\documentclass[12pt, a4]{amsart}

\usepackage{latexsym,amsmath,amssymb,amsthm,mathrsfs,color}
\usepackage{float}
\usepackage{graphicx}
\usepackage{marginnote}

\usepackage[bbgreekl]{mathbbol}
\usepackage{bbm}
\usepackage{tikz-cd}
\usepackage[all]{xy}
\usepackage{stmaryrd}
\usepackage{hyperref}
\usepackage{comment}
\hypersetup{colorlinks,%
    citecolor=brown,%
    filecolor=black,%
   linkcolor=blue,%
   urlcolor=black}

\usepackage[toc,page]{appendix}
% \usepackage{stix}

%%% to set up AMSRefs %%%
\usepackage[shortalphabetic]{amsrefs}
 \makeatletter
\def\append@label@year@{%
    \safe@set\@tempcnta\bib@year
    \edef\bib@citeyear{\the\@tempcnta}%
    \ifnum\bib@citeyear>9
      \append@to@stem{%
          \ifx\bib@year\@empty
          \else
            \@xp\year@short \bib@citeyear \@nil
          \fi
      }%
    \fi
}
\makeatother
%%%

%%% table of contents %%%
\setcounter{tocdepth}{2}
\let\oldtocsection=\tocsection
\renewcommand{\tocsection}[2]{\hspace{0em}\oldtocsection{#1}{#2}}

%%% paper size %%%
\usepackage[a4paper]{geometry}

\geometry{top=1in, bottom=1in, left=1in, right=1in}

%% show label %%%
%\usepackage{xcolor}
%\usepackage[color]{showkeys}
%\definecolor{refkey}{gray}{.45}
%\definecolor{labelkey}{gray}{.45}
%\definecolor{citekey}{gray}{.45}

\def\upddots{\mathinner{\mkern 1mu\raise 1pt \hbox{.}\mkern 2mu
\mkern 2mu \raise 4pt\hbox{.}\mkern 1mu \raise 7pt\vbox {\kern 7
pt\hbox{.}}} }

\numberwithin{equation}{section}

\begin{document}
\setlength{\unitlength}{2.5cm}

%%%%%%%%%%% theorem styles
\newtheorem{thm}{Theorem}[section]
\newtheorem{lm}[thm]{Lemma}
\newtheorem{prop}[thm]{Proposition}
\newtheorem{cor}[thm]{Corollary}
\newtheorem{conj}[thm]{Conjecture}
\newtheorem{specu}[thm]{Speculation}

\theoremstyle{definition}
\newtheorem{dfn}[thm]{Definition}
\newtheorem{eg}[thm]{Example}
\newtheorem{rmk}[thm]{Remark}

\newcommand{\F}{\mathbf{F}}
\newcommand{\N}{\mathbbm{N}}
\newcommand{\R}{\mathbbm{R}}
\newcommand{\C}{\mathbbm{C}}
\newcommand{\Z}{\mathbbm{Z}}
\newcommand{\Q}{\mathbbm{Q}}
\newcommand{\Mp}{{\rm Mp}}
\newcommand{\Sp}{{\rm Sp}}
\newcommand{\GSp}{{\rm GSp}}
\newcommand{\GL}{{\rm GL}}
\newcommand{\PGL}{{\rm PGL}}
\newcommand{\SL}{{\rm SL}}
\newcommand{\SO}{{\rm SO}}
\newcommand{\Spin}{{\rm Spin}}
\newcommand{\GSpin}{{\rm GSpin}}
\newcommand{\Ind}{{\rm Ind}}
\newcommand{\Res}{{\rm Res}}
\newcommand{\Hom}{{\rm Hom}}
\newcommand{\End}{{\rm End}}
\newcommand{\msc}[1]{\mathscr{#1}}
\newcommand{\mfr}[1]{\mathfrak{#1}}
\newcommand{\mca}[1]{\mathcal{#1}}
\newcommand{\mbf}[1]{{\bf #1}}
\newcommand{\mbm}[1]{\mathbbm{#1}}
\newcommand{\into}{\hookrightarrow}
\newcommand{\onto}{\twoheadrightarrow}
\newcommand{\s}{\mathbf{s}}
\newcommand{\cc}{\mathbf{c}}
\newcommand{\bfa}{\mathbf{a}}
\newcommand{\id}{{\rm id}}
\newcommand{\g}{ \mathbf{g} }
\newcommand{\w}{\mathbbm{w}}
\newcommand{\Ftn}{{\sf Ftn}}
\newcommand{\p}{\mathbf{p}}
\newcommand{\bq}{\mathbf{q}}
\newcommand{\WD}{\text{WD}}
\newcommand{\W}{\text{W}}
\newcommand{\Wh}{{\rm Wh}}
\newcommand{\Whc}{{{\rm Wh}_\psi}}
\newcommand{\ggma}{\omega}
\newcommand{\sct}{\text{\rm sc}}
\newcommand{\Of}{\mca{O}^\digamma}
\newcommand{\gk}{c_{\sf gk}}
\newcommand{\Irr}{ {\rm Irr} }
\newcommand{\Irrg}{ {\rm Irr}_{\rm gen} }
\newcommand{\diag}{{\rm diag}}
\newcommand{\uchi}{ \underline{\chi} }
\newcommand{\Tr}{ {\rm Tr} }
\newcommand{\der}\de
\newcommand{\Stab}{{\rm Stab}}
\newcommand{\Ker}{{\rm Ker}}
\newcommand{\bfp}{\mathbf{p}}
\newcommand{\bfq}{\mathbf{q}}
\newcommand{\KP}{{\rm KP}}
\newcommand{\Sav}{{\rm Sav}}
\newcommand{\de}{{\rm der}}
\newcommand{\tnu}{{\tilde{\nu}}}
\newcommand{\lest}{\leqslant}
\newcommand{\gest}{\geqslant}
\newcommand{\tu}{\widetilde}
\newcommand{\tchi}{\tilde{\chi}}
\newcommand{\tomega}{\tilde{\omega}}
\newcommand{\Rep}{{\rm Rep}}
\newcommand{\cu}[1]{\textsc{\underline{#1}}}
\newcommand{\set}[1]{\left\{#1\right\}}
\newcommand{\ul}[1]{\underline{#1}}
\newcommand{\wt}[1]{\overline{#1}}
\newcommand{\wtsf}[1]{\wt{\sf #1}}
\newcommand{\anga}[1]{{\left\langle #1 \right\rangle}}
\newcommand{\angb}[2]{{\left\langle #1, #2 \right\rangle}}
\newcommand{\wm}[1]{\wt{\mbf{#1}}}
\newcommand{\elt}[1]{\pmb{\big[} #1\pmb{\big]} }
\newcommand{\ceil}[1]{\left\lceil #1 \right\rceil}
\newcommand{\floor}[1]{\left\lfloor #1 \right\rfloor}
\newcommand{\val}[1]{\left| #1 \right|}
\newcommand{\aff}{ {\rm a} }
\newcommand{\ex}{ {\rm ex} }
\newcommand{\exc}{ {\rm exc} }
\newcommand{\HH}{ \mca{H} }
\newcommand{\HKP}{ {\rm HKP} }
\newcommand{\std}{ {\rm std} }
\newcommand{\motimes}{\text{\raisebox{0.25ex}{\scalebox{0.8}{$\bigotimes$}}}}

\title[pro-$p$ Iwahori--Hecke algebras, Gelfand--Graev modules and applications]{Genuine pro-$p$ Iwahori--Hecke algebras, Gelfand--Graev representations, and some applications}

\author{Fan Gao, Nadya Gurevich, and Edmund Karasiewicz}
\address{F. Gao: School of Mathematical Sciences, Yuquan Campus, Zhejiang University, 38 Zheda Road, Hangzhou, China 310027}
\email{gaofan@zju.edu.cn}
\address{N. Gurevich: Department of Mathematics, Ben Gurion University of the Negev, Be'er Sheva,  Israel 8410501}
\email{ngur@math.bgu.ac.il }
\address{E. Karasiewicz: Department of Mathematics, University of Utah, Salt Lake City, USA 84112}
\email{karasiewicz@math.utah.edu}

\date{}
\subjclass[2010]{Primary 11F70; Secondary 22E50}
\keywords{covering groups, pro-$p$ Iwahori--Hecke algebra, Iwahori--Hecke algebras, Gelfand--Graev representations, Chinta--Gunnells representations, Whittaker dimensions}
%\thanks{The second-named author is partially supported by the NSF grant DMS-1801273.
% The third-named author is partially supported by a Simons Foundation Collaboration Grant 426446.
%}
\dedicatory{To Professor Gordan Savin on his 60th birthday.}
\maketitle

\begin{abstract} 
We study the Iwahori-component of the Gelfand--Graev representation of a central cover of a split linear reductive group and utilize our results for three applications. In fact, it is advantageous to begin at the pro-$p$ level. Thus to begin we study the structure of a genuine pro-$p$ Iwahori--Hecke algebra, establishing Iwahori--Matsumoto and Bernstein presentations. With this structure theory we first describe the pro-$p$ part of the Gelfand--Graev representation and then the more subtle Iwahori part. 

For the first application we relate the Gelfand--Graev representation to the metaplectic representation of Sahi--Stokman--Venkateswaran, which conceptually realizes the Chinta--Gunnells action from the theory of Weyl group multiple Dirichlet series. For the second we compute the Whittaker dimension of the constituents of regular unramified principal series; for the third we do the same for unitary unramified principal series.
\end{abstract}

\tableofcontents

%%%

\section{Introduction}
Let $F$ be a $p$-adic local field with ring of integers $O_F$ and residue field $\kappa$. Let $\mbf{G}$ be a smooth reductive linear algebraic group scheme over $O_F$. It is known that the generic fiber of $\mbf{G}$ is quasi-split and split over a finite unramified extension of $F$; for simplicity, we assume that it is  actually split over $F$. Write $G:=\mbf{G}(F)$ and $G_\kappa:=\mbf{G}(\kappa)$.

It is a well-known result of Rodier, Gelfand--Kazhdan and Shalika that every irreducible admissible representation $\pi \in \Irr(G)$ has at most one Whittaker model, i.e., 
\begin{equation} \label{E:M1}
\dim \Hom_G(\pi, \Ind_{U^-}^G \psi) \lest 1,
\end{equation}
where $U^-$ is the unipotent radical of a Borel subgroup of $G$ and $\psi$ is a nondegenerate character of $U^{-}$. This is a $p$-adic analogue of the finite field case, where it was first shown by Gelfand--Graev that the finite field analogue $\Ind_{U_\kappa^-}^{G_\kappa} \psi_\kappa$ of $\Ind_{U^-}^G \psi$ has similar multiplicity-one property. This multiplicity-one property for $\Ind_{U_\kappa^-}^{G_\kappa} \psi_\kappa$ follows from the commutativity of its endomorphism algebra $C(U_\kappa^-, \psi_\kappa \backslash G_\kappa /U_\kappa^-, \psi_\kappa)$, which can be proved by exhibiting a Chevalley--Steinberg involution, see \cite[\S 8.1]{Car} or \cite[Chapter 14]{Ste16}. The proof for $G$ essentially adapts this idea. The multiplicity one property \eqref{E:M1} for Whittaker models is important for the theory of $L$-functions, especially the Langlands--Shahidi method and some Rankin--Selberg integrals as well.

As a refinement of \eqref{E:M1}, it was shown in \cite[Theorem 4.3]{BuHe03}  that for simply generic Bernstein class $\mfr{s}$ with respect to $\psi$, there is a canonical isomorphism
\begin{equation} \label{E:BH}
\mfr{Z}^\mfr{s}(G) \longrightarrow {\rm End}_G({\rm ind}_{U^-}^G \psi)^\mfr{s},
\end{equation}
where $\mfr{Z}^\mfr{s}(G)$ is the Bernstein center of the subcategory of $\Irr(G)$ associated with $\mfr{s}$. Here ${\rm ind}_{U^-}^G \psi$ is the compact induction, whose dual is just ${\rm Ind}_{U^-}^G \psi^{-1}$. If one concentrates further on the unramified class $\mfr{s} = (T, \mbm{1})$, then further investigation of the structure of $(({\rm ind}_{U^-}^G \psi)^\mfr{s})^{I} = ({\rm ind}_{U^-}^G \psi)^I$ was given in several works \cite{BM1, Ree4, CS18} for example. Here $I \subset G$ is the Iwahori subgroup determined by the Borel subgroup $B \subset G$. In particular, it was shown in \cite{CS18} that if the conductor of $\psi$ is $\mfr{p}_F$, then one has
\begin{equation} \label{E:CS}
({\rm ind}_{U^-}^G \psi)^I \simeq \varepsilon_W \otimes_{\mca{H}_W} \mca{H}_I,
\end{equation}
where $\mca{H}_I = C_c^\infty(I\backslash G /I)$ is the Iwahori--Hecke algebra and $\varepsilon_W$ the sign character of the finite-dimensional subalgebra $\mca{H}_W$ deformed from $\C[W]$. Such a result was also obtained from a more general perspective by Brubaker--Bump--Friedberg \cite{BBF4}. For general $\mfr{s}=[T, \chi]$, the structure of 
$({\rm ind}_{U^-}^G \psi)^\mfr{s}$ was determined recently by \cite{MiPa21}.

More generally, one may consider the above problems for central covers of $G$. Assume that $F^\times$ contains the full group $\mu_n$ of $n$-th roots of unity. There are natural $n$-fold central covers
$$\begin{tikzcd}
\mu_n \ar[r, hook] & \wt{G} \ar[r, two heads] & G,
\end{tikzcd}$$
of which we consider only the genuine representations, i.e., $\mu_n$ acts via a fixed embedding $\epsilon: \mu_n \into \C^\times$. With the unipotent subgroup $U^-$ splitting uniquely in $\wt{G}$, one may consider the genuine Gelfand--Graev representation
$$\mca{V}:={\rm ind}_{\mu_n U^-}^{\wt{G}} (\epsilon\otimes \psi).$$
In this case, for fixed $\wt{G}$ we expect the right hand side of \eqref{E:M1} to be replaced by $\val{\wt{T}/Z(\wt{T})}^{1/2}$, where this upper bound is achieved, see \cite{GSS2}. In particular, multiplicity-one fails.

This multi-dimensionality of Whittaker models for genuine representation was observed decades ago. Though it gives obstacles to the classical theory of $L$-functions, it nevertheless motivates the theory of Weyl group multiple Dirichlet series (WMDS), which dates back at least to the work of Kubota.  These WMDS possess meromorphic continuation and functional equation, but are not Eulerian. Conjecturally, they are equal to certain Fourier coefficients of the Borel Eisenstein series of $\wt{G}$. See \cite{Bum12} for an exposition on this topic. There are various methods to construct such WMDS. One was given by Chinta--Gunnells \cite{CG10} by utilizing the so-called metaplectic $W$-representation afforded by $\C(Y)$, where $Y$ is the cocharacter lattice of $G$. Let $\mca{H}_I = C_{\epsilon, c}^\infty(I\backslash \wt{G} /I)$ be the Iwahori--Hecke algebra of $\wt{G}$. It was shown recently by  Sahi--Stokman--Venkateswaran \cite{SSV21} that the metaplectic $W$-representation of Chinta--Gunnells arises naturally from a certain $\mca{H}_I$-module afforded on the space $\C[Y]$, which we call the SSV representation of $\mca{H}_I$. In fact, the SSV representation is afforded by the bigger space $\C[P]$, where $P\supset Y$ is the coweight lattice. In any case, it should be highlighted that the construction in \cite{SSV21} is algebraic and uniform and does not involve computer-assisted checking for certain well-definedness of the Weyl group action.

In this paper we continue the study of Whittaker spaces for covering groups, focusing on the following problems:
%We are motivated by different but related problems arising from the above brief discussion, including especially the following:
\begin{enumerate}
\item[(P1)] Describe the Iwahori-component of the Gelfand--Graev representation, that is, to determine the $\mca{H}_I$-module structure of $\mca{V}^I={\rm ind}_{\mu_n U^-}^{\wt{G}} (\epsilon\otimes \psi)^I$. This will be a direct generalization of \cite{CS18} to covers.
\item[(P2)] Give a more natural and conceptual interpretation of the SSV representation in terms of $\mca{V}^I$. Relations between these two and also the metaplectic $W$-representation have been hinted at in several works including \cite{Mc2} \cite{CG10} \cite{CO} via the presence of the local scattering matrices, which describe the intertwining operators between the Whittaker models of principal series $I(\chi)$ and $I({}^w \chi)$.
\item[(P3)] Verify some speculative formulas regarding the Whittaker dimension of some Iwahori-spherical representations, especially those irreducible constituents of a regular or unitary unramified principal series, see \cite{Ga6, Ga7, Ga8}.
\end{enumerate}

A solution to (P1) would provide a valuable tool to investigate (P2) and (P3). Unfortunately, naively adapting the strategy of \cite{CS18} does not work. The fundamental obstacle is that the ``supports'' of $\mca{V}^I$ and $\mca{H}_I$ do not match. More precisely, for a linear group $G$, the Iwahori--Hecke algebra $\mca{H}_I$ is an affine Hecke algebra deformed from the extended affine Weyl group $Y\rtimes W$. In this case, the twisted Satake transform (see \cite{GuKa})
$$\mca{S}_\psi: C_c^\infty(I\backslash G/I) \longrightarrow ({\rm ind}_{U^-}^G \psi)^I$$
of $\mca{H}_I$-modules is surjective, and realizes \eqref{E:CS}. However, for $\wt{G}$, the support of the genuine Iwahori--Hecke algebra only corresponds to $Y_{Q,n} \rtimes W$, where $Y_{Q,n} \subset Y$ is a certain sublattice; yet the support of $\mca{V}^I$ still corresponds to $Y\rtimes W$. This is the support mismatch mentioned above. Consequently, $\mca{S}_\psi$ is not surjective for general covering groups. 
% and is used in \cite{GuKa} to provide a conceptual explanation for the appearance of characters in the Casselman--Shalika formula. 

The support mismatch disappears if $I$ is replaced by its maximal pro-$p$ subgroup $I_{1}$ and we consider the algebra  $\mca{H}:=C_{\epsilon, c}^\infty(I_1 \backslash \wt{G} / I_1)$. Our first goal is then the following:
\begin{enumerate}
\item[(P0)] Study the larger pro-$p$ Hecke algebra $\mca{H}$ instead of $\mca{H}_I$, including its presentations; explicate the structure of $\mca{V}^{I_1}$ over $\mca{H}$. 
\end{enumerate}
We leverage these pro-$p$ results to investigate the Iwahori level and ultimately solve (P1)-(P3). 
The road map of the above topics and results in our paper is as follows:
$$\begin{tikzcd}
\text{(P0)} \ar[r] & \text{(P1)} \ar[r] \ar[rd] & \text{(P2)} \\
& & \text{(P3)}.
\end{tikzcd}$$

\subsection{Main results}
To elaborate on the above, we give a brief outline of the paper and state our main results.

In \S \ref{S:Pre}, we introduce algebraic groups, their central covers, and fix some notations.

In \S \ref{S:proH}, we first study a $\Z$-model $\mca{H}_\Z$ of the pro-$p$ Iwahori--Hecke algebra $\mca{H}$. Using this, we give the Iwahori--Matsumoto presentation of the algebra $\mca{H}$. In \S \ref{S:proPS}, we give the Bernstein presentation of the Hecke algebra $\mca{H}$, which is based on the universal principal series in Haines--Kottwitz--Prasad \cite{HKP10}. Thus, we have the following result for \S \ref{S:proH}--\S \ref{S:proPS} as an amalgam of Theorem \ref{T:IM2} and Theorem \ref{T:BernR}:
\begin{thm}
The pro-$p$ Hecke algebra $\mca{H}$ has an explicit Iwahori--Hecke presentation and Bernstein presentation. 
\end{thm}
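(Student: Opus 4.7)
The strategy is to treat the two presentations separately, with the Iwahori--Matsumoto presentation providing the combinatorial foundation on which the Bernstein presentation is built. For the Iwahori--Matsumoto presentation, the plan is to begin with the double coset decomposition $I_1 \backslash \wt{G} / I_1$. Since $I_1$ is pro-$p$ and the cover $\mu_n \into \wt{G} \onto G$ is tame of order prime to the residue characteristic, $I_1$ admits a unique (pro-$p$) lift into $\wt{G}$; hence the double cosets are naturally indexed by an extension $\wt{W}(1)$ sitting in a short exact sequence relating the usual extended affine Weyl group $Y \rtimes W$ to the finite abelian group $\mbf{T}(\kappa)$ coming from the reductive quotient of $I/I_1$. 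I would define $\mca{H}_\Z$ to be the free $\Z$-module on this index set and equip it with a convolution-style product via a cocycle encoding both the Bruhat intersection of $I_1$-double cosets and the central extension data of the cover.

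The key verification then splits into two parts. First, for $T_s T_w$ with $s$ a simple affine reflection and $\ell(sw) > \ell(w)$, the computation reduces, after Iwahori factorization of $I_1$, to a rank-one Levi, where the genuine cocycle is governed by Steinberg's bilinear form on $Y$; this yields the braid relation by the classical chamber--gallery argument, which is essentially local and is insensitive to the central extension once the cocycle is chosen coherently. Second, the quadratic relation acquires a genuine flavour: instead of $T_s^2 = (q-1)T_s + q$, one gets $T_s^2 = c_s T_s + q T_{s^2}$, where $c_s$ lies in the commutative subalgebra of $\mca{H}$ supported on $\mbf{T}(\kappa)$ and $s^2$ is a torsion element reflecting the failure of $s$ to square to the identity in $\wt{W}(1)$. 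This is the shape of the quadratic relation in the pro-$p$ Iwahori setting, and its proof is a direct product-of-characteristic-functions computation in the rank-one reduction.

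For the Bernstein presentation I would follow the Haines--Kottwitz--Prasad template: consider the universal principal series $M$ built from the induction of tame characters of $\wt{T}$ up to $\wt{G}$, restricted to $I_1$-invariants. This module carries a faithful right action of $\mca{H}$, so identities in $\mca{H}$ can be verified by checking them on $M$. Using the Iwahori--Matsumoto generators, together with Bernstein-type elements $\Theta_y$ obtained by normalizing characteristic functions on lifts of strictly dominant cocharacters and extending by the obvious quotient relation $\Theta_y \Theta_{y'} = \Theta_{y+y'}$ on the sublattice that supports genuine functions, one exhibits a commutative polynomial subalgebra. The mixed relation $T_s \Theta_y - \Theta_{s(y)} T_s = (\text{correction})$ is then derived by a direct calculation of the right action on $M$; the correction term reduces to a rank-one $\Theta$-shift whose coefficients are metaplectic analogues of the usual Bernstein rational functions and, as in the linear case, it is precisely this relation that forces the triangular decomposition $\mca{H} = \C[\wt{Y}] \otimes \mca{H}_W^{(1)}$.

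The main obstacle will be the coherent bookkeeping of the cocycle for the central cover throughout. Ordinary affine Hecke algebra manipulations exploit the fact that $W$ acts on $Y$ via the coroot system, but in the genuine setting the natural lattice has to be adjusted and $n$-th roots of unity appear in the structure constants; establishing a coherent normalization of lifts of $\wt{W}(1)$ so that braid relations hold exactly in $\mca{H}$ (rather than only up to $\mu_n$) is the step that demands the most care. Once this normalization is fixed, both the quadratic relations and the Bernstein cross-relations are formal consequences of rank-one reductions inside the universal principal series.
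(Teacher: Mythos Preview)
Your outline is correct and matches the paper's framework: Iwahori--Matsumoto via the $W(1)$-indexed basis with braid relations from length-additivity and quadratic relations by rank-one reduction (the paper's Proposition~\ref{QuadRel} confirms your shape $T_s^2 = c_sT_s + qT_{s^2}$, with the affine generator acquiring an extra $\varepsilon^{Q(\alpha^{\dag,\vee})}$), and Bernstein via the universal pro-$p$ principal series $\mca{M}=C^\infty_{c,\epsilon}(T_1U\backslash\wt{G}/I_1)$ following Haines--Kottwitz--Prasad.

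Two implementation points differ from your sketch and are worth noting because they address precisely the cocycle-bookkeeping obstacle you identify. First, the paper does \emph{not} define $\Theta_y$ by normalizing characteristic functions of dominant translations and extending via $\Theta_y\Theta_{y'}=\Theta_{y+y'}$: since $\wt{T}$ is non-abelian this relation only holds up to a Hilbert-symbol twist, and the resulting subalgebra $\mca{R}=\C[\wt{T}/T_1]e_\epsilon$ is \emph{not} a commutative polynomial algebra (its center is $\C[Z(\wt{T})/T_1]e_\epsilon$, see Lemma~\ref{HeckeTorus3}). Instead the paper defines $\Theta_t$ for every $t\in\wt{T}/T_1$ in one stroke by the identity $t\cdot\nu_1=\nu_1*\Theta_t$ coming from the left $\mca{R}$-action on $\mca{M}$; this absorbs all cocycle normalization into the group law of $\wt{T}/T_1$ and makes the Bernstein decomposition $\mca{R}\otimes_{\mca{R}\cap\mca{H}_\kappa}\mca{H}_\kappa\simeq\mca{H}$ immediate (Proposition~\ref{BernDecomp}). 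Second, the cross-relation is not obtained by a direct computation of the right $\mca{H}$-action on $\mca{M}$ but via the rank-one intertwining operator: one computes $I_{s_\alpha}(\nu_1)$ explicitly and reads off the relation from $I_{s_\alpha}\circ\Theta_t=\Theta_{w_\alpha(1)\cdot t}\circ I_{s_\alpha}$. The correction term that emerges is an infinite sum $(q-1)\sum_{j\geq1}\varepsilon^{jQ(\alpha^\vee)}\Theta_{\wt{h}_\alpha(\varpi^j)}c_\alpha(jQ(\alpha^\vee))$ living in a completion $\mca{H}_{\{\alpha_{Q,n}^\vee\}}$, which then collapses to a finite identity in $\mca{H}$ (Theorem~\ref{T:BernR}); this is more intricate than the single rational-function correction your sketch suggests, with the Gauss-sum idempotents $c_\alpha(j)$ genuinely entering the formula.
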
 
We note that for linear groups the pro-$p$ Iwahori-Hecke algebra was studied by Vigneras \cite{Vig16} and Flicker \cite{Fl11}.  

The Bernstein presentation of $\mca{H}$ gives rise to the corresponding one for $\mca{H}_I$, which we explicate in \S \ref{SS:IHalg}. This presentation already appears in the earlier work \cite{Sav88, Sav04, Mc1, GG}. However, by using the pro-$p$ algebra we can easily prove Corollary \ref{PropInvert} and circumvent the technical complications faced by Savin in \cite[Proposition 6.5]{Sav04}.%The main difficultly faced by Savin was establishing the analog of Corollary \ref{PropInvert} for the Iwahori Hecke algebra, which he resolved using the theory of the Bernstein center. 

In \S \ref{S:GG}, we first discuss the structure of 
$$\mca{V}_\kappa=(\Ind_{U_\kappa^-}^{G_\kappa} \psi)^{U_\kappa},$$
the $U_{\kappa}$-fixed vectors in the Gelfand--Graev representation of $G_\kappa$. We show that there is a decomposition 
$$\mca{V}_\kappa = \bigoplus_{\mca{O}} \mca{V}_{\kappa, \mca{O}}$$
into irreducible modules over $\mca{H}_\kappa = C(U_\kappa \backslash G_\kappa /U_\kappa)$, where the sum is taken over all $W$-orbits in $\Hom(T_\kappa, \C^\times)$. There is a $W$-stable sublattice $Y_{Q,n} \subset Y$ which dictates $Z(\wt{T})$, the center of the covering torus $\wt{T}$. We write  $\msc{X}_{Q,n}:=Y/Y_{Q,n}$. The main result pertaining to (P0) and (P1) is the following:

\begin{thm}[{Theorem \ref{GGProP}, Corollary \ref{C:VI-O}, Theorem \ref{Wequi}}] \label{T:V^I}
There is a natural isomorphism
$$\pmb{\gamma}: \mca{V}_\kappa \otimes_{\mca{H}_\kappa} \mca{H} \longrightarrow \mca{V}^{I_1}$$
of $\mca{H}$-modules. It gives rise to a decomposition
$$\mca{V}^I = \bigoplus_{\mca{O} \subset \msc{X}_{Q,n}} \mca{V}^I_\mca{O}$$
over all $W$-orbits in $\msc{X}_{Q,n}$. Moreover, if $\mca{O}=\mca{O}_{y} \subset \msc{X}_{Q,n}$ is splitting (see Definition \ref{D:splO}), then one has natural isomorphisms of $\mca{H}_I$-modules
$$\begin{tikzcd}
 \varepsilon_y \otimes_{\mca{H}_{I, y}} \mca{H}_{I} \ar[r] & \mca{V}_{\kappa, \mathcal{O}}\otimes_{\mca{H}_\kappa}\mca{H}*\mbm{1}_{I} \ar[r, "{\pmb{\gamma}}"] & \mca{V}^I_\mca{O},
 \end{tikzcd}$$
 where $\varepsilon_y$ is the sign character of the deformed subalgebra $\mca{H}_{I, y} \subset \mca{H}_W \subset \mca{H}_I$ associated with the parabolic Weyl subgroup $W_y=\mathrm{Stab}_{W}(y) \subset W$.
\end{thm}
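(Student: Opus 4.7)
The plan is to descend from pro-$p$ to Iwahori and then to individual orbits, building on the Iwahori--Matsumoto and Bernstein presentations established in Theorems \ref{T:IM2} and \ref{T:BernR}.

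The map $\pmb{\gamma}$ is constructed as follows. The finite Hecke algebra $\mca{H}_\kappa$ embeds in $\mca{H}$ as the subalgebra of functions supported on the parahoric preimage, and reduction mod $\mfr{p}$ identifies $\wt{I}/I_1$ with $\wt{T}_\kappa$ while killing $I_1 \cap U^-$. Hence every $f \in \mca{V}_\kappa$ lifts canonically to an $I_1$-fixed element of $\mca{V}$ supported on the parahoric; this lift is $\mca{H}_\kappa$-equivariant, so tensor--hom adjunction yields $\pmb{\gamma}$. To prove bijectivity, I would combine the Iwahori--Matsumoto presentation, which makes $\mca{H}$ free as a left $\mca{H}_\kappa$-module on representatives of the cosets of the finite Weyl group in the extended affine Weyl group, with the Bruhat-type stratification of $\wt{G}$ into $I_1$-double cosets, and compare the graded pieces of the matching support filtrations on both sides.

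Next, passing to the Iwahori level, one applies the idempotent $\mbm{1}_I \in \mca{H}$ to get $\mca{V}^I = \mca{V}^{I_1} * \mbm{1}_I$. The finite Gelfand--Graev decomposition $\mca{V}_\kappa = \bigoplus_\mca{O} \mca{V}_{\kappa, \mca{O}}$ over $W$-orbits of genuine characters of $\wt{T}_\kappa$ is $\mca{H}_\kappa$-equivariant, hence decomposes $\mca{V}^{I_1}$ after tensoring with $\mca{H}$. Multiplication by $\mbm{1}_I$ retains only those orbits whose characters factor through the central subgroup $Z(\wt{T})$; these are parametrized by $W$-orbits in $\msc{X}_{Q,n} = Y/Y_{Q,n}$, yielding the claimed decomposition.

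For a splitting orbit $\mca{O}_y$, the stabilizer $W_y$ lifts so that the sign character $\varepsilon_y$ of the deformed parabolic subalgebra $\mca{H}_{I,y}$ is well-defined. I would define the $\mca{H}_I$-homomorphism $\varepsilon_y \otimes_{\mca{H}_{I,y}} \mca{H}_I \to \mca{V}^I_\mca{O}$ by sending $1 \otimes 1$ to the distinguished image $\pmb{\gamma}(v_\mca{O} \otimes \mbm{1}_I)$ for a generator $v_\mca{O} \in \mca{V}_{\kappa, \mca{O}}^{U_\kappa}$, and verify that each $T_w$ with $w \in W_y$ acts on this vector by $-1$. This reduces to a computation in the finite Hecke algebra, adapting \cite[Prop.~6.5]{Sav04} and \cite{CS18}; crucially, Corollary \ref{PropInvert} supplies the invertibility needed to avoid the technicalities encountered there. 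The intermediate isomorphism in the displayed chain is then the restriction of $\pmb{\gamma}$ to the $\mca{O}$-block.

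The main obstacle is the bijectivity of $\pmb{\gamma}$: although the support mismatch discussed in the introduction disappears at the pro-$p$ level, the genuine cocycle still requires careful bookkeeping when matching strata, and one must show that the $\mca{H}$-translates of parahoric-supported vectors span $\mca{V}^{I_1}$ without redundancy.
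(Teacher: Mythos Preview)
Your outline has the right architecture, but two of the load-bearing steps are not carried by the arguments you sketch.

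\textbf{Bijectivity of $\pmb{\gamma}$.} You propose to compare support filtrations on both sides using the Iwahori--Matsumoto basis. The paper does not argue this way, and it is not clear that your route closes. The paper's proof goes through the Jacquet module: by Bushnell--Kutzko theory (applicable because $I_1$ has an Iwahori factorization and the $\mca{T}_t$ are invertible) one has $\mca{V}^{I_1}\simeq(\mca{V}_U)^{T_1}$, and then, following Chan--Savin, restriction to the big cell identifies $(\mca{V}_U)^{T_1}$ with $C^\infty_{c,\epsilon}(\wt{T}/T_1)\simeq\mca{R}$ as $\wt{T}$-modules. Call this composite $\vartheta_1$. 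Under $\vartheta_1\circ\vartheta_2$ the finite piece $\mca{V}_\kappa$ lands exactly on $\C[\mu_n,T_\kappa]=\mca{R}\cap\mca{H}_\kappa$, and now the Bernstein decomposition $\mca{H}\simeq\mca{H}_\kappa\otimes_{\mca{R}\cap\mca{H}_\kappa}\mca{R}$ (Proposition~\ref{BernDecomp}) finishes the job immediately. Your filtration idea would have to reproduce this identification of $\mca{V}^{I_1}$ with $\mca{R}$ by hand; the support strata of $\mca{V}^{I_1}$ are indexed by $I_1$-double cosets in $U^-\backslash\wt{G}$, and matching them to Hecke-algebra strata without the Jacquet-module step is precisely the ``careful bookkeeping'' you flag as an obstacle---it is a real one.

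\textbf{The splitting-orbit isomorphism.} You correctly send $1\otimes 1$ to the distinguished vector $c(\varphi(y))\otimes\Theta_{\s_y}*\mbm{1}_I$ and note that verifying the $\varepsilon_y$-action is a finite computation (indeed: $\langle\alpha,y\rangle=0$ for $w_\alpha\in W_y$ forces $\varphi(y)|_{h_\alpha(\kappa^\times)}=1$, so the Gauss sum is $-1$). But you give no argument for why the induced map is bijective; ``adapting \cite{Sav04,CS18}'' does not suffice, since those papers treat only the trivial orbit. The paper proves bijectivity by exhibiting $\mca{A}$-bases on both sides (minimal-length coset representatives $w_j$ of $W_y\backslash W$ on the source; the vectors $c(\varphi(y_j))\otimes\Theta_{\s_{y_j}}*\mbm{1}_I$ with $y_j=w_j^{-1}(y)$ on the target) and showing the transition matrix is upper-triangular with invertible diagonal. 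The triangularity comes from the Bernstein relations in the form of Lemma~\ref{BRandA}, which says $\Theta_{\s_y}*\mca{T}_{\wt{w}}*\mbm{1}_I$ lies in $\mca{T}_{\wt{w}}*\Theta_{\wt{w}^{-1}\cdot\s_y}*\mbm{1}_I+\sum_{w'<w}\mca{T}_{\wt{w}'}*\Theta*\mca{A}$, together with the fact (Lemma~\ref{WeqSplitting}(iii)) that minimal-length coset representatives are downward-closed in the Bruhat order. This triangularity argument is the heart of the proof and is absent from your sketch.

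A minor point: the orbits surviving after $*\mbm{1}_I$ are not those ``whose characters factor through $Z(\wt{T})$'' but those lying in the image of $\varphi:\msc{X}_{Q,n}\hookrightarrow\Hom(T_\kappa,\C^\times)$; the mechanism is $\Theta_{\s_y}*c(\mbm{1})=c(\varphi(y))*\Theta_{\s_y}$.
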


The trivial orbit of $0$ and free orbits are always splitting, which immediately gives Corollary \ref{O-triv} and \ref{O-free}. In particular, if $n=1$, then $\wt{G}=G$ is a linear group, $\msc{X}_{Q,n}=\set{0}$, and in this case Corollary \ref{O-triv} implies \eqref{E:CS}.

From Theorem \ref{T:V^I}, it is thus an interesting question to determine which orbits $\mca{O} \subset \msc{X}_{Q,n}$ are splitting, and even more to determine the covering groups $\wt{G}$ for which every orbit $\mca{O}$ is splitting. We give a detailed discussion of several (nested) subclasses of covering groups in \S \ref{S:splO}. In particular, we show in Corollary \ref{C:aOsplt} that for an oasitic cover of an almost simple and simply connected $G$, every orbit $\mca{O}$ is splitting. Other examples with this splitting property include the Kazhdan--Patterson and Savin covers of $\GL_r$, which we discuss in Example \ref{E:KPS}.

In the remainder of \S \ref{S:splO}, we give a full account of the structure of $\mca{V}^I_\mca{O}$ for covers $\wt{\SL}_2$ of $\SL_2$. This example is already instructive. Indeed, if the invariant $n^*=n/\gcd(n, 2Q(\alpha^\vee))$ associated with $\wt{\SL}_2$ is odd, then every orbit $\mca{O}\subset \msc{X}_{Q,n}$ is splitting and thus Theorem \ref{T:V^I} applies. However, if $n^*$ is even, then there is one peculiar orbit which is non-splitting. This illustrates some subtleties in determining the structure of $\mca{V}^I_\mca{O}$ for general $\mca{O}$.

In \S \ref{S:SSV}, we identify the $\mca{H}_I$-module $\mca{V}^I$ as a submodule of the metaplectic representation $(\pi, \C[P])$ constructed by Sahi--Stokman--Venkateswaran \cite{SSV21}. As mentioned above, there is a natural $\mca{H}_I$-module $(\pi^0, \C[P])$ afforded by  $\C[P]$. The subspace $\C[Y]$ is invariant under $\pi$ and thus gives $(\pi^0, \C[Y])$. Moreover, there is a decomposition 
$$\pi^0 =\bigoplus_{\mca{O} \subset \msc{X}_{Q,n}} \pi^0_\mca{O},$$
where $\mca{O}$ is taken over all orbits in $\msc{X}_{Q,n}$. We have the following answer to (P2) above:

\begin{thm}[{Theorem \ref{T:GG=SSV0}}]
Let $\wt{G}$ be an $n$-fold cover of a semisimple group $G$. Assume $(-1, \varpi)_n=1$. Then for every $W$-orbit $\mca{O}\subset \msc{X}_{Q,n}$, one has
$$\mca{V}^I_\mca{O} \simeq \pi^0_\mca{O}$$
as $\mca{H}_I$-module; hence, $\mca{V}^I \simeq \pi^0$ as well.
\end{thm}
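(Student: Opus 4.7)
The plan is to match the two modules orbit-by-orbit on $\msc{X}_{Q,n}$. By Theorem \ref{T:V^I} we have $\mca{V}^I = \bigoplus_\mca{O} \mca{V}^I_\mca{O}$, and by construction $\pi^0 = \bigoplus_\mca{O} \pi^0_\mca{O}$, so it suffices to produce an $\mca{H}_I$-equivariant isomorphism $\mca{V}^I_\mca{O} \simeq \pi^0_\mca{O}$ for each $W$-orbit $\mca{O} \subset \msc{X}_{Q,n}$. The hypothesis $(-1, \varpi)_n = 1$ enters by killing the quadratic Hilbert-symbol twist between the cocycle defining $\wt{G}$ and the normalization implicit in the SSV formulas; without it one would at best expect an isomorphism up to a quadratic character twist on $\wt{T}$.

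I would first dispose of the splitting orbits $\mca{O} = \mca{O}_y$, for which Theorem \ref{T:V^I} realizes $\mca{V}^I_\mca{O}$ as the parabolically induced module $\varepsilon_y \otimes_{\mca{H}_{I, y}} \mca{H}_I$. On the SSV side, I would locate a distinguished vector $v_y \in \pi^0_\mca{O} \subset \C[Y]$, naturally associated to a representative of $\mca{O}$, and verify using the explicit metaplectic Demazure--Lusztig formulas of \cite{SSV21} that $T_{s_\alpha} \cdot v_y = -v_y$ for every simple reflection $s_\alpha \in W_y$. Frobenius reciprocity then yields a nonzero $\mca{H}_I$-map $\varepsilon_y \otimes_{\mca{H}_{I, y}} \mca{H}_I \to \pi^0_\mca{O}$, and a comparison of supports (or equivalently, of ranks over the Bernstein subalgebra $\C[Y_{Q,n}]^W$) upgrades this to an isomorphism. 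The splitting hypothesis on $\mca{O}$ is exactly what makes this sign character well defined on $\mca{H}_{I, y}$ in the covering setting.

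The main obstacle is the non-splitting case, as illustrated by the anomalous $\wt{\SL}_2$-orbit with $n^*$ even from \S\ref{S:splO}, where $\mca{V}^I_\mca{O}$ is no longer induced from a single sign character. Here I would descend to the pro-$p$ level, using the Iwahori--Matsumoto and Bernstein presentations of $\mca{H}$ from Theorems \ref{T:IM2} and \ref{T:BernR} together with the isomorphism $\pmb{\gamma}: \mca{V}_\kappa \otimes_{\mca{H}_\kappa} \mca{H} \to \mca{V}^{I_1}$. Specifically, I would build an $\mca{H}$-equivariant map from $\mca{V}^{I_1}$ to the $I_1$-invariant version of the SSV module on $\C[P]$ by prescribing it on each isotypic piece $\mca{V}_{\kappa, \mca{O}}$ (each of which is irreducible over $\mca{H}_\kappa$), and then checking compatibility with the Bernstein generators $\Theta_y$; under those generators the SSV action becomes a multiplication-plus-correction operator whose combinatorics are dictated by the same scattering matrices that appear in the twisted Satake transform $\mca{S}_\psi$. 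Passing to $I$-invariants then produces the sought isomorphism in the non-splitting case as well.

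The hardest step will be this pro-$p$ comparison for non-splitting orbits: one must verify that the correction terms in the SSV Demazure--Lusztig operators, which involve Gauss sums and Hilbert symbols, agree with the intertwiner entries appearing in $\mca{V}^I$ that come from Whittaker functionals on principal series. This is precisely where $(-1, \varpi)_n = 1$ is indispensable, because the Gauss sums on the SSV side carry a quadratic normalization factor that collapses exactly under this assumption, leaving a clean match with the Whittaker-side intertwiners.
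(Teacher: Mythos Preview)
Your case split into splitting versus non-splitting orbits is unnecessary, and the non-splitting branch has a real gap. The SSV module $(\pi,\C[P])$ is constructed only as an $\mca{H}(P_{Q,n})$-module (hence an $\mca{H}_I$-module by restriction); it is not an $\mca{H}$-module, and there is no ``$I_1$-invariant version'' of it to map into. So the proposed descent to the pro-$p$ level on the SSV side does not make sense as stated, and the non-splitting case is left unproved.

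The paper's argument avoids the dichotomy entirely. By Lemma \ref{OrbitA} the elements $c(\varphi(y))\otimes\Theta_{\s_y}*\mbm{1}_I$, as $y$ runs over $Y$, form an $\mca{A}$-basis of $\mca{V}^I$ (indexed exactly by $Y$, just like the monomial basis $\{x^\lambda\}$ of $\C[Y]$). Lemmas \ref{SSVA} and \ref{SSVB} then compute the action of $\Theta_{\s_z}^I$ ($z\in Y_{Q,n}$) and of $\mca{T}_\alpha^I$ ($\alpha\in\Delta$) on this basis, directly from the Bernstein relations \eqref{BR3*I}. One then checks that under the assignment
\[
c(\varphi(y))\otimes\Theta_{\s_y}*\mbm{1}_I \longleftrightarrow x^y,\qquad \Theta_{\s_z}^I\longleftrightarrow x^z,\qquad \mbm{1}_I*\mca{T}_\alpha*\mbm{1}_I\longleftrightarrow \mbf{k}_\alpha T_\alpha,
\]
these formulas coincide with the SSV formulas \eqref{F:SSV}. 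The hypothesis $\varepsilon=(-1,\varpi)_n=1$ is used precisely here, to make the Gauss-sum parameters $\mbf{k}_\alpha$ satisfy the relations required in \cite{SSV21}; this is where your intuition about the quadratic twist is correct, but it enters at the level of matching generators, not via intertwiner entries. This single formula comparison handles all orbits at once, so no separate treatment of non-splitting orbits is needed. Your splitting-orbit argument via Frobenius reciprocity would work, but it is a detour: once you have the explicit action from Lemmas \ref{SSVA}--\ref{SSVB}, the isomorphism is immediate.
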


In fact, for each $z\in P$, the space $\C[Y]\cdot x^z \subset \C[P]$ is also $\mca{H}_I$-invariant and thus gives a representation $(\pi^z, \C[Y]\cdot x^z)$ of $\mca{H}_I$. We speculate that $\pi^z$ is related to $({\rm ind}_{\mu_n U^-}^G \epsilon \otimes {}^z \psi)^I$, see Conjecture \ref{C:pi-z}.

The remaining part of the paper concerns (P3). We apply Theorem \ref{T:V^I} to determine certain Whittaker dimensions and verify several formulas speculated in \cite{Ga6, Ga7}. In fact, for every orbit $\mca{O} \subset \msc{X}_{Q,n}$, one can define the $\mca{O}$-Whittaker subspace of any Iwahori-spherical $\pi\in \Irr(\wt{G})$ as
$$\Wh_\psi(\pi)_\mca{O} := \Hom_{\mca{H}_I}(\mca{V}^I_\mca{O}, \check{\pi}^I).$$
In the case that $\pi=I(\chi)$ is an unramified principal series, we expect that $\Wh_\psi(I(\chi))_\mca{O}$ is isomorphic to another naturally defined $\mca{O}$-Whittaker space $\Wh_\psi(I(\chi))_\mca{O}^\sharp$ discussed more frequently in the literature, see Conjecture \ref{C:iden}.

In any case, in the last part of \S \ref{S:Wdim}, we consider regular unramified genuine principal series $I(\chi)$. We assume that the set $\Phi(\chi)$ of reducibility of $I(\chi)$ is a subset of simple roots. Then by Rodier's result, the semisimplificiation of $I(\chi)^{\rm ss}$ is multiplicity free and there is a natural bijection
$$\msc{P}(\Phi(\chi)) \longrightarrow {\rm JH}(I(\chi)), \quad S \mapsto \pi_S$$
determined by the Jacquet module of $\pi_S$. Here $\msc{P}(\Phi(\chi))$ is the power set of $\Phi(\chi)$ and ${\rm JH}(I(\chi))$ is the Jordan--Holder set of $I(\chi)$. Every orbit $\mca{O} \subset \msc{X}_{Q,n}$ gives rise to a Weyl group permutation
$$\sigma_\mca{O}^\msc{X}: W \longrightarrow {\rm Perm}(\mca{O})$$
given by  $\sigma_\mca{O}^\msc{X}(w)(y):=w(y)$.  We have $\sigma^\msc{X}=\bigoplus_{\mca{O} \subset \msc{X}_{Q,n}} \sigma^\msc{X}_\mca{O}$, which is the permutation representation of $W$ realized on $\msc{X}_{Q,n}$.

\begin{thm}[{Theorem \ref{T:reg-ps}}] \label{T:Mreg}
Keeping the notation above, and let $S\subset \Phi(\chi)$. Then for every splitting orbit $\mca{O} \subset \msc{X}_{Q,n}$ one has
$$\dim \Whc(\pi_S)_\mca{O} = \angb{\sigma_S}{\sigma^\msc{X}_\mca{O}}_W,$$
where $\sigma_S \in {\rm Rep}(W)$ is a sum of certain Kazhdan--Lusztig representations of $W$ naturally associated with $S$.
Hence, for $\wt{G}$ such that every orbit $\mca{O}$ is splitting (for example, those as in Corollary \ref{C:aOsplt} and Example \ref{E:KPS}) one has 
$$\dim \Whc(\pi_S) = \angb{\sigma_S}{\sigma^\msc{X}}_W.$$
\end{thm}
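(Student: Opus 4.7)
The plan is to combine Theorem \ref{T:V^I} with a tensor--Hom adjunction, reducing the computation of $\dim \Whc(\pi_S)_\mca{O}$ to a multiplicity of a sign character inside an $\mca{H}_W$-module, and then to invoke the Kazhdan--Lusztig description of $\check{\pi}_S^I$ together with classical Frobenius reciprocity for the Weyl group.

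First I would use Theorem \ref{T:V^I} in the splitting case to write $\mca{V}^I_\mca{O} \simeq \varepsilon_y \otimes_{\mca{H}_{I, y}} \mca{H}_I$ for $\mca{O}=\mca{O}_y$, so that the tensor--Hom adjunction gives
$$\Whc(\pi_S)_\mca{O} \;=\; \Hom_{\mca{H}_I}\!\big(\mca{V}^I_\mca{O},\, \check{\pi}_S^I\big) \;\simeq\; \Hom_{\mca{H}_{I, y}}\!\big(\varepsilon_y,\, \check{\pi}_S^I\big).$$
Using the Bernstein presentation from \S\ref{SS:IHalg}, $\mca{H}_{I, y} \subset \mca{H}_W$ is the standard Hecke deformation of $\C[W_y] \subset \C[W]$, and $\varepsilon_y$ specialises at $q^{1/2}=1$ to the sign character of $W_y$. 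Since $\pi_S$ is a subquotient of a regular unramified principal series with $\Phi(\chi)$ consisting of simple roots, $\check{\pi}_S^I$ is a semisimple $\mca{H}_W$-module whose underlying $W$-representation is (up to the standard Kazhdan--Lusztig sign-twist absorbed into the definition of $\sigma_S$) the module $\sigma_S$. Thus the preceding Hom reduces to
$$\dim \Whc(\pi_S)_\mca{O} \;=\; \dim \Hom_{W_y}\!\big(\varepsilon_{W_y},\, \sigma_S|_{W_y}\big).$$

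The last step is classical Frobenius reciprocity for the finite group $W$: because $W_y$ is a standard parabolic subgroup, $\varepsilon_{W_y}=\varepsilon_W|_{W_y}$, whence $\Ind_{W_y}^W \varepsilon_{W_y} \simeq \sigma_\mca{O}^\msc{X} \otimes \varepsilon_W$, where $\sigma_\mca{O}^\msc{X}$ is the permutation representation on $W/W_y = \mca{O}$. Tensoring with $\varepsilon_W$ and absorbing the twist on the other side of the pairing (this is exactly the Kazhdan--Lusztig sign-twist that relates the two natural $W$-actions on $\check{\pi}_S^I$), we arrive at
$$\dim \Whc(\pi_S)_\mca{O} \;=\; \angb{\sigma_S}{\sigma_\mca{O}^\msc{X}}_W,$$
and summing over all $\mca{O}\subset \msc{X}_{Q,n}$ when every orbit is splitting gives the global formula $\dim \Whc(\pi_S) = \angb{\sigma_S}{\sigma^\msc{X}}_W$.

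The main obstacle I anticipate is the bookkeeping of sign conventions and of $q^{1/2}=1$ specialisations: one must verify that restriction along the chain $\mca{H}_{I, y} \hookrightarrow \mca{H}_W \hookrightarrow \mca{H}_I$, followed by specialisation to group algebras, intertwines $\varepsilon_y$ with $\varepsilon_{W_y}$ and $\check{\pi}_S^I$ with $\sigma_S$ in a way compatible with the $\varepsilon_W$-twist that necessarily appears on the induced-module side, so that the final pairing is with $\sigma_\mca{O}^\msc{X}$ rather than with its sign twist. The splitting hypothesis enters decisively, as it is precisely what licenses the orbit-by-orbit application of Theorem \ref{T:V^I}, while the regularity hypothesis on $\chi$ is what guarantees that $\check{\pi}_S^I$ has a clean Kazhdan--Lusztig description.
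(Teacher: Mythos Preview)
Your overall shape is sensible: use Theorem~\ref{T:V^I}, then tensor--Hom adjunction, then Frobenius reciprocity. The difficulty is that the linchpin of your argument---the identification of $\check{\pi}_S^I|_{\mca{H}_W}$ (after $q\to 1$) with $\sigma_S$, or with $\varepsilon_W\otimes\sigma_S$---is asserted rather than proved. You invoke a ``Kazhdan--Lusztig description'' and a ``standard sign-twist absorbed into the definition of $\sigma_S$'', but you never say precisely which statement you mean, nor why restriction along $\mca{H}_W\hookrightarrow\mca{H}_I$ commutes with $q\to 1$ in the required way. Since $\pi_S^I$ is typically reducible as an $\mca{H}_W$-module, this is not a formality; it is the content of the theorem. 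Your final paragraph correctly flags this as the ``main obstacle'', but flagging it is not the same as resolving it, and the vague handling of the sign (you need $\varepsilon_W\otimes\sigma^\msc{X}_\mca{O}$ on one side and $\sigma^\msc{X}_\mca{O}$ on the other) is a symptom of the same gap.

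The paper bypasses this issue entirely by a different and more elementary route. It first treats the extreme case $S=\Phi(\chi)=\Delta$ directly: the contragredient $\check{\pi}_\Delta$ is the spherical constituent of $I({}^{w_G}\chi^{-1})$, so $(\check{\pi}_\Delta)^I$ restricted to $\mca{H}_{\tilde{W}_\aff}$ is the \emph{trivial} character, and the adjunction computation is immediate, yielding $\dim\Whc(\pi_\Delta)_\mca{O}=\langle\varepsilon_W,\sigma^\msc{X}_\mca{O}\rangle_W$. For general $S$ the paper uses the alternating-sum identity
\[
\pi_S=\sum_{S\subseteq S'\subseteq\Phi(\chi)}(-1)^{|S'\setminus S|}\,\pi(w_{S'},\chi)
\]
in the Grothendieck group, together with the exactness of $\Whc(-)_\mca{O}$ (projectivity of $\mca{V}^I_\mca{O}$), to reduce to the images $\pi(w_{S'},\chi)$. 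Each of these is parabolically induced from a theta representation of the Levi $M_{S'}$, so the theta-case computation for the Levi gives $\dim\Whc(\pi(w_{S'},\chi))_\mca{O}=\langle\Ind_{W(S')}^W\varepsilon_{W(S')},\sigma^\msc{X}_\mca{O}\rangle_W$, and the parallel alternating-sum formula \eqref{E:al-sum} for $\sigma_S$ finishes the proof. What this buys is that one never needs the full $\mca{H}_W$-structure of $\check{\pi}_S^I$---only the trivially identifiable structure of the \emph{unramified} constituent---while your approach, if completed, would require exactly that harder input.
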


The proof of Theorem \ref{T:Mreg} is essentially the same as that in \cite{Ga6} which concerns $\Wh_\psi(\pi_S)_\mca{O}^\sharp$. However, the crucial difference is that we know that the functor $\dim \Wh_\psi(-)_\mca{O}$ is exact, whereas the exactness is not clear for $\Wh_\psi(-)_\mca{O}^\sharp$. Thus, Theorem \ref{T:Mreg} verifies the analogue of \cite[Conjecture 1.1]{Ga6} for $\Wh_\psi(\pi_S)_\mca{O}$ considered in this paper, and also for $\Wh_\psi(\pi_S)_\mca{O}^\sharp$ if we assume Conjecture \ref{C:iden}. 

In \S \ref{S:uniPS}, we consider unitary unramified genuine principal series $I(\chi)$ with a decomposition
$$I(\chi) = \bigoplus_{\sigma\in \Irr(R_\chi)} \pi_\sigma,$$
where $R_\chi$ is the $R$-group of $I(\chi)$.

\begin{thm}[{Theorem \ref{T:uni-ps}}] \label{TM:uni}
Let $\wt{G}$ be a very saturated cover of an almost simple simply-connected  $G$ associated with $Q(\alpha^\vee)=1$ for any short simple coroot $\alpha^\vee$. Let $I(\chi)$ be a unitary $(K, s_K)$-unramified genuine principal series of $\wt{G}$. Let $\mca{O} \subset \msc{X}_{Q,n}$ be a $W$-orbit satisfying the S-property (see Definition \ref{D:S-ppty}). Then
$$\dim \Whc(\pi_\sigma)_\mca{O} =  \angb{\sigma \otimes \zeta_{\tilde{\rho}} }{  \sigma_\mca{O}^\msc{X} }_{R_\chi}$$ 
for every $\sigma \in \Irr(R_\chi)$.
\end{thm}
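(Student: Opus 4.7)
The plan is to reduce the dimension computation to a multiplicity inside the $R$-group $R_\chi$ by combining Theorem \ref{T:V^I} with the $R_\chi$-decomposition of Iwahori-fixed vectors of $I(\chi)$.

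\emph{Step 1 (Reduction via Theorem \ref{T:V^I}).} The S-property of $\mca{O}$ implies in particular that $\mca{O}$ is splitting, so Theorem \ref{T:V^I} yields
$$\mca{V}^I_\mca{O} \simeq \varepsilon_y \otimes_{\mca{H}_{I,y}} \mca{H}_I$$
for a representative $y$ of $\mca{O}$. By tensor--hom adjunction,
$$\Whc(\pi_\sigma)_\mca{O} = \Hom_{\mca{H}_I}\bigl(\varepsilon_y \otimes_{\mca{H}_{I,y}} \mca{H}_I,\, \check{\pi}_\sigma^I\bigr) \simeq \Hom_{\mca{H}_{I,y}}\bigl(\varepsilon_y,\, \check{\pi}_\sigma^I\bigr).$$
The problem is thus reduced to the finite-dimensional task of computing the multiplicity of the sign character $\varepsilon_y$ inside $\check{\pi}_\sigma^I$ viewed as a module over the parabolic subalgebra $\mca{H}_{I,y}$ attached to $W_y = \Stab_W(y)$.

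\emph{Step 2 (Structure of $I(\chi)^I$).} For a unitary $(K, s_K)$-unramified $I(\chi)$, the space $I(\chi)^I$ is a standard $\mca{H}_I$-module, and the commuting $R_\chi$-action arising from the unitary intertwining operators refines the decomposition $I(\chi)^I = \bigoplus_\sigma \pi_\sigma^I$. The hypothesis that $\wt{G}$ is very saturated with $Q(\alpha^\vee) = 1$ on every short simple coroot, together with $G$ almost simple and simply connected, forces $W_\chi$ to be the parabolic subgroup generated by all metaplectically integral reflections in its stabilizer, so that $W_\chi = W_\chi' \rtimes R_\chi$ in its cleanest form. Under this decomposition, the finite-Hecke action on $\pi_\sigma^I$ realizes, up to the twist $\zeta_{\tilde{\rho}}$ that compares the natural spherical basis with the $\psi$-Whittaker-normalized one, the module $\Ind_{W_\chi' \rtimes R_\chi}^W(\mbm{1} \otimes \sigma)$ as a $W$-representation inside $\mca{H}_W$.

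\emph{Step 3 (Conclusion).} Applying Frobenius reciprocity along the parabolic inclusion $\mca{H}_{I,y} \subset \mca{H}_I$ and combining with Step 2, the multiplicity $\dim \Hom_{\mca{H}_{I,y}}(\varepsilon_y, \pi_\sigma^I)$ equals the number of times $\sigma \otimes \zeta_{\tilde{\rho}}$ occurs inside the permutation representation of $R_\chi$ on the $W_y$-cosets meeting the $R_\chi$-orbit of $y$; by definition this permutation coincides with the restriction of $\sigma^\msc{X}_\mca{O}$ to $R_\chi$. This yields
$$\dim \Whc(\pi_\sigma)_\mca{O} = \angb{\sigma \otimes \zeta_{\tilde{\rho}}}{\sigma_\mca{O}^\msc{X}}_{R_\chi},$$
as required.

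\emph{Main obstacle.} The difficult step is Step 2: matching the abstract $R_\chi$-action on the constituents $\pi_\sigma$ with the Iwahori-level combinatorics of intertwining operators, and pinning down the precise twist $\zeta_{\tilde{\rho}}$. The hypotheses ``very saturated'' and ``$Q(\alpha^\vee) = 1$ on short coroots'' are essential: they prevent pathological contributions from $W_\chi'$ and force $R_\chi$ into a product-of-involutions shape in which sign characters can be tracked unambiguously, since otherwise the metaplectic scattering matrices would introduce sign corrections not absorbed by $\zeta_{\tilde{\rho}}$. The remainder of the computation runs parallel to Theorem \ref{T:Mreg}, once more exploiting the exactness of $\Whc(-)_\mca{O}$ made available by Theorem \ref{T:V^I}.
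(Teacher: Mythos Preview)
There are two genuine gaps. First, your Step~1 reverses the logical implication: the S-property (Definition~\ref{D:S-ppty}) does \emph{not} imply that $\mca{O}$ is splitting; rather, Theorem~\ref{Wequi} shows that splitting orbits have the S-property. The S-property only gives $\mca{V}^I_\mca{O}\simeq \mu_y\otimes_{\mca{H}_{I,y}}\mca{H}_I$ with $\mca{H}_{I,y}=\mca{H}_{\tilde{W}_{\aff,y}}$ attached to the \emph{affine} stabilizer $\tilde{W}_{\aff,y}=\Stab_{\tilde{W}_\aff}(y)$, which need not lie in $W$ (it may involve the affine reflection $\tilde{w}_{\alpha^\dag,1}$), and with an unspecified character $\mu_y$ rather than $\varepsilon_y$. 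The $\wt{\SL}_2$ example in Proposition~\ref{WaffSplit} already exhibits an orbit with the S-property that is not splitting. So you cannot invoke Theorem~\ref{T:V^I}, and you cannot restrict to a parabolic $\mca{H}_{W_y}\subset\mca{H}_W$.

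Second, your Step~2 is where the real work lies and is not carried out. The paper does not identify $\pi_\sigma^I$ directly as an $\mca{H}_W$-module; instead it applies Lusztig's $q\to 1$ isometry, then uses the Kazhdan--Lusztig--Reeder realization via $H_*(\mfr{B}_{\wt{G}^\vee}^{s_\chi},\C)$ together with Kato's induction formula and the ABPS identification to compute $(\check{\pi}_\sigma^I)_{q\to 1}$ as $\Ind_{Y_{Q,n}\rtimes R_\chi}^{Y_{Q,n}\rtimes W}(s_\chi^{-1}\rtimes\sigma)$. After Mackey, the proof reduces to a character identity on $\eta(\tilde{W}_{\aff,y})\cap R_{{}^w\chi}$ which is handled in two cases: when $\alpha_0\in\Delta_{\aff,y}$ one needs Proposition~\ref{P:uni-key} (a type-by-type verification that this intersection is trivial), and when $\alpha_0\notin\Delta_{\aff,y}$ one computes $\zeta_{\tilde{\rho}}$ explicitly against $\varepsilon_W$ on generators of $R_\chi$, again case by case. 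Your remark that the hypotheses force $R_\chi$ into a ``product-of-involutions shape'' is incorrect: Table~3 shows $R_\chi$ can be $\Z/d\Z$ for $A_r$, $\Z/4\Z$ for $D_r$ with $r$ odd, or $\Z/3\Z$ for $E_6$, and these cyclic cases require separate treatment. None of this case analysis, which is the substance of the argument and the source of the twist $\zeta_{\tilde\rho}$, appears in your sketch.
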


In the above theorem, $\zeta_{\tilde{\rho}}$ is a character of $R_\chi$ given in \eqref{D:z-rho}. The proof relies on explicating the Kazhdan--Lusztig--Reeder parametrization of $\pi_\sigma$ in terms of a homology space. If $\wt{G}$ is an oasitic cover of almost simple simply-connected group, then every $W$-orbit $\mca{O}$ satisfies the condition in Theorem \ref{TM:uni} and this gives Corollary \ref{C:uni-ps}.

At the end of \S \ref{S:uniPS}, we also prove a result regarding the variation of Whittaker dimension with respect to changing the additive character from $\psi$ of conductor $\mfr{p}_F$ to ${}^{\rho} \psi$ of conductor $O_F$. This could be considered as a covering analogue of the linear case as discussed in \cite[\S 4]{Kal4} or \cite[\S 9]{GGP1}. It in particular verifies a special case of \cite[Conjecture 5.7]{GSS3}. See Corollary \ref{C:Wh-equi} for details.

In summary, Theorem \ref{T:Mreg}, \ref{TM:uni}  and Corollary \ref{C:uni-ps} constitute the main results for (P3) above.

\subsection{Acknowledgements}  We would like to thank Gordan Savin for his influence on this work. We have learned a great deal from Gordan over the years; our results on Hecke algebras and the Gelfand--Graev representation are directly inspired by his work. Throughout the preparation of this manuscript, Gordan offered numerous insights that greatly improved the presentation and the quality of our results. %To mention a few, Gordan recognized an earlier ad hoc construction as the finite Gelfand--Graev representation; and a comment about the representation theory of Hecke algebras helped us complete the description of the Gelfand--Graev representation for all covers of $\mathrm{SL}(2)$.

Thanks are also due to Dani Szpruch for communications on the general local coefficients and some results pertaining to our work. The first-named author is partially supported by NSFC-12171422. The second author would like to thank Sergey Lysenko for drawing her attention to the special role of the pro-$p$ Hecke algebra in the case of covering groups.

%%%
\section{Preliminaries} \label{S:Pre}

\subsection{Algebraic groups}
Let $p$ be a prime number. Let $F$ be a finite extension of $\Q_{p}$ with ring of integers $O=O_{F}$ and maximal ideal $\frak{p}=\frak{p}_{F}$. Let $\varpi\in O_F$ be a fixed uniformizer, and thus $\mfr{p}=\varpi O_F$. Let $q$ denote the size of the residue field $\kappa=O_F/\mfr{p}$.

Let $\mbf{G}$ be a connected reductive linear algebraic group over $O_F$. We assume that the generic fiber at ${\rm Spec}(F)$, which is still denoted by $\mbf{G}$, is split over $F$ with maximal split torus $\mbf{T}$. We write 
$$X=\Hom(\mbf{T}, \mbf{G}_m) \text{ and } Y=\Hom(\mbf{G}_m, \mbf{T})$$
for the lattice of characters and cocharacters of $\mbf{T}$ respectively. Here $X$ and $Y$ form a perfect pairing
$$\angb{-}{-}: X\times Y \rightarrow \Z$$
given by $x\circ y(t)=t^{\langle x,y\rangle}$ for $t\in \mbf{G}_m$. Let $\Phi$ and $\Phi^{\vee}$ denote the roots and coroots of $(\mbf{G},\mbf{T})$, respectively. We write $Y^{\mathrm{sc}} \subset Y$ for the sublattice generated by the coroots. 

Let $N(\mbf{T}) \subset \mbf{G}$ be the normalizer of $\mbf{T}$ in $\mbf{G}$. This gives the Weyl group 
$$W=N(\mbf{T})/\mbf{T},$$
which we identify with the Weyl group of the coroot system. Given $\alpha^{\vee}\in \Phi^{\vee}$, we write $w_{\alpha}$ for the associated reflection of $Y\otimes \Q$. 

 We fix a Borel subgroup $\mbf{B}$ containing $\mbf{T}$ with unipotent radical $\mbf{U}$. This choice of $\mbf{B}$ identifies a set of positive roots $\Phi_{+}$ (resp. positive coroots $\Phi^{\vee}_{+}$) and simple roots $\Delta\subset \Phi_{+}$ (resp. simple coroots $\Delta^{\vee}\subset \Phi^{\vee}_{+}$). The choice of simple roots induces a length function 
 $$\ell: W \longrightarrow \Z_{\gest 0}.$$
 Let $\mbf{B}^{-} =\mbf{T} \mbf{U}^-$ denote the opposite Borel subgroup. We fix a Chevalley--Steinberg system $\set{e_\alpha: \mbf{G}_a \to \mbf{U}_\alpha}_{\alpha \in \Phi}$ of pinnings for $(\mbf{G},\mbf{T})$.
For each $\alpha\in \Phi$ we can define the  map 
$$w_{\alpha}:\mbf{G}_m \longrightarrow N(\mbf{T})$$ 
given by $w_{\alpha}(a)=e_{\alpha}(a)\cdot e_{-\alpha}(-a^{-1})\cdot e_{\alpha}(a)$. For $\alpha\in \Phi$ we write $h_{\alpha}(a):=\alpha^{\vee}(a)$ which satisfies $h_{\alpha}(a)=w_{\alpha}(a)\cdot w_{\alpha}(-1)$.

For simplicity of notation, for a group $\mbf{H}$ over $\kappa$, we  write
$$H_\kappa:=\mbf{H}(\kappa).$$
Consider the hyperspecial maximal compact subgroup $K:=\mbf{G}(O_F)$.
The reduction mod $\frak{p}$ map
 $${\rm red}_\mfr{p}: K \onto G_\kappa =\mbf{G}(\kappa)$$
 is surjective (see Tits \cite[\S 3.4.4]{Tit79}). Let
 $$I:={\rm red}_\mfr{p}^{-1}(B_\kappa) \subset K$$ be the Iwahori subgroup. Inside $I$ one has a unique maximal pro-$p$ normal subgroup
 $$I_1:={\rm red}_\mfr{p}^{-1}(U_\kappa).$$
We write $G, B, T, U$ for the $F$-rational points of $\mbf{G},\mbf{B},\mbf{T},\mbf{U}$. For any subgroup $H\subset G$ let 
$$H_{1}:=H\cap I_{1}.$$

The group $U$ decomposes as a product of the subgroups $U_{\alpha}$, where $\alpha\in \Phi_{+}$; similarly for the opposite unipotent radical $U^-$ with respect to $\Phi_-$. Each $U_{\alpha}$ has a filtration by subgroups 
$$U_{\alpha}^l=\set{e_{\alpha}(u): u\in\frak{p}^l }$$
indexed by $l \in \Z$ and satisfying $U_\alpha^l \supset U_\alpha^{l+1}$. See \cite{Tit79} for details.

 \subsection{Root system and affine root system} \label{SS:rsys}
Consider
$$\msc{A}=Y\otimes\R \text{ and } \msc{A}^{\rm sc}=Y^{\rm sc} \otimes \R.$$
The coroot system $\Phi^{\vee}\subset \msc{A}^{\mathrm{sc}}$ decomposes into irreducible coroot systems $\Phi_j^{\vee} \subset \msc{A}_j:= {\rm span}_\R(\Phi_j^\vee)$ satisfying 
$$\Phi^\vee=\bigcup_{j=1}^d \Phi_{j}^\vee, \quad \msc{A}^{\mathrm{sc}}=\bigoplus_{j=1}^d \msc{A}_{j}.$$
This gives a partition of $\Delta^{\vee}$ into  $\Delta_{j}^{\vee}=\Delta^{\vee}\cap \Phi_{j}^{\vee}$, the sets of simple roots for the $\Phi_{j}^{\vee}$'s. Each $\Phi_j$ has a unique highest root $\alpha_j^\dag$ with respect to $\Delta_j$.

For any $\alpha\in\Phi$ and $k\in \Z$, consider 
$$H_{\alpha, k}=\set{ v \in \msc{A}:\ \angb{\alpha}{v}=k}.$$
Let $w_{\alpha, k}$ be the reflection of $\msc{A}$ fixing the hyperplane $H_{\alpha, k}$, i.e., $w_{\alpha, k}(v)= v -(\langle\alpha,v\rangle - k)\alpha^\vee$. 
The set of reflections $\{w_{\alpha, k}| \alpha \in \Phi, k\in \Z\}$ generates the affine Weyl group $W_\aff = Y^{\rm sc} \rtimes W$. Our choice of $\Phi_{+}$ identifies the alcove $\mca{C}$ that is contained in the positive Weyl chamber such that $0 \in \overline{\mca{C}}$. The set of reflections associated to the walls of $\mca{C}$ 
$$S_\aff=\set{ w_{\alpha}: \alpha\in \Delta } \cup \set{w_{\alpha_j^\dag, 1}: 1\lest j \lest d }$$
is a minimal set of generators for $W_{\aff}$ and realizes $W_{\aff}$ as a Coxeter group.
The group $W_\aff$ acts simply transitively on the set of alcoves, which are the connected components of 
$$\msc{A}-\bigcup_{\alpha \in \Phi, k \in \Z} H_{\alpha, k}.$$
Let $W_{\rm ex}:=Y \rtimes W$ be the extended affine Weyl group. One has 
$$W_{\rm ex} = W_\aff \rtimes \Omega,$$
where $\Omega=\set{w \in W_{\rm ex}: w(\mca{C}) = \mca{C}}$.

A minimal expression of the element $w\in W_\aff$ with respect to $S_\aff$ is a factorization $w=w_{1}\ldots w_{k}$ with $w_j \in S_\aff$ such that there is no factorization of $w$ using fewer elements of $S_\aff$. The (strong) Bruhat order $w<w'$ on the Coxeter group $W_\aff$ with respect to $S_\aff$ is defined as follows:  $w<w'$ if there are minimal expressions $w=w_{1}\ldots w_{k}$ and $w'=w_{1}'\ldots w_{\ell}'$ such that the sequence $w_{1},\ldots, w_{k}$ is a subsequence of $w_{1}',\ldots, w_{k}'$.

 \subsection{Covering groups} 
 By abuse of notation, we still use $\mbf{G}$ to denote the generic fiber of $\mbf{G}$ at the point ${\rm Spec}(F)$.
Consider the $\mbf{K}_{2}$-extension  \cite{BD}
$$\begin{tikzcd}
\mbf{K}_2 \ar[r, hook] & \wt{\mbf{G}} \ar[r, two heads, "\wp"] & \mbf{G}
\end{tikzcd}$$
over $F$ which is incarnated by the pair $(D, \eta = \mbm{1})$, see \cite{We3} or \cite[\S 2.6]{GG}. Here 
$$D: Y \times Y \to \Z$$
is a (not necessarily symmetric) bilinear form such that the quadratic form 
$$Q(y):=D(y,y)$$ is Weyl-invariant. We have a Weyl-invariant symmetric bilinear form $B_Q$ given by 
$$B_Q(y, z) =D(y, z) + D(z, y).$$

The extension $\wt{\mbf{G}}$ splits canonically and $\mbf{G}$-equivariantly over any unipotent subgroup of $\mbf{G}$. Thus, we write
$$\wt{e}_{\alpha}:\mbf{G}_{a} \to \wm{G}$$
for the splitting of $e_{\alpha}$ for any $\alpha \in \Phi$. With this, one gets 
$$\wt{w}_{\alpha}(a):=\wt{e}_{\alpha}(a)\cdot \wt{e}_{-\alpha}(-a^{-1})\cdot \wt{e}_{\alpha}(a) \in \wt{N(\mbf{T})} \text{ and } \wt{h}_{\alpha}(a):=\wt{w}_{\alpha}(a)\cdot \wt{w}_{\alpha}(-1)\in \wm{T}.$$
Also, there is a section $\s$ of the quotient map $\wm{T} \onto \mbf{T}$ such that for any $a_{j}\in \mbf{G}_m, y_{j}\in Y, i=1, 2$ one has
\begin{equation}\label{ppty1}
\s(y_{1}(a_{1}))\cdot \s(y_{2}(a_{2}))=\{a_{1},a_{2}\}^{D(y_{1},y_{2})}\cdot \s(y_{1}(a_{1})\cdot y_{2}(a_{2})),
\end{equation}
where $\{a_{1},a_{2}\}\in \mbf{K}_{2}$ as in \cite[\S 0.N.5]{BD}. Furthermore, since we have assumed $\eta=\mbm{1}$, for $\alpha \in \Delta$ one has
\begin{equation}\label{ppty2}
\wt{h}_{\alpha}(a)=\s(h_{\alpha}(a))\in\wm{T}.
\end{equation}
Writing $\wt{w}_{\alpha}$ for $\wt{w}_{\alpha}(1)$ for every $\alpha\in \Phi$, it then follows from \cite[Proposition 11.3]{BD} that
\begin{equation}\label{ppty3}
\wt{w}_{\alpha}\cdot \s(y(a))\cdot \wt{w}_{\alpha}^{-1}=\s(y(a)) \cdot \wt{h}_{\alpha}(a^{-\langle \alpha,y\rangle}).
\end{equation}
 for every $y\in Y$ and $a\in \mbf{G}_{m}$.

Assume that $F^\times$ contains the full group $\mu_n$ of $n$-th roots of unity. By push-out via the Hilbert symbol $(-,-)_n: \mbf{K}_2(F) \to \mu_n$, one obtains from $\wm{G}(F)$ a topological central extension
$$\begin{tikzcd}\label{CExt}
\mu_n \ar[r, hook] & \wt{G} \ar[r, two heads, "\wp"] & G.
\end{tikzcd}$$
For any subset $H \subset G$, we may write $\wt{H}:=\wp^{-1}(H)$. Fixing an embedding $\epsilon: \mu_n \into \C^\times$ we consider only $\epsilon$-genuine representation of $\wt{G}$, i.e., when $\mu_n$ acts via $\epsilon$.

All the properties in \eqref{ppty1}--\eqref{ppty3} specialize to give corresponding relations on elements in $\wt{G}$. For example, from \eqref{ppty1} we obtain 
\begin{equation}\label{ppty1'}
\s(y_{1}(a_{1}))\cdot \s(y_{2}(a_{2}))=(a_{1},a_{2})_{n}^{D(y_{1},y_{2})}\cdot \s(y_{1}(a_{1})\cdot y_{2}(a_{2})),
\end{equation}
for any $y_{j}\in Y$, $a_{j}\in F^{\times}$. For $y\in Y$ denote
$$\s_y:=\s(y(\varpi)).$$
The commutator on $\overline{T}$ factors through $T$ and defines a map
\begin{equation}
[-,-]:T\times T\rightarrow \mu_{n}.
\end{equation}

We collect below some relations in $\wt{G}$ which we use in our computations. Let $\alpha\in \Phi$, $t,t_{j}\in F^{\times}$, $u\in F$, $y_{j}\in Y$. Then:
\begin{subequations}
\begin{align}
 \wt{h}_{-\alpha}(t)& =(t,-1)_n^{Q(\alpha^{\vee})} \cdot \wt{h}_{\alpha}(t^{-1}) \label{Rel1}   \\ 
 \s(y_{1}(t_{1}))\cdot \s(y_{2}(t_{2})) & =(t_{1},t_{2})_n^{D(y_{1},y_{2})}\cdot \s(y_{1}(t_{1})\cdot y_{2}(t_{2})) \label{Rel2} \\
 [y_{1}(t_{1}),y_{2}(t_{2})] & =(t_{1},t_{2})_{n}^{B_{Q}(y_{1},y_{2})}   \label{Rel3} \\
 \wt{w}_{-\alpha}(t) & =\wt{w}_{\alpha}(-t^{-1}) \label{Rel4} \\
 \wt{w}_{\alpha}(t_{1})\wt{w}_{\alpha}(t_{2})& =(-t_{1},-t_{2})_n^{Q(\alpha^{\vee})} \cdot \wt{h}_{\alpha}(-t_{1}t_{2}^{-1}) \label{Rel5} \\
\wt{w}_{\alpha}(t)\wt{e}_{\pm\alpha}(u)\wt{w}_{\alpha}(-t) & =\wt{e}_{\mp\alpha}(-t^{\mp2}u) \label{Rel6} \\
\wt{w}_{\alpha} \cdot \wt{y}(t) \cdot \wt{w}_{\alpha}^{-1} & =\wt{y}(t) \cdot \wt{h}_{\alpha}(t^{-\langle \alpha,y\rangle}). \label{Rel7}
 \end{align}
 \end{subequations}

\subsection{Dual group}
Consider the sublattice
$$Y_{Q,n}:=Y\cap nY^{*}$$
of $Y$, where $Y^{*}\subset Y\otimes \Q$ is the lattice dual to $Y$ with respect to $B_{Q}$. The lattice $Y_{Q,n}$ dictates the center $Z(\wt{T})$ of $\wt{T}$, see \cite{We1}. 
We write 
$$\msc{X}_{Q,n}:=Y/Y_{Q,n}$$
 or just $\msc{X}$ for simplicity. For $y\in Y$, we write $\hat{y} \in \msc{X}_{Q,n}$ for the coset of $y$.

We also write $X_{Q,n}:=\Hom_\Z(Y_{Q,n}, \Z)$. For every $\alpha\in \Phi$ we set
$$\alpha_{Q,n}^{\vee}=n_{\alpha}\alpha^{\vee} \text{ and } \alpha_{Q,n}=n_\alpha^{-1} \alpha,$$
where $n_\alpha = n/\gcd(n, Q(\alpha^\vee))$. This gives the modified simple roots $\Delta_{Q,n}=\set{\alpha_{Q,n}: \alpha\in \Delta}$ and coroots $\Delta_{Q,n}^\vee$; similarly for $\Phi_{Q,n}$ and $\Phi_{Q,n}^\vee$. Let $Y_{Q,n}^{sc}$ denote the coroot lattice spanned by $\Delta_{Q,n}^{sc}$. One has
$$Y_{Q,n}^{sc} \subset Y_{Q,n} \subset Y.$$

The tuple
\begin{equation*}
(Y_{Q,n},\ \Phi_{Q,n}^{\vee};\ X_{Q,n},\ \Phi_{Q,n})
\end{equation*}
forms a root datum. Let $\wm{G}_{Q,n}^\vee$ be the associated reductive group over $\Z$ with character lattice $Y_{Q,n}$. Write $\wt{G}_{Q,n}^\vee$ or simply $\wt{G}^\vee$ for its complex group, which is called the dual group of $\wt{G}$.

\subsection{Tame covers and splittings}
The commutator of $\overline{T}$ induces a bi-multiplicative map $[-,-]: T \times T \to \mu_n$, given explicitly on generators in \eqref{Rel3}. Consider the map
$$\varphi: T \longrightarrow \Hom(T_\kappa, \C^\times)$$
 given by 
 $$\varphi(t)(s):=[t, s']$$
 where $s' \in \mbf{T}(O_F)$ is any lifting of $s\in T_\kappa$ with respect to the reduction map ${\rm red}_\mfr{p}$. Since $T_1 = I_1 \cap \mbf{T}(O_F)$, which is the kernel of ${\rm red}_\mfr{p}$ restricted to $\mbf{T}(O_F)$, lies in the center of $\wt{T}$, we see that the map $\varphi$ is a well-defined group homomorphism.
 
Throughout this paper, we assume that $\wt{G}$ is a tame cover, meaning $p\nmid n$. For tame covers, one has ${\rm Ker}(\varphi)=Z(\overline{T}) \cdot \mbf{T}(O_F)$, since $p\nmid n$; as $\msc{X}_{Q,n} \simeq \wt{T}/Z(\wt{T})\mbf{T}(O_F)$, this gives a well-defined injective homomorphism
\begin{equation} \label{D:varphi}
\begin{tikzcd}
\varphi: \msc{X}_{Q,n} \ar[r, hook] & \Hom(T_\kappa, \C^\times).
\end{tikzcd}
\end{equation}
In explicit terms, $\varphi(\hat{y})(s) =[y(\varpi), s']$.

The covering group $\wt{G}$ splits over a subgroup $H\subset G$ if there is a group homomorphism $s:H\rightarrow \wt{G}$ such that $\wp\circ s=id_{H}$. If $\wt{G}$ splits over a subgroup $H\subset G$, then the set of all such splittings is a torsor over $\Hom(H, \mu_n)$. For tame covers, the group $\wt{G}$ splits over $K=\mbf{G}(O_F)$. In fact, since $\eta =\mbm{1}$ by our assumption, the $\mbf{K}_2$-extension $\wm{G}$ over $F$ arises from a $\mbf{K}_2$-extension of $\mbf{G}$ over $O_F$ (see \cite[Theorem 4.3]{We4}), which also entails the splitting of $K$. Note that for non-tame covers the groups $K$ and $I$ may not split. For examples of double covers of $G$ over $\Q_{2}$, see \cite{Kar21}. 

Throughout, we fix a splitting 
$$s_K: K \into \wt{G}.$$
To simplify notation, we may write $K$ instead of $s_K(K)$. The splitting $s_K$ gives rise to splittings of $I$ and $I_{1}$ by restriction. Since $I_{1}$ is a pro-$p$ group and $p\nmid n$, it follows that $\Hom(I_1,\mu_{n})=\set{1}$. Thus $I_1$ has a unique splitting afforded by $s_K|_{I_1}$. In contrast, for $I$ we have 
$$\Hom(I,\mu_{n})\simeq \Hom(I/I_{1},\mu_{n})\simeq  \Hom(T_\kappa, \mu_{n})\simeq (\mu_{n})^r,$$
where $r$ is the rank of the $\Z$-lattice $Y$ and the last isomorphism follows because $\mbf{T}$ is split.

We remark that the above contrast between the uniqueness of the splitting for $I_{1}$ and the non-uniquness for $I$ is partly responsible for the simpler descriptions and proofs of our results at the pro-$p$ Iwahori--Hecke algebra level than at the usual Iwahori--Hecke level.

 %%%
\section{Pro-$p$ Iwahori--Hecke algebra $\mca{H}$} \label{S:proH}

In this section we will establish an Iwahori--Matsumoto presentation, following Vigneras \cite{Vig16}, for the integral $\epsilon$-genuine pro-$p$ Hecke algebra 
$$\mca{H}_{\Z}:=C^{\infty}_{c,\epsilon}(I_{1}\backslash \wt{G}/I_{1}, \Z[\mu_{n}]),$$
which consists of locally constant and compactly supported functions 
$$f: \wt{G} \rightarrow \Z[\mu_{n}]$$
such that $f(\gamma_{1}g\gamma_{2}\zeta)=\epsilon(\zeta)^{-1}f(g)$ for every $g\in \wt{G},\gamma_{j}\in I_{1}$, and $\zeta\in \mu_{n}$. The multiplication is given by convolution, i.e., for $f_{1},f_{2}\in \mca{H}_{\Z}$ one has
\begin{equation*}
f_{1}*f_{2}(g)=\int_{\wt{G}}f_{1}(h)f_{2}(h^{-1}g)dh=\int_{\wt{G}}f_{1}(gh)f_{2}(h^{-1})dh,
\end{equation*} 
where $dh$ is the Haar measure on the unimodular group $\wt{G}$ so that $\int_{I_{1}}dh=1$. We define 
$$\mca{H}:=\mca{H}_\Z \otimes_{\Z[\mu_{n}]}\C=C^{\infty}_{c,\epsilon}(I_{1}\backslash \wt{G}/I_{1},\C).$$

\subsection{Several groups}
We begin by describing a $\Z[\mu_{n}]$-basis for $\mca{H}_\Z$. This basis is in bijection with the group 
$$W(1)=N(T)/T_{1}\simeq \wt{N(T)}/\mu_{n} T_{1}.$$
Recall that for the extended affine Weyl group one has
$$W_{\rm ex}\simeq  N(T)/\mbf{T}(O_F),$$
where the affine reflection $w_{\alpha, k}$ corresponds to the class of $w_{\alpha}(\varpi^{-k}) \in N(T)$. This gives the exact sequence
$$\begin{tikzcd}
 T_\kappa \ar[r, hook] & W(1) \ar[r, two heads, "f"] & W_{\rm ex}.
\end{tikzcd}$$
For more details, see \cite[Page 696]{Vig16}. We have
$$\Omega\simeq (N_{G}(I)\cap N(T))/\mbf{T}(O_F) \simeq N_{G}(I)/I.$$

Since $W_\aff, \Omega \subset W_{\rm ex}$, we define
$$\Omega(1)=f^{-1}(\Omega),\quad W_\aff(1) = f^{-1}(W_\aff).$$
Consider $\wt{W}(1):=\wt{N(T)}/T_1$ with the natural quotient
$$\wp: \wt{W}(1) \onto W(1).$$
From this we define
$$\wt{\Omega}(1):=\wp^{-1}(\Omega(1)) = (f\circ \wp)^{-1}(\Omega),\quad  \wt{W}_\aff(1):= \wp^{-1}(W_\aff(1)) = (f\circ \wp)^{-1}(W_\aff).$$
The various groups above are illustrated in the following diagram
$$\begin{tikzcd}
&  \Omega(1) \ar[r, two heads, "f"] \ar[d, hook] & \Omega  \ar[d, hook] \\
\wt{\Omega}(1) \ar[d, hook] \ar[ru, two heads, "\wp"] & W(1) \ar[r, two heads, "f"] & W_{\rm ex} \\
\wt{W}(1) \ar[ru, two heads, "\wp"] & W_\aff(1) \ar[r, two heads, "f"] \ar[u, hook] & W_\aff  \ar[u, hook] \\
\wt{W}_\aff(1) \ar[u, hook] \ar[ru, two heads, "\wp"],
\end{tikzcd}$$
where $\Ker(f) = T_\kappa$ and $\Ker(\wp) = \mu_n$. Since $W_{\rm ex} =\Omega \ltimes W_\aff$, one has
$$ W(1)\simeq \Omega(1)\ltimes_{T_\kappa}W_\aff(1):= \frac{\Omega(1)\ltimes W_\aff(1)}{\nabla(T_\kappa)},$$
where $\nabla(t)=(t, t^{-1})$ is the anti-diagonal embedding. Similarly, there is an isomorphism
$$ \wt{W}(1)\cong\overline{\Omega}(1)\ltimes_{\mu_{n}T_\kappa}\wt{W}_\aff(1).$$
Note that the group $\wt{W}_\aff(1)$ is generated by the following set
$$\left(\mu_{n}\mbf{T}(O_F)/T_{1} \right) \bigcup \set{\wt{w}_{\alpha}(1)T_{1}: \alpha\in \Delta} \bigcup \set{ \wt{w}_{\alpha_j^\dag}(\varpi^{-1})T_{1}: 1\lest j \lest d}.$$

\begin{lm}\label{DoubCosetReps}
The natural map $W(1) \to \mu_{n}I_{1}\backslash\wt{G}/I_{1}$ is a bijection; similarly, one has a natural bijection $W_{\rm ex} \to \mu_{n}I\backslash\wt{G}/I$.
\end{lm}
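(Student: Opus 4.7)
The plan is to reduce both bijections to the classical (non-covering) pro-$p$ Iwahori--Bruhat decomposition $G = \bigsqcup_{w \in W(1)} I_1 \dot{w} I_1$ and the usual Iwahori--Bruhat decomposition $G = \bigsqcup_{w \in W_{\rm ex}} I \dot{w} I$, by descending through the projection $\wp: \wt{G} \twoheadrightarrow G$ and using the splittings afforded by $s_K$. The natural map is defined as follows: given $w \in W(1)$, pick any representative $\dot{w} \in N(T)$ of $w$ and any lift $n_w \in \wt{N(T)}$ of $\dot{w}$, then send $w$ to the double coset $\mu_n \, s_K(I_1) \, n_w \, s_K(I_1)$.

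\textbf{Well-definedness.} A different choice $\zeta \cdot n_w$ of lift, with $\zeta \in \mu_n$, is absorbed by the left $\mu_n$-quotient. A different representative $\dot{w} \cdot t$ with $t \in T_1$ forces $n_w$ to be replaced by a lift of $t$ times $n_w$; since $T_1 \subset I_1$ is pro-$p$ and $p \nmid n$, one has $\Hom(T_1, \mu_n) = 1$, so all lifts of $t$ lie in $\mu_n \cdot s_K(T_1) \subset \mu_n \, s_K(I_1)$ and are again absorbed.

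\textbf{Main step.} I would next show that $\wp$ induces a bijection
\[
\wp_*: \mu_n \, s_K(I_1) \backslash \wt{G} / s_K(I_1) \xrightarrow{\sim} I_1 \backslash G / I_1.
\]
Surjectivity is immediate from surjectivity of $\wp$. For injectivity, suppose $\wp(\tilde{g}_1) = i_1 \, \wp(\tilde{g}_2) \, i_1'$ with $i_1, i_1' \in I_1$; then $\tilde{g}_1$ and $s_K(i_1) \, \tilde{g}_2 \, s_K(i_1')$ both project to $\wp(\tilde{g}_1)$, so they differ by some $\zeta \in \mu_n$, whence $\tilde{g}_1 \in \mu_n \, s_K(I_1) \, \tilde{g}_2 \, s_K(I_1)$. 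Composing $\wp_*$ with the classical bijection $I_1 \backslash G / I_1 \xrightarrow{\sim} W(1)$ (which follows from the usual Iwahori--Bruhat decomposition together with $I / I_1 \simeq T_\kappa$ and the extension $T_\kappa \hookrightarrow W(1) \twoheadrightarrow W_{\rm ex}$) yields the first claim.

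\textbf{The $W_{\rm ex}$ case.} The same argument applies verbatim after replacing $I_1$ by $I$, $T_1$ by $\mbf{T}(O_F)$, and $W(1)$ by $W_{\rm ex}$, ending with the classical bijection $I \backslash G / I \simeq W_{\rm ex}$. The one new subtlety is that $s_K|_I$ is not the unique splitting of $I$ into $\wt{G}$: any two splittings differ by a character in $\Hom(I, \mu_n) \simeq \mu_n^r$. However, this ambiguity is again absorbed into the left $\mu_n$-quotient. The main obstacle, such as it is, lies precisely in this bookkeeping of lifts and splittings modulo $\mu_n$; there is no analytic or combinatorial difficulty, only a diagram chase through the central extension $\mu_n \hookrightarrow \wt{G} \twoheadrightarrow G$ in which the canonicity of $s_K|_{I_1}$ (and its failure for $s_K|_I$) must be handled with care.
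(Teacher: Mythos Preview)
Your proof is correct and follows essentially the same route as the paper, which simply cites \cite[Proposition 3.35]{Vig16} for the linear-group decomposition $I_1\backslash G/I_1 \simeq W(1)$; you have merely made explicit the easy descent through $\wp$ that the paper leaves implicit. The only minor remark is that your well-definedness discussion is not strictly needed once you have the bijection $\wp_*$, since the natural map $W(1)\to \mu_n I_1\backslash \wt{G}/I_1$ is then just the inverse of $\wp_*$ composed with the classical bijection.
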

\begin{proof}
This follows immediately from \cite[Proposition 3.35]{Vig16}.
\end{proof}

For $g\in \wt{G}$ let 
$$\mathcal{T}_{g} \in \mca{H}_\Z$$ be the unique element such that $\mathrm{supp}(\mathcal{T}_{g})=\mu_{n}I_{1}gI_{1}$ and $\mathcal{T}_{g}(g)=1$. Note that $\mathcal{T}_{g}$ is well-defined. To prove this it suffices to show that if 
$$g=\zeta \cdot s_K(\gamma_{1})gs_K(\gamma_{2})$$
with $\zeta\in \mu_{n}$ and $\gamma_{i}\in I_{1}$, then $\zeta=1$. Since any open subgroup of $I_1$ is a pro-$p$ group and thus has a unique splitting into $\wt{G}$, it follows that the two splittings
$$s_K, {}^gs_K: \ I_{1}\cap (g I_1 g^{-1}) \rightarrow\wt{G}$$
are equal. Therefore $\zeta=1$ and thus $\mca{T}_g$ is well-defined. By Lemma \ref{DoubCosetReps}, any section $\sigma$ of the map $\wt{N(T)}\onto W(1)$ yields a $\Z[\mu_n]$-basis $\{\mathcal{T}_{\sigma(w)}: w\in W(1)\}$ of $\mca{H}_\Z$. 

\subsection{Iwahori--Matsumoto presentation}
In this subsection we show that $\mca{H}_\Z$ admits an Iwahori--Matsumoto presentation. For any $g\in \wt{G}$ we set 
$$\mfr{q}_{g}=[I_{1}\wp(g)I_{1}:I_{1}].$$
By definition $\mfr{q}_{g}$ is constant on the double cosets of $\mu_{n}I_{1}$ inside $\wt{G}$. By reduction to the linear case as in \cite[Prop 6.2]{Sav04} and arguing as in \cite[\S 4.1]{Vig16}, the following holds:

\begin{lm}[Braid relations]
Let $g,g'\in \wt{G}$. If $\mfr{q}_{g}\cdot \mfr{q}_{g'}=\mfr{q}_{gg'}$, then $\mathcal{T}_{g}*\mathcal{T}_{g'}=\mathcal{T}_{gg'}$.
\end{lm}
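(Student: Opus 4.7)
The strategy is to reduce to the corresponding braid relation in the pro-$p$ Iwahori--Hecke algebra of the linear group $G$, which is established in \cite[\S 4.1]{Vig16}, and then to lift this to $\wt{G}$ by exploiting the uniqueness of the splitting $s_K|_{I_1}$. This mirrors the template used in \cite[Proposition 6.2]{Sav04}.

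First, I would note that $\mca{T}_g * \mca{T}_{g'}$ lies in $\mca{H}_\Z$, since the convolution of two $(\mu_n, I_1)$-equivariant functions is again $(\mu_n, I_1)$-equivariant; the same is true of $\mca{T}_{gg'}$. By Lemma \ref{DoubCosetReps}, to identify the two it is enough to show that they have the same support and agree at the single representative $gg'$.

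For the support, the definition of convolution gives
\[
\mathrm{supp}(\mca{T}_g * \mca{T}_{g'}) \subset \mu_n I_1 g I_1 \cdot I_1 g' I_1 = \mu_n I_1 g I_1 g' I_1.
\]
Applying $\wp$ and using the standard linear pro-$p$ Iwahori--Hecke fact (which needs only $\mfr{q}_{g}\mfr{q}_{g'}=\mfr{q}_{gg'}$ together with the counting of $I_1$-cosets in triple products), one obtains $I_1 \wp(g) I_1 \wp(g') I_1 = I_1 \wp(gg') I_1$. Lifting this back, $\mathrm{supp}(\mca{T}_g * \mca{T}_{g'}) \subset \mu_n I_1 gg' I_1 = \mathrm{supp}(\mca{T}_{gg'})$.

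To compute the value at $gg'$, I would write
\[
(\mca{T}_g * \mca{T}_{g'})(gg') = \int_{\wt{G}} \mca{T}_g(h)\,\mca{T}_{g'}(h^{-1}gg')\,dh,
\]
and sum over representatives of the form $h = \zeta\,s_K(i_1)\,g$ with $\zeta \in \mu_n$ and $i_1$ running over $I_1/(I_1 \cap g I_1 g^{-1})$. Then $\mca{T}_g(h) = \epsilon(\zeta)^{-1}$, and since $s_K$ restricted to the pro-$p$ group $I_1$ is a group homomorphism, and since the two splittings $s_K$ and ${}^{g}s_K$ must agree on $I_1 \cap \wp(g)I_1\wp(g)^{-1}$ by uniqueness of splittings of pro-$p$ subgroups into $\wt{G}$, the conjugation $g^{-1} s_K(i_1)^{-1}g$ equals $s_K(\wp(g)^{-1} i_1^{-1} \wp(g))$ with no $\mu_n$-cocycle. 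Therefore $h^{-1}gg' = \epsilon(\zeta)^{-1}\cdot s_K(j)\,g'$ for some $j \in I_1$, and $\mca{T}_{g'}(h^{-1}gg') = \epsilon(\zeta)$. The $\zeta$-factors cancel, so the integrand agrees pointwise with the integrand of the analogous convolution in the linear pro-$p$ Hecke algebra, which by \cite{Vig16} evaluates to $1$ under the hypothesis $\mfr{q}_g\mfr{q}_{g'} = \mfr{q}_{gg'}$.

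The main obstacle is tracking the $\mu_n$-valued cocycles that could in principle appear whenever products $s_K(i_1)\,g\,s_K(i_2)\cdot s_K(i_3)\,g'\,s_K(i_4)$ are rearranged into the form $s_K(j_1)\,gg'\,s_K(j_2)$; the decisive input is precisely the uniqueness of the splitting of $I_1$ (and of any pro-$p$ subgroup of $G$) into $\wt{G}$, which forces all these cocycles to be trivial and lets the whole computation descend to the linear case. This is also the structural reason the pro-$p$ level admits a cleaner presentation than the Iwahori level, where $\Hom(I/I_1, \mu_n) \ne \set{1}$.
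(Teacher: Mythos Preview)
Your proposal is correct and follows exactly the approach the paper indicates: reduce to the linear pro-$p$ Iwahori--Hecke computation of \cite[\S 4.1]{Vig16} using the uniqueness of the splitting of pro-$p$ subgroups into $\wt{G}$, as in \cite[Proposition~6.2]{Sav04}. One small slip: in the displayed evaluation you wrote $\epsilon(\zeta)^{-1}$ (a scalar) where you mean the group element $\zeta^{-1}$, and to make the conjugation identity $g^{-1}s_K(i_1)^{-1}g=s_K(\wp(g)^{-1}i_1^{-1}\wp(g))$ literally valid you should either use the right-coset decomposition $\mu_n I_1 g I_1=\bigsqcup_j \mu_n I_1\,g\,s_K(\gamma_j)$ with $\gamma_j\in (I_1\cap \wp(g)^{-1}I_1\wp(g))\backslash I_1$, or invoke uniqueness of the splitting on the pro-$p$ group $\wp(g)^{-1}I_1\wp(g)$ rather than just on $I_1\cap \wp(g)I_1\wp(g)^{-1}$.
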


Let 
$$\mca{H}_{\Z, 0} \subset \mca{H}_\Z$$ be the subalgebra of functions supported on $\overline{K}$. The splitting $s_{K}$ relates $\mca{H}_{\Z,0}$ to functions on the linear group $K$. 
The next lemma follows from a direct calculation.
%The splitting $s_{K}$ relates $\mca{H}_{\Z,0}$ to a Hecke algebra on $K$.
\begin{lm}\label{HeckeSplit}
The map $s_K^\mca{H}: \mca{H}_{\Z, 0} \rightarrow C^\infty(I_{1}\backslash K/I_{1}, \Z[\mu_n])$ defined by $f\mapsto f \circ s_K$ is an isomorphism of algebras.
\end{lm}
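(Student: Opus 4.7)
The plan is to verify three things: well-definedness of $s_K^\mca{H}$, its bijectivity, and its compatibility with convolution.

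For well-definedness, since $s_K: K \into \wt{G}$ is a continuous group homomorphism and $s_K|_{I_{1}}$ is the unique splitting of $I_{1}$ into $\wt{G}$ (uniqueness because $I_{1}$ is pro-$p$ and $p\nmid n$), the bi-$I_{1}$-invariance of $f$ on $\wt{G}$ pulls back to bi-$I_{1}$-invariance of $f\circ s_K$ on $K$. Local constancy is inherited, and compactness of the support of $f\circ s_K$ follows since $\overline{K}=\wp^{-1}(K)$ is itself compact.

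For bijectivity, the inverse sends $\phi\in C^\infty(I_{1}\backslash K/I_{1}, \Z[\mu_{n}])$ to the function $\tilde\phi$ on $\wt{G}$ defined by $\tilde\phi(\zeta\cdot s_K(k)) := \epsilon(\zeta)^{-1}\phi(k)$ for $\zeta\in\mu_{n}, k\in K$, and extended by zero outside $\overline{K}$. This is well-defined because $s_K(K) \cap \mu_{n} = \set{1}$, so $\overline{K}$ decomposes as the disjoint union $\bigsqcup_{\zeta\in\mu_{n}} \zeta\cdot s_K(K)$. One then checks routinely that $\tilde\phi$ is $\epsilon$-genuine, bi-$I_{1}$-invariant, and lies in $\mca{H}_{\Z, 0}$. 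The two assignments are mutual inverses because any $\epsilon$-genuine function on $\overline{K}$ is already determined by its restriction to the image of the splitting.

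The crucial step is multiplicativity. For $f_{1}, f_{2} \in \mca{H}_{\Z, 0}$ with $\phi_{i} := f_{i}\circ s_K$, I would restrict the convolution integral to $\overline{K}$ (the support of $f_1$) and then decompose along the fiber of $\wp$:
\begin{equation*}
(f_{1} * f_{2})(s_K(k)) \;=\; \int_{\overline{K}} f_{1}(h)\, f_{2}(h^{-1} s_K(k))\, dh \;=\; \sum_{\zeta\in\mu_{n}} \int_{s_K(K)} f_{1}(\zeta h')\, f_{2}(\zeta^{-1} (h')^{-1} s_K(k))\, dh'.
\end{equation*}
Substituting $h' = s_K(k')$ and invoking the $\epsilon$-genuine property, the factor $\epsilon(\zeta)^{-1}$ from $f_{1}$ and the factor $\epsilon(\zeta^{-1})^{-1}=\epsilon(\zeta)$ from $f_{2}$ cancel term by term, so the integrand becomes $\phi_{1}(k')\,\phi_{2}(k'^{-1} k)$, independent of $\zeta$. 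Identifying $s_K(K)$ with $K$ via $s_K$, this reduces to the convolution $(\phi_{1} * \phi_{2})(k)$ on $K$.

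The only real obstacle is the bookkeeping of the Haar measures across $\wt{G}$, $\overline{K}$, $s_K(K)$, and $K$: one must reconcile the normalization $\int_{I_{1}}dh=1$ on $\wt{G}$ with the corresponding convention on $K$ through the central extension $\mu_n \into \overline{K} \onto K$. This is conceptually trivial but worth carrying out explicitly so that the sum over $\mu_{n}$ is correctly absorbed into the measure identification.
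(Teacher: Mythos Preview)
Your proof is correct and is precisely the ``direct calculation'' to which the paper alludes without giving details; you have in fact supplied more than the paper does. Your closing remark about the Haar-measure bookkeeping is well placed: once the measure on $K$ is taken compatibly with the pushforward from $\overline{K}$ (so that the $n$ identical summands over $\mu_n$ are absorbed), the multiplicativity goes through exactly as you outline.
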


Now we prove the quadratic relations. Suppose that $\Delta=\set{\alpha_1, ..., \alpha_r}$ is the set of simple roots and $\set{\alpha_j^\dag: 1\lest j \lest d}$ are the highest roots of the irreducible root systems $\Phi_{j}$. Let
$$\Delta_\aff=\Delta\cup\set{-\alpha_j^\dag: 1\lest j \lest d}.$$
For $\alpha\in \Phi$ and $\chi\in \Hom(\kappa^{\times},\mu_{n})$ we define $c_\alpha(\chi) \in \mca{H}$ to be such that
$$(q-1) \cdot c_{\alpha}(\chi)=\sum_{u\in\kappa^{\times}}\chi(u) \mca{T}_{\wt{h}_{\alpha(u)}}\in\mca{H}_\Z.$$
For every $\alpha \in \Phi$, we write $\mca{T}_\alpha:= \mca{T}_{\wt{w}_\alpha(1)}$ and let 
$$\varepsilon:=(\varpi,\varpi)_n=(-1,\varpi)_n \in \set{\pm 1}.$$

\begin{prop}[Quadratic relations] \label{QuadRel} 
Keep notation as above.
\begin{enumerate}
\item[(i)] For every $\alpha\in \Delta$ one has
\begin{equation} \label{Quadfin} 
\mathcal{T}_{\alpha}^{2}=q\mathcal{T}_{\wt{h}_{\alpha}(-1)}+(q-1)c_{\alpha}(\mbm{1})\mathcal{T}_{\alpha}.
\end{equation}
\item[(ii)] For any $\alpha^\dag \in \set{\alpha_1^\dag, ..., \alpha_d^\dag }$, write $\mathcal{T}_{\alpha^\dag}=\mathcal{T}_{\wt{w}_{\alpha^\dag}(\varpi^{-1})}$. Then 
\begin{equation} \label{Quadaff} 
\mathcal{T}_{\alpha^\dag}^{2}=q\varepsilon^{Q(\alpha^{\dag, \vee})}\mathcal{T}_{\wt{h}_{\alpha^\dag}(-1)}+(q-1)c_{\alpha^\dag}((-,\varpi)_n^{Q(\alpha^{\dag, \vee})})\mathcal{T}_{\alpha^\dag}.
\end{equation}
\end{enumerate}
\end{prop}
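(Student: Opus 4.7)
The plan is to compute both quadratic relations as direct convolutions in $\mca{H}_\Z$, reducing in each case to the rank-one subgroup attached to the relevant root and tracking central Hilbert-symbol factors via the relations (\ref{Rel1})--(\ref{Rel7}).

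For part (i), since $\alpha \in \Delta$ the element $\wt{w}_\alpha(1)$ lies in $s_K(K)$, so both $\mca{T}_\alpha$ and $\mca{T}_{\wt{h}_\alpha(-1)}$ lie in $\mca{H}_{\Z, 0}$. By Lemma \ref{HeckeSplit}, the convolution transfers to $C^\infty(I_1 \backslash K / I_1, \Z[\mu_n])$ and further, via ${\rm red}_\mfr{p}: K \onto G_\kappa$, to the corresponding finite Hecke algebra on $G_\kappa$. Inside the $\SL_2$ attached to $\alpha$, the standard Bruhat-type decomposition
\[
I_1 \cdot \wt{w}_\alpha(1) \cdot I_1 \cdot \wt{w}_\alpha(1) \cdot I_1 \;=\; I_1 \cdot \wt{w}_\alpha(1)^2 \cdot I_1 \;\sqcup\; \bigsqcup_{u \in \kappa^\times} I_1 \cdot \wt{w}_\alpha(1)\wt{h}_\alpha(u) \cdot I_1
\]
decomposes the top cell into $q$ cosets of $I_1$, giving $q \cdot \mca{T}_{\wt{w}_\alpha(1)^2}$; relation (\ref{Rel5}) with $t_1 = t_2 = 1$ identifies $\wt{w}_\alpha(1)^2 = (-1,-1)_n^{Q(\alpha^\vee)} \wt{h}_\alpha(-1) = \wt{h}_\alpha(-1)$, the tame Hilbert symbol being trivial on pairs of units. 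The remaining $q-1$ cells are single $I_1$-cosets and sum to $(q-1)\, c_\alpha(\mbm{1})\, \mca{T}_\alpha$, exactly as in the argument of Vigneras \cite{Vig16} at the finite level.

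For part (ii), the affine element $\wt{w}_{\alpha^\dag}(\varpi^{-1})$ does not lie in $s_K(K)$, so the convolution must be computed directly. The analogous rank-one decomposition inside the affine $\SL_2$ attached to $\alpha^\dag$ (using the filtration piece $U_{-\alpha^\dag}^1$ in place of $U_{-\alpha^\dag} \cap I_1$) produces a top-cell contribution of $q \cdot \wt{w}_{\alpha^\dag}(\varpi^{-1})^2$. By (\ref{Rel5}),
\[
\wt{w}_{\alpha^\dag}(\varpi^{-1})^2 \;=\; (-\varpi^{-1}, -\varpi^{-1})_n^{Q(\alpha^{\dag,\vee})} \wt{h}_{\alpha^\dag}(-1) \;=\; \varepsilon^{Q(\alpha^{\dag,\vee})} \wt{h}_{\alpha^\dag}(-1),
\]
where triviality of the tame symbol on units reduces $(-\varpi^{-1}, -\varpi^{-1})_n$ to $(\varpi, \varpi)_n = \varepsilon$. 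The remaining cells are parametrized by $u \in \kappa^\times$ via lifts of $\wt{e}_{-\alpha^\dag}(\varpi u')$ with $u'$ lifting $u$; commuting $\wt{w}_{\alpha^\dag}(\varpi^{-1})$ through these using (\ref{Rel6}) and then collecting torus factors via (\ref{Rel1})--(\ref{Rel2}) extracts a genuine cocycle of the form $(u, \varpi)_n^{Q(\alpha^{\dag,\vee})}$ attached to each $\mca{T}_{\wt{h}_{\alpha^\dag}(u)} \mca{T}_{\alpha^\dag}$, and summing yields $(q-1)\, c_{\alpha^\dag}\bigl((-,\varpi)_n^{Q(\alpha^{\dag,\vee})}\bigr)\, \mca{T}_{\alpha^\dag}$.

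The main technical obstacle is the Hilbert-symbol bookkeeping in (ii): each time one moves $\wt{w}_{\alpha^\dag}(\varpi^{-1})$ through an element of $I_1$ or across the torus, cocycles of the form $(-, \varpi)_n^{?}$ appear, and one must verify that they combine to give exactly $\varepsilon^{Q(\alpha^{\dag,\vee})}$ on the $\wt{h}_{\alpha^\dag}(-1)$ summand and exactly $(-, \varpi)_n^{Q(\alpha^{\dag,\vee})}$ as the character fed into $c_{\alpha^\dag}$, with no residual factor absorbed elsewhere. All such contributions arise within the rank-one subgroup via (\ref{Rel1})--(\ref{Rel6}), so the verification is algorithmic but must be carried out carefully.
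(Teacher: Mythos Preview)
Your approach is correct and essentially the same as the paper's: reduce (i) to the linear case via Lemma~\ref{HeckeSplit} and invoke Vign\'eras, then compute (ii) directly by evaluating the convolution on a set of coset representatives coming from $U_{-\alpha^\dag}^{1}/U_{-\alpha^\dag}^{2}$ and tracking Hilbert-symbol factors through the relations \eqref{Rel1}--\eqref{Rel7}. One caution on the sketch for (i): the double cosets $I_1 \wt{w}_\alpha(1)\wt{h}_\alpha(u) I_1$ are \emph{not} single left $I_1$-cosets (each has index $q$), and the coefficient $q$ on $\mca{T}_{\wt{h}_\alpha(-1)}$ arises because all $q$ coset representatives in the convolution sum contribute there, not because a ``top cell'' splits into $q$ pieces---but since you ultimately defer to Vign\'eras this does not affect the argument.
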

\begin{proof}
Both assertions can be verified directly. We elaborate only for \eqref{Quadaff}. For \eqref{Quadfin} we can reduce to the linear case by Lemma \ref{HeckeSplit} and then apply \cite[Theorem 2.2]{Vig16}. 

To begin we bound ${\rm supp}(\mca{T}_{\alpha^\dag}^{2})$. The computation reduces to the linear case, which gives 
\begin{equation*}
{\rm supp}(\mca{T}_{\alpha^\dag}^{2})  \subseteq \mu_{n}\wt{h}_{\alpha^\dag}(-1)I_{1} \cup \bigcup_{u\in O_F^{\times}/1+\frak{p}}\mu_{n}I_{1}\wt{h}_{\alpha^\dag}(u)\wt{w}_{\alpha^\dag}(\varpi^{-1})I_{1}.
\end{equation*}
Now we evaluate $\mathcal{T}_{\alpha^\dag}^{2}$ at the elements $\wt{h}_{\alpha^\dag}(-1)$  and $\wt{w}_{\alpha^\dag}(\varpi^{-1})\wt{h}_{\alpha^\dag}(v)$ where  $v\in O_F^{\times}/1+\frak{p}$.

A preliminary calculation gives
\begin{equation} \label{QuadEqn1}
\begin{aligned}
\mca{T}_{\alpha^\dag}^{2}(g) = &\int_{\wt{G}}\mca{T}_{\alpha^\dag}(h) \cdot \mca{T}_{\alpha^\dag}(h^{-1}g)dh \\
=&\sum_{\gamma\in \mu_{n}I_{1}/\mu_{n}I_{1}\cap \wt{w}_{\alpha^\dag}(\varpi^{-1})I_{1}\wt{w}_{\alpha^\dag}(\varpi^{-1})^{-1}} \mathcal{T}_{\alpha^\dag}(\gamma \wt{w}_{\alpha^\dag}(\varpi^{-1}))\mathcal{T}_{\alpha^\dag}(\wt{w}_{\alpha^\dag}(\varpi^{-1})^{-1}\gamma^{-1}g)dh \\
= & \sum_{\gamma\in I_{1}/I_{1}\cap w_{\alpha^\dag}(\varpi^{-1})I_{1}w_{\alpha^\dag}(\varpi^{-1})^{-1}} \mathcal{T}_{\alpha^\dag}(\gamma \wt{w}_{\alpha^\dag}(\varpi^{-1}))\mathcal{T}_{\alpha^\dag}(\wt{w}_{\alpha^\dag}(\varpi^{-1})^{-1}\gamma^{-1}g)dh \\
= & \mathcal{T}_{\alpha^\dag}(\wt{w}_{\alpha^\dag}(\varpi^{-1})^{-1}g)+\sum_{u\in O_F^{\times}/1+\frak{p}} \mathcal{T}_{\alpha^\dag}(\wt{w}_{\alpha^\dag}(\varpi^{-1})^{-1}\wt{e}_{-\alpha^\dag}(\varpi u)g),
\end{aligned}
\end{equation}
where the last equality follows from $I_{1}/I_{1}\cap \wt{w}_{\alpha^\dag}(\varpi^{-1})I_{1}\wt{w}_{\alpha^\dag}(\varpi^{-1})^{-1}= U_{-\alpha^\dag}^{1}/U_{-\alpha^\dag}^{2}$.

It follows from \eqref{Rel2} and \eqref{Rel7} that
$$\wt{w}_{\alpha^\dag}(\varpi^{-1})^{-1}\wt{h}_{\alpha^\dag}(-1)=\varepsilon^{Q(\alpha^{\dag, \vee})} \cdot \wt{w}_{\alpha^\dag}(\varpi^{-1})$$ 
and $\wt{e}_{-\alpha^\dag}(\varpi u)\in I_{1}$ for all $u\in O_F^\times$. Thus from \eqref{QuadEqn1} we have 
\begin{equation*}
\mathcal{T}_{\alpha^\vee}^{2}(\wt{h}_{\alpha^\dag}(-1))=q\varepsilon^{Q(\alpha^{\dag, \vee})}.
\end{equation*}

Next, we consider the element $\wt{w}_{\alpha^\dag}(\varpi^{-1})\wt{h}_{\alpha^\dag}(v)$, where $v\in O_F^{\times}/1+\frak{p}$. For $u\in O_F^{\times}$, the left and right $U_{-\alpha^\dag}^1$-invariance of $\mathcal{T}_{\alpha^\dag}$ and the equations \eqref{Rel2}, \eqref{Rel6} and \eqref{Rel7} imply
$$\mathcal{T}_{\alpha^\dag}(\wt{w}_{\alpha^\dag}(\varpi^{-1})^{-1}\wt{e}_{-\alpha^\dag}(\varpi u)\wt{w}_{\alpha^\dag}(\varpi^{-1})\wt{h}_{\alpha^\dag}(v))=(-u,\varpi)_n^{Q(\alpha^{\dag, \vee})} \cdot \mathcal{T}_{\alpha^\dag}(\wt{w}_{\alpha^\dag}(\varpi^{-1})\wt{h}_{\alpha^\dag}(-u^{-1}v)).$$
Thus by line \eqref{QuadEqn1} we obtain
\begin{equation*}
\mathcal{T}_{\alpha^\dag}^{2}(\wt{w}_{\alpha^\dag}(\varpi^{-1})\wt{h}_{\alpha^\dag}(v))=(v,\varpi)_n^{Q(\alpha^{\dag,\vee})},
\end{equation*}
which completes the proof.
\end{proof}
%We remark that the equality in \eqref{Quadaff} can be verified by choosing a different maximal compact subgroup of $\wt{G}$. However, in that case it will involve a different splitting for $\mbf{T}(O_F)$, and in order to determine the explicit quadratic relation we need to have an explicit splitting as well. We will not pursue the details here though.

The braid relations and the quadratic relations imply the following.
\begin{cor}\label{PropInvert}
For every $g\in \wt{G}$, the element $\mca{T}_g \in \mca{H}_\Z$ is invertible.
\end{cor}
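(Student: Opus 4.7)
The plan is to reduce invertibility of $\mca{T}_g$ for arbitrary $g \in \wt{G}$ to invertibility on a small generating set of $W(1)$, and then combine via braid relations. By Lemma~\ref{DoubCosetReps}, $\mca{T}_g$ depends only on the class of $g$ in $W(1) = \Omega(1) \ltimes_{T_\kappa} W_\aff(1)$, and $W_\aff(1)$ is generated by $T_\kappa$ together with lifts of the simple affine reflections in $S_\aff$. Hence it suffices to treat (i) $w \in \Omega(1)$, (ii) $w \in T_\kappa$, and (iii) $w$ a lift to $W(1)$ of an element of $S_\aff$.

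In cases (i) and (ii) the image $\wp(w) \in G$ normalizes $I_1$, so $\mfr{q}_w = \mfr{q}_{w^{-1}} = 1 = \mfr{q}_{ww^{-1}}$ and the braid relation gives $\mca{T}_w * \mca{T}_{w^{-1}} = \mca{T}_1$, yielding an explicit two-sided inverse. For case (iii), Proposition~\ref{QuadRel} puts $\mca{T}_\alpha^2$ in the form
$$\mca{T}_\alpha^2 \;=\; q\,\mca{T}_{t_\alpha} \;+\; \Bigl(\sum_{u\in\kappa^\times}\chi_\alpha(u)\mca{T}_{\wt h_\alpha(u)}\Bigr)\mca{T}_\alpha,$$
with $t_\alpha \in T_\kappa$ (up to an invertible central $\mu_n$-factor) and a specific character $\chi_\alpha$ of $\kappa^\times$. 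By step (ii) each of $\mca{T}_{t_\alpha}$ and $\mca{T}_{\wt h_\alpha(u)}$ is invertible in $\mca{H}$, so rewriting the identity as $\bigl(\mca{T}_\alpha - \sum_u \chi_\alpha(u)\mca{T}_{\wt h_\alpha(u)}\bigr)\mca{T}_\alpha = q\,\mca{T}_{t_\alpha}$ exhibits the left inverse
$$L \;=\; q^{-1}\mca{T}_{t_\alpha}^{-1}\Bigl(\mca{T}_\alpha - \sum_u \chi_\alpha(u)\mca{T}_{\wt h_\alpha(u)}\Bigr) \in \mca{H}.$$
A parallel right-sided rearrangement, obtained either by repeating the computation of Proposition~\ref{QuadRel} with the convolution in the opposite order or by commuting $\sum_u \chi_\alpha(u)\mca{T}_{\wt h_\alpha(u)}$ past $\mca{T}_\alpha$ via the braid relation (up to an invertible $\mu_n$-correction), produces the matching right inverse; hence $\mca{T}_\alpha$ is two-sided invertible.

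Finally, for an arbitrary $w \in W(1)$, choose a reduced decomposition of $f(w) \in W_{\rm ex}$ as $\omega s_1 \cdots s_k$ with $\omega \in \Omega$ and $s_i \in S_\aff$, and lift each factor to $W(1)$; the discrepancy between the product of the chosen lifts and $w$ itself lies in $T_\kappa$ and contributes one more invertible factor by step (ii). Since $\mfr{q}$ is multiplicative along a reduced decomposition, iterated application of the braid relation expresses $\mca{T}_w$ as a product of the invertible factors from (i)--(iii), proving the corollary. The only non-formal step is (iii), where the essential input is the quadratic relation of Proposition~\ref{QuadRel}; the main subtlety there is upgrading the one-sided inverse produced directly by that relation to a genuine two-sided inverse in $\mca{H}$.
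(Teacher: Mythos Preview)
Your argument is correct and is exactly what the paper has in mind: the paper's entire proof is the sentence ``The braid relations and the quadratic relations imply the following,'' and you have simply unpacked that sentence into the three cases $\Omega(1)$, $T_\kappa$, and lifts of $S_\aff$, then reassembled via a reduced expression. One small remark: your inverse $L$ involves $q^{-1}$, so the invertibility you produce is in $\mca{H}$ (or $\mca{H}_\Z[q^{-1}]$) rather than in $\mca{H}_\Z$ itself; this is consistent with every later use of the corollary in the paper, which always takes place over $\C$.
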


Setting 
$$\wt{N}(I_1):=\wt{N_{G}(I_{1})},$$
a direct calculation and comparison of measures give that
$$\wt{N}(I_1)=\set{g\in \wt{G}: \mfr{q}_g=1}.$$
It also follows that $\wm{T}(O_F) \subset \wt{N}(I_1)$. 
Setting 
$$e_{\epsilon}=\sum_{\zeta\in\mu_{n}}\epsilon^{-1}(\zeta)\zeta\in \C[\mu_{n}],$$
which is the idempotent associated to $\epsilon$.
The map 
$$\Z[\mu_{n},\wt{N}(I_1)/I_{1}]e_\epsilon \longrightarrow \mca{H}_\Z$$
given by $gI_1 e_\epsilon \mapsto \mca{T}_g$ is an embedding of $\Z[\mu_n]$-algebras. Let 
$$\mca{H}_\Z^\aff \subset \mca{H}_\Z$$
 be the subalgebra  generated by  
 $$A:=\Z[\mu_{n}, \wm{T}(O_F)/T_{1}]e_{\epsilon}$$
  and $\set{\mathcal{T}_{\alpha}: \alpha\in \Delta}$, and also $\set{\mca{T}_{\alpha_j^\dag}: 1\lest j \lest d}$. We have
$$\mca{H}^\aff_\Z = \langle \mca{T}_{w}: w\in \wt{W}^\aff(1) \rangle,$$
as an algebra generated over $A$. Indeed, this follows from support considerations and also the inclusion $wIs \subset (IwI\cup IwsI)$, where $w\in W_\aff$ and $s$ is one of the generators of $W_\aff$ as a Coxeter group.

To describe a presentation for $\mca{H}^\aff_\Z$, let $\msc{F}'$ denote the free $\Z[\mu_{n}]$-algebra generated by the symbols $\msc{T}_{\alpha}$, where $\alpha\in \Delta_\aff$. Let
 $$\msc{F}=A*\msc{F}'$$
 be the free product of the associative $\Z[\mu_{n}]$-algebras $A$ and $\msc{F}'$. For $\alpha,\beta\in\Delta_\aff$, write $\theta_{\alpha\beta}=\theta_{\beta\alpha}$ for the angle between the roots $\alpha$ and $\beta$. Let $\msc{I} \subset \msc{F}$ be the ideal generated by the following elements:
\begin{enumerate}
\item[$\bullet$] $\msc{T}_{\alpha}\msc{T}_{\beta}-\msc{T}_{\beta}\msc{T}_{\alpha}$, for any $\alpha,\beta\in \Delta_\aff $ such that $\theta_{\alpha\beta}=\frac{\pi}{2}$;
\item[$\bullet$] $\msc{T}_{\alpha}\msc{T}_{\beta}\msc{T}_{\alpha}-\msc{T}_{\beta}\msc{T}_{\alpha}\msc{T}_{\beta}$, for any $\alpha,\beta\in \Delta_\aff$ such that $\theta_{\alpha\beta}=\frac{2\pi}{3}$;
\item[$\bullet$] $(\msc{T}_{\alpha}\msc{T}_{\beta})^{2}-(\msc{T}_{\beta}\msc{T}_{\alpha})^{2}$, for any $\alpha,\beta\in \Delta_\aff$ such that $\theta_{\alpha\beta}=\frac{3\pi}{4}$;
\item[$\bullet$] $(\msc{T}_{\alpha}\msc{T}_{\beta})^{3}-(\msc{T}_{\beta}\msc{T}_{\alpha})^{3}$, for any $\alpha,\beta\in \Delta_\aff$ such that $\theta_{\alpha\beta}=\frac{4\pi}{5}$;
\item[$\bullet$] $\msc{T}_{\alpha}h-(w_{\alpha}(1)hw_{\alpha}(-1))\msc{T}_{\alpha}$, for any $\alpha\in \Delta$ and $h\in T_\kappa$;
\item[$\bullet$] $\msc{T}_{\alpha^\dag}h-(w_{\alpha^\dag}(\varpi^{-1})hw_{\alpha^\dag}(-\varpi^{-1}))\msc{T}_{\alpha^\dag}$ for any $-\alpha^\dag \in\Delta_\aff-\Delta$ and $h\in T_\kappa$;
\item[$\bullet$] $\msc{T}_{\alpha}^{2}-qh_{\alpha}(-1)-(q-1)c_{\alpha_{j}}(1)\msc{T}_{\alpha}$, for any $\alpha\in \Delta$;
\item[$\bullet$] $\msc{T}_{\alpha^\dag}^{2}-q\varepsilon^{Q(\alpha^{\dag, \vee})}h_{\alpha^\dag}(-1)-(q-1)c_{\alpha^\dag}((-,\varpi)^{Q(\alpha^{\dag, \vee})})\msc{T}_{\alpha^\dag}$, for any $-\alpha^\dag \in\Delta_\aff -\Delta$.
\end{enumerate}

\begin{comment}
\begin{align*}
\bullet& \msc{T}_{\alpha}\msc{T}_{\beta}-\msc{T}_{\beta}\msc{T}_{\alpha},& \text{ for any }\alpha,\beta\in \Delta_\aff \text{ such that }\theta_{\alpha\beta}=\frac{\pi}{2};\\
\bullet& \msc{T}_{\alpha}\msc{T}_{\beta}\msc{T}_{\alpha}-\msc{T}_{\beta}\msc{T}_{\alpha}\msc{T}_{\beta},& \text{ for any }\alpha,\beta\in \Delta_\aff \text{ such that }\theta_{\alpha\beta}=\frac{2\pi}{3};
\end{align*}
\end{comment}

\begin{prop}\label{P:IM1}
The map of $\Z[\mu_n]$-algebras 
$$\msc{F}'\rightarrow \mca{H}^\aff_\Z$$
given by 
$$\msc{T}_{\alpha}\mapsto \mathcal{T}_{\wt{w}_{\alpha}(-1)}, \quad \msc{T}_{\alpha^\dag}\mapsto \mathcal{T}_{\wt{w}_{\alpha^\dag}(\varpi^{-1})} \text{ and } he_\epsilon \mapsto \mca{T}_h$$
for $\alpha\in \Delta,  -\alpha^\dag \in \Delta_\aff -\Delta$, and $h e_\epsilon \in A$,  induces an isomorphism of $\Z[\mu_{n}]$-algebras $\msc{F}/\mathscr{I} \rightarrow \mca{H}^\aff_\Z$. 
\end{prop}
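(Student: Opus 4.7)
\medskip

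\noindent\textbf{Proof proposal.}
The plan is to construct the map at the level of the free product $\msc{F}$, verify that each defining element of $\msc{I}$ goes to zero, and then show the induced surjection $\Psi: \msc{F}/\msc{I} \to \mca{H}^\aff_\Z$ is bijective by comparing spanning sets with the double-coset basis.

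First I would establish well-definedness of $\Psi$. The finite braid relations among the $\msc{T}_\alpha$ for $\alpha \in \Delta$ can be reduced to the linear case via the algebra isomorphism $s_K^\mca{H}$ of Lemma \ref{HeckeSplit}, where they are classical. The mixed braid relations involving $\msc{T}_{\alpha^\dag}$ with $-\alpha^\dag \in \Delta_\aff - \Delta$ follow from the Braid Relations lemma once one checks the multiplicativity $\mfr{q}_g \mfr{q}_{g'} = \mfr{q}_{gg'}$ whenever the corresponding product of simple affine reflections is reduced in $W_\aff$; this is a standard length-additivity argument on the Coxeter group $(W_\aff, S_\aff)$. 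The torus commutation relations (fifth and sixth bullets) follow from a direct calculation using \eqref{Rel7} together with the fact that $I_1$ is normalized by $\wm{T}(O_F)$, so that $\mca{T}_{\wt{w}_\alpha(\cdot)} * \mca{T}_h = \mca{T}_{\wt{w}_\alpha(\cdot) h}$ and $\mca{T}_{\wt{w}_\alpha(\cdot) h} = \mca{T}_{\wt{w}_\alpha(\cdot) h \wt{w}_\alpha(\cdot)^{-1}} * \mca{T}_{\wt{w}_\alpha(\cdot)}$. Finally, the two quadratic relations are exactly the content of Proposition \ref{QuadRel}. Hence $\Psi$ descends to $\msc{F}/\msc{I}$.

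Surjectivity of $\Psi$ is immediate from the generation result recorded just before the statement: $\mca{H}^\aff_\Z$ is generated as an algebra over $A$ by $\{\mca{T}_\alpha: \alpha\in \Delta\} \cup \{\mca{T}_{\alpha_j^\dag}: 1 \lest j \lest d\}$, which are precisely the images of the generators of $\msc{F}'$.

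The real work is injectivity. I would prove this by exhibiting an $A$-spanning set of $\msc{F}/\msc{I}$ indexed by $W_\aff$ and comparing with the $\Z[\mu_n]$-basis $\{\mca{T}_w : w \in \wt{W}_\aff(1)\}$ guaranteed by Lemma \ref{DoubCosetReps}. Using the torus-commutation relations, any monomial in $\msc{F}$ can be moved so that all factors from $A$ are pushed to the right of the $\msc{T}_\bullet$'s. Any resulting product of the form $\msc{T}_{s_{i_1}} \cdots \msc{T}_{s_{i_k}}$, when the associated word $s_{i_1} \cdots s_{i_k}$ in $W_\aff$ is not reduced, can be shortened using the quadratic relation together with the braid relations in the usual Matsumoto-type exchange argument for Coxeter groups. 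This shows $\msc{F}/\msc{I}$ is spanned over $A$ by $\{\msc{T}_{s_{i_1}} \cdots \msc{T}_{s_{i_k}} : s_{i_1} \cdots s_{i_k} \text{ reduced}\}$, and in particular spanned over $\Z[\mu_n]$ by $|W_\aff|\cdot|T_\kappa|$ many elements. Since $|\wt{W}_\aff(1)| = |W_\aff| \cdot |T_\kappa|$ and the images $\mca{T}_w$ for $w \in \wt{W}_\aff(1)$ are $\Z[\mu_n]$-linearly independent in $\mca{H}^\aff_\Z$, the surjection $\Psi$ is forced to be an isomorphism.

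The main obstacle I anticipate is the exchange/shortening step inside $\msc{F}/\msc{I}$: one needs to verify that the standard Coxeter-theoretic arguments go through with the cocycle-twisted quadratic relation \eqref{Quadaff} at the affine generators, where the torus terms $\wt{h}_{\alpha^\dag}(-1)$ and the factor $\varepsilon^{Q(\alpha^{\dag,\vee})}$ appear. A secondary bookkeeping issue is to ensure that the torus commutation relations match the actual conjugation of $A$ by $\wt{w}_{\alpha^\dag}(\varpi^{-1})$ computed via \eqref{Rel7}, which requires care because of the $\varpi$-dependence.
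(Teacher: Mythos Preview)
Your proposal is correct and is precisely the adaptation of the Iwahori--Matsumoto argument that the paper invokes (the paper's proof is one line: ``The proof of \cite[Theorem 3.5]{IM65} by Iwahori--Matsumoto adapts directly to this case''). One cosmetic point: your injectivity step speaks of ``$|W_\aff|\cdot|T_\kappa|$ many elements'', but $W_\aff$ is infinite, so phrase it instead as ``the spanning set indexed by $W_\aff\times T_\kappa$ maps to the $\Z[\mu_n]$-basis $\{\mca{T}_w: w\in \wt{W}_\aff(1)\}$, hence is itself linearly independent and $\Psi$ is injective''.
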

\begin{proof}
The proof of \cite[Theorem 3.5]{IM65} by Iwahori--Matsumoto  adapts directly to this case.
\end{proof}

Recall the definition of a twisted tensor product of algebras. Let $\Gamma$ be a  group, $L$ a commutative ring, and $B$ a ring  that is a free $L$-module with $L$-basis $\set{b_j: j\in J}$. Suppose that 
$$\phi: \Gamma \longrightarrow {\rm Aut}_{\text{$L$-alg}}(B)$$
is a group homomorphism from $\Gamma$ into the $L$-algebra automorphisms of $B$. We define the twisted tensor product $L[\Gamma]\hat{\motimes}_{L}B$ to be the free $L$-module with basis $\{\gamma\otimes b_j: \gamma\in \Gamma,\, j\in J \}$ and multiplication defined by
\begin{equation*}
(\gamma_{1}\otimes b_{j_{1}})\cdot (\gamma_{2}\otimes b_{j_{2}})\mapsto (\gamma_{1}\gamma_{2}\otimes \phi_{\gamma_{2}}(b_{j_1})b_{j_2}).
\end{equation*}

\begin{thm}\label{T:IM2}
The map 
$$\Z[\mu_{n},\wt{N}(I_1)/T_1]e_{\epsilon}\hat{\motimes}_{A}\mca{H}^\aff_\Z \longrightarrow \mca{H}_\Z$$
defined by $g \otimes \mca{T}_{w}\mapsto \mca{T}_{gw}=\mca{T}_{g}*\mca{T}_{w}$, where $g\in \wt{N}(I_{1})$ and $w\in \wt{W}(1)$ is an isomorphism of $\Z[\mu_{n}]$-algebras. Together with Proposition \ref{P:IM1}, this gives an Iwahori--Matsumoto presentation of the pro-$p$ Iwahori--Hecke algebra $\mca{H}_\Z$.
\end{thm}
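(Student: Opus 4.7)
The plan is to establish the stated map as a $\Z[\mu_n]$-algebra homomorphism via the twisted tensor product relations, and then to match bases on both sides using the amalgamated semidirect decomposition $\wt{W}(1) \simeq \wt{\Omega}(1)\ltimes_{\mu_n T_\kappa}\wt{W}_\aff(1)$. The key input throughout is an extension of the braid relation to the normalizer $\wt{N}(I_1)$.

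First, for any $g \in \wt{N}(I_1)$ one has $\mfr{q}_g = 1$, and the equality $g I_1 = I_1 g$ gives $I_1 g I_1 h I_1 = I_1 g h I_1$ for every $h \in \wt{G}$; hence $\mfr{q}_g \cdot \mfr{q}_h = \mfr{q}_{gh}$ and symmetrically $\mfr{q}_h \cdot \mfr{q}_g = \mfr{q}_{hg}$. The braid relation then yields $\mca{T}_g * \mca{T}_h = \mca{T}_{gh}$ and $\mca{T}_h * \mca{T}_g = \mca{T}_{hg}$; in particular $\mca{T}_g^{-1} = \mca{T}_{g^{-1}}$. This sets up a conjugation action $\phi:\wt{N}(I_1)/T_1 \to {\rm Aut}_{\text{$A$-alg}}(\mca{H}^\aff_\Z)$ via $\phi_g(f):= \mca{T}_{g^{-1}} * f * \mca{T}_g$, sending $\mca{T}_v$ to $\mca{T}_{g^{-1}vg}$. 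This action stabilizes $\mca{H}^\aff_\Z$ because $\Omega$ normalizes $W_\aff$ inside $W_{\rm ex}$ (and permutes the walls of $\mca{C}$), and restricts to an automorphism of $A$ because $\wt{N}(I_1)$ normalizes $\mbf{T}(O_F)$; well-definedness modulo $T_1$ follows from the bi-$I_1$-invariance of $\mca{T}_g$.

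With $\phi$ in place, the algebra homomorphism property reduces to showing that $\mca{T}_{g_1 g_2}*\mca{T}_{g_2^{-1}w_1 g_2}*\mca{T}_{w_2} = \mca{T}_{g_1}*\mca{T}_{w_1}*\mca{T}_{g_2}*\mca{T}_{w_2}$, which follows from three applications of the extended braid relation above (factoring $\mca{T}_{g_1 g_2} = \mca{T}_{g_1}*\mca{T}_{g_2}$, collapsing $\mca{T}_{g_2}*\mca{T}_{g_2^{-1}w_1 g_2} = \mca{T}_{w_1 g_2}$, and finally expanding $\mca{T}_{w_1 g_2} = \mca{T}_{w_1}*\mca{T}_{g_2}$). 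For bijectivity, every class $w \in \wt{W}(1)$ factors as $\omega v$ for $\omega \in \wt{\Omega}(1)$ and $v \in \wt{W}_\aff(1)$, uniquely up to an element of $\mu_n T_\kappa \subset A$. The resulting family $\{\omega \otimes_A \mca{T}_v\}$ is a $\Z[\mu_n]$-basis for the left-hand side and maps to the basis $\{\mca{T}_{w}: w\in W(1)\}$ of $\mca{H}_\Z$ from Lemma~\ref{DoubCosetReps}.

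The main technical obstacle will be verifying that the tensor product over $A$ precisely absorbs the $\mu_n T_\kappa$-overlap between $\wt{\Omega}(1)$ and $\wt{W}_\aff(1)$ — that is, that the $A$-identifications on the left-hand side are exactly those forced by $\wt{W}(1) \simeq \wt{\Omega}(1)\ltimes_{\mu_n T_\kappa}\wt{W}_\aff(1)$ — so that the basis comparison yields a bijection rather than merely a surjection. Granting this, the Iwahori--Matsumoto presentation of $\mca{H}_\Z$ follows at once by combining with Proposition~\ref{P:IM1}.
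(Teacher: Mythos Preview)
The paper states this theorem without proof, so your outline supplies the omitted argument; the strategy---extended braid relations for $g\in\wt{N}(I_1)$ to get the homomorphism, then a basis comparison via $\wt{W}(1)\simeq\wt{\Omega}(1)\ltimes_{\mu_n T_\kappa}\wt{W}_\aff(1)$---is the natural one and is essentially correct.

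One point you gloss over should be made explicit. Your basis argument uses $\wt{\Omega}(1)$ for the left tensor factor, but the theorem as printed has $\wt{N}(I_1)/T_1$. These do not coincide: indeed $T_1$ is not even normal in $\wt{N}(I_1)$ (for $u=e_\alpha(x)\in I_1$ with $x\in O_F$ and $t\in T_1$ one computes $utu^{-1}=t\,e_\alpha\bigl((\alpha(t)^{-1}-1)x\bigr)\notin T$), so the quotient is not a group and the statement cannot be taken literally. This is almost certainly a slip for $\wt{N}(I_1)/I_1$, consistent with the embedding $\Z[\mu_n,\wt{N}(I_1)/I_1]e_\epsilon\hookrightarrow\mca{H}_\Z$ displayed just before the theorem; and with $I_1$ in place one has $\wt{N}(I_1)/I_1\simeq\wt{\Omega}(1)$ canonically (the map $\omega T_1\mapsto\omega I_1$ is injective since $N(T)\cap I_1=T_1$, and both sides are $\mu_n$-extensions of $\Omega(1)$). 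With that correction your proof goes through. A related wrinkle: your claim that $\phi$ restricts to an automorphism of $A$ ``because $\wt{N}(I_1)$ normalizes $\mbf{T}(O_F)$'' fails at the group level for the same reason; what is actually true is that $\mca{T}_g$ depends only on the coset $gI_1$, and representatives chosen in $\wt{N(T)}$ do normalize $\mbf{T}(O_F)$.
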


%%%
\section{Universal pro-$p$ principal series and $\mca{H}$}  \label{S:proPS}

In this section we prove that $\mca{H}=C^{\infty}_{c,\epsilon}(I_{1}\backslash \wt{G}/I_{1},\C)$ admits a Bernstein presentation. Our proof is based on the universal principal series, following Haines--Kottwitz--Prasad \cite{HKP10}.

\subsection{A first reduction}
Consider the space
$$\mca{M}:=C^{\infty}_{c,\epsilon}(T_{1}U\backslash \wt{G}/I_{1}, \C)$$
consisting of locally constant functions $f: \wt{G}\rightarrow \C $ such that 
$$f(\zeta tug\gamma)=\epsilon(\zeta)f(g) \text{ for } \zeta\in \mu_{n},t\in T_{1}, u\in U, \gamma\in I_{1}$$ and is also compactly supported on $U\backslash \wt{G}$. Let
$$\mca{R}':=\C[\wt{T}/T_{1}].$$

Here $\mca{M}$ is an $(\mca{R}',\mca{H})$-bimodule, where the right action of $\mca{H}$ is given by convolution of functions. More specifically, given $F\in \mca{M}$ and $f\in \mca{H}$ we define
\begin{equation*}
F*f(g):=\int_{\wt{G}}F(h)f(h^{-1}g)dh=\int_{\wt{G}}F(gh)f(h^{-1})dh,
\end{equation*}
where $dh$ is the Haar measure of $\wt{G}$ normalized so that the measure of $I_{1}$ is 1. There is also a natural left action of $\wt{T}$ on $\mca{M}$ given by (for $t\in \overline{T}$ and $F\in M$) 
\begin{equation*}
(t\cdot F)(g):=\delta_{\overline{B}}(t)F(t^{-1}g),
\end{equation*} 
where $\delta_{\overline{B}}$ is the modular character of $\wt{B}$. Since $T_{1}$ is central in $\overline{T}$, a consequence of $\gcd(p,n)=1$, this left action descends to $\wt{T}/T_{1}$ and can be extended $\C$-linearly to an action of $\mca{R}'$.  Since the functions in $\mca{M}$ are $\epsilon$-genuine it suffices to work with 
$$\mca{R}:=\mca{R}'e_\epsilon.$$
Note that $\mca{R}$ has a $\C$-basis indexed by $T/T_1$.

Consider the natural map $N(T)\rightarrow T_{1}U\backslash G/I_{1}$. In view of the Iwasawa decomposition $G=UTK$ and the Bruhat decomposition for $K$ with respect to $I_{1}$, it induces a bijection 
$$W(1)=T_{1}\backslash N(T) \longrightarrow T_{1}U\backslash G/I_{1}.$$
For any $g\in\wt{G}$, define
$$\nu_g  \in \mca{M}$$ to be the unique function such that ${\rm supp}(\nu_{g})=\mu_{n} T_{1} Ug I_1$ and $\nu_{g}(g)=1$. 

\begin{lm}
The function $\nu_g\in \mca{M}$ is well-defined.
\end{lm}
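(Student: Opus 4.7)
The plan is to emulate the pro-$p$ uniqueness-of-splitting argument used just above for the well-definedness of $\mca{T}_g$. For $\nu_g$ to be a well-defined $\epsilon$-genuine function with $\nu_g(g)=1$ and support inside $\mu_n T_1 U g I_1$, it suffices to show that whenever $\zeta \cdot s_{T_1U}(tu) \cdot g \cdot s_K(\gamma) = g$ in $\wt{G}$ with $\zeta\in\mu_n$, $t\in T_1$, $u\in U$, $\gamma\in I_1$, one must have $\zeta=1$; otherwise the $\epsilon$-equivariance would force the contradictory identities $\nu_g(g)=1$ and $\nu_g(g)=\epsilon(\zeta)\neq 1$.

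The key preliminary I would establish is that $T_1 U \subset G$ admits a unique splitting $s_{T_1 U}\colon T_1 U \to \wt{G}$. Indeed, $U$ has a canonical splitting by unipotency, $T_1$ has a unique splitting because it is pro-$p$ and $p\nmid n$, and any two splittings of the entire group $T_1 U$ differ by a homomorphism $T_1 U \to \mu_n$, which must vanish on both factors. In particular, $\wp^{-1}(T_1 U) = \mu_n\cdot s_{T_1 U}(T_1 U)$ with trivial intersection.

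Given $\gamma\in I_1$ with $\wp(g)\gamma\wp(g)^{-1}\in T_1 U$, I would then consider the closed subgroup
$$H_0 := I_1 \cap \wp(g)^{-1}(T_1 U)\wp(g)$$
of $I_1$, which is pro-$p$. The two maps
$$\phi_1(\gamma) = g\cdot s_K(\gamma)\cdot g^{-1}, \qquad \phi_2(\gamma) = s_{T_1 U}\bigl(\wp(g)\gamma\wp(g)^{-1}\bigr)$$
from $H_0$ into $\wp^{-1}(T_1 U)$ are both group homomorphisms lifting conjugation by $\wp(g)$, so their ratio $\phi_1\phi_2^{-1}$ takes values in the central subgroup $\mu_n$ and defines a group homomorphism $H_0\to\mu_n$. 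Because $H_0$ is pro-$p$ and $|\mu_n|$ is coprime to $p$, this ratio is trivial, so $g\cdot s_K(\gamma)\cdot g^{-1} = s_{T_1 U}(\wp(g)\gamma\wp(g)^{-1})$ carries no $\mu_n$-twist. Rearranging the original equation then forces $\zeta=1$, as required.

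The only genuinely new ingredient compared to the well-definedness of $\mca{T}_g$ is the uniqueness of the splitting $s_{T_1 U}$; once this is in hand, the remainder of the proof runs exactly in parallel. I expect the verification of the splitting uniqueness on $T_1 U$ (combining the pro-$p$ and unipotent parts) to be the only real subtlety.
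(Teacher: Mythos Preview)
Your argument is correct and parallels the paper's earlier proof for $\mca{T}_g$ quite closely. The paper, however, takes a different route for $\nu_g$: it first checks the case $g=k\in s_K(K)$ directly (where the relevant product forces $tu\in T_1U\cap K$, and one can compare splittings inside $K$), and then reduces the general case to this one via the Iwasawa decomposition $\wt{G}=U\wt{T}K$ and the left $\wt{T}$-action, observing that $t\cdot\nu_k$ has the right support and is nonzero at $tk$. Your approach is more uniform---it treats all $g$ at once by exploiting that $H_0=I_1\cap\wp(g)^{-1}(T_1U)\wp(g)$ is pro-$p$---while the paper's approach avoids introducing and analyzing the splitting $s_{T_1U}$ explicitly, at the cost of a two-step reduction. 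Both are short; yours has the advantage of making transparent that exactly the same mechanism (uniqueness of splittings on pro-$p$ subgroups) underlies both $\mca{T}_g$ and $\nu_g$.
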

\begin{proof}
For $g=k\in s_K(K)$ this follows from a direct computation. Now for $k\in s_K(K)$ and $t\in \wt{T}$ we consider 
$$\nu_{tk}':=t\cdot \nu_k.$$
The function $\nu_{tk}'$ has support $\mu_{n}T_{1}UtkI_{1}$ and $\nu_{tk}'(tk)\neq 0$. Since the set $\overline{T} s_K(K)$ contains a set of representatives for $\mu_{n}T_{1}U\backslash \wt{G}/I_{1}$ the functions $\nu_g$ for any $g\in \wt{G}$ are well defined, being a nonzero scalar multiple of $\nu_{tk}'$, for some $t\in\wt{T}$ and $k\in s_K(K)$.
\end{proof}

Fix a section $\sigma$ of the map $\overline{N(T)} \rightarrow W(1)$ that sends the identity to the identity. For each $w\in W(1)$ define 
$$\nu_w=\nu_{\sigma(w)}\in \mca{M}.$$
We see that the set $\set{\nu_w: w\in W(1)}$ constitutes a $\C$-basis for $\mca{M}$.

\begin{lm}\label{HMIso}
The map 
$$\mca{H} \longrightarrow \mca{M}$$
 given by $h\mapsto \nu_{1}*h$ is an isomorphism of $\mca{H}$-modules. Furthermore, this induces an isomorphism of rings 
 $$\mca{H} \longrightarrow {\rm End}_{\mca{H}}(\mca{M})$$
   via the map $h\mapsto (\nu_{1}*h'\mapsto \nu_{1}*h*h')$.
\end{lm}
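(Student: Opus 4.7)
The map $\Psi\colon \mca{H} \to \mca{M}$ defined by $h \mapsto \nu_1 * h$ is manifestly a right $\mca{H}$-module homomorphism, so the plan for the first claim is to establish its bijectivity. By the Iwahori--Matsumoto presentation (Theorem~\ref{T:IM2}), $\mca{H}$ is a free $\C$-module with basis $\{\mca{T}_w\}_{w \in W(1)}$, and by construction $\mca{M}$ is a free $\C$-module with basis $\{\nu_w\}_{w \in W(1)}$. It therefore suffices to show that the transition matrix from $\{\nu_w\}$ to $\{\Psi(\mca{T}_w)\}_{w \in W(1)}$ is triangular with nonzero diagonal entries.

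To carry this out, I will unfold
\[
(\nu_1 * \mca{T}_w)(g) = \int_{\wt{G}} \nu_1(h)\, \mca{T}_w(h^{-1}g)\, dh
\]
by a left-coset decomposition of the support $\mu_n I_1 \sigma(w) I_1$ of $\mca{T}_w$, reducing the integral to a finite sum of translated values of $\nu_1$. The support condition $\mathrm{supp}(\nu_1) = \mu_n T_1 U I_1$, together with the Iwahori factorization $I_1 = U_1^- T_1 U_1$ and the bijection $W(1) \simeq T_1 U\backslash \wt{G}/I_1$ established just before the statement, confines $\mathrm{supp}(\Psi(\mca{T}_w))$ to a finite union of $T_1 U$--$I_1$ double cosets $\mu_n T_1 U \sigma(w') I_1$ with $w'$ bounded by $w$ in the Bruhat-type order lifted to $W(1)$ from $W_{\mathrm{ex}}$. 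Specializing $g = \sigma(w)$, all but one term in the sum is forced to vanish by support considerations, and an explicit computation shows the surviving contribution is a nonzero scalar (essentially a volume of a single $I_1$-coset). This gives the desired triangularity, so $\{\Psi(\mca{T}_w)\}_{w \in W(1)}$ is also a $\C$-basis of $\mca{M}$ and $\Psi$ is an isomorphism of right $\mca{H}$-modules.

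For the ring statement, the first part shows that $\mca{M}$ is a free right $\mca{H}$-module of rank one with generator $\nu_1$. Any $\phi \in \End_{\mca{H}}(\mca{M})$ is thus determined by $\phi(\nu_1)$, and the surjectivity of $\Psi$ furnishes a unique $h_\phi \in \mca{H}$ with $\phi(\nu_1) = \nu_1 * h_\phi$; right-$\mca{H}$-linearity then forces $\phi(\nu_1 * h') = \nu_1 * h_\phi * h'$ for every $h' \in \mca{H}$. Composition satisfies $(\phi_1 \circ \phi_2)(\nu_1) = \nu_1 * h_{\phi_1} * h_{\phi_2}$, so $\phi \mapsto h_\phi$ is a ring isomorphism whose inverse is precisely the map $h \mapsto (\nu_1 * h' \mapsto \nu_1 * h * h')$ of the statement.

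The main obstacle is the support-and-triangularity analysis for $\Psi(\mca{T}_w)$: one must carefully track the interaction between the $I_1$--$I_1$ double cosets that index $\mca{H}$ and the $T_1 U$--$I_1$ double cosets that index $\mca{M}$, using the genuine Iwahori factorization and the Bruhat structure on $W(1)$. Everything else is formal once this is in hand, and the second assertion follows from the first by the standard identification of the endomorphism ring of a free rank-one module.
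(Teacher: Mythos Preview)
Your proposal is correct and follows the triangularity/Bruhat-order approach that the paper cites from Haines--Kottwitz--Prasad \cite[Lemma~1.6.1]{HKP10}; the key observation you need (and the paper records) is that the Bruhat order on $W(1)$ is inflated from the underlying affine Weyl group, which is exactly what makes your support analysis go through. One small remark: the basis $\{\mca{T}_w\}_{w\in W(1)}$ already follows from Lemma~\ref{DoubCosetReps}, so you do not need to invoke the full Iwahori--Matsumoto presentation.

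The paper also supplies a second, more conceptual proof via Bushnell--Kutzko theory: since each $\mca{T}_t$ is invertible (Corollary~\ref{PropInvert}), the type-theoretic machinery gives an isomorphism $C^\infty_{c,\epsilon}(I_1\backslash\wt{G})\simeq {}_{U}^{T_1}C^\infty_{c,\epsilon}(\wt{G})$ of $\wt{G}$-modules, and integrating on the left over $U$ identifies the latter with $C^\infty_{c,\epsilon}(UT_1\backslash\wt{G})$; taking right $I_1$-invariants then yields $\mca{H}\simeq\mca{M}$ directly, with the identity of $\mca{H}$ sent to $\nu_1$. This route bypasses the combinatorial support analysis entirely, at the cost of importing the Bushnell--Kutzko formalism and the invertibility result. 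Your approach is more self-contained and hands-on; the paper's alternative is shorter once the machinery is in place.
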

\begin{proof} 
Given a $\wt{G}\times\wt{G}$-module $A$, we write $_{U}A$ for the space of left $U$-coinvariants and $^{T_{1}}A$ for the space of left $T_{1}$-invariants.

The assertions follow from Haines--Kottwitz--Prasad \cite[Lemma 1.6.1]{HKP10}, since the Bruhat ordering on $W(1)$ is inflated from the underlying affine Weyl group (see \cite[Page 697]{Vig16}). 

Nevertheless, we give another proof suggested by Gordan Savin based on Bushnell--Kutzko theory. Consider $C^{\infty}_{c,\epsilon}(\wt{G})$ as a $\wt{G}\times \wt{G}$-module with left and right translation. By Lemma \ref{PropInvert} the elements $\mathcal{T}_{t}, t\in\overline{T}$, are invertible. Thus by Bushnell--Kutzko theory \cite{BK98}, we have an isomorphism of $\wt{G}$-modules 
$$C^{\infty}_{c,\epsilon}(I_{1}\backslash \wt{G})\simeq {_{U}^{T_{1}}C^{\infty}_{c,\epsilon}}(\wt{G}).$$
Integrating on the left over $U$ gives that ${_{U}^{T_{1}}C^{\infty}_{c,\epsilon}}(\wt{G})\simeq C^{\infty}_{c,\epsilon}(UT_{1}\backslash\wt{G})$. Thus we have 
$$C^{\infty}_{c,\epsilon}(I_{1}\backslash \wt{G})\simeq C^{\infty}_{c,\epsilon}(UT_{1}\backslash\wt{G})$$
 as $\overline{T}\times \wt{G}$-modules. Taking right $I_{1}$-invariants, we get $\mca{H} \simeq \mca{M}$ as right $\mca{H}$-modules. Moreover, integration on the left over $U$ sends the identity element of $\mca{H}$ to $\nu_1\in \mca{M}$. This completes the proof.
\end{proof}

Next we record three useful identities. An element $t\in\overline{T}$ is called dominant if for any $l \in \Z$ we have $tU_{\alpha}^l t^{-1}\subseteq U_{\alpha}^l$ for all $\alpha\in \Phi_+$. 

\begin{lm}\label{ModId} We have the following:
\begin{enumerate}
\item[(i)] $\nu_1*\mca{T}_w=\nu_w$ for any $w\in K\cap N(T)$;
\item[(ii)] $\nu_t*\mca{T}_w=\nu_{tw}$ for any $t\in \wt{T}/T_1$ and $w\in K\cap N(T)$;
\item[(iii)] $\nu_1*\mca{T}_t=\nu_{t}$ for any $t\in \wt{T}/T_1$ dominant.
\end{enumerate}
\end{lm}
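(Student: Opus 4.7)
All three identities are established by a direct support analysis of the convolution integral, followed by a single-point evaluation. The decisive input throughout is the elementary identity $U^-\cap T_1UI_1=U^-\cap I_1$, which follows from Iwahori factorization of $I_1$ together with $B\cap U^-=\{1\}$: if $u\in U^-$ equals $t_1u'\gamma$ for some $t_1\in T_1$, $u'\in U$, $\gamma\in I_1$, then applying Iwahori factorization to $\gamma$ and using the uniqueness of the $B$-$U^-$ decomposition forces $u\in U^-\cap I_1$.

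For (i), expand $(\nu_1*\mca{T}_w)(g)=\int\nu_1(h)\mca{T}_w(h^{-1}g)\,dh$. The integrand is nonzero only when $h\in\mu_nT_1UI_1$ and $h^{-1}g\in\mu_nI_1wI_1$, so the support of $\nu_1*\mca{T}_w$ lies in $\mu_nT_1UI_1\cdot I_1wI_1\subseteq T_1U\cdot K$. By the bijection $W(1)\xrightarrow{\sim}T_1U\backslash\wt{G}/I_1$ (Iwasawa plus Bruhat), the support must collapse to the single double coset $\mu_nT_1UwI_1$, so $\nu_1*\mca{T}_w=c\cdot\nu_w$ for some scalar $c$. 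Evaluating at $g=w$: the contributing $h$ lie in $T_1UI_1\cap wI_1w^{-1}I_1$; since $wI_1w^{-1}I_1\subseteq K$ and $T_1UI_1\cap K=T_1(U\cap K)I_1=I_1$ (using $U\cap K=\prod_{\alpha\in\Phi_+}U_\alpha^0\subseteq I_1$), this intersection equals $I_1$. On $I_1$ both $\nu_1(h)=1$ and $\mca{T}_w(h^{-1}w)=\mca{T}_w(w)=1$ by left $I_1$-invariance (with no cocycle correction, as $I_1$ is pro-$p$ and $p\nmid n$). With ${\rm vol}(I_1)=1$, this gives $c=1$.

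For (ii), reduce to (i) via the left $\wt{T}$-action. A routine change of variables yields $(t\cdot F)*\mca{T}_w=t\cdot(F*\mca{T}_w)$ for $F\in\mca{M}$. Evaluating $(t\cdot\nu_1)$ at $t$ gives $\delta_{\wt{B}}(t)\nu_1(1)=\delta_{\wt{B}}(t)$; since $t\cdot\nu_1$ and $\nu_t$ share the same support $\mu_nT_1UtI_1$, one concludes $t\cdot\nu_1=\delta_{\wt{B}}(t)\,\nu_t$, and analogously $t\cdot\nu_w=\delta_{\wt{B}}(t)\,\nu_{tw}$. Chaining with (i),
\[
\delta_{\wt{B}}(t)\,(\nu_t*\mca{T}_w)=(t\cdot\nu_1)*\mca{T}_w=t\cdot(\nu_1*\mca{T}_w)=t\cdot\nu_w=\delta_{\wt{B}}(t)\,\nu_{tw},
\]
and cancellation yields $\nu_t*\mca{T}_w=\nu_{tw}$.

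For (iii), dominance of $t$ enters through the Iwahori factorization. Since $t(U\cap I_1)t^{-1}\subseteq U\cap I_1$ and $t^{-1}(U^-\cap I_1)t\subseteq U^-\cap I_1$, a short computation gives $I_1\cap tI_1t^{-1}=(U^-\cap I_1)\,T_1\,t(U\cap I_1)t^{-1}$, whence $I_1tI_1=(U^-\cap I_1)\,T_1\cdot t\cdot I_1$. Support analysis then contains ${\rm supp}(\nu_1*\mca{T}_t)$ in $\mu_nT_1UtI_1$. To evaluate at $g=t$, note the contributing $h$ lie in $T_1UI_1\cap tI_1t^{-1}I_1=T_1UI_1\cdot\bigl(t(U^-\cap I_1)t^{-1}\bigr)I_1$, since the other factors of $tI_1t^{-1}$ already lie in $I_1$. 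Writing $h=u\gamma$ with $u\in t(U^-\cap I_1)t^{-1}\subseteq U^-$ and $\gamma\in I_1$, the key identity $U^-\cap T_1UI_1=U^-\cap I_1$ forces $u\in U^-\cap I_1$, so $h\in(U^-\cap I_1)I_1=I_1$. The integrand is identically $1$ on $I_1$, and the integral evaluates to ${\rm vol}(I_1)=1$, giving $\nu_1*\mca{T}_t=\nu_t$.

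The main obstacle in each part is the support reduction and the identification of the integration domain at the evaluation point; once the key identity $U^-\cap T_1UI_1=U^-\cap I_1$ is in hand, the rest is bookkeeping. Cocycle issues never arise because all computations localize to pro-$p$ subgroups, where the cover splits uniquely via $s_K$.
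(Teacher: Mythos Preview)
Your overall strategy---support containment followed by point evaluation---matches the paper's, and parts (ii) and the evaluation steps are fine. However, there is a genuine gap in the support argument of (i), and a false intermediate claim in (iii).

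In (i) you assert that ``by the bijection $W(1)\xrightarrow{\sim}T_1U\backslash\wt{G}/I_1$, the support must collapse to the single double coset $\mu_nT_1UwI_1$.'' The bijection only tells you that the double cosets are indexed by $W(1)$; it gives no reason why $\mu_nT_1UI_1wI_1$ meets just one of them. You must actually show $UI_1wI_1=UwI_1$. The paper does this in one line: write $UI_1=UT_1(U^-\cap I_1)$ by Iwahori factorization (absorbing $U\cap I_1$ into $U$), then use that $w^{-1}(U^-\cap I_1)w\subseteq I_1$ for any $w\in K\cap N(T)$ (since $w^{-1}U_{-\alpha}^1w=U_{-w^{-1}\alpha}^1\subseteq I_1$ whether $w^{-1}\alpha$ is positive or negative), together with $wT_1w^{-1}=T_1$. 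This is exactly the missing step; without it your evaluation at $g=w$ only shows the $w$-coset appears, not that nothing else does.

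In (iii) your claim $I_1tI_1=(U^-\cap I_1)T_1\cdot t\cdot I_1$ is false: from your correct computation $I_1\cap tI_1t^{-1}=(U^-\cap I_1)T_1\,t(U\cap I_1)t^{-1}$ one obtains representatives for $I_1/(I_1\cap tI_1t^{-1})$ in $U\cap I_1$, giving $I_1tI_1=(U\cap I_1)tI_1$, not what you wrote. (A concrete counterexample: for $\SL_2$ with $t=\alpha^\vee(\varpi)$, the element $e_\alpha(1)t$ does not lie in $(U^-\cap I_1)T_1tI_1$.) The paper instead argues directly on $UI_1tI_1$: again $UI_1=UT_1(U^-\cap I_1)$, and dominance gives $t^{-1}(U^-\cap I_1)t\subseteq I_1$, so $UI_1tI_1=UT_1tI_1=UtI_1$. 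Your subsequent evaluation at $g=t$ via the identity $U^-\cap T_1UI_1=U^-\cap I_1$ is correct and is a legitimate alternative to the paper's coset-sum computation, but the support step preceding it needs repair.
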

\begin{proof}
We begin with two general calculations. First, for functions $f_{j}$ on the group $\wt{G}$, we have 
\begin{equation}\label{consupp}
\mathrm{supp}(f_{1}*f_{2})\subseteq \mathrm{supp}(f_{1})\cdot \mathrm{supp}(f_{2}).
\end{equation}
Second, for $\mathcal{T}_{g}\in\mca{H}$ and $\nu \in \mca{M}$ we have 
\begin{equation}  \label{ModIdEqn1}
\begin{aligned} 
\nu*\mathcal{T}_{g}(g')& =\int_{\wt{G}} \nu(g'h^{-1})\mathcal{T}_{g}(h)dh \\
&= \int_{\mu_{n} I_{1}gI_{1}} \nu(g'h^{-1})\mathcal{T}_{g}(h)dh \\
&=\sum_{\mu_{n}I_{1}\backslash\mu_{n}I_{1}gI_{1}} \nu(g'h^{-1})\mathcal{T}_{g}(h) \\
&= \sum_{h\in (I_{1}\cap g^{-1}I_{1}g)\backslash I_{1}} \nu(g'h^{-1}g^{-1})\mathcal{T}_{g}(gh).
\end{aligned}
\end{equation}

Now we prove (i). By \eqref{consupp} we have $\mathrm{supp}(\nu_1*\mca{T}_w)\subseteq \mu_{n}UI_{1}wI_{1}$. We use the Iwahori factorization and $w^{-1}(I_{1}\cap U^-)w\subseteq I_{1}$ to see that $\mu_{n}UI_{1}wI_{1}=\mu_{n}UwI_{1}$. Thus $\nu_1*\mca{T}_{w}=c \cdot \nu_w$ for some $c \in \C$. Using \eqref{ModIdEqn1} we compute $\nu_1*\mca{T}_{w}(w)=1$, thus $\nu_{1}*\mca{T}_{w}= \nu_{w}$. Specifically, 
\begin{equation*}
\nu_{1}*\mca{T}_{w}(w)=\sum_{(I_{1}\cap w^{-1}I_{1}w)\backslash I_{1}}  \nu_{1}(wh^{-1}w^{-1})\mathcal{T}_{w}(wh)=1.
\end{equation*}
The last equality follows because $(I_{1}\cap w^{-1}I_{1}w)\backslash I_{1}\cong \prod_{\alpha\in \Phi_{+}\cap w^{-1}\Phi_{-}}U_{\alpha}^{1}\backslash U_{\alpha}^{0}$ and the factorization of $UI_{1}$ into $U(T\cap I_{1})(U^{\mathrm{op}}\cap I_{1})$ is unique.

Item (ii) follows from item (i) and $\nu_t=\delta_{\overline{B}}^{-1}(t)(t\cdot \nu_1)$. For (iii), by \eqref{consupp} we have
\begin{equation*}
\mathrm{supp}(\nu_1*\mca{T}_t)\subseteq \mu_{n}T_{1}UI_{1}tI_{1}
=\mu_{n}T_{1}UU^-(\frak{p})tI_{1}
\subseteq \mu_{n}T_{1}UtI_{1},
\end{equation*}
where the last containment follows because $t$ is dominant. Thus it suffices to show that $\nu_1*\mathcal{T}_{t}(t)=1$. Using \eqref{ModIdEqn1} we have 
\begin{equation*}
\nu_{1}*\mathcal{T}_{t}(t)=\sum_{(I_{1}\cap t^{-1}I_{1}t)\backslash I_{1}} \nu_{1}(th^{-1}t^{-1})\mathcal{T}_{t}(th)=1.
\end{equation*}
The last equality follows since $(I_{1}\cap t^{-1}I_{1}t)\backslash I_{1}\simeq \prod_{\alpha\in \Phi_{-}}U_{\alpha}^{1+\ell_{t,\alpha}}\backslash U_{\alpha}^{1}$, where $\ell_{t,\alpha}\in\Z_{\gest 0}$.
\end{proof}

\begin{lm}\label{TInj}
The $\C$-algebra map $\mca{R} \rightarrow {\rm End}_{\mca{H}}(\mca{M}) \simeq \mca{H}$ defined by $r\mapsto (m\mapsto r\cdot m)$ is an injection.
\end{lm}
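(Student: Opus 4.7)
The plan is to prove injectivity by evaluating any putative element of the kernel on the distinguished vector $\nu_1 \in \mca{M}$, and then reading off the vanishing of its coefficients from the linear independence of the basis $\{\nu_w : w \in W(1)\}$.

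An arbitrary element of $\mca{R}$ can be written uniquely in the form $r = \sum_{t \in T/T_1} c_t \, \tilde{t}\, e_\epsilon$, where for each $t \in T/T_1$ we choose a lift $\tilde{t} \in \wt{T}/T_1$; the choice of lift only rescales the coefficient by an element of $\epsilon(\mu_n) \subset \C^\times$ and is absorbed by $e_\epsilon$. Assuming $r$ lies in the kernel, in particular $r \cdot \nu_1 = 0$ in $\mca{M}$. The computation of $r \cdot \nu_1$ proceeds in two steps. First, since $\nu_1$ is $\epsilon$-genuine and $\delta_{\wt{B}}(\zeta) = 1$ for $\zeta \in \mu_n$, the defining formula $(t \cdot F)(g) = \delta_{\wt{B}}(t)\, F(t^{-1}g)$ yields $\zeta \cdot \nu_1 = \epsilon(\zeta)\, \nu_1$, so $e_\epsilon \cdot \nu_1$ is a nonzero scalar multiple of $\nu_1$. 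Second, the same defining formula, combined with the well-definedness argument for $\nu_g$ already given in the excerpt (in which $\nu'_{tk} := t \cdot \nu_k$ is shown to have support $\mu_n T_1 U t k I_1$), implies that for any $\tilde{t} \in \wt{T}/T_1$ one has $\tilde{t} \cdot \nu_1 = \delta_{\wt{B}}(\tilde{t})\, \nu_{\tilde{t}}$. Combining the two steps gives
$$r \cdot \nu_1 = \lambda \sum_{t \in T/T_1} c_t \, \delta_{\wt{B}}(\tilde{t}) \, \nu_{\tilde{t}}$$
for some fixed $\lambda \in \C^\times$.

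To conclude, I would observe that the composition $T/T_1 \hookrightarrow N(T)/T_1 = W(1)$ is injective, so the elements $\tilde{t}$ (viewed in $W(1)$) are pairwise distinct as $t$ ranges over $T/T_1$. Since $\{\nu_w : w \in W(1)\}$ is a $\C$-basis of $\mca{M}$ and $\delta_{\wt{B}}(\tilde{t}) \neq 0$, the identity $r \cdot \nu_1 = 0$ forces $c_t = 0$ for every $t$, and hence $r = 0$.

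I do not anticipate any serious obstacle. The only subtle point is the careful bookkeeping of the $\mu_n$-twists when passing between $\wt{T}/T_1$ and $T/T_1$, which is precisely what is encoded in the identity $e_\epsilon \cdot \nu_1 \in \C^\times \nu_1$; once this is handled, injectivity reduces to the linear independence of the $\nu_w$'s, a fact already available from the paragraph preceding Lemma \ref{HMIso}.
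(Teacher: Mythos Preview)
Your proposal is correct and follows essentially the same approach as the paper: both evaluate a putative kernel element on $\nu_1$ and use that the translates $t\cdot\nu_1$ for distinct $t\in T/T_1$ have pairwise disjoint supports $\mu_n T_1 U t I_1$, hence are linearly independent. The paper's proof is simply a terser version of yours, phrasing the conclusion directly in terms of supports rather than expanding $r$ in a basis.
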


\begin{proof}
Let $t\in \overline{T}$. Then $t\cdot \nu_1$ is a function supported on $\mu_{n}T_{1}UtI_{1}$. Any two elements of $\overline{T}$ have distinct supports unless they are in the same $\mu_{n}T_{1}$ coset.
\end{proof}
Using Lemmas \ref{HMIso} and \ref{TInj}, we can associate to any $t\in\overline{T}/T_{1}$ an element 
$$\Theta_{t}\in \mca{H},$$
defined by $t\cdot \nu_{1}=\nu_{1}*\Theta_{t}$.  Let 
$$\mca{H}_\kappa :=\mca{H}_{\Z, 0}\otimes_{\Z[\mu_n]} \C = C_{c, \epsilon}^\infty(I_1 \backslash \wt{K} /I_1)$$
be the subalgebra of $\mca{H}$ consisting of functions with support contained in $\overline{K}$. For any $\chi\in \Hom(T_\kappa,\C^\times)$ we define 
$$c(\chi)=\frac{1}{\val{T_\kappa}} \sum_{t\in T_\kappa} \chi(t)\mathcal{T}_{t}\in\mca{H}_\kappa.$$
The next lemma contains some useful identities, which follow from the braid relations in $\mca{H}_\Z$. Note that the Weyl group $W$ acts on $\Hom(T_\kappa, \C^\times)$ through its action on $T_\kappa$.

\begin{lm}\label{BasicHeckeRel}
The following holds.
\begin{enumerate}
\item[(i)] As a $\C$-algebra $\mca{H}_\kappa$ is generated by the elements $\mathcal{T}_{\alpha}$, where $\alpha\in \Delta$, and $c(\chi)$, where $\chi\in\Hom(T_\kappa, \C^{\times})$.
\item[(ii)] Let $\alpha\in \Delta$ and $\chi\in\Hom(T_\kappa, \C^\times)$. Then
\begin{equation*}
\mathcal{T}_{\alpha}*c(\chi)=c({}^{w_\alpha}\chi)*\mathcal{T}_{\alpha}.
\end{equation*}
\item[(iii)] Let $t\in\overline{T}$ and $\chi\in \Hom(T_\kappa,\C^\times)$. Then 
\begin{equation*}
\Theta_{t}*c(\chi)=c(\varphi(t)\cdot\chi)*\Theta_{t}.
\end{equation*}
\end{enumerate}
\end{lm}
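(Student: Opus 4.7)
The plan is to address the three parts in turn, relying on the finite Hecke algebra structure for (i)--(ii) and on the universal-module realization $\mca{H} \simeq \mca{M}$ of Lemma \ref{HMIso} for (iii). Throughout I will use that $\{\mathcal{T}_h\}_{h \in T_\kappa}$ and $\{c(\chi)\}_\chi$ are Fourier-dual bases of the subspace $\C[T_\kappa] \subset \mca{H}_\kappa$; inverting $c(\chi) = |T_\kappa|^{-1}\sum_h \chi(h)\mathcal{T}_h$ on the finite abelian group $T_\kappa$ yields $\mathcal{T}_h = \sum_\chi \chi(h)^{-1} c(\chi)$.

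For (i), Lemma \ref{HeckeSplit} identifies $\mca{H}_\kappa$ with the linear finite Iwahori--Hecke algebra $C^\infty(I_1 \backslash K / I_1, \C)$, whose double cosets are indexed by $T_\kappa \rtimes W$ via the Bruhat decomposition of $G_\kappa$. The braid relations in Proposition \ref{P:IM1}, restricted to $\mca{H}_{\Z,0}$, show that $\{\mathcal{T}_\alpha : \alpha \in \Delta\} \cup \{\mathcal{T}_h : h \in T_\kappa\}$ generate $\mca{H}_\kappa$; swapping the second family for $\{c(\chi)\}_\chi$ via the Fourier inversion above preserves the generation, giving (i).

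For (ii), the Iwahori--Matsumoto presentation (specifically the relation $\msc{T}_\alpha h = (w_\alpha(1) h w_\alpha(-1)) \msc{T}_\alpha$ in the ideal $\msc{I}$ of Proposition \ref{P:IM1}) gives $\mathcal{T}_\alpha * \mathcal{T}_h = \mathcal{T}_{{}^{w_\alpha}h} * \mathcal{T}_\alpha$ for all $h \in T_\kappa$, where ${}^{w_\alpha}h$ is conjugation inside $T_\kappa$. Expanding $c(\chi)$, applying this relation termwise, and re-indexing via $h \mapsto {}^{w_\alpha^{-1}}h$ replaces $\chi$ by ${}^{w_\alpha}\chi$ and yields (ii).

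For (iii), which is the main content of the lemma, the plan is to transport the identity to $\mca{M}$ via $h \mapsto \nu_1 * h$ and check
\[
(t \cdot \nu_1) * c(\chi) \,=\, \nu_1 * c(\varphi(t) \cdot \chi) * \Theta_t.
\]
Using $t \cdot \nu_1 = \delta_{\overline{B}}(t)\,\nu_t$ and Lemma \ref{ModId}(ii) with a lift $s' \in \mbf{T}(O_F)$ of $s \in T_\kappa$, the left side becomes $\delta_{\overline{B}}(t)\,|T_\kappa|^{-1} \sum_{s} \chi(s)\,\nu_{ts'}$. The central commutator identity $ts' = [t,s']\cdot s't$ in $\wt{T}$ with $[t,s'] \in \mu_n$, combined with the $\epsilon$-genuine transformation law on $\mca{M}$, converts $\nu_{ts'}$ into a $\mu_n$-scalar multiple of $\nu_{s't}$, and the definition $\varphi(t)(s) = [t,s']$ identifies this scalar with (a power of) $\varphi(t)(s)$. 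Reindexing produces $\varphi(t) \cdot \chi$, while Lemma \ref{ModId}(i) combined with the defining property of $\Theta_t$ lets me rewrite $\nu_{s't}$ as $\nu_1 * \mathcal{T}_s * \Theta_t$, matching the right side. The main obstacle is tracking the precise $\mu_n$-twist from the commutator so that the resulting character is exactly $\varphi(t)\cdot\chi$ rather than its inverse; once reconciled with the paper's commutator and $\varphi$-conventions, injectivity in Lemma \ref{HMIso} upgrades the identity from $\mca{M}$ back to $\mca{H}$.
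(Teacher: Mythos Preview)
Your argument is correct; for (i) and (ii) it is precisely what the paper's one-line proof (``follows from the braid relations in $\mca{H}_\Z$'') means. For (iii) your passage through $\mca{M}$ is valid but more elaborate than necessary. A quicker route, and probably the one the paper has in mind: for $h \in \mbf{T}(O_F)$ one has $\Theta_h = \mathcal{T}_h$ (both send $\nu_1$ to $\nu_h$, since $\delta_{\wt B}(h)=1$), and since $t \mapsto \Theta_t$ is multiplicative (the map of Lemma~\ref{TInj} is an algebra embedding), the identity $th = [t,h]\,ht$ in $\wt{T}$ gives $\Theta_t * \mathcal{T}_h = \Theta_{[t,h]} * \mathcal{T}_h * \Theta_t$ directly; summing against $\chi$ yields (iii). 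Equivalently, for dominant $t$ this is literally the braid relation $\mathcal{T}_t\mathcal{T}_h = \mathcal{T}_{th}$ via Lemma~\ref{ModId}(iii), and general $t$ follows by writing $t = t_1 t_2^{-1}$ with $t_i$ dominant. Your caveat about the sign is appropriate but is a matter of the paper's $\epsilon$-conventions rather than a gap in your reasoning.
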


We end this section with a Bernstein decomposition.

\begin{prop}\label{BernDecomp}
Multiplication in $\mca{H}$ induces a vector space isomorphism
\begin{equation*}
\mca{R} \otimes_{\mca{R} \cap \mca{H}_\kappa} \mca{H}_\kappa \rightarrow \mca{H}
\end{equation*}
sending $te_{\epsilon}\otimes h$ to $\Theta_{t}*h$. Furthermore, the same is true with the factors reversed.
\end{prop}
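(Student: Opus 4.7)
The plan is to pass to the universal principal series module $\mca{M}$, use Lemma \ref{HMIso} to translate the multiplication map into $\mca{M}$, verify surjectivity against the basis $\{\nu_w : w \in W(1)\}$, and conclude injectivity from a dimension count.

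The first task will be to show that $t \mapsto \Theta_t$ is an injective $\C$-algebra homomorphism $\mca{R} \hookrightarrow \mca{H}$. Applying both sides of $\nu_1 * \Theta_t = t \cdot \nu_1$ to a product $t_1 t_2$ and using the commutativity of the $\wt{T}$- and $\mca{H}$-actions on $\mca{M}$ gives $\Theta_{t_1 t_2} = \Theta_{t_1} * \Theta_{t_2}$ after invoking Lemma \ref{HMIso}; injectivity is Lemma \ref{TInj}. I will also need the identification $\Theta_t = \mca{T}_t$ for every $t \in \wt{T}(O_F)/T_1$: such $t$ is a unit, hence both dominant and anti-dominant, so Lemma \ref{ModId}(iii) applies to give $\nu_1 * \mca{T}_t = \nu_t = t \cdot \nu_1 = \nu_1 * \Theta_t$. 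This identifies $\mca{R} \cap \mca{H}_\kappa$ with the image of $\C[T_\kappa] e_\epsilon$ in both algebras and shows the product map $(r, h) \mapsto \Theta_r * h$ is $(\mca{R} \cap \mca{H}_\kappa)$-balanced, so descends to a well-defined $\C$-linear map
$$\Phi: \mca{R} \otimes_{\mca{R} \cap \mca{H}_\kappa} \mca{H}_\kappa \to \mca{H}.$$

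For surjectivity, I will compose with $\mca{H} \xrightarrow{\sim} \mca{M}$ of Lemma \ref{HMIso} and show every basis vector $\nu_w$ lies in the image. Since every class in $W$ has a representative in $K$, one has $N(T) = T \cdot N_K(T)$, so any lift of $w \in W(1)$ factors as $tk$ with $t \in \wt{T}$ and $k \in \wt{N_K(T)}$. Lemma \ref{ModId}(ii) then yields
$$\nu_1 * \Phi(\Theta_t \otimes \mca{T}_k) = \nu_t * \mca{T}_k = \nu_{tk} = \nu_w,$$
establishing surjectivity. For injectivity, Lemma \ref{HeckeSplit} together with the finite-group analogue of Theorem \ref{T:IM2} realizes $\mca{H}_\kappa$ as a free left $\C[T_\kappa] e_\epsilon$-module of rank $|W|$, and the section $y \mapsto \s_y$ makes $\mca{R}$ a free right $\C[T_\kappa] e_\epsilon$-module of rank $|Y|$. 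Both sides therefore have $\C$-dimension $|Y| \cdot |T_\kappa| \cdot |W| = |T_\kappa| \cdot |W_{\rm ex}| = |W(1)|$, matching the Iwahori--Matsumoto basis of $\mca{H}$; surjectivity then forces $\Phi$ to be an isomorphism. The factors-reversed assertion will follow by the mirror argument, using $C^\infty_{c,\epsilon}(I_1 \backslash \wt{G} / U^- T_1)$ as the right-handed analogue of $\mca{M}$ carrying a right $\wt{T}$-action.

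The main subtlety will be the identification $\Theta_t = \mca{T}_t$ on $\wt{T}(O_F)/T_1$ in the first step, since it is this identification that correctly pins down $\mca{R} \cap \mca{H}_\kappa$ and makes the tensor product meaningful; once this is in place, the remainder reduces to bookkeeping with Lemma \ref{ModId} and the Iwahori--Matsumoto basis of Theorem \ref{T:IM2}.
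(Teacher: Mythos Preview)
Your approach is essentially the paper's: compose with the isomorphism $\mca{H}\simeq\mca{M}$ from Lemma \ref{HMIso}, and use Lemma \ref{ModId} to see that the elements $\Theta_t\otimes\mca{T}_k$ hit the basis $\{\nu_w\}$ of $\mca{M}$ up to nonzero scalars. There is, however, one genuine gap.

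Your injectivity argument is a dimension count: both sides have $\C$-dimension $|Y|\cdot|T_\kappa|\cdot|W|=|W(1)|$, so surjectivity forces injectivity. But $|Y|$ is countably infinite, and a surjection between vector spaces of the same infinite dimension need not be injective (the shift map on $\bigoplus_{\N}\C$ is a counterexample). So the conclusion does not follow from what you wrote.

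The fix is already implicit in what you have. You established that $\mca{R}$ is a free right $A$-module on $\{\s_y e_\epsilon:y\in Y\}$ and $\mca{H}_\kappa$ is a free left $A$-module; hence $\{\s_y e_\epsilon\otimes\mca{T}_k:y\in Y,\ k\in N_K(T)/T_1\}$ is a $\C$-basis of the tensor product, not merely a spanning set. Your surjectivity computation shows this basis maps to $\{\delta(\s_y)\,\nu_{\s_y k}\}$, and $(y,k)\mapsto \s_y k$ is a bijection $Y\times (N_K(T)/T_1)\to W(1)$. So a basis goes to a basis (up to nonzero scalars), and $\Phi$ is an isomorphism. This is exactly the paper's argument, stated tersely there as ``$\nu_1*\Theta_t*\mca{T}_w=C\cdot\nu_{tw}$ with $C\in\C^\times$, thus the map is a $\C$-isomorphism.''

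One minor correction: you wrote $\nu_1*\Theta_t=\nu_t$, but from the definition $t\cdot\nu_1=\delta_{\wt{B}}(t)\nu_t$, so $\nu_1*\Theta_t=\delta_{\wt{B}}(t)\nu_t$ and your displayed line should read $\nu_1*\Phi(\Theta_t\otimes\mca{T}_k)=\delta_{\wt{B}}(t)\,\nu_{tk}$. This does not affect surjectivity but is needed for the basis-to-basis argument above.
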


\begin{proof}
The result follows from  \cite[Lemma 1.7.1]{HKP10}. Consider the composite of the above map  with the isomorphism $\mca{H} \longrightarrow \mca{M}$ from Lemma \ref{HMIso}:
\begin{equation*}
\mca{R} \otimes_{\mca{R} \cap \mca{H}_\kappa} \mca{H}_\kappa \longrightarrow \mca{H} \longrightarrow \mca{M}.
\end{equation*}
For $t\in \overline{T}/T_{1}$ and $w\in N(T) \cap K$, this map sends $te_{\epsilon}\otimes \mathcal{T}_{w}$ to $\nu_1*\Theta_{t}*\mathcal{T}_{w}$. By Lemma \ref{ModId}, $\nu_1*\Theta_{t}*\mathcal{T}_{w}=C \cdot \nu_{tw}$, where $C \in \C^\times$. Thus the map is a $\C$-isomorphism.
\end{proof}

\begin{comment}
Based on Proposition \ref{BernDecomp}, to understand $\mca{H}$ it suffices to understand both $\mca{R}$ and $\mca{H}_\kappa$, and how these two subalgebras of $\mca{H}$ interact. Lemma \ref{HeckeSplit} relates $\mca{H}_\kappa$ to the known structure of the finite Hecke algebra $C^{\infty}(I_{1}\backslash K/I_{1})$. On the other hand, the Bernstein relations to be proved in \S \ref{ProPBR} describe how $\mca{R}$ and $\mca{H}_\kappa$ interact. In the next subsection we will describe the structure of $\mca{R}$. 
\end{comment}

\subsection{Decomposing $\mca{H}$}
In this subsection we describe the $\C$-algebra structure of $\mca{H}$ as a product of matrix rings, see Proposition \ref{MatProd} below.

To begin, we decompose $\mca{H}$ as a right $\mca{H}$-module using the idempotents $c(\chi) \in \C[T_\kappa]$. The decomposition $\C[T_\kappa]=\bigoplus_\chi c(\chi) \C[T_\kappa]$, where the direct sum is taken over $\chi\in \Hom(T_\kappa,\mu_{q-1})$, immediately gives the following:

\begin{lm}\label{HeckeIdemDecomp}
One has a decomposition
$$\mca{H} \simeq \bigoplus_{\chi} c(\chi)\mca{H},$$
where $\chi$ is taken over elements in the set $\Hom(T_\kappa,\mu_{q-1})$.
\end{lm}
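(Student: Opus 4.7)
The plan is to realize the $c(\chi)$ as the idempotent basis of a finite group algebra embedded in $\mca{H}$ and then transport the standard decomposition across.

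First I would identify a copy of $\C[T_\kappa]$ inside $\mca{H}$. Through the splitting $s_K$, the group $T_\kappa = \mbf{T}(O_F)/T_1$ embeds into $\wt{N}(I_1)/T_1$, and since every $t\in T_\kappa$ satisfies $\mfr{q}_t=1$, the braid relations proved earlier give $\mca{T}_t*\mca{T}_{t'} = \mca{T}_{tt'}$. Hence the $\C$-span of $\set{\mca{T}_t:t\in T_\kappa}$ is a subalgebra of $\mca{H}_\kappa$ canonically isomorphic to the group algebra $\C[T_\kappa]$, with identity element corresponding (up to the fixed normalization of Haar measure) to the identity of $\mca{H}$. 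Note also that since $\mbf{T}$ is split, $T_\kappa \simeq (\kappa^\times)^r$, so every character of $T_\kappa$ has image in $\mu_{q-1}$; thus $\Hom(T_\kappa,\C^\times) = \Hom(T_\kappa,\mu_{q-1})$.

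Next I would apply the standard orthogonality relations for characters of the finite abelian group $T_\kappa$. These give, for $\chi,\chi' \in \Hom(T_\kappa,\mu_{q-1})$, that
\[
c(\chi)*c(\chi') = \delta_{\chi,\chi'}\, c(\chi),\qquad \sum_{\chi} c(\chi) = 1,
\]
where the sum is the identity of the subalgebra $\C[T_\kappa]\subset \mca{H}$ (and hence, by the previous paragraph, the identity of $\mca{H}$). Thus the $c(\chi)$ form a complete set of orthogonal idempotents of $\mca{H}$.

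Finally, since multiplication by any complete orthogonal system of idempotents yields a direct-sum decomposition, I would conclude that right multiplication by $\sum_\chi c(\chi) = 1$ expresses $\mca{H}$ as
\[
\mca{H} = \bigoplus_{\chi \in \Hom(T_\kappa,\mu_{q-1})} c(\chi)\,\mca{H}
\]
as a right $\mca{H}$-module. There is no real obstacle here: the content of the lemma is purely the character-orthogonality decomposition of $\C[T_\kappa]$, combined with the (already established) fact that $\C[T_\kappa]$ sits in $\mca{H}$ as a subalgebra. The only thing one has to verify carefully, and which we have already done, is that the normalization chosen for $c(\chi)$ is compatible with the convolution product so that the idempotent identities hold on the nose in $\mca{H}$ rather than only in $\C[T_\kappa]$.
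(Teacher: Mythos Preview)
Your proof is correct and follows the same approach as the paper: the lemma is an immediate consequence of the decomposition $\C[T_\kappa]=\bigoplus_\chi c(\chi)\C[T_\kappa]$ into primitive idempotents, together with the embedding $\C[T_\kappa]\subset\mca{H}$ as a unital subalgebra. Your write-up simply makes explicit the details (the identification of $\C[T_\kappa]$ inside $\mca{H}$ via the braid relations, and the equality $\Hom(T_\kappa,\C^\times)=\Hom(T_\kappa,\mu_{q-1})$) that the paper leaves implicit.
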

We view $\mca{H}$ as a left $\Hom_{\mca{H}}(\mca{H}, \mca{H})$-module. In particular, the map 
$$\mca{H} \longrightarrow \Hom_{\mca{H}}(\mca{H},\mca{H})$$
defined by $h\mapsto (h'\mapsto h*h')$ gives an isomorphism of rings. For $\chi\in \Hom(T_\kappa,\mu_{q-1})$ we set
$$\mca{H}_{\chi}:=c(\chi)\mca{H} c(\chi).$$
%This set up allows us to avoid mention of opposite rings.

\begin{lm} \label{HeckeIdemHom}
Let $\chi,\chi'\in \Hom(T_\kappa,\mu_{q-1})$. Then the map 
$$c(\chi')\mca{H} c(\chi)\longrightarrow \Hom_{\mca{H}}(c(\chi)\mca{H}, c(\chi')\mca{H})$$
defined by $c(\chi')hc(\chi)\mapsto (f\mapsto c(\chi')hc(\chi)*f)$ is an isomorphism of $(\mca{H}_{\chi'},\mca{H}_{\chi})$-bimodules.
\end{lm}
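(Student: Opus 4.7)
The claim is an instance of a standard ring-theoretic fact: for any unital ring $R$ and idempotents $e_1, e_2 \in R$, the natural map $e_2 R e_1 \to \Hom_R(e_1 R, e_2 R)$ sending $x$ to $(y \mapsto x \cdot y)$ is an isomorphism of abelian groups with inverse $f \mapsto f(e_1)$. My plan is to apply this with $R = \mca{H}$, $e_1 = c(\chi)$, and $e_2 = c(\chi')$, and then to verify compatibility with the $(\mca{H}_{\chi'}, \mca{H}_\chi)$-bimodule structure. The preliminary point is that the $c(\chi)$ form a set of mutually orthogonal idempotents, which follows from the identity $\mca{T}_{t_1} * \mca{T}_{t_2} = \mca{T}_{t_1 t_2}$ for $t_1, t_2 \in T_\kappa$ (a direct consequence of the Braid Relations, since elements of $T_\kappa$ lie in the fiber over $1 \in W_{\rm ex}$) combined with the orthogonality of characters on the finite abelian group $T_\kappa$; together these give $c(\chi) * c(\chi') = \delta_{\chi,\chi'} \, c(\chi)$.

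The rest amounts to routine bookkeeping. The map in the lemma, call it $\Psi$, is well defined: for $x = c(\chi') h c(\chi)$ we have $x = c(\chi') * x$, so $\Psi(x)(f) = x * f$ lies in $c(\chi')\mca{H}$ for every $f \in c(\chi)\mca{H}$, and the right $\mca{H}$-linearity of $\Psi(x)$ is immediate. The candidate inverse $\Phi \colon f \mapsto f(c(\chi))$ lands in $c(\chi')\mca{H}c(\chi)$, since $f(c(\chi)) \in c(\chi')\mca{H}$ by the codomain of $f$, and $f(c(\chi)) = f(c(\chi)^2) = f(c(\chi)) * c(\chi)$ by right $\mca{H}$-linearity. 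The identities $\Phi \circ \Psi = \mathrm{id}$ and $\Psi \circ \Phi = \mathrm{id}$ then follow from $x * c(\chi) = x$ for $x \in c(\chi')\mca{H}c(\chi)$ and from right $\mca{H}$-linearity, respectively.

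Finally, compatibility with the bimodule structure reduces to the associativity of convolution: left multiplication by $a \in \mca{H}_{\chi'}$ on $c(\chi')\mca{H}c(\chi)$ corresponds under $\Psi$ to post-composition of $\Psi(x)$ by left multiplication by $a$, and the right action of $b \in \mca{H}_\chi$ similarly corresponds to pre-composition. There is no genuine obstacle in this proof; the content is essentially that the Peirce decomposition of $\mca{H}$ with respect to the orthogonal idempotents $\{c(\chi)\}$ is compatible with the $\Hom$ functor.
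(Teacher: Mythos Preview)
Your proof is correct and follows essentially the same approach as the paper's. The paper's argument is more terse: it embeds $\Hom_{\mca{H}}(c(\chi)\mca{H}, c(\chi')\mca{H})$ into $\Hom_{\mca{H}}(\mca{H},\mca{H}) \simeq \mca{H}$ via extension by zero and then identifies the image as $c(\chi')\mca{H} c(\chi)$, which unwinds to exactly your inverse map $f \mapsto f(c(\chi))$.
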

\begin{proof}
There is an embedding $\Hom_{\mca{H}}(c(\chi)\mca{H}, c(\chi')\mca{H})\hookrightarrow \Hom_{\mca{H}}(\mca{H}, \mca{H})\simeq \mca{H}$. It is simple to check that the image of this embedding is $c(\chi')\mca{H} c(\chi)$.
\end{proof}

There is an action of $W$ on $\Hom(T_\kappa,\mu_{q-1})$ via the action of $W$ on $T_\kappa$. There is also an action of $\msc{X}_{Q,n} = Y/Y_{Q,n}$ on $\Hom(T_\kappa,\mu_{q-1})$ via the embedding 
\begin{equation}
\begin{tikzcd}
\varphi: \msc{X}_{Q,n} \ar[r, hook] & \Hom(T_\kappa,\mu_{n}).
\end{tikzcd}
\end{equation}
These actions are compatible in the sense that they give rise to a well-defined action of  $W \ltimes \msc{X}_{Q,n}$ on $\Hom(T_\kappa, \mu_{q-1})$.

\begin{lm}\label{HeckeIdemnon0}
The space $c(\chi')\mca{H} c(\chi)$ is nonzero if and only if $\chi$ and $\chi'$ are in the same $W\ltimes \msc{X}_{Q,n}$-orbit.
\end{lm}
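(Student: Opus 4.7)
My plan is to bootstrap from the finite-level analogue inside $\mca{H}_\kappa$, using the Bernstein presentation (Proposition \ref{BernDecomp}) and the two commutation identities of Lemma \ref{BasicHeckeRel}(ii)--(iii). First, at the finite level, I would prove the analogous statement: $c(\chi')\mca{H}_\kappa c(\chi)\neq 0$ if and only if $\chi$ and $\chi'$ lie in the same $W$-orbit on $\Hom(T_\kappa,\mu_{q-1})$. Expanding a general element of $\mca{H}_\kappa$ in the Iwahori--Matsumoto basis from Theorem \ref{T:IM2} (and using Lemma \ref{HeckeSplit} to reduce to the linear finite Hecke algebra), commuting $c(\chi)$ through each basis element $\mca{T}_w$ by Lemma \ref{BasicHeckeRel}(ii), and using orthogonality of characters of $T_\kappa$, one finds $c(\chi')*\mca{T}_w*c(\chi) = \delta_{\chi',{}^w\chi}\, c(\chi')*\mca{T}_w$. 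Each such $c(\chi')*\mca{T}_w$ with $\chi'={}^w\chi$ is visibly nonzero as a linear combination of distinct Iwahori--Matsumoto basis vectors in $\mca{H}_\kappa$.

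For the ``if'' direction at the pro-$p$ level, suppose $\chi'=\varphi(\bar t)\cdot {}^w\chi$ for some $\bar t\in \msc{X}_{Q,n}$ and $w\in W$. I would lift $\bar t$ to some $t\in \wt T/T_1$ and pick a nonzero $h_1\in c({}^w\chi)\mca{H}_\kappa c(\chi)$, available from the finite-level statement. By Lemma \ref{BasicHeckeRel}(iii),
\begin{equation*}
c(\chi')*\Theta_t * h_1 \;=\; c(\chi')*c(\varphi(t)\cdot {}^w\chi)*\Theta_t * h_1 \;=\; c(\chi')*\Theta_t * h_1,
\end{equation*}
so this is an element of $c(\chi')\mca{H} c(\chi)$. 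Nonvanishing is immediate: $\Theta_t$ is invertible in $\mca{H}$ by Corollary \ref{PropInvert}, hence left multiplication by $\Theta_t$ is injective on $\mca{H}_\kappa$, and $h_1\neq 0$.

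For the ``only if'' direction, I would use Proposition \ref{BernDecomp} to write every element of $\mca{H}$ as a finite sum of products $\Theta_t * h_0$ with $t\in \wt T/T_1$ and $h_0\in \mca{H}_\kappa$; applying Lemma \ref{HeckeIdemDecomp} inside $\mca{H}_\kappa$, I may further assume $h_0 = c(\eta')*h_1*c(\eta)$ with $h_1\in c(\eta')\mca{H}_\kappa c(\eta)$. Squeezing such a summand between $c(\chi')$ and $c(\chi)$ and applying Lemma \ref{BasicHeckeRel}(iii) together with orthogonality of the idempotents $c(\cdot)$ gives
\begin{equation*}
c(\chi')*\Theta_t*c(\eta')*h_1*c(\eta)*c(\chi) \;=\; \delta_{\eta,\chi}\,\delta_{\chi',\varphi(t)\eta'}\, c(\chi')*\Theta_t*h_1.
\end{equation*}
If $c(\chi')\mca{H} c(\chi)\neq 0$, some such summand must be nonzero, forcing $\eta=\chi$, $\chi'=\varphi(t)\eta'$, and $h_1\neq 0$. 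The last condition combined with the finite-level criterion gives $\eta'={}^w\chi$ for some $w\in W$, hence $\chi'=\varphi(t)\cdot {}^w\chi$; since $\varphi$ factors through $\msc{X}_{Q,n}$, this places $\chi$ and $\chi'$ in the same $W\ltimes \msc{X}_{Q,n}$-orbit.

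The main obstacle is the finite-level orbit criterion, which carries the real content; once that is in hand the pro-$p$ level reduces to mechanical bookkeeping with the two commutation relations and the invertibility of $\Theta_t$. A subtle point to verify is that $\varphi(t)\cdot \chi$ again lies in $\Hom(T_\kappa,\mu_{q-1})$ so that the products $c(\varphi(t)\cdot {}^w\chi)$ make sense as idempotents in the decomposition, but this is automatic since $T_\kappa$ has exponent dividing $q-1$.
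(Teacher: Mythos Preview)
Your argument is correct, but the paper takes a shorter and more direct route. The paper does not pass through the Bernstein presentation or a separate finite-level statement at all: it works directly with the Iwahori--Matsumoto basis $\{\mca{T}_{\wt w}:w\in W(1)\}$ of $\mca{H}$ and the single identity $\mca{T}_{\wt w}\mca{T}_t=\mca{T}_{wtw^{-1}}\mca{T}_{\wt w}$ for $t\in\mbf{T}(O_F)$, which holds by the braid relations since $\mbf{T}(O_F)\subset N_G(I_1)$. Summing over $t\in T_\kappa$ gives $\mca{T}_{\wt w}c(\chi)=c({}^w\chi)\mca{T}_{\wt w}$, where $w$ acts through its image in $W\ltimes\msc{X}_{Q,n}$; both directions then follow immediately by expanding an arbitrary element of $\mca{H}$ in this basis. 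Your approach instead separates the $W$-action (handled at the finite level in $\mca{H}_\kappa$) from the $\msc{X}_{Q,n}$-action (handled by the $\Theta_t$'s via Lemma~\ref{BasicHeckeRel}(iii)), which is conceptually clean but longer. One point you leave implicit in the ``only if'' direction: the assertion that ``some such summand must be nonzero'' uses that Proposition~\ref{BernDecomp} is a vector-space \emph{isomorphism}, so that after commuting $c(\chi')$ past each $\Theta_{\s_y}$ the resulting expression $\sum_y\Theta_{\s_y}k_y$ with $k_y\in\mca{H}_\kappa$ has uniquely determined components $k_y$; without this uniqueness, cancellation among summands could not be ruled out.
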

\begin{proof}
Let $\wt{w}\in \overline{N(T)}$ represent $w\in W\ltimes Y$. Let $t\in \mbf{T}(O_F)$. The result follows from the identity 
$$\mathcal{T}_{\wt{w}}\mathcal{T}_{t}=\mathcal{T}_{wtw^{-1}}\mathcal{T}_{\wt{w}},$$
which is due to the fact that $\mbf{T}(O_F) \subset N_G(I_1)$. Specifically, $\mathcal{T}_{\wt{w}}c(\chi)=c({}^w\chi)\mathcal{T}_{\wt{w}}$. Here $w$ is acting on $\chi$ through its image in $W\ltimes \msc{X}_{Q,n}$.
\end{proof}

\begin{cor}\label{HeckeIdemIso}
The module $c(\chi)\mca{H}$ is isomorphic to $c(\chi')\mca{H}$ if and only if $\chi$ and $\chi'$ are in the same $W\ltimes \msc{X}_{Q,n}$-orbit.
\end{cor}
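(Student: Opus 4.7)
The plan is to prove the forward and converse directions separately, each of which follows essentially from one of the preceding lemmas combined with the invertibility established in Corollary \ref{PropInvert}.

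For the ``only if'' direction, suppose there is a right $\mca{H}$-module isomorphism $\phi: c(\chi)\mca{H} \xrightarrow{\sim} c(\chi')\mca{H}$. Since $\phi \neq 0$, we have $\Hom_{\mca{H}}(c(\chi)\mca{H}, c(\chi')\mca{H}) \neq 0$. By Lemma \ref{HeckeIdemHom} this space is isomorphic to $c(\chi')\mca{H} c(\chi)$, so $c(\chi')\mca{H} c(\chi) \neq 0$, and Lemma \ref{HeckeIdemnon0} forces $\chi$ and $\chi'$ to lie in the same $W\ltimes \msc{X}_{Q,n}$-orbit.

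For the ``if'' direction, write $\chi' = {}^w \chi$ for some $w \in W \ltimes \msc{X}_{Q,n}$. Lift $w$ to an element of $W\ltimes Y$, then to some $\dot{w} \in N(T)$, and finally to $\wt{w} \in \wt{N(T)}$. By Corollary \ref{PropInvert} the element $\mca{T}_{\wt{w}} \in \mca{H}$ is invertible. The key identity from the proof of Lemma \ref{HeckeIdemnon0}, namely $\mca{T}_{\wt{w}} c(\chi) = c({}^w\chi) \mca{T}_{\wt{w}} = c(\chi') \mca{T}_{\wt{w}}$, shows that left multiplication by $\mca{T}_{\wt{w}}$ sends $c(\chi)\mca{H}$ into $c(\chi')\mca{H}$. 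Applying the analogous identity to $\wt{w}^{-1}$ shows that left multiplication by $\mca{T}_{\wt{w}}^{-1}$ sends $c(\chi')\mca{H}$ back into $c(\chi)\mca{H}$, and these two maps are mutually inverse right $\mca{H}$-module homomorphisms.

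No step is really the main obstacle here, as both directions are essentially formal given the preceding results; the only small subtlety is verifying that the inverse of $\mca{T}_{\wt{w}}$ is the correct candidate and that $\mca{T}_{\wt{w}}^{-1}$ satisfies the analogous commutation relation with $c(\chi')$, which follows from applying the same bilateral identity to $\wt{w}^{-1}$ or equivalently from uniqueness of the two-sided inverse in $\mca{H}$.
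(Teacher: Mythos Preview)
Your proof is correct and follows essentially the same approach as the paper's: both directions rest on Lemma \ref{HeckeIdemnon0} and the invertibility of $\mca{T}_{\wt{w}}$ from Corollary \ref{PropInvert}, with the isomorphism realized by left multiplication by $\mca{T}_{\wt{w}}$ and its inverse. The only minor remark is that your phrase ``applying the analogous identity to $\wt{w}^{-1}$'' is slightly loose since $\mca{T}_{\wt{w}}^{-1}\neq \mca{T}_{\wt{w}^{-1}}$ in general, but your alternative justification---conjugating the relation $\mca{T}_{\wt{w}}c(\chi)=c(\chi')\mca{T}_{\wt{w}}$ by $\mca{T}_{\wt{w}}^{-1}$---is exactly right and is what the paper does.
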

\begin{proof}
By Corollary \ref{PropInvert}, every $\mathcal{T}_{g}$ is invertible. Thus, if $c(\chi')\mathcal{T}_{g}c(\chi)\neq 0$, then $c(\chi')\mathcal{T}_{g}c(\chi)=c(\chi')\mathcal{T}_{g}=\mathcal{T}_{g}c(\chi)$, and in fact $c(\chi)\mathcal{T}_{g}^{-1}c(\chi')=c(\chi)\mathcal{T}_{g}^{-1}=\mathcal{T}_{g}^{-1}c(\chi')$ is its inverse.
\end{proof}

Given a ring $S$ and $k\in\Z_{\gest 1}$, we denote by $\mbf{M}(k,S)$ be the ring of $k\times k$ matrices with entries in $S$.

\begin{prop} \label{HeckeRingDecomp}
There is an isomorphism of rings
\begin{equation*}
\mca{H} \simeq \prod_{_{\mathcal{O}_{\chi}\subset \Hom(T_\kappa,\mu_{q-1})}} \mbf{M}(\val{\mca{O}_\chi},\mca{H}_{\chi}).
\end{equation*}
The product is taken over the $W\ltimes  \msc{X}_{Q,n}$-orbits.
\end{prop}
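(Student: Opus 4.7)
The plan is to decompose $\mca{H}$ using central idempotents attached to the $W\ltimes\msc{X}_{Q,n}$-orbits on $\Hom(T_\kappa,\mu_{q-1})$, and then to identify each block with a matrix ring via explicit matrix units.

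First I would assemble, for each orbit $\mca{O}$, the element $e_\mca{O} := \sum_{\chi\in\mca{O}} c(\chi) \in \C[T_\kappa] \subset \mca{H}$. Since $\{c(\chi)\}_\chi$ is a complete system of orthogonal idempotents in $\C[T_\kappa]$ summing to $1$, so is $\{e_\mca{O}\}_\mca{O}$. The key point is that each $e_\mca{O}$ is central in $\mca{H}$: writing $h = \sum_{\chi,\chi'} c(\chi') h\, c(\chi)$ for an arbitrary $h \in \mca{H}$, Lemma \ref{HeckeIdemnon0} forces $c(\chi')h\, c(\chi) = 0$ unless $\chi,\chi'$ lie in the same orbit, so both $h\, e_\mca{O}$ and $e_\mca{O}\, h$ collapse to $\sum_{\chi,\chi'\in\mca{O}} c(\chi') h\, c(\chi)$. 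This yields a ring decomposition $\mca{H} = \prod_\mca{O} e_\mca{O}\mca{H}$.

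Second, for each $\mca{O}$ I would exhibit $e_\mca{O}\mca{H} \simeq \mbf{M}(|\mca{O}|, \mca{H}_{\chi_\mca{O}})$ for any chosen representative $\chi_\mca{O}$. For each $\chi\in\mca{O}$ fix $w_\chi\in W\ltimes\msc{X}_{Q,n}$ with ${}^{w_\chi}\chi_\mca{O}=\chi$ and a lift $\wt{w}_\chi \in \wt{N(T)}$ (with $\wt{w}_{\chi_\mca{O}}=1$). The intertwining relation $\mca{T}_{\wt{w}_\chi}\, c(\chi_\mca{O}) = c(\chi)\, \mca{T}_{\wt{w}_\chi}$ (from the proof of Lemma \ref{HeckeIdemnon0}, compatible with Lemma \ref{BasicHeckeRel}), combined with the invertibility of $\mca{T}_{\wt{w}_\chi}$ (Corollary \ref{PropInvert}), shows that
$$E_{\chi,\chi'} := \mca{T}_{\wt{w}_\chi}\, c(\chi_\mca{O})\, \mca{T}_{\wt{w}_{\chi'}}^{-1} \in c(\chi)\,\mca{H}\,c(\chi')$$
obeys the matrix-unit relations $E_{\chi_1,\chi_2} E_{\chi_3,\chi_4} = \delta_{\chi_2,\chi_3} E_{\chi_1,\chi_4}$ and $\sum_{\chi\in\mca{O}} E_{\chi,\chi} = e_\mca{O}$. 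Consequently, the map
$$\Phi: \mbf{M}(|\mca{O}|,\mca{H}_{\chi_\mca{O}}) \longrightarrow e_\mca{O}\mca{H}, \qquad (a_{\chi,\chi'}) \longmapsto \sum_{\chi,\chi'\in\mca{O}} E_{\chi,\chi_\mca{O}}\, a_{\chi,\chi'}\, E_{\chi_\mca{O},\chi'}$$
is a ring homomorphism, and combining over all orbits yields the statement.

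The main obstacle is the bijectivity of $\Phi$, which reduces to showing that $c(\chi)\mca{H}\, c(\chi')$ is free of rank one as a $(\mca{H}_\chi, \mca{H}_{\chi'})$-bimodule generated by $E_{\chi,\chi'}$. Here the essential input is the invertibility from Corollary \ref{PropInvert}: any $c(\chi)\mca{T}_g\, c(\chi')$ can be transported back to $\mca{H}_{\chi_\mca{O}} = c(\chi_\mca{O})\mca{H}\, c(\chi_\mca{O})$ via left multiplication by $\mca{T}_{\wt{w}_\chi}^{-1}$ and right multiplication by $\mca{T}_{\wt{w}_{\chi'}}$, using Corollary \ref{HeckeIdemIso} to identify the source and target. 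Everything else — the centrality of $e_\mca{O}$, the matrix-unit identities, and the fact that $\mca{H}_{\chi_\mca{O}}$ commutes with the $E_{\chi,\chi'}$ in the appropriate sense — is formal bookkeeping from the commutation relations already recorded in Lemma \ref{BasicHeckeRel} and the proof of Lemma \ref{HeckeIdemnon0}.
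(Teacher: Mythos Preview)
Your argument is correct and complete: the centrality of $e_{\mca{O}}$ follows exactly as you say from Lemma~\ref{HeckeIdemnon0}, the elements $E_{\chi,\chi'}$ really are matrix units with $E_{\chi,\chi}=c(\chi)$ and $\sum_{\chi\in\mca{O}}E_{\chi,\chi}=e_{\mca{O}}$, and the bijectivity of $\Phi$ is the standard consequence of having a full system of matrix units in $e_{\mca{O}}\mca{H}$ (the inverse sends $h$ to the matrix $\bigl(E_{\chi_{\mca{O}},\chi}\,h\,E_{\chi',\chi_{\mca{O}}}\bigr)_{\chi,\chi'}$).

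The paper takes a different, more module-theoretic route: rather than building matrix units by hand, it identifies $\mca{H}\simeq\End_{\mca{H}}(\mca{H})$, decomposes the right module $\mca{H}=\bigoplus_{\chi}c(\chi)\mca{H}$ (Lemma~\ref{HeckeIdemDecomp}), invokes Corollary~\ref{HeckeIdemIso} to see that $c(\chi)\mca{H}\simeq c(\chi')\mca{H}$ exactly when $\chi,\chi'$ share an orbit, and then reads off $\End_{\mca{H}}\bigl((c(\chi)\mca{H})^{\oplus|\mca{O}_\chi|}\bigr)\simeq\mbf{M}(|\mca{O}_\chi|,\mca{H}_\chi)$ via Lemma~\ref{HeckeIdemHom}. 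Both arguments rest on the same two inputs, namely the commutation relation $\mca{T}_{\wt{w}}c(\chi)=c({}^{w}\chi)\mca{T}_{\wt{w}}$ and the invertibility of $\mca{T}_{\wt{w}}$ from Corollary~\ref{PropInvert}; the paper packages them as an isomorphism of cyclic modules, while you unwind that isomorphism into explicit matrix units. Your version has the advantage of giving the isomorphism by a formula, which can be useful downstream; the paper's has the advantage of making the Morita-theoretic structure transparent without choosing lifts $\wt{w}_\chi$.
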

\begin{proof} 
By Lemmas \ref{HeckeIdemDecomp}, \ref{HeckeIdemHom}, and \ref{HeckeIdemnon0} and Corollary \ref{HeckeIdemIso} we have
\begin{align*}
\mca{H} & \simeq \Hom_{\mca{H}}(\mca{H}, \mca{H}) \\
& \simeq  \Hom_{\mca{H}}\Big(\bigoplus_{\chi\in \mathcal{O}}c(\chi)\mca{H},\bigoplus_{\chi'\in\mathcal{O}}c(\chi')\mca{H} \Big)\\
& \simeq \prod_{\mathcal{O}_{\chi}}\Hom_{\mca{H}}(c(\chi)\mca{H}^{\val{\mca{O}_\chi}}, c(\chi)\mca{H}^{\val{\mca{O}_\chi}}),
\end{align*}
where the product is taken over the $W\ltimes \msc{X}_{Q,n}$-orbits in $\Hom(T_\kappa,\mu_{q-1})$. Finally, since
\begin{equation*}
\Hom_{\mca{H}}(c(\chi)\mca{H}^{\val{\mca{O}_\chi}}, c(\chi)\mca{H}^{\val{\mca{O}_\chi}}) \simeq \mbf{M}(\val{\mathcal{O}}, \Hom_{\mca{H}}(c(\chi)\mca{H} , c(\chi)\mca{H} )),
\end{equation*}
where $\Hom_{\mca{H}}(c(\chi)\mca{H}, c(\chi)\mca{H})\simeq \mca{H}_{\chi}$ by Lemma \ref{HeckeIdemHom}, the result follows.
\end{proof}

\subsection{The case of covering tori} 
The results of the preceding subsection can be refined in the case $G=T$. 

\begin{lm}\label{HeckeTorus3} 
Let $\chi,\chi'\in\Hom(T_\kappa, \C^\times)$.
\begin{enumerate}
\item[(i)] The center of $\mca{R}$ is equal to $\C[Z(\overline{T})/T_{1}]e_{\epsilon}$.
\item[(ii)] Suppose that $y_{0}\in Y$ satisfies $\chi=\varphi(y_{0})\cdot \chi'$. Then 
$$c(\chi)\mca{R} c(\chi')=Z(\mca{R})c(\chi) \Theta_{\s_{y_0}}c(\chi').$$
In particular, if $\chi=\chi'$, then $c(\chi)\mca{R} c(\chi)=Z(\mca{R})c(\chi)$. Hence, $c(\chi) \mca{R} c(\chi)\simeq Z(\mca{R})c(\chi)$ as rings.
\end{enumerate}
\end{lm}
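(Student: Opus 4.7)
The plan is to view $\mca{R} = \C[\wt{T}/T_1]e_\epsilon$ as a twisted group algebra of $H := \wt{T}/T_1$ and exploit the very explicit commutation law it inherits from $\wt{T}$.

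For (i), I will use that a basis of $\mca{R}$ is $\{h e_\epsilon : h \in H\}$ with product $(h_1 e_\epsilon)(h_2 e_\epsilon) = (h_1 h_2) e_\epsilon$. Because $\wt{T}$ is a central extension of the abelian group $T$ by $\mu_n$, any two lifts $h_1, h_2 \in H$ satisfy $h_1 h_2 = [h_1,h_2]\, h_2 h_1$ with $[h_1,h_2] \in \mu_n$; pushing through $e_\epsilon$ this becomes $(h_1 e_\epsilon)(h_2 e_\epsilon) = \epsilon([h_1,h_2])(h_2 e_\epsilon)(h_1 e_\epsilon)$. Since $\epsilon$ is injective on $\mu_n$, the element $h e_\epsilon$ is central iff $[h_1, h] = 1$ for every $h_1 \in H$, i.e.\ iff $h \in Z(H)$. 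Tameness of $\wt{G}$ gives $T_1 \subset Z(\wt{T})$, so $Z(H) = Z(\wt{T})/T_1$, and extending linearly yields $Z(\mca{R}) = \C[Z(\wt{T})/T_1] e_\epsilon$.

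For (ii), the first step is to write any element of $c(\chi)\mca{R} c(\chi')$ as a linear combination of the basis elements $c(\chi)\Theta_t c(\chi')$, $t \in H$. Applying Lemma \ref{BasicHeckeRel}(iii) gives $\Theta_t c(\chi') = c(\varphi(t)\chi')\Theta_t$, and orthogonality of the $c(\chi)$'s reduces this to
\[
c(\chi)\Theta_t c(\chi') = \delta_{\chi,\,\varphi(t)\chi'}\, c(\chi)\Theta_t.
\]
Combined with $\chi = \varphi(y_0)\chi'$, the nonvanishing condition becomes $\varphi(\hat{t}) = \varphi(\hat{y}_0)$ in $\Hom(T_\kappa,\C^\times)$; since $\varphi$ is injective on $\msc{X}_{Q,n}$ (Equation \eqref{D:varphi}), this forces $t$ to lie in the coset $\s_{y_0}\cdot Z(\wt{T})\mbf{T}(O_F)\cdot \mu_n$ modulo $T_1$.

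Next, for such a $t$ I decompose $t = \s_{y_0}\cdot z\cdot\tau$ (up to a $\mu_n$-cocycle) with $z \in Z(\wt{T})/T_1$ and $\tau \in T_\kappa$, and move the factors past $c(\chi)$ and $c(\chi')$: $\Theta_z$ is central in $\mca{R}$ by part (i) and $\Theta_\tau c(\chi') = \chi'^{-1}(\tau)c(\chi')$ by the standard identity $\mca{T}_\tau c(\chi') = \chi'^{-1}(\tau)c(\chi')$ for $\tau \in T_\kappa$. Absorbing the scalar and the cocycle from \eqref{Rel2} into a central factor, each generator takes the form
\[
c(\chi)\Theta_t c(\chi') \;=\; \lambda\cdot \Theta_z \cdot c(\chi)\Theta_{\s_{y_0}}c(\chi') \;\in\; Z(\mca{R}) \cdot c(\chi)\Theta_{\s_{y_0}}c(\chi').
\]
The reverse inclusion is immediate because $Z(\mca{R})$ is central: $Z(\mca{R})c(\chi)\Theta_{\s_{y_0}}c(\chi') = c(\chi)Z(\mca{R})\Theta_{\s_{y_0}}c(\chi') \subset c(\chi)\mca{R} c(\chi')$. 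This gives the claimed equality. The ``in particular'' statement follows by taking $y_0 = 0$, whence $\Theta_{\s_0}$ is the identity and $c(\chi)\mca{R} c(\chi) = Z(\mca{R})c(\chi)^2 = Z(\mca{R})c(\chi)$; the latter is a ring with unit $c(\chi)$, and the map $z \mapsto zc(\chi)$ realizes it as a quotient of $Z(\mca{R})$ (hence as a ring) since $c(\chi)$ is a central idempotent relative to $Z(\mca{R})$.

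The main bookkeeping obstacle is keeping the $\mu_n$-cocycles from \eqref{Rel2} straight when rewriting $t$ as $\s_{y_0}\cdot z\cdot\tau$; all such cocycles collapse to scalars once $e_\epsilon$ is applied, but one has to check that no such scalar vanishes so that the spanning argument in Step 3 is tight rather than an inclusion in only one direction. A secondary point to verify is that the decomposition of the coset $\s_{y_0}\cdot Z(\wt{T})\mbf{T}(O_F)$ into $\s_{y_0}\cdot z\cdot\tau$ exhausts the nonzero generators, which uses precisely the identification $\msc{X}_{Q,n} \simeq \wt{T}/Z(\wt{T})\mbf{T}(O_F)$ recalled in \S\ref{S:Pre}.
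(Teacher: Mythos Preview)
Your proof is correct and follows essentially the same approach as the paper's: for (i) you both identify the center via the commutator relation on basis elements, and for (ii) you both use the nonvanishing criterion $c(\chi)\Theta_t c(\chi')\neq 0 \iff \varphi(t)\chi'=\chi$ together with the decomposition of $\wt{T}/T_1$ into $\s_y$-, central-, and $T_\kappa$-parts. The paper is slightly more streamlined---it passes directly to the $\C$-bases $\{c(\chi)\Theta_{\s_{y_0}}\Theta_{\s_y}c(\chi'):y\in Y_{Q,n}\}$ and $\{\Theta_{\s_y}c(\chi):y\in Y_{Q,n}\}$ and matches them---while you do the equivalent element-by-element decomposition $t=\s_{y_0}\cdot z\cdot\tau$; but the content is the same. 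One cosmetic point: in (i) the set $\{h e_\epsilon : h\in H\}$ is not literally a basis (elements differing by $\mu_n$ become proportional), so your ``extending linearly'' should really be phrased as the paper does, by taking representatives of $\wt{T}/\mu_n T_1$ and using their linear independence to force $c_h=0$ for non-central $h$.
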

\begin{proof}
For (i),  let $\set{f}$ be a $\C$-basis for $\C[\wt{T}/T_{1}]e_{\epsilon}$ given by a choosing a set of representatives for the coset space $\wt{T}/\mu_{n}T_{1}$. Suppose that $\sum c_f f$ lies in the center of $\C[\wt{T}/T_{1}]e_\epsilon$. Then for any $t\in \overline{T}/T_{1}$ we have 
$$t\Big(\sum c_{f}f\Big)=\Big(\sum c_{f}f\Big)t.$$
On the other hand we have $t(\sum c_{f}f)=(\sum c_{f}[t,f]f)t$. Thus for any $t\in \wt{T}/T_{1}$ one has 
$$[t,f]=1$$
 for any $f$ such that $c_f\neq 0$. Thus the center of $\C[\wt{T}/T_{1}]e_\epsilon$ is equal to $\C[Z(\overline{T})/T_{1}]e_{\epsilon}$.

Now we prove (ii). One has
$$c(\chi)\Theta_{\s_y} c(\chi') \ne 0$$
 if and only if $y-y_{0}\in Y_{Q,n}$. Thus the set 
 $$\set{c(\chi)\Theta_{\s_{y_0}}\Theta_{\s_y}c(\chi'): y\in Y_{Q,n}}$$
 is a $\C$-basis for $c(\chi)\mca{R} c(\chi')$. Since $Z(\overline{T})\mbf{T}(O_F)/\mu_{n}\mbf{T}(O_F) \simeq  Y_{Q,n}$, the elements in 
 $$\set{\Theta_{\s_y}c(\chi): y \in Y_{Q,n}}$$ 
form a $\C$-basis for $Z(\mca{R})c(\chi)$ and  the result follows.
\end{proof}

%Recall that the map $R\rightarrow \mathrm{End}_{R}(R)$ defined by $r\mapsto (r'\mapsto r'r)$ is an isomorphism of $\C $-algebras. (\textbf{I might need an opposite here.})It will be more convenient to study this endomorphism ring.

Using Lemma \ref{HeckeTorus3} we refine Proposition \ref{HeckeRingDecomp} as follows.

\begin{prop}\label{MatProd}
One has an isomorphism of rings
$$\mca{R} \simeq \prod_{\mca{O}_\chi} \mbf{M}(\val{\msc{X}_{Q,n}}, Z(\mca{R})c(\chi)),$$
where the product is taken over the cosets in $\Hom(T_\kappa,\C^\times)/{\rm Im}(\varphi)$.
\end{prop}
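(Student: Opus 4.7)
The plan is to derive this as a direct specialization of Proposition \ref{HeckeRingDecomp} to the torus case $G=T$, combined with Lemma \ref{HeckeTorus3}(ii) to identify the diagonal blocks more explicitly.

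First I would observe that when $G=T$, the Iwahori pro-$p$ subgroup is simply $I_1=T_1$, so $\mca{H}=C^\infty_{c,\epsilon}(T_1\backslash\wt{T}/T_1,\C)=\C[\wt{T}/T_1]e_\epsilon=\mca{R}$; moreover the Weyl group is trivial. Consequently the $W\ltimes \msc{X}_{Q,n}$-orbits in $\Hom(T_\kappa,\mu_{q-1})=\Hom(T_\kappa,\C^\times)$ appearing in Proposition \ref{HeckeRingDecomp} degenerate to $\msc{X}_{Q,n}$-orbits under the action induced by $\varphi$, i.e., to the cosets in $\Hom(T_\kappa,\C^\times)/\mathrm{Im}(\varphi)$.

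Next I would argue that every such orbit has cardinality exactly $|\msc{X}_{Q,n}|$. This amounts to checking that the multiplicative action of $\mathrm{Im}(\varphi)$ on $\Hom(T_\kappa,\C^\times)$ is free: if $\varphi(\hat{y})\cdot\chi=\chi$ then $\varphi(\hat{y})=1$, and since $\varphi$ is injective this forces $\hat{y}=0$. Thus Proposition \ref{HeckeRingDecomp}, specialized to the present setting, gives
$$\mca{R}\simeq \prod_{\mca{O}_\chi} \mbf{M}(|\msc{X}_{Q,n}|,\, c(\chi)\mca{R} c(\chi)),$$
with the product indexed by cosets in $\Hom(T_\kappa,\C^\times)/\mathrm{Im}(\varphi)$.

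Finally I would invoke Lemma \ref{HeckeTorus3}(ii), taking $\chi=\chi'$ and $y_0=0$, which furnishes a ring isomorphism $c(\chi)\mca{R} c(\chi)\simeq Z(\mca{R})c(\chi)$. Substituting this identification into the product above yields the desired formula. There is no real obstacle here; the argument is essentially a bookkeeping combination of Proposition \ref{HeckeRingDecomp} and Lemma \ref{HeckeTorus3}, with the mild point of verification being the freeness of the $\msc{X}_{Q,n}$-action that ensures uniformity of the matrix size across the orbits.
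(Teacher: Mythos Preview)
Your proposal is correct and follows exactly the route the paper takes: the paper simply states that Proposition~\ref{MatProd} is the refinement of Proposition~\ref{HeckeRingDecomp} obtained via Lemma~\ref{HeckeTorus3}, and you have supplied precisely those details, including the observation that the $\msc{X}_{Q,n}$-action is free so that all orbits have size $\val{\msc{X}_{Q,n}}$.
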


%\begin{proof} As discussed above, it is equivalent to study the ring $\Hom_{R}(R,R)$. Using Lemma \ref{HeckeTorus1}, item \ref{HeckeTorus1.1}, and Lemma \ref{HeckeTorus3}, item \ref{HeckeTorus3.1}, $\Hom_{R}(R,R)$ is isomorphic to $\Hom_{R}(R,R)\cong \prod_{\mathscr{O}}\Hom_{R}(Rc(\chi)^{|\mathscr{X}|},Rc(\chi)^{|\mathscr{X}|})$ as rings, where the product is taken over the cosets in $\Hom(T_\kappa,\C ^{\times})$ with respect to the subgroup $\mathrm{Im}(\phi)$. By a direct calculation $\Hom_{R}(Rc(\chi)^{|\mathscr{X}|},Rc(\chi)^{|\mathscr{X}|})$ is isomorphic to $M(|\mathscr{X}|,\Hom_{R}(Rc(\chi),Rc(\chi)))$. By Lemma \ref{HeckeTorus2}, $\Hom_{R}(Rc(\chi),Rc(\chi))\cong R_{\chi}$, which is isomorphic to $Z(R)c(\chi)$ by Lemma \ref{HeckeTorus3}, item \ref{HeckeTorus3.2}. This proves the proposition. \end{proof}

With the choice of a genuine character of $Z(\overline{T})$ we can describe $Z(\mca{R})c(\chi)$ as a ring. 

\begin{lm}\label{centerIso}
Let $\sigma \in \Irrg(Z(\overline{T}))$. One has an injection of $\C$-algebras
$$\C[Y_{Q,n}]\hookrightarrow Z(\mca{R})$$
given by $y\mapsto \sigma(\s_y)\Theta_{\s_y}$. Furthermore, for $\chi\in\Hom(T_\kappa,\C^\times)$ the composition  of this map with the projection $Z(\mca{R}) \onto Z(\mca{R})c(\chi)$ defined by right multiplication by $c(\chi)$ gives an isomorphism 
$$\C[Y_{Q,n}] \longrightarrow Z(\mca{R}) c(\chi)$$
of $\C$-algebras.
\end{lm}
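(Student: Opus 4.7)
The plan is to establish the statement in four stages: first check that $\Theta_{\s_y}\in Z(\mca{R})$ for $y\in Y_{Q,n}$ so the target is correct, then verify multiplicativity, then injectivity of $\Psi$, and finally deduce the algebra isomorphism after projecting to $Z(\mca{R})c(\chi)$.

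For the first point, the commutator formula \eqref{Rel3} gives, for any $y\in Y_{Q,n}=Y\cap nY^{*}$ and $y'\in Y$, that $B_Q(y,y')\in n\Z$, hence $[y(\varpi),y'(t')]=(\varpi,t')_n^{B_Q(y,y')}=1$. Thus $\s_y\in Z(\wt{T})$, so $\Theta_{\s_y}$ lies in $\C[Z(\wt{T})/T_1]e_\epsilon=Z(\mca{R})$ by Lemma \ref{HeckeTorus3}(i). The main computation is multiplicativity. Using \eqref{Rel2} with $(\varpi,\varpi)_n=\varepsilon$,
\begin{equation*}
\s_{y_1}\s_{y_2}=\varepsilon^{D(y_1,y_2)}\s_{y_1+y_2}\quad\text{in }\wt{T}.
\end{equation*}
Projecting to $\mca{R}$, where $\zeta e_\epsilon=\epsilon(\zeta)e_\epsilon$, yields $\Theta_{\s_{y_1}}\Theta_{\s_{y_2}}=\epsilon(\varepsilon)^{D(y_1,y_2)}\Theta_{\s_{y_1+y_2}}$. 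Applying the $\epsilon$-genuine character $\sigma$ to the same identity in $\wt{T}$ gives $\sigma(\s_{y_1})\sigma(\s_{y_2})=\epsilon(\varepsilon)^{D(y_1,y_2)}\sigma(\s_{y_1+y_2})$. Combining,
\begin{equation*}
\Psi(y_1)\Psi(y_2)=\epsilon(\varepsilon)^{2D(y_1,y_2)}\sigma(\s_{y_1+y_2})\Theta_{\s_{y_1+y_2}}=\Psi(y_1+y_2),
\end{equation*}
because $\varepsilon\in\{\pm 1\}$ forces $\epsilon(\varepsilon)^{2}=1$. This cocycle bookkeeping is the main (and really the only) delicate point: the scalar $\sigma(\s_y)$ is precisely the correction that cancels the $D$-twist and turns the twisted multiplication on $\{\Theta_{\s_y}\}$ into the group algebra $\C[Y_{Q,n}]$.

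Injectivity of $\Psi$ is straightforward: the map $Y\hookrightarrow T$, $y\mapsto y(\varpi)$, shows that $\{\s_y T_1:y\in Y_{Q,n}\}$ are pairwise distinct cosets in $\wt{T}/T_1$, so $\{\Theta_{\s_y}\}_{y\in Y_{Q,n}}$ is $\C$-linearly independent in $\mca{R}$, and the nonzero rescalings by $\sigma(\s_y)$ do not affect this.

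For the final assertion, note that right multiplication by $c(\chi)$ is a ring homomorphism on $Z(\mca{R})$: Lemma \ref{BasicHeckeRel}(iii) combined with the fact that $\varphi$ factors through $\msc{X}_{Q,n}$ and hence $\varphi(\s_y)=1$ for $y\in Y_{Q,n}$ yields $\Theta_{\s_y}c(\chi)=c(\chi)\Theta_{\s_y}$, while a direct braid computation (using that $T_\kappa$ normalizes $I_1$) gives $c(\chi)^{2}=c(\chi)$. The proof of Lemma \ref{HeckeTorus3}(ii) already exhibits $\{\Theta_{\s_y}c(\chi):y\in Y_{Q,n}\}$ as a $\C$-basis of $Z(\mca{R})c(\chi)$, so the composition $\Psi_\chi(y)=\sigma(\s_y)\Theta_{\s_y}c(\chi)$ sends the canonical basis of $\C[Y_{Q,n}]$ to nonzero scalar multiples of this basis. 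Being also a $\C$-algebra homomorphism, it is the asserted isomorphism $\C[Y_{Q,n}]\xrightarrow{\sim}Z(\mca{R})c(\chi)$.
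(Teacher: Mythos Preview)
Your proof is correct and follows essentially the same approach as the paper's: linear independence of $\{\Theta_{\s_y}:y\in Y_{Q,n}\}$ for injectivity, relation \eqref{Rel2} together with genuineness of $\sigma$ for multiplicativity, and the basis $\{\Theta_{\s_y}c(\chi)\}$ from Lemma~\ref{HeckeTorus3}(ii) for the isomorphism onto $Z(\mca{R})c(\chi)$. The paper's proof is terser---it simply cites \eqref{Rel2} and genuineness without unpacking the cocycle cancellation---whereas you make the $\epsilon(\varepsilon)^{2D(y_1,y_2)}=1$ step explicit and also add the preliminary check that $\s_y\in Z(\wt{T})$ for $y\in Y_{Q,n}$; both additions are helpful but not essential differences.
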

\begin{proof}
Since the set $\set{\Theta_{\s_y}: y\in Y_{Q,n}}$ is $\C$-linearly independent in $Z(\mca{R})$, the map is an injection of $\C$-vector spaces. By relation \eqref{Rel2} and the fact that $\sigma$ is genuine, we see that the map is a $\C $-algebra homomorphism. The composition of this map with the projection $Z(\mca{R})\onto Z(\mca{R})c(\chi)$ sends the $\C $-linearly independent set 
$$\set{\Theta_{\s_y}: y\in Y_{Q,n}}\subset Z(\mca{R})$$
to the $\C $-basis 
$$\set{\Theta_{\s_y}c(\chi): y\in Y_{Q,n}} \subset Z(\mca{R})c(\chi).$$
Thus the composition is an isomorphism of $\C $-vector spaces. Again relation (\ref{Rel2}) shows that the map is a $\C $-algebra homomorphism.
\end{proof}

\begin{comment}
Now we relate $\mca{R}$ to ${\rm ind}_{\mu_{n}T_{1}}^{\wt{T}}(\epsilon\otimes \mbm{1})$, the universal pro-$p$ principal series of $\overline{T}$. We view ${\rm ind}_{\mu_{n}T_{1}}^{\overline{T}}(\epsilon\otimes \mbm{1})$ as a left $\mca{R}$ -module induced by the left action of $\overline{T}$ given by right translation. 
We show that $\mca{R}$ and ${\rm ind}_{\mu_{n}T_{1}}^{\overline{T}}(\epsilon\otimes \mbm{1})$ are isomorphic as $\mca{R}$-modules. 
Given $t\in \overline{T}$, let 
$$f_t \in  {\rm ind}_{\mu_nT_{1}}^{\wt{T}}(\epsilon\otimes \mbm{1})$$
be the unique element supported on $\mu_{n}T_{1}t^{-1}$ and taking the value $1$ at $t^{-1}$.

\begin{lm}\label{Tprop}
The map $\overline{T}\rightarrow {\rm ind}_{\mu_{n}T_{1}}^{\wt{T}}(\epsilon\otimes \mbm{1})$ defined by $t\mapsto f_{t}$ induces an isomorphism 
$$\mca{R} \longrightarrow {\rm ind}_{\mu_{n}T_{1}}^{\wt{T}}(\epsilon\otimes \mbm{1})$$
 of $\mca{R}$-modules where $\Theta_{t}\mapsto f_{t}$. Moreover, 
 $$\mca{R}\simeq {\rm End}_\mca{R}({\rm ind}_{\mu_{n}T_{1}}^{\wt{T}}(\epsilon\otimes \mbm{1}))$$
  as $\C$-algebras.
\end{lm}

\begin{proof}
The first isomorphism follows from a direct calculation. The key point is that $T_{1} \subset Z(\wt{T})$ and thus the map $t\mapsto f_t$ factors through $T/T_{1}$. The second isomorphism follows from general theory. Specifically, the map $\mca{R}\rightarrow {\rm End}_\mca{R}(\mca{R})$ defined by $r\mapsto (r'\mapsto r'r)$ is an isomorphism of $\C $-algebras.
\end{proof}
Note that Lemma \ref{Tprop} and Lemma \ref{HMIso} are analogous.
\end{comment}

%%
\subsection{Intertwining operators}
In this subsection we consider algebraic intertwining operators, following Haines--Kottwitz--Prasad. The construction of these operators mostly involves minor modifications to \cite[\S 1.10]{HKP10}. Thus we will be brief and focus primarily on the modifications required in the context of covering groups.   

To begin, we define a completion of $\mca{M}$. Since $\overline{T}$ is in general non-commutative, it is more convenient to define our completions using $Z(\mca{R})$ instead of $\mca{R}$. Let $J$ be a subset of modified coroots contained in some system of positive modified coroots, and let $\C [J]$ be the subalgebra of $\C [Y_{Q,n}]$ generated by $J$. We write $\C [J]^{\wedge}$ for the completion of $\C [J]$ with respect to the maximal ideal generated by the elements of $J$. We define the $\C $-algebra 
$$\mca{R}_J=\C[J]^\wedge\otimes_{\C[J]}\mca{R},$$
where we view $\mca{R}$ as an $\C[J]$-module via the isomorphism of Lemma \ref{centerIso}. Set
$$\mca{M}_J:=\mca{R}_J \otimes_\mca{R} \mca{M} \text{ and } \mca{H}_J:=\mca{R}_J\otimes_R \mca{H}.$$
The rest of \cite[\S 1.10]{HKP10} applies without essential change.

Since our objective is to derive the Bernstein relations, we consider only the intertwining operators associated to simple reflections. For $\alpha\in \Delta$, let 
$$I_{\alpha}=I_{s_{\alpha}}: \mca{M}_{\{-\alpha_{Q,n}^\vee\}}\longrightarrow \mca{M}_{\{\alpha_{Q,n}^\vee\}}$$ 
be the intertwining operator given by 
$$I_{s_{\alpha}}(f)(g)=\int_{U_{\alpha}}f(\wt{w}_{\alpha}(-1)ug)du.$$
%Note that a different choice of representative for $w_{\alpha}$ may result in a different intertwining operator.
The Bernstein relations follow from an analogue of Haines--Kottwitz--Prasad \cite[Lemma 1.13.1 (i)]{HKP10}. We write $U^-(\frak{p}):=U^-\cap I$.

\begin{lm} \label{DoubleCosetRedRank}
Let $\alpha\in \Delta$ and $T_{\alpha}:=\alpha^\vee(F^\times)$. Then 
\begin{equation*}
\phi_{\alpha}(\SL_2(F))\cap UI_1=U_\alpha(T_\alpha\cap I_1)U_{-\alpha}^{1}.
\end{equation*}
\end{lm}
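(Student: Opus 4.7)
The plan is to reduce to an explicit Gauss decomposition in $\SL_2(F)$, leveraging uniqueness of triangular factorizations. First, from the Iwahori factorization $I_1 = (U \cap I_1) \cdot T_1 \cdot U^-(\mathfrak{p})$, together with the absorption $U \cdot (U \cap I_1) = U$ and the fact that $T_1$ normalizes $U^-(\mathfrak{p})$, one obtains
\begin{equation*}
UI_1 = U \cdot U^-(\mathfrak{p}) \cdot T_1;
\end{equation*}
moreover the product map $U \times U^-(\mathfrak{p}) \times T_1 \to UI_1$ is a bijection, using $U \cap B^- = \{1\}$ to isolate the $U$-factor and $U^- \cap T = \{1\}$ to separate the remaining two. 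In particular $UI_1 \subset UU^-T = UTU^-$, on which the product map $U \times U^- \times T \to UTU^-$ is also a bijection.

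Second, suppose $\phi_\alpha(M) \in UI_1$ with $M = \begin{pmatrix} a & b \\ c & d \end{pmatrix} \in \SL_2(F)$. Since $\phi_\alpha(\SL_2(F))$ lies in the rank-one Levi $L_\alpha = \langle T, U_{\pm\alpha}\rangle$, the intersection $\phi_\alpha(\SL_2(F)) \cap UTU^-$ equals the image of the open Bruhat cell of $\SL_2$ under $\phi_\alpha$ (this follows from the Bruhat decomposition of $L_\alpha$ combined with the uniqueness of the $UTU^-$-decomposition in $G$); hence $\phi_\alpha(M) \in UTU^-$ forces $d \ne 0$. The standard Gauss decomposition of $M$ in $\SL_2$, combined with the conjugation identity $\alpha^\vee(1/d) \cdot e_{-\alpha}(x) = e_{-\alpha}(d^2 x) \cdot \alpha^\vee(1/d)$, then gives
\begin{equation*}
\phi_\alpha(M) = e_\alpha(b/d) \cdot \alpha^\vee(1/d) \cdot e_{-\alpha}(c/d) = e_\alpha(b/d) \cdot e_{-\alpha}(cd) \cdot \alpha^\vee(1/d).
\end{equation*}

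Finally, comparing this with the decomposition $\phi_\alpha(M) = u \cdot v^- \cdot t$ (with $u \in U$, $v^- \in U^-(\mathfrak{p})$, $t \in T_1$) supplied by the first step, uniqueness of the $U \cdot U^- \cdot T$ factorization on $UTU^-$ forces $v^- = e_{-\alpha}(cd) \in U^-(\mathfrak{p})$ and $t = \alpha^\vee(1/d) \in T_1$; these conditions read $cd \in \mathfrak{p}$ and $d \in 1 + \mathfrak{p}$. The latter gives $d \in O^\times$, hence $c \in \mathfrak{p}$, hence $c/d \in \mathfrak{p}$, and therefore
\begin{equation*}
\phi_\alpha(M) = e_\alpha(b/d) \cdot \alpha^\vee(1/d) \cdot e_{-\alpha}(c/d) \in U_\alpha \cdot (T_\alpha \cap I_1) \cdot U_{-\alpha}^1,
\end{equation*}
yielding one inclusion. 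The reverse inclusion is immediate since $U_\alpha \subset U$ and $(T_\alpha \cap I_1) \cdot U_{-\alpha}^1 \subset I_1$. The main technical point is establishing the uniqueness of the triangular decomposition of $UI_1$; the rest is routine $\SL_2$-bookkeeping.
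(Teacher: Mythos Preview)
Your proof is correct and follows essentially the same approach as the paper: both rule out the $w_\alpha$-cell via the Bruhat decomposition of $G$ (the paper by noting $U_\alpha T_\alpha w_\alpha \subset UTw_\alpha U^-$ is disjoint from $UTU^- \supset UI_1$; you by the equivalent observation that $\phi_\alpha(M)\in UTU^-$ forces $d\neq 0$), and then use uniqueness of the $U\times T\times U^-$ factorization to match factors, which you make explicit while the paper leaves implicit. One terminological slip: the locus $d\neq 0$ is the big Gauss cell $U_+TU_-$, not the ``open Bruhat cell'' of $\SL_2$ (which is $BwB$, i.e.\ $c\neq 0$); this does not affect the argument.
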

\begin{proof}
The result follows from the Bruhat decomposition. Specifically, 
\begin{equation*}
\phi_{\alpha}(\SL_2(F))=U_{\alpha}T_{\alpha}w_{\alpha}(1)\cup U_{\alpha}T_{\alpha}U_{-\alpha},
\end{equation*}
and $U_{\alpha}T_{\alpha}w_{\alpha}\cap UU^-(\frak{p})\subseteq UTw_{\alpha}U^- \cap UTU^-=\emptyset$.
\end{proof}

\begin{lm}\label{CoverDoubleCosetId}
Let $\alpha\in \Delta$ and $b\in O_F^\times$. Let $j\in \Z_{\gest 0}$ and $u\in F^{\times}$ be such that ${\rm val}(u)\lest 2j-1$. The following identity of double cosets holds in $\wt{G}$.
\begin{equation*}
U\wt{w}_{\alpha}(-1)\wt{e}_{\alpha}(u)\wt{h}_{\alpha}(\varpi^{j})\wt{h}_{\alpha}(b)U^-(\frak{p})=(u^{-1},\varpi^{j})_n^{Q(\alpha^\vee)} (u^{-1}\varpi^{j},b)_n^{Q(\alpha^{\vee})} U\wt{h}_{\alpha}(u^{-1}\varpi^{j}b)U^-(\frak{p}).
\end{equation*}
\end{lm}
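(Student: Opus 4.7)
The plan is to reduce the double coset identity to a chain of elementary manipulations inside the rank-one subgroup $\wt{G}_\alpha$, collecting Hilbert-symbol cocycles from the relations \eqref{Rel1}--\eqref{Rel7}. The computation takes place essentially in a copy of $\wt{\SL}_2$ and mirrors the $\SL_2$ Bruhat decomposition
$$w_\alpha(-1)e_\alpha(u) = e_\alpha(-u^{-1}) h_\alpha(u^{-1}) e_{-\alpha}(u^{-1}),$$
valid for $u \in F^\times$. The first step, which I expect to be the main obstacle, is to show that this identity lifts exactly (with trivial cocycle) to $\wt{G}$, i.e.
\begin{equation}\label{key}
\wt{w}_\alpha(-1)\wt{e}_\alpha(u) = \wt{e}_\alpha(-u^{-1})\,\wt{h}_\alpha(u^{-1})\,\wt{e}_{-\alpha}(u^{-1}).
\end{equation}
To verify \eqref{key} directly I would show both sides equal $\wt{e}_{-\alpha}(-u)\wt{w}_\alpha(-1)$: the left side is immediate from \eqref{Rel6} with $t=-1$; for the right side I would rewrite $\wt{h}_\alpha(u^{-1})=\wt{w}_\alpha(u^{-1})\wt{w}_\alpha(-1)$ using the definition, push $\wt{w}_\alpha(-1)$ past $\wt{e}_{-\alpha}(u^{-1})$ via \eqref{Rel6} to produce $\wt{e}_\alpha(-u^{-1})\wt{w}_\alpha(-1)$, and then collapse $\wt{e}_\alpha(-u^{-1})\wt{w}_\alpha(u^{-1})\wt{e}_\alpha(-u^{-1}) = \wt{e}_{-\alpha}(-u)$ using the definition $\wt{w}_\alpha(u^{-1}) = \wt{e}_\alpha(u^{-1})\wt{e}_{-\alpha}(-u)\wt{e}_\alpha(u^{-1})$ together with the fact that $\wt{e}_\alpha$ is a genuine group homomorphism. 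No Hilbert symbols appear, since each step either uses \eqref{Rel6}, which carries no cocycle, or cancels $\wt{e}_\alpha$-factors additively.

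Having established \eqref{key}, the rest is bookkeeping. I would multiply both sides on the right by $\wt{h}_\alpha(\varpi^j)\wt{h}_\alpha(b)$ to obtain
$$\wt{w}_\alpha(-1)\wt{e}_\alpha(u)\wt{h}_\alpha(\varpi^j)\wt{h}_\alpha(b) = \wt{e}_\alpha(-u^{-1})\wt{h}_\alpha(u^{-1})\wt{e}_{-\alpha}(u^{-1})\wt{h}_\alpha(\varpi^j)\wt{h}_\alpha(b).$$
Next I would move the $\wt{e}_{-\alpha}$-factor to the far right by conjugating twice. Since $U_{-\alpha}$ splits canonically in $\wt{G}$, the conjugation $\wt{h}_\alpha(s)\wt{e}_{-\alpha}(y)\wt{h}_\alpha(s)^{-1}=\wt{e}_{-\alpha}(s^{-2}y)$ lifts without cocycle; applied with $s=\varpi^j$ and then $s=b$, it replaces $\wt{e}_{-\alpha}(u^{-1})$ by $\wt{e}_{-\alpha}(b^2\varpi^{2j}u^{-1})$. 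This is the point where the hypothesis ${\rm val}(u)\leq 2j-1$ enters: it ensures $\mathrm{val}(b^2\varpi^{2j}u^{-1})\geq 1$, so the resulting element lies in $U^-_\alpha(\mfr{p})\subset U^-(\mfr{p})$.

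Finally, I would collapse the three $\wt{h}_\alpha$ factors using \eqref{Rel2} (equivalently \eqref{ppty1'}) in two steps, extracting the cocycles
$$\wt{h}_\alpha(u^{-1})\wt{h}_\alpha(\varpi^j) = (u^{-1},\varpi^j)_n^{Q(\alpha^\vee)}\,\wt{h}_\alpha(u^{-1}\varpi^j),$$
$$\wt{h}_\alpha(u^{-1}\varpi^j)\wt{h}_\alpha(b) = (u^{-1}\varpi^j,b)_n^{Q(\alpha^\vee)}\,\wt{h}_\alpha(u^{-1}\varpi^j b),$$
which multiply to exactly the scalar asserted in the lemma. The resulting expression sits in $(u^{-1},\varpi^j)_n^{Q(\alpha^\vee)}(u^{-1}\varpi^j,b)_n^{Q(\alpha^\vee)}\,U\,\wt{h}_\alpha(u^{-1}\varpi^jb)\,U^-(\mfr{p})$, yielding the claimed identity of double cosets. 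The only genuinely delicate point is \eqref{key}; once that is in hand, everything else is a transparent manipulation in which all cocycles are tracked by \eqref{Rel2} and conjugations by split unipotent or torus elements introduce no further Hilbert symbols.
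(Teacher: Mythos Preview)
Your proposal is correct and follows essentially the same approach as the paper, which simply records that the identity ``follows from a direct calculation by using the relations \eqref{Rel2}, \eqref{Rel4}, \eqref{Rel6}, and \eqref{Rel7}.'' You have supplied exactly those details: the Bruhat-type identity \eqref{key} in the cover (your verification via \eqref{Rel6} and the definition of $\wt{w}_\alpha(a)$ is clean and correct), the cocycle-free conjugation of $\wt{e}_{-\alpha}$ past the torus elements, and the collapse of the three $\wt{h}_\alpha$ factors via \eqref{Rel2}, with the valuation hypothesis used precisely where you indicate.
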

\begin{proof} 
This follows from a direct calculation by using the relations (\ref{Rel2}), (\ref{Rel4}), (\ref{Rel6}), and (\ref{Rel7}).
\end{proof}

\begin{lm}
We have the following equality
\begin{equation*}
I_{s_{\alpha}}(\nu_{1})=q^{-1} \nu_{\wt{w}_{\alpha}(1)}+\sum_{j\in\Z_{\gest1}} q^{-j-1}\sum_{b \in \kappa^{\times}} (\varepsilon^{j}(b,\varpi^{j})_n)^{Q(\alpha^{\vee})} \nu_{\wt{h}_{\alpha}(\varpi^{j})\wt{h}_{\alpha}(b)}.
\end{equation*}
\end{lm}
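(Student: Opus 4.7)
The plan is to evaluate
\[
I_{s_\alpha}(\nu_1)(g) = \int_F \nu_1\bigl(\wt{w}_\alpha(-1)\,\wt{e}_\alpha(x)\,g\bigr)\,dx
\]
at each representative $g$ of the double-coset set $\mu_n T_1 U \backslash \wt{G} / I_1 \leftrightarrow W(1)$, after parametrizing $u = \wt{e}_\alpha(x)$ with $dx$ the Haar measure on $F$ normalized by $\mathrm{vol}(O_F) = 1$, and to read off the coefficients against the basis $\{\nu_w : w \in W(1)\}$.

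For $g = \wt{w}_\alpha(1)$, relation \eqref{Rel6} gives $\wt{w}_\alpha(-1)\wt{e}_\alpha(x)\wt{w}_\alpha(1) = \wt{e}_{-\alpha}(-x)$, which lies in $U_{-\alpha}^1 \subseteq I_1$ if and only if $x \in \mathfrak{p}$; right-$I_1$-invariance of $\nu_1$ then reduces the integrand to the indicator $\mathbf{1}_\mathfrak{p}$, giving $\mathrm{vol}(\mathfrak{p}) = q^{-1}$. For $g = \wt{h}_\alpha(\varpi^j)\wt{h}_\alpha(b)$ with $j \geq 1$ and $b \in \kappa^\times$, I would invoke Lemma~\ref{CoverDoubleCosetId} with $u = x$ (valid for ${\rm val}(x) \leq 2j-1$), which rewrites $U\wt{w}_\alpha(-1)\wt{e}_\alpha(x)\wt{h}_\alpha(\varpi^j)\wt{h}_\alpha(b)U^-(\mathfrak{p})$ as an explicit Hilbert-symbol scalar times $U\wt{h}_\alpha(x^{-1}\varpi^j b)U^-(\mathfrak{p})$. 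Using left-$U$-equivariance and right-$U^-(\mathfrak{p}) \subseteq I_1$-invariance of $\nu_1$, together with $\wt{T} \cap \mu_n T_1 U I_1 = \mu_n T_1$, one sees that the integrand is supported on the single coset $x \in \varpi^j b_0(1+\mathfrak{p})$ of volume $q^{-j-1}$ (where $b_0 \in O_F^\times$ is any lift of $b$), on which it equals a constant scalar. The remaining range ${\rm val}(x) \geq 2j$ pushes the image into the Weyl cell of $\wt{w}_\alpha(-1)$, disjoint from $\mu_n T_1 U I_1$, and hence contributes nothing.

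For every other $g \in W(1)$, I would verify the integrand vanishes identically in $x$ using the same Bruhat reasoning: torus representatives outside $\mu_n T_1$ are excluded by $\wt{T}\cap \mu_n T_1 U I_1 = \mu_n T_1$, and Weyl-translated representatives not equivalent to $\wt{w}_\alpha(1)$ modulo $\mu_n T_1$ are excluded via the decomposition in Lemma~\ref{DoubleCosetRedRank} applied to $\phi_\alpha(\SL_2(F))$.

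The main obstacle will be collapsing the $\mu_n$-valued Hilbert-symbol scalar from Lemma~\ref{CoverDoubleCosetId} at $x = \varpi^j b_0 (1+t)$ with $t \in \mathfrak{p}$. Using the tame Hilbert-symbol formula, the identity $(b,b)_n = 1$ for $b \in O_F^\times$, and $\varepsilon = (-1,\varpi)_n$, one reduces $(x^{-1},\varpi^j)_n^{Q(\alpha^\vee)}(x^{-1}\varpi^j, b)_n^{Q(\alpha^\vee)}$ to the displayed factor $(\varepsilon^j(b,\varpi^j)_n)^{Q(\alpha^\vee)}$.
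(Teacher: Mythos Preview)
Your proposal is correct and follows essentially the same route as the paper: first bound the support of $I_{s_\alpha}(\nu_1)$ via Bruhat-type reasoning (the paper does this by reducing to the linear group and showing the Weyl component must be $1$ or $w_\alpha$, then pinning down the torus part to $\wt{h}_\alpha(\varpi^j)\wt{h}_\alpha(b)$ with $j\geq 1$), then evaluate at $\wt{w}_\alpha(1)$ directly and at $\wt{h}_\alpha(\varpi^j)\wt{h}_\alpha(b)$ using Lemmas~\ref{DoubleCosetRedRank} and~\ref{CoverDoubleCosetId}, and finally simplify the Hilbert symbols using tameness and $\varepsilon=(\varpi,\varpi)_n$. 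The only cosmetic difference is ordering: the paper isolates the support analysis before any evaluation, whereas you interleave them.
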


\begin{proof}
This reduces to a rank-one calculation, which we discuss in detail. First, we determine the support of the function $I_{\alpha}(\nu_{1})(g)$. It suffices to consider representatives of the double cosets 
$$\mu_{n}T_{1}U\backslash \wt{G}/I_{1}\cong W(1).$$
In fact, we can work in $G$ instead of $\wt{G}$.

Suppose that $\wt{g}\in \wt{G}$ and $I_{\alpha}(\nu_{1})(\wt{g})\neq 0$. Let $g=\wp(\wt{g})$. We may assume that $g=wy(\varpi)t$, where $y\in Y, t\in \mbf{T}(O_F)$ and $w\in K\cap N(T)$ represents an element of the Weyl group. We claim that $w$ must represent $1$ or $w_{\alpha}$. Since $I_{\alpha}(\nu_{1})(\wt{g})\neq 0$, it follows that $w_{\alpha}(-1)Ug\cap UI_{1}\neq \emptyset$ and so $w_{\alpha}(-1)Uw\cap UTU^-\neq \emptyset$. Thus 
$$UTw_{\alpha}Uww_{\ell}U\cap UTw_{\ell}U\neq \emptyset.$$
Since 
$$UTw_{\alpha}Uww_{\ell}U\subseteq UTw_{\alpha}ww_{\ell}U\cup UTww_{\ell}U$$
 the element $w$ must represents $1$ or $w_{\alpha}$ by the Bruhat decomposition.  Thus we may assume that $w=1$ or $w=w_{\alpha}(1)$.

Now we determine $y\in Y$ and $t\in \mbf{T}(O_F)$. There are two cases as follows.
\begin{enumerate}
\item[$\bullet$] If $w=w_{\alpha}(1)$, then
$$w_{\alpha}(-1)Uw_{\alpha}(1)y(\varpi)t\cap UI_{1}\neq \emptyset$$
 if and only if $U_{-\alpha}y(\varpi)t\cap UI_{1}\neq \emptyset$, if and only if $y=0$ and $t\in T_{1}$.
\item[$\bullet$]
Suppose that $w=1$. In this case, $w_{\alpha}(-1)Uy(\varpi)t\cap UI_{1}\neq \emptyset$, thus 
$$U_{-\alpha}w_{\alpha}(-1)y(\varpi)\cap UK\neq \emptyset.$$
Let $t_{s_{\alpha}\cdot y}=y(\varpi)h_{\alpha}(\varpi^{-\langle y,\alpha\rangle})$. Then we have 
$$U_{-\alpha}w_{\alpha}y(\varpi)=t_{s_{\alpha}\cdot y}U_{-\alpha}w_{\alpha},$$
and  thus $U_{-\alpha}\cap Ut_{s_{\alpha}(y)}^{-1}K\neq \emptyset$. Since the elements in $U_{-\alpha}$ admit an Iwasawa decomposition in the $\SL_2$ corresponding to $\alpha$ we see that $y=j\alpha^{\vee}$ for some $j\in \Z$. Furthermore, a computation in $\SL_2$ shows that $y=j\alpha^{\vee}$, where $j\in \Z_{\gest 0}$. Similarly, we see that $t=h_{\alpha}(b)$, where $b\in O_F^{\times}$. 
%\textbf{I need to be careful about whether this $j$ is positive or negative. This depends on which element $\alpha^{\vee}(\varpi)$ is in SL$(2)$.}
\end{enumerate}

We have just shown that $I_{\alpha}(\nu_{1})$ is determined by its values on the elements $g=\wt{w}_{\alpha}(1),\, \wt{h}_{\alpha}(\varpi^{j})\wt{h}_{\alpha}(b)$, where $j\in \Z_{\gest 0}$ and $b\in O_F^{\times}$. 

Now we compute $I_{\alpha}(\nu_{1})(\wt{w}_{\alpha}(1))$. A direct computation yields
$$ I_{\alpha}(\nu_{1})(\wt{w}_{\alpha}(1))=\int_{F}\nu_{1}(\wt{e}_{-\alpha}(u))du=\mathrm{vol}(\frak{p}).$$
To compute $I_{\alpha}(\nu_{1})(\wt{h}_{\alpha}(\varpi^{j})\wt{h}_{\alpha}(b))$ with $j\in\Z_{\gest 0}$ and $b\in O_F^{\times}$, a calculation in $\SL_2$ shows that if this is nonzero, then $j\gest 1$. By definition,
$$
I_{\alpha}(\nu_1)(\wt{h}_{\alpha}(\varpi^{j})\wt{h}_{\alpha}(b))=\int_{F} \nu_{1}(\wt{w}_{\alpha}(-1)\wt{e}_{\alpha}(u)\wt{h}_{\alpha}(\varpi^{j})\wt{h}_{\alpha}(b))du.
$$
We apply Lemmas \ref{DoubleCosetRedRank} and \ref{CoverDoubleCosetId} to get
\begin{align*}
\int_{F} \nu_{1}(\wt{w}_{\alpha}(-1)\wt{e}_{\alpha}(u)\wt{h}_{\alpha}(\varpi^{j})\wt{h}_{\alpha}(b))du =& \int_{\varpi^{j}b(1+\frak{p})}((u^{-1},\varpi^{j})_n (u^{-1}\varpi^{j},b)_n)^{-Q(\alpha^{\vee})}du\\
=&q^{-j}\mathrm{vol}(1+\frak{p})(b\varpi^{j},\varpi^{j})_n^{Q(\alpha^{\vee})}.
\end{align*}
Combining the above gives
\begin{equation*}
I_{\alpha}(\nu_{1})=q^{-1}\nu_{\wt{w}_{\alpha}(1)}+\sum_{j\in\Z_{\gest1}}q^{-j-1}\sum_{b\in \kappa^{\times}}(b\varpi^{j},\varpi^{j})_n^{Q(\alpha^{\vee})} \nu_{\wt{h}_{\alpha}(\varpi^{j})\wt{h}_{\alpha}(b)}.
\end{equation*}
This completes the proof.
\end{proof}

\subsection{Bernstein presentation}\label{ProPBR}
For every $j\in \Z$, we set
$$c_{\alpha}(j)=\frac{1}{q-1}\sum_{b \in\kappa^{\times}}(b,\varpi)_n^j\Theta_{\wt{h}_{\alpha}(b)}.$$
The intertwining operator $I_{s_{\alpha}}$ for any $\alpha\in \Delta$ is represented by the element 
$$
q^{-1}\mathcal{T}_{\wt{w}_{\alpha}(1)}+(q-1)q^{-1}\sum_{j\in\Z_{\gest 1}}\varepsilon^{jQ(\alpha^{\vee})}\Theta_{\wt{h}_{\alpha}(\varpi^{j})}c_{\alpha}(jQ(\alpha^{\vee}))\in \mca{H}_{\{ \alpha_{Q,n}^\vee \}}.
$$

Since for any $t\in \overline{T}$ the intertwining operator satisfies $I_{s_{\alpha}}\circ\Theta_{t}=\Theta_{w_{\alpha}(1)\cdot t}\circ I_{s_{\alpha}}$, where $w_\alpha(1)\cdot t= w_\alpha(1) t w_\alpha(-1)$, we get the following proposition.

\begin{thm}[Bernstein relations] \label{T:BernR}
Let $\alpha\in\Delta$, then 
$$\begin{aligned}
& \Big( \mathcal{T}_{\wt{w}_{\alpha}(1)}+(q-1)\sum_{j\in\Z_{\gest 1}}\varepsilon^{jQ(\alpha^{\vee})}\Theta_{\wt{h}_{\alpha}(\varpi^{j})}c_{\alpha}(jQ(\alpha^{\vee})) \Big)*\Theta_{\s_y}\\
=& \Theta_{\wt{w}_{\alpha}(1)\cdot \s_y}*\Big( \mathcal{T}_{\wt{w}_{\alpha}(1)}+(q-1)\sum_{j\in\Z_{\gest 1}}\varepsilon^{jQ(\alpha^{\vee})}\Theta_{\wt{h}_{\alpha}(\varpi^{j})}c_{\alpha}(jQ(\alpha^{\vee})) \Big) \in\mca{H}_{\{\alpha_{Q,n}^{\vee}\}}.
\end{aligned}$$
Furthermore, this identity in $\mca{H}_{\{\alpha_{Q,n}^{\vee}\}}$ simplifies to an identity in $\mca{H}$ as follows: 
\begin{equation}\label{BR1}
\begin{aligned}
& \mathcal{T}_{\wt{w}_{\alpha}(1)}*\Theta_{\s_y}+(q-1)\sum_{j\in\Z_{\gest 1}}\varepsilon^{jQ(\alpha^{\vee})(1+\langle y,\alpha\rangle)}\Theta_{\s_y\wt{h}_{\alpha}(\varpi^{j})}c_{\alpha}((j+\langle y,\alpha\rangle)Q(\alpha^{\vee}))\\
= & \Theta_{w_{\alpha}(1)\cdot \s_y}*\mathcal{T}_{\wt{w}_{\alpha}(1)}+(q-1)\sum_{j\in\Z_{\gest 1-\langle y,\alpha\rangle}}\varepsilon^{jQ(\alpha^{\vee})(1+\langle y,\alpha\rangle)}\Theta_{\s_y\wt{h}_{\alpha}(\varpi^{j})}c_{\alpha}((j+\langle y,\alpha\rangle)Q(\alpha^{\vee})).
\end{aligned}
\end{equation}
Thus, in view of Proposition \ref{BernDecomp} this gives a Bernstein presentation for $\mca{H}$.
\end{thm}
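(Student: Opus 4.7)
The plan is to derive (BR1) from the covariance relation
\[
I_{s_\alpha}\circ\Theta_t = \Theta_{w_\alpha(1)\cdot t}\circ I_{s_\alpha}
\]
for every $t\in\wt{T}/T_1$, applied to $t=\s_y$, combined with the explicit formula for $I_{s_\alpha}$ already established in the excerpt. The covariance follows from a rank-one calculation on $U_\alpha$ (the same calculation that gives the formula for $I_{s_\alpha}$), and transporting it across the ring isomorphism $\mca{H}\simeq \End_\mca{H}(\mca{M})$ of Lemma \ref{HMIso} yields, after clearing $q^{-1}$, precisely the first displayed identity of the theorem inside the completion $\mca{H}_{\{\alpha_{Q,n}^\vee\}}$.

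The second step is to descend this to (BR1) in $\mca{H}$ by expanding the products and carrying $\Theta_{\s_y}$ (respectively $\Theta_{w_\alpha(1)\cdot\s_y}$) past the intervening $c_\alpha$'s and $\Theta_{\wt{h}_\alpha(\varpi^j)}$'s. Lemma \ref{BasicHeckeRel}(iii) handles the commutation with the idempotents: the argument of $c_\alpha$ is shifted from $jQ(\alpha^\vee)$ to $(j+\langle y,\alpha\rangle)Q(\alpha^\vee)$, because $\varphi(\s_y)$ acts on the image of $h_\alpha$ via the character $u\mapsto (u,\varpi)_n^{\langle\alpha,y\rangle Q(\alpha^\vee)}$. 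Commuting the two torus elements uses the bracket relation \eqref{Rel3} together with the standard identity $B_Q(\alpha^\vee,y)=\langle\alpha,y\rangle Q(\alpha^\vee)$ coming from Weyl-invariance of $Q$, yielding
\[
\Theta_{\wt{h}_\alpha(\varpi^j)}\ast\Theta_{\s_y} = \varepsilon^{j\langle\alpha,y\rangle Q(\alpha^\vee)}\,\Theta_{\s_y\wt{h}_\alpha(\varpi^j)}.
\]
On the right-hand side of the first identity the same manipulations are complemented by the conjugation relation \eqref{Rel7}, which rewrites $w_\alpha(1)\cdot\s_y$ as $\s_y\wt{h}_\alpha(\varpi^{-\langle\alpha,y\rangle})$; substituting and re-indexing the summation variable $j\mapsto j-\langle\alpha,y\rangle$ produces a sum over $j\gest 1-\langle\alpha,y\rangle$ of $\Theta_{\s_y\wt{h}_\alpha(\varpi^j)}c_\alpha((j+\langle y,\alpha\rangle)Q(\alpha^\vee))$, matching the right-hand side of (BR1).

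The main obstacle is the sign bookkeeping. One must verify that the various $\varepsilon$-factors thrown off by \eqref{Rel2}, \eqref{Rel3}, and \eqref{Rel7}, combined with the initial factor $\varepsilon^{jQ(\alpha^\vee)}$ from the formula for $I_{s_\alpha}$, produce on both sides the common exponent $jQ(\alpha^\vee)(1+\langle y,\alpha\rangle)$ visible in (BR1). Here one uses crucially that $\varepsilon^2=1$, which absorbs the quadratic term $\langle\alpha,y\rangle^2 Q(\alpha^\vee)$ that arises when squaring the shift and identifies $1-\langle\alpha,y\rangle$ with $1+\langle\alpha,y\rangle$ modulo $2$ in the re-indexed exponents on the right-hand side. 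Once this matching is checked, the two sides already agree termwise for $j\gg 0$, so the formal identity in the completion collapses to the honest finite-tail identity (BR1) in $\mca{H}$. Combining (BR1) with the vector-space decomposition of Proposition \ref{BernDecomp} then yields the Bernstein presentation of the pro-$p$ Hecke algebra $\mca{H}$.
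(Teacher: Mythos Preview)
Your proposal is correct and follows essentially the same approach as the paper: the first identity is obtained from the covariance $I_{s_\alpha}\circ\Theta_t=\Theta_{w_\alpha(1)\cdot t}\circ I_{s_\alpha}$ together with the explicit Hecke-algebra element representing $I_{s_\alpha}$, and the reduction to \eqref{BR1} is the straightforward commutation/re-indexing computation you describe. In fact the paper only states the covariance relation and leaves the simplification to \eqref{BR1} entirely implicit, so your sketch of the sign bookkeeping (in particular the use of $\varepsilon^2=1$ to absorb the quadratic term in $\langle\alpha,y\rangle$) is more detailed than what the paper provides.
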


For later, we record some formulas derived from Theorem \ref{T:BernR}.
If we substitute $w_{\alpha}(-1)\cdot \s_y$ in place of $\s_y$ and apply the identity $\wt{w}_{\alpha}(-1)\cdot \s_y=\varepsilon^{B(y,\alpha^{\vee})}\s_y\wt{h}_{\alpha}(\varpi^{-\langle y,\alpha\rangle})$ as from \eqref{Rel2}, \eqref{Rel3} and \eqref{Rel7}, then formula \eqref{BR1} becomes
 \begin{equation}\label{BR2}
 \begin{aligned}
& \mathcal{T}_{\wt{w}_{\alpha}(1)}*\Theta_{w_{\alpha}(-1)\cdot \s_y}+(q-1)\sum_{j\in\Z_{\gest 1}}\varepsilon^{jQ(\alpha^{\vee})(1-\langle y,\alpha\rangle)}\Theta_{(\wt{w}_{\alpha}(-1)\cdot \s_y)\wt{h}_{\alpha}(\varpi^{j})}c_{\alpha}((j-\langle y,\alpha\rangle)Q(\alpha^{\vee}))\\
= & \Theta_{\s_y}*\mathcal{T}_{\wt{w}_{\alpha}(1)}+(q-1)\sum_{j\in\Z_{\gest 1+\langle y,\alpha\rangle}}\varepsilon^{jQ(\alpha^{\vee})(1-\langle y,\alpha\rangle)}\Theta_{(w_{\alpha}(-1)\cdot \s_y)\wt{h}_{\alpha}(\varpi^{j})}c_{\alpha}((j-\langle y,\alpha\rangle)Q(\alpha^{\vee})).
\end{aligned}
\end{equation}
This gives
 \begin{equation}\label{BR3}
 \begin{aligned}
& \mathcal{T}_{\wt{w}_{\alpha}(1)}*\Theta_{w_{\alpha}(-1)\cdot \s_y}+(q-1)\sum_{j\in\Z_{\gest 1-\langle y ,\alpha\rangle}}\varepsilon^{jQ(\alpha^{\vee})}\Theta_{\s_y\wt{h}_{\alpha}(\varpi^{j})}c_{\alpha}(jQ(\alpha^{\vee}))\\
=& \Theta_{\s_y}*\mathcal{T}_{\wt{w}_{\alpha}(1)}+(q-1)\sum_{j\in\Z_{\gest 1}}
\varepsilon^{jQ(\alpha^{\vee})}\Theta_{\s_y\wt{h}_{\alpha}(\varpi^{j})}c_{\alpha}(jQ(\alpha^{\vee})).
\end{aligned}
\end{equation}
Multiply \eqref{BR3} by $\mbm{1}_{I} = c(\mbm{1})$ on the right to get
\begin{equation} \label{BR3*I}
\begin{aligned}
& \mathcal{T}_{\wt{w}_{\alpha}(1)}*\Theta_{\wt{w}_{\alpha}(-1)\cdot \s_y}*\mbm{1}_{I}+(q-1)\sum_{
\substack{
j\in\Z_{\gest 1-\langle y ,\alpha\rangle}\\
j\equiv 0\,\text{mod }n_{\alpha}
}}\varepsilon^{jQ(\alpha^{\vee})}\Theta_{\s_y\wt{h}_{\alpha}(\varpi^{j})}*\mbm{1}_{I}\\
= & \Theta_{\s_y}*\mathcal{T}_{\wt{w}_{\alpha}(1)}*\mbm{1}_{I}+(q-1)\sum_{\begin{smallmatrix}
j\in\Z_{\gest 1}\\
j\equiv 0\,\text{mod }n_{\alpha}
\end{smallmatrix}}\varepsilon^{jQ(\alpha^{\vee})}\Theta_{\s_y\wt{h}_{\alpha}(\varpi^{j})}*\mbm{1}_{I}.
\end{aligned}
\end{equation}
Henceforth, we set
$$\mca{A}:=\mbm{1}_{I}*\mca{R}*\mbm{1}_{I}.$$
Note that $j\equiv 0\,\text{mod }n_{\alpha}$ implies $\wt{h}_{\alpha}(\varpi^{j})\in Z(\overline{T})$ and thus 
$$\Theta_{\wt{h}_{\alpha}(\varpi^{j})}*\mbm{1}_{I}\in \mathcal{A}.$$

\begin{lm}\label{BRandA}
One has
\begin{equation*}
\Theta_{\s_y}*\mathcal{T}_{\wt{w}_{\alpha}(1)}*\mbm{1}_{I}\in \mathcal{T}_{\wt{w}_{\alpha}(1)}*\Theta_{w_{\alpha}(-1)\cdot \s_y}*\mbm{1}_{I}+\Theta_{\s_y}*\mathcal{A}.
\end{equation*}
Moreover, for any $\wt{w}\in K$ representing an element $w\in W$
\begin{equation*}
\Theta_{\s_y}*\mathcal{T}_{\wt{w}}*\mbm{1}_{I}\in \mathcal{T}_{\wt{w}}*\Theta_{\wt{w}^{-1}\cdot \s_y}*\mbm{1}_{I}+\sum_{w'<w}\mathcal{T}_{\wt{w}'}*\Theta_{(\wt{w}')^{-1}\cdot \s_y}*\mathcal{A}.
\end{equation*}
\end{lm}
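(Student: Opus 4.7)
The first assertion follows from equation~\eqref{BR3*I}: rearranging that identity so that $\Theta_{\s_y}*\mca{T}_{\wt{w}_\alpha(1)}*\mbm{1}_I$ is isolated on one side and the desired main term $\mca{T}_{\wt{w}_\alpha(1)}*\Theta_{w_\alpha(-1)\cdot \s_y}*\mbm{1}_I$ on the other, what remains is a finite $\C$-linear combination of terms $\Theta_{\s_y\wt{h}_\alpha(\varpi^j)}*\mbm{1}_I$ with $j\equiv 0\pmod{n_\alpha}$. The congruence $n_\alpha\mid j$ forces $j\alpha^\vee\in Y_{Q,n}$, hence $\wt{h}_\alpha(\varpi^j)\in Z(\wt{T})$ and $\varphi(\wt{h}_\alpha(\varpi^j))=\mbm{1}$. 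Splitting $\Theta_{\s_y\wt{h}_\alpha(\varpi^j)}$ as a Hilbert-symbol scalar times $\Theta_{\s_y}*\Theta_{\wt{h}_\alpha(\varpi^j)}$ via \eqref{Rel2}, and using Lemma~\ref{BasicHeckeRel}(iii) to see $\Theta_{\wt{h}_\alpha(\varpi^j)}*\mbm{1}_I=\mbm{1}_I*\Theta_{\wt{h}_\alpha(\varpi^j)}*\mbm{1}_I\in\mca{A}$, each such correction lies in $\Theta_{\s_y}*\mca{A}$.

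For the second assertion I would argue by induction on $\ell(w)$. The base case $\ell(w)=0$ is immediate: any lift $\wt{w}$ of $1\in W$ lies in $\wt{\mbf{T}(O_F)}$, normalizes $I_1$, and $\mca{T}_{\wt{w}}$ commutes with $\Theta_{\s_y}$ up to scalars absorbed by the conventions on the $\Theta$'s. For the inductive step, choose a simple root $\alpha$ with $\ell(s_\alpha w)=\ell(w)-1$, write $w=s_\alpha w''$ and $\wt{w}=\wt{w}_\alpha(1)\cdot\wt{w}''$ so that $\mca{T}_{\wt{w}}=\mca{T}_{\wt{w}_\alpha(1)}*\mca{T}_{\wt{w}''}$ by the braid relation. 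Apply the unstarred Bernstein identity \eqref{BR3} to write
\[
\Theta_{\s_y}*\mca{T}_{\wt{w}_\alpha(1)} = \mca{T}_{\wt{w}_\alpha(1)}*\Theta_{w_\alpha(-1)\cdot \s_y} + Y,
\]
where $Y$ is a finite combination of terms of the form $\Theta_{\s_y\wt{h}_\alpha(\varpi^j)}*c_\alpha(jQ(\alpha^\vee))$. The main piece $\mca{T}_{\wt{w}_\alpha(1)}*\Theta_{w_\alpha(-1)\cdot \s_y}*\mca{T}_{\wt{w}''}*\mbm{1}_I$ is processed by applying the inductive hypothesis to $w''$ (with $y$ replaced by $w_\alpha(-1)\cdot y$): this yields $\mca{T}_{\wt{w}}*\Theta_{\wt{w}^{-1}\cdot \s_y}*\mbm{1}_I$ plus a sum over $w'''<w''$ of terms $\mca{T}_{\wt{w}_\alpha(1)}*\mca{T}_{\wt{w}'''}*\Theta_{(\wt{w}_\alpha(1)\wt{w}''')^{-1}\cdot \s_y}*\mca{A}$. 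Each product $\mca{T}_{\wt{w}_\alpha(1)}*\mca{T}_{\wt{w}'''}$ is then expanded via the braid relation (when $\ell(s_\alpha w''')=1+\ell(w''')$) or the quadratic relation \eqref{Quadfin} (otherwise) into a linear combination of $\mca{T}_{\wt{u}}$ with $u\leq s_\alpha w'''$; the subword property of Bruhat order gives $s_\alpha w'''<w$, so these contributions fit into the target sum indexed by $w'<w$.

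The genuinely delicate step is to absorb $Y*\mca{T}_{\wt{w}''}*\mbm{1}_I$ into the error sum. I would pass each $c_\alpha(jQ(\alpha^\vee))$ across $\mca{T}_{\wt{w}''}$ using Lemma~\ref{BasicHeckeRel}(ii) and then apply the inductive hypothesis to each resulting $\Theta_{\s_{y+j\alpha^\vee}}*\mca{T}_{\wt{w}''}*\mbm{1}_I$, producing terms of the shape $\mca{T}_{\wt{w}'}*\Theta_{(\wt{w}')^{-1}\cdot \s_{y+j\alpha^\vee}}*\mca{A}$ with $w'\leq w''<w$. Factoring $\Theta_{(\wt{w}')^{-1}\cdot \s_{y+j\alpha^\vee}}$ as a Hilbert-symbol scalar times $\Theta_{(\wt{w}')^{-1}\cdot \s_y}*\Theta_{(\wt{w}')^{-1}\cdot \wt{h}_\alpha(\varpi^j)}$ via \eqref{Rel2} and \eqref{Rel7} and shifting the last factor into the trailing $\mca{A}$, one uses Lemma~\ref{BasicHeckeRel}(iii) to see that the $\mbm{1}_I$ on the right of $\mca{A}$ kills all summands whose associated torus element is nontrivial in $\msc{X}_{Q,n}$; the surviving terms then lie in $\mca{T}_{\wt{w}'}*\Theta_{(\wt{w}')^{-1}\cdot \s_y}*\mca{A}$. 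I expect the main obstacle to be purely bookkeeping: tracking the cocycle scalars from \eqref{Rel2}--\eqref{Rel7} and verifying at each stage the $\msc{X}_{Q,n}$-triviality conditions that promote each surviving contribution into $\mca{A}$. No algebraic ingredient beyond the Bernstein and quadratic relations is needed.
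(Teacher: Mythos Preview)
Your approach matches the paper's: the first part is read off \eqref{BR3*I}, and the second is by induction on $\ell(w)$. The paper gives no detail beyond ``follows from induction on the length of $w$'', so your sketch already goes further than the original.

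There is one point where your bookkeeping needs more care. In the inductive step for the main piece, when $\ell(s_\alpha w''')<\ell(w''')$ you expand $\mca{T}_{\wt{w}_\alpha(1)}*\mca{T}_{\wt{w}'''}$ via the quadratic relation \eqref{Quadfin}. The summand coming from $(q-1)c_\alpha(\mbm{1})\mca{T}_{\wt{w}_\alpha(1)}$ produces elements $\mca{T}_{\wt{u}}$ with $\wt{u}$ lifting $w'''$, while the accompanying factor is $\Theta_{(\wt{w}_\alpha(1)\wt{w}''')^{-1}\cdot\s_y}$, whose index is $\wt{v}^{-1}\cdot\s_y$ for a lift $\wt{v}$ of $s_\alpha w'''$. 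These two differ by a torus element projecting to $\langle\alpha,y\rangle\,(w''')^{-1}(\alpha^\vee)\in Y$, which is not in $Y_{Q,n}$ in general, so such a term is \emph{not} visibly in $\mca{T}_{\wt{w}'''}*\Theta_{(\wt{w}''')^{-1}\cdot\s_y}*\mca{A}$ as you assert. Your sentence ``these contributions fit into the target sum indexed by $w'<w$'' glosses over exactly this mismatch.

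A clean fix: observe that for the application in Theorem~\ref{Wequi} one only needs the weaker containment with $\Theta_{(\wt{w}')^{-1}\cdot\s_y}*\mca{A}$ replaced by $\mca{R}*\mbm{1}_I$ in each error summand, and for that version your induction goes through without the quadratic-relation complication. After tensoring on the left with $c(\varphi(y))$ and pushing $\mca{T}_{\wt{w}'}$ across via \eqref{FinGGAction}, any term $c(\varphi((w')^{-1}y))\otimes\Theta_t*\mbm{1}_I$ either vanishes (if $\varphi(t)\neq\varphi((w')^{-1}y)$) or already lies in the $\mca{A}$-line through the basis vector for $(w')^{-1}y$, which is all the upper-triangularity argument requires.
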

%\textbf{(Note: I thought the exact statement of the lemma above depends on a choice of decomposition of $\wt{w}$ into simple reflections. But this may not be so. See the reference to Steinberg mentioned in the notation section.)}
\begin{proof}
The first equality is clear, while the second is follows from induction on the length of $w$.
\end{proof}

\subsection{Iwahori--Hecke algebra} \label{SS:IHalg}
Recall that 
$$\mbm{1}_{I}=c(\mbm{1})=\frac{1}{\val{T_\kappa}}\sum_{h\in T_\kappa}\mathcal{T}_h \in\mca{H}_\kappa.$$
The Iwahori--Hecke algebra for $\wt{G}$ with respect to $(I, s_K|_I)$ is
$$\mca{H}_{I}:=\mbm{1}_{I}*\mca{H}*\mbm{1}_{I}.$$
Our results for $\mca{H}$ can thus be projected onto $\mca{H}_{I}$. For $g\in \wt{G}$ and $t\in \wt{T}$ we write 
$$\mathcal{T}_{g}^{I}=\mbm{1}_{I}*\mathcal{T}_{g}*\mbm{1}_{I}, \quad \Theta_{t}^{I}=\mbm{1}_{I}*\Theta_{t}*\mbm{1}_{I}.$$
Set
$$\mca{H}_W:=\mbm{1}_{I}*\mca{H}_\kappa*\mbm{1}_{I} \subset \mca{H}_\kappa.$$

\begin{lm} The following holds:
\begin{enumerate}
\item[(i)] The map 
$$\mca{H}_W \longrightarrow C^{\infty}(I\backslash K/I)$$
 defined by $f\mapsto f|_{s(K)}$ is a $\C $-algebra isomorphism.\label{FinIwahoriIso} 
 %\textbf{I should write out the quadratic relations.}
\item[(ii)] The element $\Theta_{t}^{I}$ is nonzero if and only if $t\in Z(\overline{T})$. Moreover, the choice of a genuine character $\sigma \in \Irrg(Z(\wt{T}))$ induces a map  
$$\C[Y_{Q,n}] \longrightarrow \mathcal{A}, \quad y\mapsto \sigma(\s_y)\Theta_{\s_y}^{I},$$
which is an isomorphism of $\C $-algebras. \label{BAlg}
\item[(iii)] The map 
$$\mca{H}_W\otimes_{\C} \mathcal{A}\longrightarrow \mca{H}_{I}$$ given by $h\otimes a\mapsto h*a$ is an isomorphism of $\C $-vector spaces.\label{IwahoriBF}
\item[(iv)] (Bernstein Relation) Let $\alpha\in \Delta$ and $y\in Y_{Q,n}$. Then\label{IwahoriBR}
\begin{equation*}\label{I*BR3*I}
\begin{aligned}
& \mathcal{T}_{w_{\alpha}(1)}^{I}*\Theta_{w_{\alpha}(-1)\cdot \s_y}^{I}+(q-1)\sum_{
\substack{
j\in\Z_{\gest 1-\langle y ,\alpha\rangle}\\
j\equiv 0\, {\rm mod }n_{\alpha}
}}\varepsilon^{jQ(\alpha^{\vee})}\Theta_{\s_yh_{\alpha}(\varpi^{j})}^{I}\\
= & \Theta_{\s_y}*\mathcal{T}_{w_{\alpha}(1)}*\mbm{1}_{I}+(q-1)\sum_{
\substack{
j\in\Z_{\gest 1}\\
j\equiv 0\,{\rm mod }n_{\alpha}
}}\varepsilon^{jQ(\alpha^{\vee})}\Theta_{\s_yh_{\alpha}(\varpi^{j})}^{I}.
\end{aligned}
\end{equation*}
\end{enumerate}
\end{lm}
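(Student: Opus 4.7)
The plan is to deduce all four statements by projecting the pro-$p$ level results of Sections \ref{S:proH}--\ref{S:proPS} through the central idempotent $\mbm{1}_{I}=c(\mbm{1}) \in \mca{H}_\kappa$. The functor $f \mapsto \mbm{1}_{I} * f * \mbm{1}_{I}$ enforces left and right $I$-invariance and thus transports pro-$p$ structures onto $\mca{H}_{I}$ transparently; everything then reduces to checking how the already-established pro-$p$ identities interact with this projection.

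For (i), observe that under the $\C$-linear extension of the isomorphism $s_K^\mca{H}$ of Lemma \ref{HeckeSplit} the idempotent $\mbm{1}_{I}$ corresponds to the (normalized) characteristic function of $I$ in $K$, so restricting to the left and right $\mbm{1}_{I}$-invariant parts yields the desired isomorphism $\mca{H}_W \to C^\infty(I \backslash K / I)$. For (ii), Lemma \ref{BasicHeckeRel}(iii) gives $\Theta_{t} * c(\mbm{1}) = c(\varphi(t)) * \Theta_{t}$, whence
\[
\Theta_{t}^{I} = \mbm{1}_{I} * \Theta_{t} * \mbm{1}_{I} = c(\mbm{1}) * c(\varphi(t)) * \Theta_{t}
\]
is nonzero if and only if $\varphi(t) = \mbm{1}$; by the injectivity of \eqref{D:varphi} this is equivalent to $t$ lying in $Z(\wt{T}) \cdot \mbf{T}(O_F)$. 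The claimed algebra isomorphism $\C[Y_{Q,n}] \to \mathcal{A}$ is then Lemma \ref{centerIso} composed with the projection $Z(\mca{R}) c(\mbm{1}) \to \mathcal{A}$, which sends the $\C$-basis $\{\Theta_{\s_y}c(\mbm{1})\}_{y \in Y_{Q,n}}$ bijectively to the $\C$-basis $\{\Theta_{\s_y}^{I}\}_{y \in Y_{Q,n}}$ by the nonvanishing just established.

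For (iii), apply $\mbm{1}_{I} * (-) * \mbm{1}_{I}$ to the Bernstein decomposition of Proposition \ref{BernDecomp}: the image of $\mca{R}$ becomes $\mathcal{A}$ by (ii), the image of $\mca{H}_\kappa$ becomes $\mca{H}_W$, and the fact that $\mathcal{A} \cap \mca{H}_W = \C \cdot \mbm{1}_{I}$ explains why the tensor product at the Iwahori level is now taken over $\C$. Bijectivity is verified directly using the bijection $W_{\rm ex} \to \mu_n I \backslash \wt{G}/I$ of Lemma \ref{DoubCosetReps}: for each $(y,w) \in Y_{Q,n} \times (K \cap N(T))$, the product $\mca{T}_{w}^{I} * \Theta_{\s_y}^{I}$ lies in a unique double coset indexed by $w\s_y \in W_{\rm ex}$, so up to the standard upper-triangular argument with respect to the Bruhat order (cf.\ Lemma \ref{BRandA}) these products form a $\C$-basis of $\mca{H}_I$ indexed by $Y_{Q,n} \rtimes W$, matching the support of $\mca{H}_I$ by (ii).

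For (iv), the hypothesis $y \in Y_{Q,n}$ ensures $\s_y \in Z(\wt{T}) \cdot T_1$, while the congruence $j \equiv 0 \bmod n_\alpha$ appearing in \eqref{BR3*I} is exactly the condition for $h_\alpha(\varpi^j) = \alpha^\vee(\varpi^j)$ to lie in $Z(\wt{T}) \cdot \mbf{T}(O_F)$. By (ii) each term $\Theta_{\s_y h_\alpha(\varpi^j)} * \mbm{1}_{I}$ therefore survives left multiplication by $\mbm{1}_{I}$ and becomes $\Theta_{\s_y h_\alpha(\varpi^j)}^{I} \in \mathcal{A}$; multiplying \eqref{BR3*I} on the left by $\mbm{1}_{I}$ then yields (iv) verbatim. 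The main bookkeeping point, and essentially the only nontrivial one, is confirming that the summation ranges $n_\alpha \mid j$ in \eqref{BR3*I} match precisely the condition under which the projected terms remain nonzero, so that no terms are spuriously created or lost when passing from the pro-$p$ identity to the Iwahori-level statement.
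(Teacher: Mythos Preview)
Your proposal is correct and follows essentially the same approach as the paper: all four parts are obtained by projecting the pro-$p$ results through the idempotent $\mbm{1}_I$, and your treatments of (i), (ii), (iv) match the paper's almost verbatim. For (iii) the paper's argument is slightly slicker than yours: it observes that the map $\mca{H}_W \otimes_\C \mathcal{A} \to \mca{H}_I$ is literally the restriction of the isomorphism $\mca{H}_\kappa \otimes_{\mca{H}_\kappa \cap \mca{R}} \mca{R} \to \mca{H}$ of Proposition~\ref{BernDecomp} (hence injective), and is surjective because $\mbm{1}_I$ is central in $\mca{H}_\kappa$, so $\mbm{1}_I * \mca{H}_\kappa * \mca{R} * \mbm{1}_I = \mca{H}_W * \mathcal{A}$; your basis-counting argument via double cosets is workable but your phrase ``lies in a unique double coset'' is imprecise, since $\Theta_{\s_y}^I$ is generally supported on several cosets and only its leading term with respect to the Bruhat order is controlled.
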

\begin{proof}
For (i) the proof is identical to that of Lemma \ref{HeckeSplit}.
To prove (ii), one has
$$\Theta_{t}*\mbm{1}_{I}=c([t,-])*\Theta_{t}$$
by Lemma \ref{BasicHeckeRel}. Thus we see that $\mbm{1}_{I}*\Theta_{t}*\mbm{1}_{I}\neq 0$ if and only if $t\in Z(\overline{T})\mbf{T}(O_F)$. Note that the isomorphism is the same as Lemma \ref{centerIso}, when $\chi=\mbm{1}$.

The map  $\mca{H}_W \otimes_{\C } \mathcal{A}\longrightarrow \mca{H}_{I}$ is the restriction of the isomorphsim 
$$\mca{H}_\kappa\otimes_{\mca{H}_\kappa\cap \mca{R}}\mca{R}\longrightarrow \mca{H}$$
from Proposition \ref{BernDecomp}; thus it is injective. The map is also surjective since for any $h\in\mca{H}_\kappa$ we have $\mbm{1}_{I}*h=h*\mbm{1}_{I}$. This gives (iii).

The assertion (iv) follows by multiplying equation (\ref{BR3*I}) on the left by $\mbm{1}_{I}$.
\end{proof}

%\begin{cor}\label{IwahoriInvert}
%For every $t\in \overline{T}$, the element $\mbm{1}_{I}*\mathcal{T}_{t}*\mbm{1}_{I}$ is either 0 or invertible in $\mca{H}_{I}$.
%\end{cor}

%%%
\section{Gelfand--Graev Representation} \label{S:GG}

In this section we study the Gelfand--Graev representation 
$$\mca{V}:=\mathrm{ind}_{\mu_{n}U^-}^{\wt{G}}(\epsilon \otimes\psi),$$
where 
$$\psi:U^- \longrightarrow \C ^{\times}$$
 is a nondegenerate character of conductor $\frak{p}$ and $\wt{G}$ acts by right translation on $\mca{V}$. Our objective is to understand $\mca{V}^I$ as an $\mca{H}_{I}$-module, for which we begin by studying the finite Gelfand--Graev representation.

\subsection{Finite Gelfand--Graev representation}
Since $\psi$ has conductor $\frak{p}$, it descends to a non-degenerate character (still denoted by)
$$\psi: U_\kappa^- \longrightarrow \C^\times.$$
Set
$$\mca{V}_\kappa:={\rm Ind}_{U_\kappa^-}^{G_\kappa}(\psi)^{U_\kappa},$$
the $U_\kappa$-fixed vectors in the Gelfand--Graev representation for the finite group $G_\kappa$.

By Lemma \ref{HeckeSplit}, for the finite Hecke algebra one has 
$$\mca{H}_\kappa\simeq C^{\infty}_{c}(I_{1}\backslash K/I_{1}) \simeq \C[U_\kappa \backslash G_\kappa/ U_\kappa],$$
where the measure used to define convolution in $\C [U_\kappa \backslash G_\kappa/U_\kappa]$ is $q^{-1}$ times the counting measure. Thus $\mca{H}_\kappa$ acts on $\mca{V}_\kappa$ by convolution and so does the subalgebra of functions supported on $U_\kappa T_\kappa U_\kappa=U_\kappa T_\kappa = T_\kappa U_\kappa$, which is isomorphic to $\C[T_\kappa]$.

\begin{lm}\label{FinGGT}
The map $\mca{V}_\kappa \longrightarrow \C[T_\kappa]$ given by $f\mapsto f|_{T_\kappa}$ is an isomorphism of $\C[T_\kappa]$-modules.
\end{lm}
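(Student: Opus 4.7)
The plan is to analyze $f\in\mca{V}_\kappa$ via the Bruhat decomposition $G_\kappa=\bigsqcup_{w\in W}U_\kappa^-T_\kappa \dot{w} U_\kappa$, showing that $f$ vanishes on every non-open cell and that on the open cell $U_\kappa^-T_\kappa U_\kappa$ its value is completely determined by, and can be freely prescribed by, its restriction to $T_\kappa$.

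The first and main step is the vanishing outside the open cell. Fix $w\ne 1$ and choose a simple root $\alpha_j$ with $w^{-1}(\alpha_j)<0$ (this exists since for any reduced expression $w=s_{j_1}\cdots s_{j_k}$ one may take $j=j_1$). Set $\beta:=-w^{-1}(\alpha_j)\in\Phi_+$, so that $w(\beta)=-\alpha_j$. For $u=e_\beta(a)\in U_\kappa$ one has $\dot{w}u\dot{w}^{-1}=e_{-\alpha_j}(c_wa)\in U_{-\alpha_j}\subset U_\kappa^-$ for some $c_w\in\kappa^\times$ coming from the pinning. Comparing $f(t\dot{w})=f(t\dot{w}u)$ (right $U_\kappa$-invariance) with the alternative
\[
f(t\dot{w}u)=f\bigl((t\,e_{-\alpha_j}(c_wa)\,t^{-1})\cdot t\dot{w}\bigr)=\psi\bigl(e_{-\alpha_j}(c_w\alpha_j(t)^{-1}a)\bigr)f(t\dot{w}),
\]
obtained from the $(U_\kappa^-,\psi)$-equivariance, yields $\bigl(1-\psi(e_{-\alpha_j}(c_w\alpha_j(t)^{-1}a))\bigr)f(t\dot{w})=0$ for every $a\in\kappa$. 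By nondegeneracy of $\psi$ on $U_{-\alpha_j}$, the expression $a\mapsto \psi(e_{-\alpha_j}(c_w\alpha_j(t)^{-1}a))$ is a non-trivial additive character of $\kappa$, so some value is $\ne 1$ and hence $f(t\dot{w})=0$. The transformation properties of $f$ then force it to vanish on the entire cell $U_\kappa^-T_\kappa\dot{w}U_\kappa$.

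On the open cell, the multiplication map $U_\kappa^-\times T_\kappa\times U_\kappa\to U_\kappa^-T_\kappa U_\kappa$ is bijective because $B_\kappa^-\cap B_\kappa=T_\kappa$. The identity $f(u^-tu)=\psi(u^-)f(t)$ then shows that $f$ is determined by $f|_{T_\kappa}$, and conversely that any $\phi:T_\kappa\to\C$ extends (by this formula on the open cell and by zero on its complement, which is a biinvariant union of the remaining cells) to a well-defined element of $\mca{V}_\kappa$. This yields the $\C$-linear bijection $f\mapsto f|_{T_\kappa}$.

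To promote this bijection to a $\C[T_\kappa]$-module isomorphism, I would verify compatibility on the basis $\set{\mca{T}_{t_0}}_{t_0\in T_\kappa}$ of the subalgebra of $\mca{H}_\kappa$ supported on $T_\kappa U_\kappa=U_\kappa T_\kappa$. Since $\mca{T}_{t_0}$ is a normalization of the characteristic function of $t_0U_\kappa$, a direct convolution computation together with right $U_\kappa$-invariance shows that $(f\cdot\mca{T}_{t_0})|_{T_\kappa}$ is a scalar multiple of the translate of $f|_{T_\kappa}$ by $t_0$, matching the regular action of $\C[T_\kappa]$ on itself. The main obstacle is Step 1: the choice of the right simple root, the identification $\dot{w}u\dot{w}^{-1}\in U_{-\alpha_j}$, and the invocation of nondegeneracy of $\psi$ are what drive the whole argument; the final step is essentially bookkeeping, with the only subtlety being the ``$q^{-1}$ times counting measure'' convention noted in the text, which must be tracked to normalize the translation identity correctly.
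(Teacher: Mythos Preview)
Your proof is correct and follows essentially the same approach as the paper: the paper's proof simply asserts that functions in $\mca{V}_\kappa$ are supported on $U_\kappa^- T_\kappa U_\kappa$ (which you prove in detail via the Bruhat decomposition and nondegeneracy of $\psi$), and then invokes $U_\kappa T_\kappa = T_\kappa U_\kappa$ for the module compatibility. Your hedging about a possible scalar in the last step is unnecessary once the measure is normalized so that $U_\kappa$ has volume $1$, since then $(f*\mca{T}_{t_0})(t)=f(tt_0^{-1})$ exactly.
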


\begin{proof}
The functions in $\mca{V}_\kappa$ have support contained in $U_\kappa^- T_\kappa U_\kappa$. By the Bruhat decomposition the map is an isomorphism of vector spaces. It is a $\C[T_\kappa]$-module homomorphism because $U_\kappa T_\kappa =T_\kappa U_\kappa$.
\end{proof}

Next we describe the structure of $\mca{V}_\kappa$ as an $\mca{H}_\kappa$-module. For $\chi\in\Hom(T_\kappa,\C ^{\times})$, recall that $c(\chi)\in \C[T_\kappa]$ is the idempotent associated with $\chi$. We also write $c(\chi)$ for the element of $\mca{V}_\kappa \simeq \C[T_\kappa]$, under the isomorphism of Lemma \ref{FinGGT}. Let 
$$\mathcal{O} \subset \Hom(T_\kappa,\C^{\times})$$
be a $W$-orbit with respect to the action  induced from that on $T_\kappa$. Let
$$\mca{V}_{\kappa, \mca{O}}:={\rm Span}_{\C}\set{c(\chi): \chi\in \mca{O} } \subseteq \mca{V}_\kappa.$$
For $\chi \in \Hom(T_\kappa, \C^\times)$ and $\alpha\in \Delta$, we set
\begin{equation} \label{D:GS}
\g_{\alpha}(\psi,\chi):=\sum_{u\in\kappa^{\times}}\psi(e_{-\alpha}(u)) \cdot \chi(h_{-\alpha}(u)).
\end{equation}

\begin{prop} 
The finite Gelfand--Graev representation $\mca{V}_\kappa = {\rm ind}_{U_\kappa^-}^{G_\kappa}(\psi)^{U_\kappa}$ decomposes as
$$
\mca{V}_\kappa=\bigoplus_{\mca{O}} \mca{V}_{\kappa, \mca{O}},
$$
where the direct sum is taken over the $W$-orbits in $\Hom(T_\kappa, \C^\times)$. Moreover, each $\mca{V}_{\kappa, \mca{O}}$ is an irreducible $\mca{H}_\kappa$-module.
\end{prop}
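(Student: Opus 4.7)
The plan is to establish three facts in sequence: the vector-space decomposition, the $\mca{H}_\kappa$-stability of each summand, and the irreducibility of each summand. The vector-space decomposition follows from Lemma \ref{FinGGT}, which gives $\mca{V}_\kappa \simeq \C[T_\kappa]$ as right $\C[T_\kappa]$-modules, combined with the spectral decomposition $\C[T_\kappa] = \bigoplus_{\chi \in \Hom(T_\kappa, \C^\times)} \C \cdot c(\chi)$ and regrouping by $W$-orbits. For $\mca{H}_\kappa$-stability, recall from Lemma \ref{BasicHeckeRel}(i) that $\mca{H}_\kappa$ is generated by $\C[T_\kappa]$ together with $\mca{T}_\alpha$ for $\alpha \in \Delta$. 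The idempotents $c(\chi)$ act diagonally and preserve each line $\C \cdot c(\chi)$, while the commutation relation $\mca{T}_\alpha * c(\chi) = c({}^{w_\alpha}\chi) * \mca{T}_\alpha$ of Lemma \ref{BasicHeckeRel}(ii), together with associativity of the right action, forces $c(\chi) * \mca{T}_\alpha$ to be a ${}^{w_\alpha}\chi$-eigenvector for the right $\C[T_\kappa]$-action and hence to lie in $\C \cdot c({}^{w_\alpha}\chi) \subseteq \mca{V}_{\kappa, \mca{O}}$.

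For irreducibility, I would compute $c(\chi) * \mca{T}_\alpha$ explicitly. By the eigenvector analysis it must equal $\lambda_{\alpha, \chi} \cdot c({}^{w_\alpha}\chi)$ (plus an additional $c(\chi)$-term when ${}^{w_\alpha} \chi = \chi$). Evaluating the convolution at a torus representative, unfolding the definition of $\mca{T}_\alpha$ via the identification $\mca{H}_\kappa \simeq \C[U_\kappa \backslash G_\kappa / U_\kappa]$ afforded by Lemma \ref{HeckeSplit} and Lemma \ref{FinGGT}, and using the Bruhat decomposition on the support of $\mca{T}_\alpha$ together with the explicit action of $\psi$ on the negative root subgroup $U_{-\alpha}$, the scalar $\lambda_{\alpha, \chi}$ comes out to be a nonzero multiple of the Gauss sum $\g_\alpha(\psi, \chi)$ defined in \eqref{D:GS}. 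Classical nonvanishing of Gauss sums then gives $\lambda_{\alpha, \chi} \ne 0$, so that whenever ${}^{w_\alpha}\chi \ne \chi$ the vector $c(\chi) * \mca{T}_\alpha$ is a nonzero scalar multiple of $c({}^{w_\alpha}\chi)$.

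To conclude, let $V \subseteq \mca{V}_{\kappa, \mca{O}}$ be a nonzero submodule. Applying the idempotents $c(\chi)$ shows that $V$ is a direct sum of some of the lines $\C \cdot c(\chi)$, so $V$ contains some basis vector $c(\chi_0)$. For any target $\chi \in \mca{O}$, choose a chain of pairwise distinct elements $\chi_0 = \chi^{(0)}, \chi^{(1)}, \dots, \chi^{(k)} = \chi$ in $\mca{O}$ with $\chi^{(i)} = {}^{w_{\alpha_i}} \chi^{(i-1)}$; such a chain exists because $W$ acts transitively on $\mca{O}$ via simple reflections and any stabilizing steps can be removed from a reduced expression for the element of $W$ carrying $\chi_0$ to $\chi$. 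Iterated application of $\mca{T}_{\alpha_i}$ then produces a nonzero multiple of $c(\chi)$ inside $V$ by the nonvanishing of the Gauss sums $\g_{\alpha_i}(\psi, \chi^{(i-1)})$, so $V = \mca{V}_{\kappa, \mca{O}}$. The main obstacle is the Gauss-sum computation in the middle paragraph: performing the explicit convolution $c(\chi) * \mca{T}_\alpha$ with correct normalizations and matching the resulting coefficient with $\g_\alpha(\psi, \chi)$ requires careful bookkeeping of the Bruhat cell supporting $\mca{T}_\alpha$ together with the commutation of $\psi$ past $w_\alpha$ and past torus elements.
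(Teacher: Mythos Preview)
Your approach is essentially the same as the paper's: both reduce to the explicit computation $c(\chi)*\mca{T}_\alpha = \g_\alpha(\psi,\chi)\cdot c({}^{w_\alpha}\chi)$ (the paper's equation \eqref{FinGGAction}), and both deduce stability and irreducibility from this together with nonvanishing of the Gauss sums. Two small points: your parenthetical about an ``additional $c(\chi)$-term when ${}^{w_\alpha}\chi=\chi$'' is unnecessary, since in that case $c({}^{w_\alpha}\chi)=c(\chi)$ and the eigenvector analysis already pins down the result; and your insistence on a chain of \emph{pairwise distinct} characters with ``stabilizing steps removed'' is also unnecessary (and the removal claim is not obviously justified), because $\g_\alpha(\psi,\chi)\ne 0$ even when ${}^{w_\alpha}\chi=\chi$, so any word in simple reflections carrying $\chi_0$ to $\chi$ works.
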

\begin{proof}
By Lemma \ref{FinGGT} we know that $ {\rm ind}_{U_\kappa^-}^{G_\kappa}(\psi)^{U_\kappa}$ decomposes as a $\C[T_\kappa]$-module into a direct sum over all of the characters of $T_\kappa$.  For $g\in G_\kappa$ we write 
$$\mca{T}^\kappa_{g}:= {\rm ch}_{U_\kappa g U_\kappa} \in\C[U_\kappa\backslash G_\kappa/U_\kappa],$$
the characteristic function of $U_\kappa g U_\kappa$. For $\alpha\in\Delta$, we also set
$$\mca{T}^\kappa_\alpha: = \mca{T}^\kappa_{w_{\alpha}(1)}.$$

For $\chi\in\Hom(T_\kappa,\C ^{\times})$ and $\alpha\in \Delta$ we  view $c(\chi) \in \mca{V}_\kappa$.
A direct computation gives
\begin{align*}
c(\chi)*\mca{T}^\kappa_{\alpha}(t)=&\int_{Uw_{\alpha}(1)U}c(\chi)(th^{-1})\mca{T}^\kappa_{w_{\alpha}(1)}(h)dh\\
=&\sum_{u\in \kappa^{\times}}c(\chi)(te_{\alpha}(u)w_{\alpha}(-1))\\
=&\Big( \sum_{u\in\kappa^{\times}}c(\chi)(e_{\alpha}(u)w_{\alpha}(-1))\Big) {}^{w_\alpha}\chi(t) \\
=&\Big( \sum_{u\in\kappa^{\times}}\psi^{-1}(e_{-\alpha}(-u^{-1}))c(\chi)(h_{\alpha}(u))\Big) {}^{w_\alpha}\chi(t)\\
=&\Big( \sum_{u\in\kappa^{\times}}\psi(e_{-\alpha}(u))\chi(h_{-\alpha}(u)) \Big) {}^{w_\alpha}\chi(t) \\
=&\g_{\alpha}(\psi,\chi) \cdot {}^{w_\alpha}\chi(t)
\end{align*}
This gives
\begin{equation}\label{FinGGAction}
c(\chi)*\mca{T}^\kappa_{\alpha}=\g_{\alpha}(\psi,\chi) \cdot c({}^{w_\alpha} \chi).
\end{equation}

The algebra $\mca{H}_\kappa$ is generated by the elements $\mca{T}^\kappa_{\alpha}$ and $\mca{T}^\kappa_{t}$, where $\alpha\in \Delta$ and $t\in T_\kappa$. Thus
$$
\mca{V}_\kappa=\bigoplus_{\mca{O}} \mca{V}_{\kappa, \mca{O}},
$$
where the direct sum is taken over $W$-orbits in $\Hom(T_\kappa,\C ^{\times})$. 

In fact, each $\mca{V}_{\kappa, \mca{O}}$ is irreducible. Let $v\in \mca{V}_{\kappa, \mca{O}}$ be a nonzero vector. By Lemma \ref{FinGGT}, there is a character $\chi\in \Hom(T_\kappa,\C ^{\times})$ such that 
$$v*c(\chi)=z \cdot c(\chi)$$ for some $z \in \C^{\times}$. Finally, the equality \eqref{FinGGAction} implies $v*\mca{H}_\kappa=\mca{V}_{\kappa, \mca{O}}$. Thus $\mca{V}_{\kappa, \mca{O}}$ is irreducible.
\end{proof}

Now we show that each $\mca{V}_{\kappa,\mathcal{O}}$ is an induced representation. Let $\chi\in\Hom(T_\kappa,\C ^{\times})$. Then consider the algebra 
$$\mca{H}_{\kappa,\chi}=c(\chi)\mca{H}_\kappa c(\chi).$$
A $\C $-basis for $\mca{H}_{\kappa,\chi}$ is given by 
$$\set{c(\chi)\mca{T}^\kappa_w c(\chi): w\in W_\chi},$$
where $w\in G_\kappa$ is the natural representative of an element in $W_{\chi}:={\rm Stab}_{W}(\chi)$. The action in line (\ref{FinGGAction}) shows that $\mca{H}_{\kappa,\chi}$ acts on the one-dimensional subspace $\C\cdot c(\chi)\subset \mca{V}_\kappa$. Henceforth, we write 
$$\tau_{\chi}:=\C \cdot c(\chi)$$
 for this one-dimensional representation of $\mca{H}_{\kappa,\chi}$.

\begin{lm}\label{OrbitInduced}
Let $\chi\in \Hom(T_\kappa,\C ^{\times})$ and let $\mathcal{O}_{\chi}$ be the $W$-orbit of $\chi$. Then the linear map 
$$\tau_{\chi}\otimes_{\mca{H}_{\kappa,\chi}}\mca{H}_\kappa\longrightarrow \mca{V}_{\kappa, \mathcal{O}_{\chi}}$$
 defined by $c(\chi)\otimes h\mapsto c(\chi)*h$ is an $\mca{H}_\kappa$-module isomorphism.
\end{lm}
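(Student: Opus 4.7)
The plan is to establish the isomorphism in three steps: well-definedness of the map, surjectivity via irreducibility, and injectivity through a dimension count.

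For well-definedness, I need to verify that $c(\chi) \ast h' = \tau_\chi(h') c(\chi)$ for every $h' \in \mca{H}_{\kappa,\chi}$. Since $\mca{H}_{\kappa,\chi} = c(\chi) \mca{H}_\kappa c(\chi)$ is spanned by $c(\chi) \mca{T}^\kappa_w c(\chi)$ for $w \in W_\chi$, iterating \eqref{FinGGAction} along a reduced expression for such a $w$ yields a product of Gauss sums $\g_\alpha(\psi, \cdot)$ times $c({}^w \chi) = c(\chi)$, as required. The intermediate Gauss sums never vanish: each is either equal to $-1$ (when the corresponding torus character is trivial) or a classical Gauss sum of absolute value $\sqrt{q}$, so the scalar $\tau_\chi(h')$ is unambiguously defined and depends only on $h'$.

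Surjectivity is immediate: the image is a nonzero $\mca{H}_\kappa$-submodule of the irreducible module $\mca{V}_{\kappa, \mca{O}_\chi}$ established in the preceding proposition, hence coincides with the target.

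For injectivity, I would compare dimensions. The target has dimension $|\mca{O}_\chi|$ by construction. For the source, first observe that $c(\chi) \otimes c(\chi') f = 0$ in $\tau_\chi \otimes_{\mca{H}_{\kappa,\chi}} \mca{H}_\kappa$ whenever $\chi' \neq \chi$: pulling the idempotent $c(\chi) \in \mca{H}_{\kappa,\chi}$ through the tensor gives $c(\chi) \otimes c(\chi') f = c(\chi) \otimes c(\chi) c(\chi') f = 0$ since $c(\chi) c(\chi') = 0$. This identifies the source with $\tau_\chi \otimes_{\mca{H}_{\kappa,\chi}} c(\chi) \mca{H}_\kappa$. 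The finite analogues of Lemma \ref{HeckeIdemDecomp}, Lemma \ref{HeckeIdemnon0}, Corollary \ref{HeckeIdemIso}, and Proposition \ref{HeckeRingDecomp} -- whose proofs transfer verbatim to $\mca{H}_\kappa$, with $W \ltimes \msc{X}_{Q,n}$-orbits replaced by $W$-orbits -- realize $\mca{H}_\kappa$ as a product of matrix algebras $\prod_\mca{O} \mbf{M}(|\mca{O}|, \mca{H}_{\kappa, \chi_\mca{O}})$. In this picture $c(\chi)$ is a diagonal matrix idempotent, so $c(\chi) \mca{H}_\kappa$ is the corresponding row, which is free of rank $|\mca{O}_\chi|$ as a left $\mca{H}_{\kappa,\chi}$-module. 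Hence $\dim(\tau_\chi \otimes_{\mca{H}_{\kappa,\chi}} \mca{H}_\kappa) = |\mca{O}_\chi|$, and the surjection of step two is forced to be an isomorphism.

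The main obstacle is establishing the freeness of $c(\chi) \mca{H}_\kappa$ as a left $\mca{H}_{\kappa,\chi}$-module. Once the matrix-algebra decomposition of $\mca{H}_\kappa$ is in hand, this is a formal Morita-theoretic consequence; the step requires checking that the invertibility of basis elements (the analogue of Corollary \ref{PropInvert}) holds in $\mca{H}_\kappa$, which follows from the standard quadratic relations for the finite Hecke algebra. One could alternatively bypass the matrix picture by choosing coset representatives $w_{\chi'} \in W$ with ${}^{w_{\chi'}} \chi' = \chi$ for each $\chi' \in \mca{O}_\chi$ and showing directly that $\{c(\chi) \mca{T}^\kappa_{w_{\chi'}} c(\chi')\}_{\chi' \in \mca{O}_\chi}$ is a free $\mca{H}_{\kappa,\chi}$-basis of $c(\chi) \mca{H}_\kappa$, but some care with the Hecke quadratic relations is required since $W_\chi$ need not be a standard parabolic subgroup of $W$.
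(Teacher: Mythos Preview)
Your proof is correct and follows the same three-step skeleton as the paper: well-definedness of the induced map, surjectivity, and a dimension comparison. The paper's proof is terser: it gets surjectivity directly from \eqref{FinGGAction} rather than via irreducibility, and it simply asserts that both sides have dimension $|W_\chi\backslash W|$ without justification. Your elaboration of the dimension count through the matrix-algebra decomposition of $\mca{H}_\kappa$ is valid but heavier than necessary. A lighter route: since $c(\chi)\otimes c(\chi')f=0$ for $\chi'\neq\chi$ (as you note) and $\mca{H}_\kappa=\bigoplus_{w\in W}\C[T_\kappa]\mca{T}^\kappa_w$, the elements $c(\chi)\otimes\mca{T}^\kappa_w$ with $w$ ranging over a set of representatives for $W_\chi\backslash W$ span the source, giving $\dim\le|\mca{O}_\chi|$; together with surjectivity onto a target of dimension $|\mca{O}_\chi|$ this forces the isomorphism without needing freeness or the Morita picture. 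Your remark on nonvanishing of the Gauss sums is correct but not strictly needed for well-definedness, since $\tau_\chi$ is \emph{defined} as the restriction of the $\mca{H}_\kappa$-action to $\C\cdot c(\chi)\subset\mca{V}_\kappa$, so the map $c(\chi)\mapsto c(\chi)$ is tautologically $\mca{H}_{\kappa,\chi}$-equivariant.
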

\begin{proof}
It follows from \eqref{FinGGAction} that the linear map $\tau_{\chi} \longrightarrow \mca{V}_{\mathcal{O}}$ defined by $c(\chi)\mapsto c(\chi)$ is an $\mca{H}_{\kappa,\chi}$-module map and induces an $\mca{H}_\kappa$-module homomorphism $\tau_{\chi}\otimes_{\mca{H}_{\kappa,\chi}}\mca{H}_\kappa\rightarrow \mca{V}_{\kappa, \mathcal{O}_{\chi}}$ defined by $c(\chi)\otimes h\mapsto c(\chi)*h$. This map is surjective by \eqref{FinGGAction}. The map is an isomorphism because both spaces have dimension $\val{W_\chi\backslash W}$.
\end{proof}

Lemma \ref{OrbitInduced} can be refined to an isomorphism
$$\tau_{\chi}\otimes_{\mca{H}_{\kappa,\chi}}c_{\mathcal{O}}\mca{H}_\kappa c_{\mathcal{O}}\simeq \mca{V}_{\kappa, \mathcal{O}},$$
where 
$$c_{\mathcal{O}}:=\sum_{\chi'\in \mathcal{O}}c(\chi')$$ is the central idempotent corresponding to the orbit $\mathcal{O}$. In particular, for the trivial orbit we have the following.

\begin{cor}\label{finiteTrivial}
Let $\mathcal{O}_\mbm{1}$ be the orbit of the trivial character $\mbm{1}$ of $T_\kappa$. Then $\mca{V}_{\kappa, \mca{O}_\mbm{1}}=\C\cdot c(\mbm{1})$ and 
\begin{equation*}
\mca{V}_{\kappa, \mca{O}_\mbm{1}}\simeq {\rm sign}
\end{equation*}
as $C(B_\kappa \backslash G_\kappa/B_\kappa)$-modules.
\end{cor}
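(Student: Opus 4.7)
The plan is to handle the two assertions separately, both of which fall out with little work from material already developed. First I would dispose of the dimension claim: the trivial character $\mbm{1}$ of $T_\kappa$ is $W$-fixed, so its orbit is the singleton $\{\mbm{1}\}$, and hence $\mca{V}_{\kappa,\mca{O}_\mbm{1}} = \C \cdot c(\mbm{1})$ by the very definition of $\mca{V}_{\kappa,\mca{O}}$.

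The main step is to identify the $C(B_\kappa\backslash G_\kappa/B_\kappa)$-action on this line. I would simply specialize formula \eqref{FinGGAction} at $\chi = \mbm{1}$: since ${}^{w_\alpha}\mbm{1} = \mbm{1}$, this collapses to
$$c(\mbm{1}) * \mca{T}^\kappa_\alpha = \g_\alpha(\psi,\mbm{1}) \cdot c(\mbm{1}), \qquad \g_\alpha(\psi,\mbm{1}) = \sum_{u \in \kappa^\times} \psi(e_{-\alpha}(u)).$$
Because $\psi$ is nondegenerate, $\psi \circ e_{-\alpha}$ is a nontrivial additive character of $\kappa$; summing it over all of $\kappa$ gives zero, so $\g_\alpha(\psi,\mbm{1}) = -1$ for every $\alpha \in \Delta$.

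To conclude, I would identify $C(B_\kappa \backslash G_\kappa/B_\kappa)$ with the subalgebra $c(\mbm{1}) * \mca{H}_\kappa * c(\mbm{1})$ of $\mca{H}_\kappa$ via the idempotent $c(\mbm{1})$, generated by the Iwahori--Matsumoto elements $T_\alpha := c(\mbm{1}) * \mca{T}^\kappa_\alpha * c(\mbm{1})$ satisfying the finite quadratic relation $T_\alpha^2 = (q-1)T_\alpha + q \cdot c(\mbm{1})$. Its two one-dimensional characters are the trivial one $T_\alpha \mapsto q$ and the sign one $T_\alpha \mapsto -1$; the computation in the previous paragraph shows that the action on $\C \cdot c(\mbm{1})$ is the latter, finishing the proof. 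I do not foresee any genuine obstacle here: the corollary is really just the $\chi = \mbm{1}$ specialization of the action formula established in the proposition, together with the Gauss-sum vanishing $\sum_{u \in \kappa}\psi(e_{-\alpha}(u)) = 0$.
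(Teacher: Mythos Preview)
Your proposal is correct and follows essentially the same approach as the paper: both reduce to the specialization of \eqref{FinGGAction} at $\chi=\mbm{1}$, and the paper's one-line appeal to that formula is exactly what you spell out in detail (including the Gauss-sum evaluation $\g_\alpha(\psi,\mbm{1})=-1$ and the identification of $C(B_\kappa\backslash G_\kappa/B_\kappa)$ with $c(\mbm{1})*\mca{H}_\kappa*c(\mbm{1})$, which the paper leaves implicit).
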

\begin{proof}
By definition $\mca{V}_{\kappa, \mca{O}_\mbm{1}}=\C \cdot c(\mbm{1})$ and by a direct computation we see that $$c(\mbm{1})*\mca{T}^\kappa_{w_{\alpha}(1)}=c(\mbm{1})*\mca{T}^\kappa_{\alpha}*c(\mbm{1})\in C(B_\kappa\backslash G_\kappa/B_\kappa).$$
The result then follows from \eqref{FinGGAction}.
\end{proof}

\begin{comment}
\begin{rmk}
 Let $G=\mathrm{SL}_2$ and let $\chi$ be the unique nontrivial quadratic character of $T_\kappa$ (we assume that $p\neq 2$). Then the linear map 
 $$\mca{H}_{\kappa, \chi} \longrightarrow \C[W]$$ defined by
 $$c(\chi)\mapsto 1 \text{ and } \frac{1}{\sqrt{q\chi(-1)}}c(\chi)\mca{T}^\kappa_{w_\alpha(-1)}c(\chi)\mapsto w_{\alpha}$$ is an isomorphism of $\C$-algebras. Furthermore, $\mca{V}_{\kappa, \mca{O}_\chi}\simeq \tau_{\chi}\otimes_{\mca{H}_{\kappa,\chi} }\mca{H}_\kappa$, where $\tau_{\chi}$ is trivial or sign depending on the value of 
 $$\frac{\g_{\alpha}(\psi,\chi)}{\sqrt{q\chi(-1)}}\in \set{\pm1}.$$
 However, the isomorphism $\mca{H}_{\kappa,\chi}\rightarrow \C[W]$ depends on the choice of a square root, so the sign of $\g_{\alpha}(\psi,\chi)/\sqrt{q\chi(-1)}$ depends on this choice.
\end{rmk}
\end{comment}

% \textbf{Remark:} An earlier version of this manuscript contained an ad hoc construction of the representation $V_{0}$. Savin pointed out the connection with the finite Gelfand-Graev representation. However, the ad hoc construction showed that $V_{0}$ is one representation in a family given by deforming the Gauss sums. Does this family have a conceptual construction?

\subsection{The pro-$p$-fixed vectors}
We follow the approach of Chan--Savin \cite{CS18} to study $\mca{V}^{I_{1}}$. For any $U$-module $S$, we write $S_{U}$ for the $\overline{T}$-module of $U$-coinvariants.

\begin{lm}\label{prop1} The natural map $\mca{V}\longrightarrow \mca{V}_{U}$ induces an isomorphism of $\overline{T}$-modules
\begin{equation}\label{TypeIso}
\mca{V}^{I_{1}}\simeq (\mca{V}_{U})^{T_{1}}.
\end{equation}
\end{lm}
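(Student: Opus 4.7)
The plan is to verify that the natural composition
$$\mca{V}^{I_1} \hookrightarrow \mca{V} \twoheadrightarrow \mca{V}_U$$
is well-defined with image in $(\mca{V}_U)^{T_1}$ and that the resulting map is a bijection. Well-definedness and the $T_1$-invariance of the image are immediate: since $T_1 \subset I_1$ normalizes $U$, the $T_1$-action on $\mca{V}$ descends to $\mca{V}_U$, and any $I_1$-fixed vector is in particular $T_1$-fixed, so its image in $\mca{V}_U$ lies in $(\mca{V}_U)^{T_1}$. This provides the canonical $\wt{T}$-equivariant morphism of \eqref{TypeIso}.

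To establish bijectivity, the natural approach is to adapt the method of Chan--Savin \cite{CS18} from the Iwahori level to the pro-$p$ Iwahori level, making essential use of the Iwahori factorization $I_1 = U^-_1 T_1 U_1$. For injectivity, Casselman's criterion identifies the kernel of $\mca{V} \to \mca{V}_U$ with the union over compact open $U_0 \subseteq U$ of those $v$ satisfying $\int_{U_0} \pi(u) v\, du = 0$. For $v \in \mca{V}^{I_1}$ in this kernel, I would enlarge $U_0$ to contain $U_1$ and then exploit the explicit description of elements of $\mca{V}^{I_1}$ as functions on $\mu_n U^- \backslash \wt{G} / I_1$ coming from the compact induction defining $\mca{V}$. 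A support argument using the Bruhat--Iwasawa decomposition, combined with the transformation property under $U^-$, then forces $v = 0$.

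For surjectivity, one computes $\mca{V}_U$ via Mackey theory and the Bruhat decomposition $\wt{G} = \bigsqcup_{w \in W} U^- \wt{w} \wt{B}$. The nondegeneracy of $\psi$ kills all contributions except those from the open cell $U^- \wt{T} U$, yielding a concrete description of $\mca{V}_U$ as a space of genuine functions on $\wt{T}$. Taking $T_1$-invariants produces an explicit basis of $(\mca{V}_U)^{T_1}$ indexed by $\wt{T} / \mu_n T_1$, which matches the natural parametrization of $\mca{V}^{I_1}$ by coset representatives that arise from the torus part of $\mu_n U^- \backslash \wt{G} / I_1$. Matching these bases gives the inverse map and completes the isomorphism.

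The main obstacle is verifying that the coset data on both sides correspond correctly, especially ensuring that the splitting $s_K|_{I_1}$ used to embed $I_1$ into $\wt{G}$ interacts compatibly with the left transformation law of $\mca{V}$ under $(\mu_n U^-, \epsilon \otimes \psi)$. The tameness hypothesis $p \nmid n$ is crucial here: it forces $I_1$ to have a unique splitting into $\wt{G}$, so the basis-matching argument of \cite{CS18} for linear groups transfers to the genuine setting without modification, and the natural map $\mca{V}^{I_1} \to (\mca{V}_U)^{T_1}$ is an isomorphism of $\wt{T}$-modules.
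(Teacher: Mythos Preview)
Your proposal is essentially correct but follows a different route from the paper. The paper's proof is a one-line appeal to Bushnell--Kutzko theory \cite{BK98}: since $I_1$ admits an Iwahori factorization $I_1 = U_1^- T_1 U_1$ and every $\mathcal{T}_t \in \mca{H}$ with $t \in \wt{T}$ is invertible (this is Corollary~\ref{PropInvert}, established earlier via the quadratic and braid relations), the general machinery of types gives the isomorphism $\mca{V}^{I_1} \simeq (\mca{V}_U)^{T_1}$ immediately. The invertibility of $\mathcal{T}_t$ is exactly the hypothesis that makes the Bushnell--Kutzko argument go through, and it is the reason the paper developed the pro-$p$ Iwahori--Hecke algebra structure first.

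Your approach instead unpacks this isomorphism by hand: Casselman's kernel criterion for injectivity, and an explicit Mackey-theoretic computation of $\mca{V}_U$ for surjectivity. This is valid, and in fact your surjectivity argument anticipates the content of the next two lemmas in the paper (Lemmas~\ref{prop2} and~\ref{prop3}), which compute $\mca{V}_U$ via the open-cell filtration and identify it with $C^\infty_{c,\epsilon}(\wt{T})$. So you are effectively merging Lemmas~\ref{prop1}--\ref{prop3} into one direct argument. The trade-off: your route is more elementary and self-contained, while the paper's route is cleaner but depends on the earlier algebraic input (invertibility of $\mathcal{T}_t$) that you do not invoke. Either works; the paper's organization separates the abstract isomorphism (Lemma~\ref{prop1}) from the concrete model of the Jacquet module (Lemmas~\ref{prop2}--\ref{prop3}).
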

\begin{proof}
This follows from the theory of Bushnell--Kutzko \cite{BK98}, which is applicable because $I_{1}$ possesses an Iwahori factorization and $\mathcal{T}_{t}\in \mca{H}$ is invertible for any $t\in \overline{T}$.
\end{proof}

\begin{lm}\label{prop2}
Let $\mca{V}_{0} \subset \mca{V}$ be the subspace consisting of functions supported on $U^-\wt{T}U$. The inclusion $\mca{V}_{0}\subseteq \mca{V}$ induces an isomorphism of $\overline{T}$-modules $(\mca{V}_{0})_{U}\simeq \mca{V}_U$.
\end{lm}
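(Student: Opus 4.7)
The strategy is to show that the quotient $\mca{V}/\mca{V}_0$ has vanishing $U$-coinvariants, then conclude via exactness of the Jacquet functor with respect to $U$.

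First, since $U$ is the unipotent radical of the Borel $\wt{B}=\wt{T}U$, the functor $V\mapsto V_U$ is exact on smooth representations: writing $V_U=\varinjlim_{U_0} V^{U_0}$ over compact open $U_0\subset U$, each of which gives a direct summand via averaging, reduces exactness to the compact-group case. Applied to $0\to\mca{V}_0\to\mca{V}\to\mca{V}/\mca{V}_0\to 0$ this yields a short exact sequence of $\wt{T}$-modules
$$
0\to(\mca{V}_0)_U\to\mca{V}_U\to(\mca{V}/\mca{V}_0)_U\to 0,
$$
so it suffices to prove $(\mca{V}/\mca{V}_0)_U=0$.

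Next, by the Bruhat decomposition the closed complement $Z:=\wt{G}\setminus U^-\wt{T}U$ is the disjoint union $Z=\bigsqcup_{w\ne 1}U^-\wt{w}\wt{B}$ (for a choice of lifts $\wt{w}$ of Weyl elements). I would filter $\mca{V}/\mca{V}_0$ by $U$-stable subspaces corresponding to closed Bruhat sub-unions $\bigcup_{w'\geq w}U^-\wt{w}'\wt{B}$ (using $\overline{U^-\wt{w}\wt{B}}=\bigcup_{w'\geq w}U^-\wt{w}'\wt{B}$), so that the successive quotients $\mca{V}_{(w)}$ correspond to sections along a single Bruhat cell $U^-\wt{w}\wt{B}$ for $w\ne 1$. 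By exactness of the Jacquet functor applied at each stage, it suffices to show $(\mca{V}_{(w)})_U=0$ for each such $w$.

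For this, a function supported in $U^-\wt{w}\wt{B}$ is determined by $\tilde{f}(b):=f(\wt{w}b)$ for $b\in\wt{B}$, and the $(\epsilon\otimes\psi)$-equivariance under $\mu_n U^-$ on the left translates into the condition $\tilde{f}(vb)=\psi_w(v)\tilde{f}(b)$ for $v\in V_w^-:=U\cap\wt{w}^{-1}U^-\wt{w}$, where $\psi_w(v):=\psi(\wt{w}v\wt{w}^{-1})$. Hence $\mca{V}_{(w)}\cong{\rm ind}_{V_w^-}^{\wt{B}}(\psi_w)$ as $\wt{B}$-modules, and further decomposing $\wt{B}=U\wt{T}$ with $U$ acting trivially on the $\wt{T}$-factor yields $\mca{V}_{(w)}|_U\cong{\rm ind}_{V_w^-}^{U}(\psi_w)\otimes C_c^\infty(\wt{T})$ as $U$-modules. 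Shapiro's lemma for coinvariants then gives $(\mca{V}_{(w)})_U\cong(\psi_w)_{V_w^-}\otimes C_c^\infty(\wt{T})$, so the task reduces to showing $\psi_w|_{V_w^-}\ne\mbm{1}$. Since $w\ne 1$ there exists $\alpha\in\Delta$ with $w^{-1}\alpha\in\Phi_-$; setting $\beta:=-w^{-1}\alpha\in\Phi_+$ one finds $w\beta=-\alpha$, so $U_\beta\subseteq V_w^-$ and $\psi_w|_{U_\beta}$ is the pullback of the non-trivial character $\psi|_{U_{-\alpha}}$. The main technical delicacy is the Bruhat-filtration step, since the cells are only locally closed in $\wt{G}$; the remaining ingredients — exactness of Jacquet, the identification with a compact induction, Shapiro's lemma, and the rank-one non-degeneracy check — are essentially formal.
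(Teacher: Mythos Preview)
Your argument is correct and coincides with the Bruhat-cell filtration proof of Chan--Savin \cite[Lemma~4.1]{CS18} that the paper cites. The one step to tighten is the claimed $U$-module isomorphism $\mca{V}_{(w)}|_U\cong{\rm ind}_{V_w^-}^{U}(\psi_w)\otimes C_c^\infty(\wt{T})$: right translation by $u_0$ acts on the fiber over $t\in\wt{T}$ via $tu_0t^{-1}$, so this is not literally a tensor product of $U$-modules; however, since conjugation by $t$ is an automorphism of $U$ preserving $V_w^-$ and the nontriviality of $\psi_w|_{V_w^-}$, the $U$-coinvariants of each fiber still vanish, and combined with the compact support of any $\tilde f$ in the $\wt{T}$-direction this still yields $(\mca{V}_{(w)})_U=0$.
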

\begin{proof}
The proof of Lemma 4.1 from Chan--Savin \cite{CS18} adapts without change.
\end{proof}

\begin{lm}\label{prop3}
The map $\mathcal{S}: (\mca{V}_{0})_{U}\longrightarrow C^{\infty}_{c,\epsilon}(\overline{T})$ defined by 
$$\mathcal{S}(f)(t)=\delta_{U}(t)^{-1/2}\int_{U}f(tu)du$$ is an isomorphism of $\overline{T}$-modules. %Moreover, $\mathcal{S}(\mathrm{ch}_{I_{1}})^{\Psi}$
\end{lm}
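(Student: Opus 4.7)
The strategy is to exploit the unique factorization of the big Bruhat cell $U^-\wt{T}U$ to identify $\mca{V}_0$ concretely, and then reduce the assertion about $U$-coinvariants to the standard fact that right translation makes $C_c^\infty(U)_U$ one-dimensional, with the Haar integral realizing the isomorphism $C_c^\infty(U)_U \xrightarrow{\sim} \C$.

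First I would check well-definedness: the multiplication map $\mu_n \times U^- \times \wt{T} \times U \to \wt{G}$ factors through the quotient by the diagonal $\mu_n$ and is a homeomorphism onto $\mu_n U^-\wt{T}U = U^-\wt{T}U$. Combined with the left $(\mu_n U^-, \epsilon\otimes\psi)$-equivariance, this yields a natural vector-space isomorphism $\mca{V}_0 \cong C^\infty_{c,\epsilon}(\wt{T}) \otimes C_c^\infty(U)$ with $U$ acting by right translation on the second factor only. In particular, for each $t \in \wt{T}$ the function $u \mapsto f(tu)$ is locally constant of compact support, so $\int_U f(tu)\,du$ makes sense; $\mathcal{S}(f)$ is itself locally constant, compactly supported on $\wt{T}$ (its support is the image of $\mathrm{supp}(f)$ under the projection $\wt{T}U \to \wt{T}$), and $\epsilon$-genuine because $f(\zeta\,tu) = \epsilon(\zeta)f(tu)$.

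Second, $\mathcal{S}$ factors through $(\mca{V}_0)_U$: for $u_0 \in U$, right-invariance of Haar measure gives $\mathcal{S}(R(u_0)f) = \mathcal{S}(f)$. For $\wt{T}$-equivariance, substituting $u = t_0 u' t_0^{-1}$ in the defining integral produces a Jacobian that is a power of $\delta_U(t_0)$; combining this with $\delta_U(tt_0)^{1/2}=\delta_U(t)^{1/2}\delta_U(t_0)^{1/2}$, the exponent $-1/2$ appearing in the definition of $\mathcal{S}$ is calibrated precisely so that the modular factors match the normalization of the $\wt{T}$-action on $C^\infty_{c,\epsilon}(\wt{T})$ used in the paper (consistent with the twist $\delta_{\wt{B}}$ appearing in the $\wt{T}$-action on $\mca{M}$ in the preceding section).

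Third, bijectivity on coinvariants falls out of the tensor decomposition of $\mca{V}_0$. Under the identification $\mca{V}_0 \cong C^\infty_{c,\epsilon}(\wt{T}) \otimes C_c^\infty(U)$, the map $\mathcal{S}$ becomes (up to the $\delta_U^{-1/2}$ twist) $\phi \otimes \xi \mapsto \bigl(\int_U \xi\,du\bigr)\,\phi$. Surjectivity is immediate: given $\varphi \in C^\infty_{c,\epsilon}(\wt{T})$, choose any $\xi \in C_c^\infty(U)$ with $\int_U \xi\,du = 1$ and lift to $f(tu) = \delta_U(t)^{1/2}\varphi(t)\xi(u)$, extended by the left $\mu_n U^-$-transformation law. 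Injectivity reduces to the observation that the kernel of the averaging map $C_c^\infty(U) \to \C$, $\xi \mapsto \int \xi\,du$, is exactly $\mathrm{span}\{\xi - R(u_0)\xi : u_0 \in U\}$, i.e., the image of $U$-action on $C_c^\infty(U)$, so $C_c^\infty(U)_U \xrightarrow{\sim} \C$; tensoring back gives that $\mathcal{S}$ descends to an isomorphism on $(\mca{V}_0)_U$.

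The only real obstacle is bookkeeping with the modular-character normalization, which must be tracked precisely to confirm $\wt{T}$-equivariance; the underlying structural content is purely the one-dimensionality of $C_c^\infty(U)_U$ together with Bruhat factorization of the open cell.
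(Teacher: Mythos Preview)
Your proposal is correct and is precisely the standard argument; the paper itself gives no details here, simply noting that the proof of \cite[Proposition 4.2]{CS18} adapts without change, and what you have written is exactly that adaptation (big-cell factorization plus $C_c^\infty(U)_U\simeq\C$ via Haar integration). One small remark: your appeal to the $\delta_{\wt{B}}$-twist on $\mca{M}$ is not quite the right justification, since the $\wt{T}$-action on $\mca{V}$ is by right translation rather than the twisted left action used for $\mca{M}$; the paper does not specify the $\wt{T}$-module structure on $C^\infty_{c,\epsilon}(\wt{T})$ explicitly, so it is implicitly whatever makes $\mathcal{S}$ equivariant, and your Jacobian bookkeeping determines it.
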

\begin{proof}
Again this adapts directly from Chan--Savin \cite[Proposition 4.2]{CS18}.
\end{proof}

Let $\mathrm{ch}_{t}\in C^{\infty}_{c,\epsilon}(\overline{T}/T_{1})$ be the unique function with support equal to $t\mu_{n}T_{1}$ and $\mathrm{ch}_{t}(t)=1$. The map 
$$\overline{T}/T_{1}\rightarrow C^{\infty}_{c,\epsilon}(\overline{T}/T_{1})=C^{\infty}_{c,\epsilon}(\overline{T})^{T_{1}}$$
defined by $t\mapsto {\rm ch}_{t}$ is a $\overline{T}/T_{1}$-equivariant injection. This map extends linearly to give an isomorphism of $\mca{R}$-modules $\mca{R}\longrightarrow C^{\infty}_{c,\epsilon}(\overline{T}/T_{1})$. 
Let 
$${\rm ch}_{I_{1}}^{\psi}\in \mca{V}^{I_{1}}$$ be the function supported on $\mu_{n}U^- I_{1}$ such that ${\rm ch}_{I_{1}}^{\psi}(1)=1$. Note that ${\rm ch}_{I_{1}}^{\psi} \in \mca{V}_{0}$.
Since the factorization $\mu_{n}U^- I_{1}=\mu_{n}U^- T_{1}(U\cap I_{1})$ is unique, a direct calculation shows that
$$\mca{S}(\mathrm{ch}_{I_{1}}^{\psi}) = z \cdot \mathrm{ch}_{1}\in C^{\infty}_{c,\epsilon}(\overline{T}/T_{1})$$
with $z \in \C^\times$.

Combining Lemmas \ref{prop1}, \ref{prop2}, and \ref{prop3}, we obtain the following:

\begin{prop}\label{prop4}
There is an isomorphism of $\overline{T}$-modules 
$$\vartheta_{1}: \mca{V}^{I_{1}}\rightarrow \mca{R}$$ induced by the isomorphisms of Lemmas \ref{prop1}, \ref{prop2}, and $\ref{prop3}$. Under this isomorphism, the functions in $\mca{V}^{I_{1}}$ with support contained in $\mu_{n}U^- K$ are mapped to $\C[\mu_{n},T_\kappa]$ and thus generate $\mca{R}$ as an $\mca{R}$-module.
\end{prop}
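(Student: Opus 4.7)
The plan is to deduce the proposition as a routine assembly of Lemmas \ref{prop1}, \ref{prop2}, and \ref{prop3}; no genuinely new computation is required. I would define $\vartheta_1$ as the composition
$$
\mca{V}^{I_1} \xrightarrow{\sim} (\mca{V}_U)^{T_1} \xleftarrow{\sim} ((\mca{V}_0)_U)^{T_1} \xrightarrow{\mca{S}} C^{\infty}_{c,\epsilon}(\wt{T})^{T_1} = C^{\infty}_{c,\epsilon}(\wt{T}/T_1) \xrightarrow{\sim} \mca{R},
$$
where the first arrow is Lemma \ref{prop1}, the second is the inverse of Lemma \ref{prop2} after passing to $T_1$-invariants, the third is the $T_1$-invariant restriction of $\mca{S}$ from Lemma \ref{prop3}, and the last sends $\mathrm{ch}_t$ to the class of $t$ and extends linearly. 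Each arrow is $\wt{T}$-equivariant by construction (the action on $\mca{R}$ being left multiplication), so $\vartheta_1$ is an isomorphism of $\mca{R}$-modules.

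For the second claim I would trace an $f \in \mca{V}^{I_1}$ supported on $\mu_n U^- K$ through each step. The key input is the elementary Bruhat/Iwasawa identity
$$
U^- K \cap (U^- T U) = U^- \cdot \mbf{T}(O_F) \cdot (U \cap K),
$$
proved by equating the torus component on both sides and invoking $T \cap K = \mbf{T}(O_F)$. This already places $f$ inside $\mca{V}_0$, so the second map in the composition is just inclusion. Then $\mca{S}(f)(t) = \delta_U(t)^{-1/2} \int_U f(tu)\, du$ can be nonzero only when $tu \in \mu_n U^- K$ for some $u \in U$, and the identity above forces $t \in \mu_n \mbf{T}(O_F)$. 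Passing to $T_1$-invariants and using $\mbf{T}(O_F)/T_1 \simeq T_\kappa$ lands $\vartheta_1(f)$ inside $\C[\mu_n, T_\kappa] \subset \mca{R}$, as desired. For the generation statement, I would simply recall the explicit computation (recorded immediately before the proposition) that $\vartheta_1(\mathrm{ch}_{I_1}^\psi) = z \cdot \mathrm{ch}_1$ for some $z \in \C^\times$. Since $\mathrm{ch}_1$ is the identity element of the group algebra $\mca{R} = \C[\wt{T}/T_1]e_\epsilon$, its $\mca{R}$-translates already exhaust $\mca{R}$, and the generation assertion follows.

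The substantive content sits in the three preceding lemmas (in particular the Bushnell--Kutzko input to Lemma \ref{prop1} and the rank-one integration underlying Lemma \ref{prop3}); the only point in the present proof that could be mistaken is the support-tracking step in the middle paragraph, namely that $U^- K \cap U^- T U$ is precisely $U^- \cdot \mbf{T}(O_F) \cdot (U \cap K)$. Once that Bruhat intersection is verified, everything else is formal.
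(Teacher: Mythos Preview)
Your approach matches the paper's, which offers no proof beyond the sentence ``Combining Lemmas \ref{prop1}, \ref{prop2}, and \ref{prop3}''. The assembly of the three isomorphisms and the generation argument via $\vartheta_1(\mathrm{ch}_{I_1}^\psi)=z\cdot\mathrm{ch}_1$ are exactly as intended.

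There is one small gap in your support-tracking paragraph. The identity $U^-K\cap U^-TU=U^-\,\mbf{T}(O_F)\,(U\cap K)$ does \emph{not} by itself place $f$ inside $\mca{V}_0$: the set $\mu_n U^-K$ is not contained in $U^-\wt{T}U$ (it contains Weyl-group representatives), so knowing the intersection says nothing about whether $f$ vanishes off it. The claim $f\in\mca{V}_0$ is in fact true, but it requires the standard ``Whittaker functions vanish off the big cell'' argument using the conductor of $\psi$ (equivalently, the support statement in the proof of Lemma~\ref{FinGGT}). Alternatively, and more simply, you can bypass this step entirely: the integral $\mca{S}(f)(t)=\delta_U(t)^{-1/2}\int_U f(tu)\,du$ makes sense for any $f\in\mca{V}$ and, by translation-invariance of Haar measure on $U$, factors through $\mca{V}_U$. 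Hence $\vartheta_1(f)$ is computed by this formula whether or not $f\in\mca{V}_0$. Your next sentence then goes through verbatim: for $t\in\wt{T}$ and $u\in U$ one has $tu\in \wt{T}U\subset U^-\wt{T}U$, so if also $tu\in\mu_n U^-K$ then the identity together with uniqueness of the big-cell factorization forces $\wp(t)\in\mbf{T}(O_F)$, i.e.\ $t\in\mu_n\, s_K(\mbf{T}(O_F))$.
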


We define a map
$$\vartheta_2: \mca{V}_\kappa \longrightarrow \mca{V}^{I_1}$$
by setting
$$\vartheta(f)(\zeta u t k) =
\begin{cases}
\epsilon(\zeta)\psi(u)f(t k) & \text{ if $t\in T\cap K$},\\
0 & \text{ otherwise}.
\end{cases}
$$

\begin{lm}\label{Embed}
The map $\vartheta_2$ is an embedding  of $\mca{H}_\kappa$-modules.  The image $\vartheta_2(\mca{V}_\kappa) \subset \mca{V}^{I_1}$ consists of the functions  with support contained in $\mu_{n}U^- K$.
\end{lm}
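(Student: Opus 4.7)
The plan is to verify the four constituent claims in order: (i) $\vartheta_2(f)$ is a well-defined $\epsilon$-genuine function on $\wt{G}$; (ii) $\vartheta_2(f)$ lies in $\mca{V}^{I_1}$; (iii) $\vartheta_2$ is $\mca{H}_\kappa$-equivariant and injective; (iv) the image is exactly the space of $F \in \mca{V}^{I_1}$ with $\mathrm{supp}(F) \subset \mu_n U^- K$.

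For (i), I would first observe that any $g \in \mu_n U^- K$ admits a decomposition $g = \zeta u k'$ with $\zeta \in \mu_n$, $u \in U^-$, $k' \in s_K(K)$, and the only ambiguity is $(u, k') \mapsto (uv, v^{-1}k')$ for some $v \in U^-(O_F)$. Since $s_K$ restricted to $U^- \cap K$ agrees with the canonical splitting of $U^-$ in $\wt{G}$, the product $uv \in \wt{G}$ computes componentwise and no extra cocycle is introduced. Because $\psi$ has conductor $\mfr{p}$ it is trivial on $U^-(\mfr{p})$ and descends to $U^-_\kappa$, so the left $(U^-_\kappa, \psi)$-equivariance of $f \in \mca{V}_\kappa$ cancels the change $\psi(uv)$ against $f({\rm red}_\mfr{p}(v^{-1}k'))$, whence the formula is unambiguous. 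For (ii), the left $(\mu_n U^-, \epsilon\otimes \psi)$-equivariance is immediate from the formula, while right $I_1$-invariance follows from $I_1 \subset s_K(K)$ reducing onto $U_\kappa$ together with the right $U_\kappa$-invariance of $f$.

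For (iii), both actions are by convolution, and $h \in \mca{H}_\kappa$ is supported in $s_K(K)$, so $\vartheta_2(f)*h$ remains supported in $\mu_n U^- K$. Evaluating at $g = \zeta u k'$, the factors $\epsilon(\zeta)\psi(u)$ pull out by left equivariance, and what remains is (up to a normalizing measure constant) the convolution of $f$ with the image $h_\kappa$ of $h$ under the isomorphism $\mca{H}_\kappa \simeq \C[U_\kappa \backslash G_\kappa / U_\kappa]$, evaluated at ${\rm red}_\mfr{p}(k')$. Injectivity follows since $\vartheta_2(f) = 0$ forces $f({\rm red}_\mfr{p}(k)) = 0$ for all $k \in K$, and ${\rm red}_\mfr{p}$ is surjective. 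For (iv), one containment is built into the formula; for the reverse, given $F \in \mca{V}^{I_1}$ supported in $\mu_n U^- K$, I would define $f(\bar{k}) := F(s_K(k))$, noting well-definedness since $\ker({\rm red}_\mfr{p}) \subset I_1$ and $F$ is right $I_1$-invariant. Checking that $f \in \mca{V}_\kappa$ (right $U_\kappa$-invariance from right $I_1$-invariance; left $(U^-_\kappa, \psi)$-equivariance from the compatibility of $s_K$ with the canonical unipotent splitting and the conductor of $\psi$) and $\vartheta_2(f) = F$ are then direct.

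The main obstacle is Step (i): correctly tracking the $\mu_n$-central factor through the ambiguity of the decomposition $g = \zeta u k'$. This ultimately reduces to two compatibilities, both built into the setup: $s_K$ extends the canonical splitting on $U^-(O_F)$, and $\psi$ vanishes on $U^-(\mfr{p})$ so that it descends cleanly to $U^-_\kappa$. The remaining steps are essentially bookkeeping once this is in place.
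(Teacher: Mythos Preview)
Your proposal is correct and is exactly the ``direct computation using the Iwasawa decomposition'' that the paper's one-line proof invokes; you have simply written out the details the paper omits. One minor caveat: in step~(iii) you should not hedge with ``up to a normalizing measure constant,'' since a nontrivial constant would break $\mca{H}_\kappa$-equivariance---with the paper's normalizations (Haar measure giving $I_1$ volume $1$, transported to $G_\kappa$ via the algebra isomorphism of Lemma~\ref{HeckeSplit}) there is genuinely no constant, and this is worth stating outright.
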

\begin{proof}
A direct computation using the Iwasawa decomposition $\wt{G}=U\wt{T}K$ shows that the map is well-defined and is an embedding of $\mca{H}_\kappa$-modules. The second claim follows by the definition of $\vartheta_2(f)$.
\end{proof}

\begin{thm}\label{GGProP}
The $\mca{H}$-module map 
$$\pmb{\gamma}: \mca{V}_\kappa \otimes_{\mca{H}_\kappa}\mca{H} \longrightarrow \mca{V}^{I_{1}}$$
 induced by the embedding $\vartheta_{2}:\mca{V}_\kappa \longrightarrow \mca{V}^{I_{1}}$ from Lemma \ref{Embed} is an isomorphism.
\end{thm}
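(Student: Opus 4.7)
The plan is to show that $\pmb{\gamma}$ is surjective using the description of $\mca{V}^{I_1}$ in Proposition \ref{prop4}, and then to identify the source with the target via the Bernstein decomposition of $\mca{H}$. The map $\pmb{\gamma}$ is $\mca{H}$-linear by the universal property of tensor products applied to the $\mca{H}_\kappa$-linear embedding $\vartheta_2$ of Lemma \ref{Embed}, so only bijectivity needs to be verified.

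For surjectivity, Lemma \ref{Embed} identifies $\vartheta_2(\mca{V}_\kappa) \subset \mca{V}^{I_1}$ with the subspace of functions supported in $\mu_n U^- K$. By Proposition \ref{prop4}, the isomorphism $\vartheta_1$ carries this subspace onto $\C[\mu_n, T_\kappa] \subset \mca{R}$, and this subspace generates $\mca{R}$ as an $\mca{R}$-module. Since the $\wt{T}$-action on $\mca{V}^{I_1}$ is realized inside the right $\mca{H}$-action via the elements $\Theta_t$ constructed in \S\ref{S:proPS}, the $\mca{H}$-span of $\vartheta_2(\mca{V}_\kappa)$ exhausts $\mca{V}^{I_1}$, so $\pmb{\gamma}$ is surjective.

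For injectivity, I would invoke the Bernstein decomposition (Proposition \ref{BernDecomp}, in the form $\mca{H} \simeq \mca{H}_\kappa \otimes_{\mca{R} \cap \mca{H}_\kappa} \mca{R}$) together with the identification of $\mca{R} \cap \mca{H}_\kappa$ with $\C[T_\kappa]$ given by the splitting $s_K$ and the idempotent $e_\epsilon$. Under this identification, the induced action of $\mca{R} \cap \mca{H}_\kappa$ on $\mca{V}_\kappa$ agrees with the natural $\C[T_\kappa]$-action of Lemma \ref{FinGGT}, which makes $\mca{V}_\kappa$ free of rank one. Consequently
\begin{equation*}
\mca{V}_\kappa \otimes_{\mca{H}_\kappa} \mca{H} \ \simeq \ \mca{V}_\kappa \otimes_{\C[T_\kappa]} \mca{R} \ \simeq \ \mca{R},
\end{equation*}
which matches $\mca{V}^{I_1}$ under $\vartheta_1$. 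Both sides then carry the same $T/T_1$-grading with one-dimensional graded pieces, so a surjection between them must be an isomorphism.

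The main technical step is the compatibility of the $\mca{R} \cap \mca{H}_\kappa$-action on $\mca{V}_\kappa$ inherited from $\mca{H}_\kappa$ with the natural $\C[T_\kappa]$-action of Lemma \ref{FinGGT}. This reduces to unwinding the splitting $s_K$ and the $\epsilon$-twist on $\wp^{-1}(\mbf{T}(O_F))$, using that $T_1 \subset Z(\wt{T})$ for tame covers. Once this compatibility is checked, the two displayed isomorphisms above follow by functoriality of tensor products and the proof is complete.
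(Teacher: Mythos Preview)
Your proposal is correct and follows essentially the same route as the paper. Both arguments use Proposition~\ref{prop4} and Lemma~\ref{Embed} to see that $\vartheta_1\circ\vartheta_2$ carries $\mca{V}_\kappa$ isomorphically onto $\mca{H}_\kappa\cap\mca{R}$, and then invoke the Bernstein decomposition (Proposition~\ref{BernDecomp}) together with $\mca{V}_\kappa\simeq\C[T_\kappa]$ as $\C[T_\kappa]$-module to identify $\mca{V}_\kappa\otimes_{\mca{H}_\kappa}\mca{H}\simeq\mca{V}_\kappa\otimes_{\mca{R}\cap\mca{H}_\kappa}\mca{R}\simeq\mca{R}$. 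The paper packages this as a single direct argument that $\vartheta_1\circ\vartheta_2$ is an isomorphism, whereas you separate surjectivity and injectivity; the content is the same.

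One small point: your surjectivity paragraph asserts that the Jacquet-module $\wt{T}$-action on $\mca{V}^{I_1}$ agrees with the right action of the $\Theta_t$'s. This is true (it is the general Bushnell--Kutzko statement underlying Lemma~\ref{prop1}, parallel to Lemma~\ref{HMIso}), and the paper uses it just as implicitly when it treats $\vartheta_1\circ\vartheta_2$ as an $\mca{R}$-module map. It is worth noting, though, that this compatibility---not the $\mca{R}\cap\mca{H}_\kappa$ versus $\C[T_\kappa]$ check you flag at the end---is the substantive ingredient; the latter is immediate from Lemma~\ref{HeckeSplit}. Also, your final ``same $T/T_1$-grading'' sentence implicitly needs $\pmb{\gamma}$ to respect that grading, which again comes down to the same $\Theta_t$-compatibility; once that is granted, the surjection between rank-one free right $\mca{R}$-modules sending generator to generator is an isomorphism, and no separate grading argument is needed.
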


\begin{proof}
By Frobenius reciprocity and Lemma \ref{Embed} there is a nonzero map $\mca{V}_\kappa \otimes_{\mca{H}_\kappa}\mca{H} \rightarrow \mca{V}^{I_{1}}$ of $\mca{H}$-modules, which we also call $\vartheta_{2}$. It suffices to show that 
$$\vartheta_{1}\circ\vartheta_{2}: \mca{V}_\kappa \otimes_{\mca{H}_\kappa}\mca{H} \rightarrow \mca{R}$$
 is an isomorphism of $\mca{R}$-modules.

The map $\vartheta_{1}\circ\vartheta_{2}: \mca{V}_\kappa \longrightarrow \mca{R}$ is injective with image equal to $\C [\mu_{n},T_\kappa]=\mca{H}_\kappa\cap \mca{R}\subset \mca{R}$ by Proposition \ref{prop4} and Lemma \ref{Embed}. By Proposition \ref{BernDecomp} we have $\mca{H} \simeq  \mca{H}_\kappa\otimes_{\mca{R}\cap \mca{H}_\kappa} \mca{R}$ as $\C$-vector spaces. It follows that 
$$\begin{tikzcd}
\mca{V}_\kappa \otimes_{\mca{H}_\kappa\cap \mca{R}} \mca{R} \simeq \mca{V}_\kappa \otimes_{\mca{H}_\kappa} \mca{H} 
\ar[r, "{\vartheta_1 \circ \vartheta_2}"] & \mca{R}
\end{tikzcd} $$ 
is an isomorphism. This completes the proof.
\end{proof}

\begin{rmk}
Although our arguments in this subsection require only minor modification to those of Chan--Savin \cite{CS18}, the usage of the pro-$p$ subgroup $I_{1}$ is essential. The key point is that the pro-$p$ Hecke algebra $\mca{H}$ contains functions supported on all elements of $\overline{T}$, while the Iwahori--Hecke algebra $\mca{H}_{I}$ does not, a fact which is closely related to the failure of multiplicity one for Whittaker models. If one carries out the above discussion using $I$ in place of $I_{1}$, then the analogue of Theorem \ref{GGProP} fails. This failure is a direct consequence of the constraints on the support of functions in $\mca{H}_{I}$.
\end{rmk}

\subsection{The Iwahori-fixed vectors}

The main objective of this subsection is to describe $\mca{V}^{I}$ as an $\mca{H}_{I}$-module. Our work from the last subsection provides the foundation. Let $\mbm{1}_{I} \in \mca{H}_I$ be the identity element.

\begin{cor} \label{C:VI-O}
The $\mca{H}$-module isomorphism $\pmb{\gamma}: \mca{V}_\kappa \otimes_{\mca{H}_\kappa}\mca{H} \rightarrow \mca{V}^{I_{1}}$ from Theorem \ref{GGProP} induces an isomorphism of $\mca{H}_{I}$-modules
$$\pmb{\gamma}: \mca{V}_\kappa \otimes_{\mca{H}_\kappa}\mca{H} *\mbm{1}_{I}\longrightarrow \mca{V}^{I}.$$
Furthermore,
\begin{equation*}
\mca{V}_\kappa \otimes_{\mca{H}_\kappa}\mca{H} *\mbm{1}_{I}=\bigoplus_{\mathcal{O}} \big( \mca{V}_{\kappa, \mathcal{O}}\otimes_{\mca{H}_\kappa}\mca{H}*\mbm{1}_{I} \big),
\end{equation*}
where the direct sum is taken over all the $W$-orbits in $\Hom(T_\kappa,\C ^{\times})$.
\end{cor}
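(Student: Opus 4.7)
The plan is to deduce both assertions formally from Theorem \ref{GGProP} together with the idempotent nature of $\mbm{1}_I \in \mca{H}$. The first key observation is that $\mca{V}^I = \mca{V}^{I_1} * \mbm{1}_I$: since $I/I_1 \simeq T_\kappa$ and $\mbm{1}_I = c(\mbm{1}) = |T_\kappa|^{-1}\sum_{h \in T_\kappa}\mca{T}_h$ is precisely the idempotent of $\mca{H}$ that projects a right $\mca{H}$-module onto its $T_\kappa$-fixed subspace (in view of the uniqueness of the splitting of $I_1$ into $\wt{G}$ and the identification of $T_\kappa$ with $I/I_1$), right convolution by $\mbm{1}_I$ on $\mca{V}^{I_1}$ cuts out $\mca{V}^I$.

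Next, because $\pmb{\gamma}$ from Theorem \ref{GGProP} is a morphism of right $\mca{H}$-modules, it commutes with right convolution by $\mbm{1}_I$. Applying the functor $(-) * \mbm{1}_I$ to both source and target therefore produces the isomorphism
\begin{equation*}
\pmb{\gamma}: \bigl( \mca{V}_\kappa \otimes_{\mca{H}_\kappa} \mca{H} \bigr) * \mbm{1}_I \xrightarrow{\sim} \mca{V}^{I_1} * \mbm{1}_I = \mca{V}^I,
\end{equation*}
and since each side is naturally a module over $\mca{H}_I = \mbm{1}_I * \mca{H} * \mbm{1}_I$, this isomorphism is automatically $\mca{H}_I$-equivariant.

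For the second assertion, I would invoke the $\mca{H}_\kappa$-module decomposition $\mca{V}_\kappa = \bigoplus_\mca{O} \mca{V}_{\kappa, \mca{O}}$ over the $W$-orbits $\mca{O} \subset \Hom(T_\kappa, \C^\times)$ established earlier in this section. Since the functor $(-) \otimes_{\mca{H}_\kappa} \mca{H}$ is additive, it yields
\begin{equation*}
\mca{V}_\kappa \otimes_{\mca{H}_\kappa} \mca{H} \;=\; \bigoplus_\mca{O} \bigl( \mca{V}_{\kappa, \mca{O}} \otimes_{\mca{H}_\kappa} \mca{H} \bigr),
\end{equation*}
and applying the additive functor $(-) * \mbm{1}_I$ then produces the claimed orbit decomposition of $\mca{V}_\kappa \otimes_{\mca{H}_\kappa} \mca{H} * \mbm{1}_I$. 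I do not anticipate any substantive obstacle here: the essential content has been absorbed into Theorem \ref{GGProP} and the earlier orbit decomposition of $\mca{V}_\kappa$, and the only remaining work is formal manipulation with the idempotent $\mbm{1}_I$.
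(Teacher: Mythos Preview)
Your proposal is correct and matches the paper's intended approach: the paper states this result as a corollary without proof, precisely because it follows formally from Theorem~\ref{GGProP} by applying the idempotent $\mbm{1}_I$ and invoking the already-established orbit decomposition $\mca{V}_\kappa = \bigoplus_{\mca{O}} \mca{V}_{\kappa,\mca{O}}$. The only content is the identification $\mca{V}^{I} = \mca{V}^{I_1} * \mbm{1}_I$ and additivity of $(-)\otimes_{\mca{H}_\kappa}\mca{H}$ and $(-)*\mbm{1}_I$, exactly as you outline.
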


By the above corollary, we can focus on the $\mca{H}_{I}$-module structure of each $\mca{V}_{\kappa, \mathcal{O}}\otimes_{\mca{H}_\kappa}\mca{H}*\mbm{1}_{I}$ individually. To begin, we first characterize the orbits $\mathcal{O}$ such that 
$$\mca{V}_{\kappa, \mathcal{O}}\otimes_{\mca{H}_\kappa}\mca{H}*\mbm{1}_{I}\neq 0.$$
Recall that we have an injective map 
$$\varphi: \msc{X}_{Q,n} \longrightarrow  \Hom(T_\kappa,\C ^{\times})$$
given by $\varphi(y)(-)=[\s_y,-]|_{\mbf{T}(O_F)}$. This map is $W$-equivariant.

\begin{lm}\label{OrbitSupp}
Let $\mathcal{O} \subset \Hom(T_\kappa,\C ^{\times})$ be a $W$-orbit. Then $\mca{V}_{\kappa, \mathcal{O}}\otimes_{\mca{H}_\kappa}\mca{H}*\mbm{1}_{I}\neq 0$ if and only if $\mathcal{O} \subset {\rm Im}(\varphi)$.
\end{lm}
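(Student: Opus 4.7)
The plan is to use the Bernstein decomposition of Proposition \ref{BernDecomp} to rewrite the tensor product in a form that decomposes transparently under the $T_\kappa$-idempotents, whose nonvanishing is governed by the image of $\varphi$. By associativity of tensor products applied to the isomorphism $\mca{H}\simeq \mca{H}_\kappa\otimes_{\mca{R}\cap \mca{H}_\kappa}\mca{R}$, I would first identify
$$\mca{V}_{\kappa,\mca{O}}\otimes_{\mca{H}_\kappa}\mca{H}*\mbm{1}_I \;\simeq\; \mca{V}_{\kappa,\mca{O}}\otimes_{\mca{R}\cap \mca{H}_\kappa}\mca{R}*\mbm{1}_I,$$
where the commutative subalgebra $\mca{R}\cap \mca{H}_\kappa=\C[\mu_n,T_\kappa]e_\epsilon\simeq \C[T_\kappa]$ acts on the left factor via right convolution in $\mca{H}_\kappa$ and on the right factor via left multiplication in $\mca{R}$.

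Next, I would decompose using the idempotent decomposition $\C[T_\kappa]=\bigoplus_{\chi\in \Hom(T_\kappa,\C^\times)} \C\cdot c(\chi)$, which yields
$$\mca{V}_{\kappa,\mca{O}}\otimes_{\mca{R}\cap \mca{H}_\kappa}\mca{R}*\mbm{1}_I \;=\; \bigoplus_{\chi}\bigl(\mca{V}_{\kappa,\mca{O}}*c(\chi)\bigr)\otimes_{\C}\bigl(c(\chi)*\mca{R}*c(\mbm{1})\bigr).$$
Lemma \ref{FinGGT} combined with the definition of $\mca{V}_{\kappa,\mca{O}}$ gives $\mca{V}_{\kappa,\mca{O}}*c(\chi)=\C\cdot c(\chi)$ for $\chi\in \mca{O}$ and zero otherwise. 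For the second factor, Lemma \ref{BasicHeckeRel}(iii) gives $c(\chi)*\Theta_{t}*c(\mbm{1})=c(\chi)*c(\varphi(t))*\Theta_{t}$ for every $t\in \wt{T}/T_1$, which vanishes unless $\chi=\varphi(t)$. Since $\{\Theta_t:t\in \wt{T}/T_1\}$ spans $\mca{R}$ and $\varphi$ factors through $\msc{X}_{Q,n}$, this shows $c(\chi)*\mca{R}*c(\mbm{1})\ne 0$ iff $\chi\in {\rm Im}(\varphi)$.

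Combining the two computations, the $\chi$-summand is nonzero precisely when $\chi\in \mca{O}\cap {\rm Im}(\varphi)$. Since $\varphi$ is $W$-equivariant, ${\rm Im}(\varphi)$ is $W$-stable, so for a $W$-orbit $\mca{O}$ the intersection $\mca{O}\cap {\rm Im}(\varphi)$ is either empty or equal to $\mca{O}$. Hence $\mca{V}_{\kappa,\mca{O}}\otimes_{\mca{H}_\kappa}\mca{H}*\mbm{1}_I\ne 0$ if and only if $\mca{O}\subset {\rm Im}(\varphi)$, as claimed. I do not anticipate a substantive obstacle: the argument is essentially a formal decomposition under idempotents once the Bernstein-style associativity identification is in hand, and the nonvanishing criterion for $c(\chi)\mca{R} c(\mbm{1})$ is already essentially computed in Lemma \ref{HeckeTorus3}(ii).
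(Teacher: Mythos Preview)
Your proof is correct and follows essentially the same idea as the paper's: both hinge on the identity $\Theta_{t}*c(\mbm{1})=c(\varphi(t))*\Theta_{t}$ from Lemma~\ref{BasicHeckeRel}(iii) together with the Bernstein decomposition of Proposition~\ref{BernDecomp}. The packaging differs slightly. The paper works directly inside $\mca{V}_{\kappa,\mca{O}}\otimes_{\mca{H}_\kappa}\mca{H}$, computing $c(\chi)\otimes\Theta_{\s_y}*\mbm{1}_I=c(\chi)c(\varphi(y))\otimes\Theta_{\s_y}*\mbm{1}_I$ and then, for the converse nonvanishing, invokes Lemma~\ref{Embed} and Theorem~\ref{GGProP} to see that $c(\varphi(y))\otimes 1$ is nonzero. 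You instead first rewrite the tensor product over the commutative subalgebra $\mca{R}\cap\mca{H}_\kappa$ (as the paper itself does later in Lemma~\ref{OrbitA}) and then apply the idempotent decomposition, which reduces the nonvanishing to a tensor product over $\C$ of two visibly nonzero spaces; this avoids appealing to Theorem~\ref{GGProP} and is arguably cleaner, at the mild cost of setting up the identification explicitly.
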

\begin{proof}
Let $\chi\in \Hom(T_\kappa,\C ^{\times})$ and $y\in Y$. Since $\mbm{1}_{I}=c(1)$ and $\Theta_{\s_y}*c(1)=c([\s_y,-])*\Theta_{\s_y}$, we have
\begin{equation*}
c(\chi)\otimes \Theta_{\s_y}*\mbm{1}_{I}=c(\chi)c(\varphi(y))\otimes \Theta_{\s_y}*\mbm{1}_{I},
\end{equation*}
which is zero unless $\chi=\varphi(y)$. By Proposition \ref{BernDecomp}, the elements of the form $c(\chi)\otimes \Theta_{\s_y}*\mbm{1}_{I}$ generate $\mca{V}_{\mathcal{O}}\otimes_{\mca{H}_\kappa}\mca{H}*\mbm{1}_{I}$ as a $\C $-vector space. Thus if $\mca{V}_{\kappa, \mathcal{O}}\otimes_{\mca{H}_\kappa}\mca{H}*\mbm{1}_{I}\neq 0$, then for every $\chi\in\mathcal{O}$ we must have $\chi=\varphi(y)$ for some $y\in Y$.

Conversely, for $y\in Y$ the element $c(\varphi(y))\otimes \Theta_{\s_y}*\mbm{1}_{I}\neq 0$.  Indeed, by Lemma \ref{Embed} and Theorem \ref{GGProP}, $c([\s_y,-])\otimes 1\in \mca{V}_\kappa \otimes_{\mca{H}_\kappa} \mca{H}$ is nonzero. Since $\Theta_{\s_y}$ is invertible, the element $c(\varphi(y))\otimes \Theta_{\s_y}=c(\varphi(y))\otimes \Theta_{\s_y}*\mbm{1}_{I}$ is nonzero.
\end{proof}

\begin{dfn}
For every $W$-orbit $\mca{O} \subset \msc{X}_{Q,n}$, we call
$$\mca{V}^I_\mca{O}:= \pmb{\gamma}\left(\mca{V}_{\kappa, \mathcal{O}}\otimes_{\mca{H}_\kappa}\mca{H}*\mbm{1}_{I}\right)$$
the $\mca{O}$-component of $\mca{V}^I$.
\end{dfn}
We have the decomposition
$$\mca{V}^I = \bigoplus_{\mca{O} \subset \msc{X}_{Q,n}} \mca{V}^I_\mca{O}$$
over all $W$-orbits in $\msc{X}_{Q,n}$.

Next we describe $\mca{V}_{\kappa, \mathcal{O}}\otimes_{\mca{H}_\kappa}\mca{H}*\mbm{1}_{I}$ (and thus also $\mca{V}^I_\mca{O}$) as an $\mathcal{A}$-module. Note that $\Theta_{\s_y}^{I}\neq 0$ if and only if $y\in Y_{Q,n}$. Thus $\mca{R}*\mbm{1}_{I}$ is a free $\mathcal{A}$-module of rank $\val{\msc{X}_{Q,n}}$. In particular, if $\set{y_j}_{j\in I} \subset Y$ is a set of representatives for $\msc{X}_{Q,n}$, then $\set{\Theta_{\s_{y_j}}*\mbm{1}_{I}}_{j\in I}$ is an $\mathcal{A}$-basis for $\mca{R}*\mbm{1}_{I}$. 

\begin{lm} \label{OrbitA}
Let $\mathcal{O} \subset \msc{X}_{Q,n}$ be a $W$-orbit and let $\set{y_j}_j\subset Y$ be a set of representatives for $\mca{O}$. Then 
$$\mca{V}_{\kappa, \mathcal{O}}\otimes_{\mca{H}_\kappa}\mca{H}*\mbm{1}_{I} \simeq \mca{A}^{\oplus \val{\mca{O}}}$$
as $\mca{A}$-module with an $\mca{A}$-basis given by
$$\set{c(\varphi(y_{j}))\otimes \Theta_{\s_{y_j}}*\mbm{1}_{I}: \varphi(y_{j})\in \mathcal{O}}.$$
Also, $\mca{V}_\kappa \otimes_{\mca{H}_\kappa}\mca{H}*\mbm{1}_{I}$ is a free $\mathcal{A}$-module of rank $\val{\msc{X}_{Q,n}}$.
\end{lm}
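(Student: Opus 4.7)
The plan is to descend everything to the Bernstein decomposition and exploit the fact, noted just before the lemma, that $\mca{R}*\mbm{1}_{I}$ is already known to be a free $\mca{A}$-module of rank $|\msc{X}_{Q,n}|$ with basis $\{\Theta_{\s_{y_j}}*\mbm{1}_I\}$. First I would use Proposition \ref{BernDecomp} to rewrite
\[
\mca{V}_{\kappa,\mca{O}}\otimes_{\mca{H}_\kappa}\mca{H}*\mbm{1}_{I}
\;\simeq\;
\mca{V}_{\kappa,\mca{O}}\otimes_{\mca{H}_\kappa\cap \mca{R}}\mca{R}*\mbm{1}_{I},
\]
where the tensor on the right is along the inclusion $\mca{H}_\kappa\cap \mca{R}=\C[T_\kappa]e_\epsilon\hookrightarrow \mca{R}$. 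Using Lemma \ref{FinGGT}, the restriction of $\mca{V}_{\kappa,\mca{O}}$ to $\C[T_\kappa]$ is $\bigoplus_{\chi\in \varphi(\mca{O})}\C\cdot c(\chi)$, on which $c(\chi')$ acts as $\delta_{\chi,\chi'}$. Hence
\[
\mca{V}_{\kappa,\mca{O}}\otimes_{\C[T_\kappa]}\mca{R}*\mbm{1}_{I}
\;=\;\bigoplus_{\chi\in \varphi(\mca{O})} c(\chi)\otimes \big(\mca{R}*\mbm{1}_{I}\big),
\]
and it suffices to analyze each summand separately.

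Next I would determine which of the basis elements $\Theta_{\s_{y_j}}*\mbm{1}_{I}$ of $\mca{R}*\mbm{1}_I$ survive after tensoring by $c(\chi)$. Using the identity $\Theta_{\s_{y}}*\mbm{1}_{I}=\Theta_{\s_{y}}*c(\mbm{1})=c(\varphi(\hat y))*\Theta_{\s_{y}}$ from Lemma \ref{BasicHeckeRel}, I get
\[
c(\chi)\otimes \Theta_{\s_{y}}*\mbm{1}_{I}
\;=\;c(\chi)c(\varphi(\hat y))\otimes \Theta_{\s_{y}}*\mbm{1}_{I},
\]
which vanishes unless $\chi=\varphi(\hat y)$, exactly as in the proof of Lemma \ref{OrbitSupp}. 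So for each $\chi=\varphi(\hat y_j)\in \varphi(\mca{O})$, the only surviving $\Theta_{\s_z}*\mbm{1}_I$'s (where $z$ runs over coset representatives of $\msc{X}_{Q,n}$) are those with $\hat z=\hat y_j$, and for any such $z$ the cocycle relation \eqref{Rel2} writes $\Theta_{\s_z}$ as a nonzero scalar multiple of $\Theta_{\s_{y_j}}*\Theta_{\s_{z-y_j}}$, with $z-y_j\in Y_{Q,n}$ so that $\Theta_{\s_{z-y_j}}*\mbm{1}_{I}\in \mca{A}$. Therefore the single element $c(\varphi(\hat y_j))\otimes \Theta_{\s_{y_j}}*\mbm{1}_{I}$ generates the summand $c(\varphi(\hat y_j))\otimes (\mca{R}*\mbm{1}_I)$ as an $\mca{A}$-module.

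Finally, I would establish freeness. Any $\mca{A}$-relation on the proposed basis pulls back, via the inclusion into $\mca{V}_\kappa\otimes_{\mca{H}_\kappa}\mca{H}*\mbm{1}_I$ and the isomorphism $\pmb{\gamma}$ composed with $\vartheta_1$ from Theorem \ref{GGProP} and Proposition \ref{prop4}, to a relation among a subset of the $\mca{A}$-basis $\{\Theta_{\s_{y_j}}*\mbm{1}_I\}_{j}$ of $\mca{R}*\mbm{1}_{I}$, which must therefore be trivial. Assembling the summands over $\chi\in\varphi(\mca{O})$ gives the first assertion, and summing over all $W$-orbits $\mca{O}\subset \msc{X}_{Q,n}$ together with Corollary \ref{C:VI-O} and Lemma \ref{OrbitSupp} yields $\mca{V}_\kappa\otimes_{\mca{H}_\kappa}\mca{H}*\mbm{1}_{I}\simeq \mca{A}^{\oplus |\msc{X}_{Q,n}|}$.

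The only slightly delicate point is keeping the two $\C[T_\kappa]$-actions (left, via $\mca{V}_\kappa$, and right, via the image of $\mbf{T}(O_F)$ in $\mca{R}*\mbm{1}_I$) correctly aligned through the Bernstein decomposition, so that the vanishing criterion $\chi=\varphi(\hat y)$ is applied on the correct side; once this bookkeeping is fixed, every remaining step reduces to the freeness of $\mca{R}*\mbm{1}_I$ over $\mca{A}$ already in hand.
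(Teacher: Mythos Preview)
Your proposal is correct and follows essentially the same route as the paper: reduce via Proposition~\ref{BernDecomp} to $\mca{V}_{\kappa,\mathcal{O}}\otimes_{\mca{H}_\kappa\cap\mca{R}}\mca{R}*\mbm{1}_I$, then use the free $\mca{A}$-basis $\{\Theta_{\s_{y_j}}*\mbm{1}_I\}$ of $\mca{R}*\mbm{1}_I$ together with the vanishing criterion $c(\chi)\otimes\Theta_{\s_{y_j}}*\mbm{1}_I\neq 0\Leftrightarrow \chi=\varphi(y_j)$. The paper's version is terser and does not spell out the freeness step; your appeal to $\pmb{\gamma}$ and $\vartheta_1$ for that is a legitimate way to justify it, though it is already implicit in the bimodule decomposition $\mca{R}*\mbm{1}_I=\bigoplus_{\hat y\in\msc{X}_{Q,n}} c(\varphi(\hat y))\,\Theta_{\s_y}\,\mca{A}$ once one notes that the left $\C[T_\kappa]$-action on $\Theta_{\s_y}*\mbm{1}_I$ is by the character $\varphi(\hat y)$.
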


\begin{proof}
By Proposition \ref{BernDecomp} we have 
$$\mca{V}_{\kappa, \mathcal{O}}\otimes_{\mca{H}_\kappa}\mca{H}*\mbm{1}_{I}\simeq \mca{V}_{\kappa, \mathcal{O}}\otimes_{\mca{H}_\kappa\cap \mca{R}} \mca{R}*\mbm{1}_{I}$$
as $\mca{A}$-modules. The result follows because $\mca{R}*\mbm{1}_{I}$ is a free $\mca{A}$-module of rank $\val{\msc{X}_{Q,n}}$. Indeed, for a choice of representatives $\{y_{j}\}$ for $\mathscr{X}_{Q,n}$, the set $\{\Theta_{s_{y_j}}*\mbm{1}_{I}\}$ is an $\mathcal{A}$-basis for $\mca{R}*\mbm{1}_{I}$. However, for $\chi\in \Hom(T_\kappa,\mu_{n})$ the element $c(\chi)\otimes \Theta_{\s_{y_j}}*\mbm{1}_{I}$ is nonzero if and only if $\chi=\varphi(y_{j})$. %Thus $\{c(\phi(y_{j}))\otimes \Theta_{\overline{y}_{j}(\varpi)}*\mbm{1}_{I}|\phi(y_{j}\in \mathcal{O})\}$ is an $\mathcal{A}$-basis for $V_{\mathcal{O}}\otimes_{\mca{H}_\kappa}\mca{H}*\mbm{1}_{I}$
\end{proof}

\subsection{Further simplification} \label{SS:sdes}
To obtain a unique and simple description of $\mca{V}^I_\mca{O} \simeq \mca{V}_{\kappa, \mathcal{O}}\otimes_{\mca{H}_\kappa}\mca{H}*\mbm{1}_{I}$, we will consider $\mca{O}$ with a special ``splitting" property as follows.

For use later, we introduce a slight generalization of the $W$-orbits on $Y$ and $\msc{X}_{Q,n}$. 
Let $L \subset Y$ be a $W$-stable sublattice of the same rank as $Y$ with respect to the usual reflection action of $W$ on $Y$. For every element $z$ in the coweight lattice $P$, one considers the twisted Weyl action on $Y$ given by
$$w[y]_z:=w(y+z) - z.$$
It is well-defined since $z\in P$. This gives a finite dimensional permutation representation
\begin{equation*} 
\sigma_{[z]}^{Y/L}: W \longrightarrow {\rm Perm}(Y/L).
\end{equation*}

An orbit $\mca{O} \subset Y$ with respect to the action $w[-]_z$ given above is called a $(W, z)$-orbit. Clearly, the $(W, 0)$-orbits are just the $W$-orbits. The $w[-]_z$ action is also well-defined on $Y/L$ and the quotient map $Y \onto Y/L$ is $(W, z)$-equivariant. Specializing to the special case $L=Y_{Q,n}$, we have

\begin{dfn} \label{D:splO}
A $(W, z)$-orbit $\mca{O} \subset \msc{X}_{Q,n}=Y/Y_{Q,n}$ is called splitting if there exists a section of the quotient map
$$Y \onto \msc{X}_{Q,n},$$
which is equivariant with respect to the $w[\cdot]_z$-action on both $Y$ and $\msc{X}_{Q,n}$.
\end{dfn}
The orbit of $\hat{0} \in \msc{X}_{Q,n}$ is always $(W, 0)$-splitting. Also, the element $y:=\rho- \rho_{Q,n}$, if it lies in $Y$, gives rise to a splitting $(W, -\rho)$-orbit $\mca{O}_{\hat{y}}=\set{\hat{y}}$. This latter example plays a crucial role as a base point for a special form of the covering Casselman--Shalika formula, see \cite[\S 5.3]{GSS2}. On the other hand, if $\mca{O}_{\hat{y}} \subset \msc{X}_{Q,n}$ is a free $(W, z)$-orbit, then it is always splitting.

For the rest of this section, we only consider the $W$-orbits.
The next lemma collects some basic consequences of a $W$-orbit $\mca{O}_{\hat{y}}$ possessing a splitting. Given $y\in Y$ we write 
$$W_y:={\rm Stab}_{W}(y) \text{ and } W_{\hat{y}}:={\rm Stab}_{W}(\hat{y}).$$

\begin{lm}\label{WeqSplitting}
Let $\mathcal{O}\subseteq \mathscr{X}_{Q,n}$ be a splitting $W$-orbit. Let $s: \mca{O} \into Y$ be such a splitting.
\begin{enumerate}
\item[(i)] For every $y\in s(\mca{O})$, one has $W_{y}=W_{\hat{y}}$, which is a parabolic subgroup of $W$. %In particular, it is conjugate to a subgroup generated by a subset (possibly empty) of simple reflections.
\item[(ii)] If $y\in Y$ and $\alpha\in \Phi$ such that $w_{\alpha}\in W_{y}$, then $\langle\alpha,y \rangle=0$.
\item[(iii)] Let $W'\subseteq W$ be a parabolic subgroup such that $w_{j}\in W$ are of minimal length in $W'w_{j}$, $j=1,2$. If $w\in W'w_{1}$ and $w\leq w_{2}$, then $w_{1}\leq w_{2}$.\label{CosetOrder}
\end{enumerate}
\end{lm}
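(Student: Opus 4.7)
The plan is to handle the three parts in order, each being essentially formal given the hypotheses.

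For (i), the inclusion $W_y \subseteq W_{\hat y}$ is automatic from the quotient map $Y \onto \msc{X}_{Q,n}$. For the reverse, I would use the $W$-equivariance of the splitting $s\colon \mca{O} \hookrightarrow Y$. If $w \in W_{\hat y}$ then $w \cdot \hat y = \hat y$, and applying $s$ gives $s(w \cdot \hat y) = s(\hat y) = y$, while equivariance yields $s(w \cdot \hat y) = w \cdot s(\hat y) = w \cdot y$. Therefore $w \cdot y = y$, i.e., $w \in W_y$. That $W_y$ is a parabolic subgroup is then a standard consequence of Steinberg's fixed-point theorem for reflection groups acting on a vector space over a field of characteristic zero: the stabilizer of any point in $Y \otimes \R$ is generated by the reflections fixing that point, and in our setting such a reflection subgroup of $W$ is conjugate to a standard parabolic.

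For (ii), a one-line computation suffices: $w_\alpha(y) = y - \langle \alpha, y \rangle \alpha^\vee$, so $w_\alpha(y) = y$ forces $\langle \alpha, y\rangle \alpha^\vee = 0$, whence $\langle \alpha, y \rangle = 0$.

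For (iii), I would invoke two standard facts from the combinatorics of Coxeter systems. First, since $W'$ is parabolic and $w_1$ is the minimal-length representative of $W'w_1$, every element $w \in W'w_1$ admits a factorization $w = w'w_1$ with $w' \in W'$ satisfying $\ell(w) = \ell(w') + \ell(w_1)$; concatenating reduced expressions therefore produces a reduced expression of $w$ in which a reduced expression of $w_1$ appears as a (terminal) subword. Second, by the subword characterization of Bruhat order, $w \leq w_2$ means some reduced expression of $w$ appears as a subword of a reduced expression of $w_2$; restricting to the $w_1$-tail of the reduced expression of $w$ gives a subword of a reduced expression of $w_2$ that is itself a reduced expression of $w_1$, hence $w_1 \leq w_2$.

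None of the three parts poses a real obstacle: (i) and (ii) are direct manipulations, and (iii) is a pure Coxeter-combinatorial statement handled by the subword property. The only place requiring care is the parabolic assertion in (i), where one must be explicit that ``parabolic'' is to be understood in the reflection-subgroup sense (i.e., $W_y$ is $W$-conjugate to a standard parabolic), which is what is actually used in the subsequent arguments.
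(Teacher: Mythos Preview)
Your proof is correct and follows the same route as the paper, which simply cites Humphreys \cite[Propositions 1.10 and 1.15]{Hum} for (iii) and (i) respectively and calls (ii) clear; you have essentially unpacked those citations. One small cleanup in (iii): the reduced expression of $w$ that appears as a subword of a reduced expression of $w_2$ need not be the specific one with $w_1$ as a terminal segment, so the phrase ``restricting to the $w_1$-tail'' is slightly loose---but your first step already shows $w_1 \leq w$, and then transitivity of the Bruhat order gives $w_1 \leq w_2$ directly, which is the cleaner way to finish.
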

\begin{proof}
Here (i) follows from \cite[Proposition 1.15]{Hum}, while (iii) from Proposition 1.10 of loc. cit. and the description of the Bruhat order in terms of subexpressions. Statement (ii) is clear by the definition of reflection.
\end{proof}

In general, we let
$$\tilde{W}_{\aff} = Y_{Q,n}^{sc} \rtimes W$$
be the modified affine Weyl group, which gives that
$$Y_{Q,n} \rtimes W = \tilde{W}_{\aff} \rtimes \Omega,$$
where $\Omega \subset Y_{Q,n} \rtimes W$ consists of the elements which fix the fundamental alcove in $Y_{Q,n}\otimes \R$ associated to $\tilde{W}_\aff$ as in \S \ref{SS:rsys}. Thus
$$\mca{H}_I = \mca{H}_{\tilde{W}_\aff} \otimes_\Z \Z[\Omega],$$
where the algebra law for the right hand is given as in \cite[Page 47]{IM65}. For any $y \in Y$, let 
$$\mca{H}_{I, y}:=\mca{H}_{\tilde{W}_{\aff, y}} \subset \mca{H}_I$$
be the subalgebra of $\mca{H}_I$ associated with $\tilde{W}_{\aff, y}:={\rm Stab}_{\tilde{W}_\aff}(y)$. In particular, if $\tilde{W}_{\aff, y} \subset W$ is actually a parabolic subgroup of $W$, then $\mca{H}_{I, y}$ is generated by the elements in
$$\set{\mbm{1}_{I}*\mathcal{T}_{\wt{w}}*\mbm{1}_{I}: w\in \tilde{W}_{\aff, y}};$$
in this case, $\mca{H}_{I, y} \subset \mca{H}_W$.

\begin{thm}\label{Wequi}
Let $\mathcal{O}\subseteq \mathscr{X}_{Q,n}$ be a splitting $W$-orbit with a splitting given by $s: \mathcal{O} \into Y$. Let $y\in Y$ be in the closure of the positive Weyl chamber with respect to $\Delta$ and $\hat{y} \in \mathcal{O}$. 
\begin{enumerate}
\item[(i)] The algebra $\mca{H}_{I, y}$ acts on the one-dimensional vector space $\C\cdot c(\varphi(y))\otimes \Theta_{\s_y}*\mbm{1}_{I}$ via the sign character $\varepsilon_{y}$. 
\item[(ii)] The $\mca{H}_{I, y}$-module map $\varepsilon_y \rightarrow \mca{V}_{\kappa, \mathcal{O}} \otimes_{\mca{H}_\kappa}\mca{H}*\mbm{1}_{I}$  induces isomorphisms of $\mca{H}_{I}$-modules
\begin{equation*}
\begin{tikzcd}
 \varepsilon_y \otimes_{\mca{H}_{I, y}} \mca{H}_{I} \ar[r] & \mca{V}_{\kappa, \mathcal{O}}\otimes_{\mca{H}_\kappa}\mca{H}*\mbm{1}_{I} \ar[r, "{\pmb{\gamma}}"] & \mca{V}^I_\mca{O},
\end{tikzcd}
\end{equation*}
where the first map is given by $1\mapsto c(\varphi(y))\otimes \Theta_{\s_y}*\mbm{1}_{I}$.
\end{enumerate}
\end{thm}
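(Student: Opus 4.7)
\emph{Part (i).} The algebra $\mca{H}_{I,y}$ is generated over $\C$ by the Iwahori--Hecke elements $\mca{T}_{\wt{w}_\alpha}^I$ corresponding to simple reflections $w_\alpha \in W_y = W_{\hat{y}}$ (using Lemma \ref{WeqSplitting}(i) and the dominance of $y$), and $\varepsilon_y$ sends each such generator to $-1$. I will verify this action on $v := c(\varphi(y)) \otimes \Theta_{\s_y} * \mbm{1}_I$ directly. For such $\alpha$, Lemma \ref{WeqSplitting}(ii) gives $\langle \alpha, y \rangle = 0$; under this vanishing, the two summation ranges in the Bernstein relation \eqref{BR3*I} coincide and the $c_\alpha$-correction sums cancel, while relation \eqref{Rel7} forces $\wt{w}_\alpha(-1) \cdot \s_y = \s_y$. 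The Bernstein relation thus collapses to $\Theta_{\s_y} * \mca{T}_{\wt{w}_\alpha} * \mbm{1}_I = \mca{T}_{\wt{w}_\alpha} * \Theta_{\s_y} * \mbm{1}_I$. Sliding $\mca{T}_{\wt{w}_\alpha} \in \mca{H}_\kappa$ across the tensor product and applying \eqref{FinGGAction}, together with ${}^{w_\alpha}\varphi(y) = \varphi(y)$, the action of $\mca{T}_{\wt{w}_\alpha}^I$ becomes multiplication by $\g_\alpha(\psi, \varphi(y))$. To evaluate this Gauss sum, note that \eqref{Rel3} gives $\varphi(y)(h_{-\alpha}(u)) = (\varpi, u)_n^{-B_Q(y, \alpha^\vee)}$; Weyl-invariance of $B_Q$ combined with $w_\alpha(y) = y$ forces $B_Q(y, \alpha^\vee) = 0$, so $\varphi(y)|_{h_{-\alpha}(\kappa^\times)}$ is trivial and $\g_\alpha(\psi, \varphi(y)) = \sum_{u \in \kappa^\times}\psi(e_{-\alpha}(u)) = -1$, as required.

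\emph{Part (ii), setup.} By part (i) and Frobenius reciprocity there is a well-defined $\mca{H}_I$-module homomorphism $\phi : \varepsilon_y \otimes_{\mca{H}_{I,y}} \mca{H}_I \to \mca{V}_{\kappa, \mca{O}} \otimes_{\mca{H}_\kappa} \mca{H} * \mbm{1}_I$ sending $1 \mapsto v$. Both sides are free $\mca{A}$-modules of rank $|\mca{O}| = |W_y \backslash W|$: the target by Lemma \ref{OrbitA}; the source by combining the Bernstein decomposition $\mca{H}_I \cong \mca{H}_W \otimes_\C \mca{A}$ from \S \ref{SS:IHalg}(iii) with the inclusion $\mca{H}_{I,y} \subset \mca{H}_W$ and the standard fact that $\varepsilon_y \otimes_{\mca{H}_{I,y}} \mca{H}_W$ has a $\C$-basis indexed by minimal-length coset representatives of $W_y \backslash W$. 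Hence it suffices to show surjectivity, which I will do via a triangularity argument. Choose minimal-length representatives $\{w_c\}$ of the cosets in $W_y \backslash W$ and set $y_c := w_c^{-1}(y)$; by $W$-equivariance of the splitting $s$, these $y_c$ are precisely $s(\hat{y}_c)$ for the orbit elements $\hat{y}_c \in \mca{O}$. Order indices by $\ell(w_c)$.

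\emph{Part (ii), triangularity.} To compute $\phi(1 \otimes \mca{T}_{\wt{w}_c}^I) = c(\varphi(y)) \otimes \Theta_{\s_y} * \mca{T}_{\wt{w}_c} * \mbm{1}_I$, apply Lemma \ref{BRandA}. The diagonal term $\mca{T}_{\wt{w}_c} * \Theta_{\wt{w}_c^{-1} \cdot \s_y} * \mbm{1}_I$ simplifies, via iterated \eqref{Rel7}, to a $\C^\times$-multiple of $\mca{T}_{\wt{w}_c} * \Theta_{\s_{y_c}} * \mbm{1}_I$ (the residual $\mbf{T}(O_F)$-factor is absorbed because $\mca{T}_h * \mbm{1}_I = \mbm{1}_I$ for $h \in T_\kappa$). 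Sliding $\mca{T}_{\wt{w}_c}$ across the tensor and iterating \eqref{FinGGAction} along a reduced expression for $w_c$, this contributes $z_c \cdot c(\varphi(y_c)) \otimes \Theta_{\s_{y_c}} * \mbm{1}_I$ with $z_c \in \C^\times$ a product of nonzero Gauss sums. The lower-order sum $\sum_{w' < w_c} \mca{T}_{\wt{w}'} * \Theta_{(\wt{w}')^{-1} \cdot \s_y} * \mca{A}$ receives the same treatment and produces $\mca{A}$-multiples of $c(\varphi(y_{c'})) \otimes \Theta_{\s_{y_{c'}}} * \mbm{1}_I$, where $c'$ is the coset of $w'$. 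Lemma \ref{WeqSplitting}(iii) then forces $\ell(w_{c'}) \leq \ell(w') < \ell(w_c)$, so the matrix of $\phi$ in these bases is lower-triangular with diagonal entries $z_c \in \C^\times$ and is therefore invertible over $\mca{A}$. The main obstacle I expect is the careful bookkeeping of Hilbert-symbol scalars emerging from \eqref{Rel2} when absorbing $\mbf{T}(O_F)$-factors into $\mbm{1}_I$; I do not expect vanishing Gauss sums to be an issue, since every $\g_\beta(\psi, \cdot)$ that appears is either a classical Gauss sum of a nontrivial character or the sum $\sum_{u \in \kappa^\times} \psi(\cdots) = -1$.
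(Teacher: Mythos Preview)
Your proposal is correct and follows essentially the same strategy as the paper: part (i) via the Bernstein relation \eqref{BR3*I} collapsing when $\langle\alpha,y\rangle=0$, together with \eqref{FinGGAction} and the vanishing $B_Q(y,\alpha^\vee)=0$; part (ii) via the triangularity argument using Lemma \ref{BRandA} on minimal-length coset representatives, with Lemma \ref{WeqSplitting}(iii) controlling the off-diagonal terms. Your treatment of the diagonal entries as products of (possibly nontrivial) Gauss sums is in fact slightly more accurate than the paper's displayed $(-1)^{\ell(w_j)}$, though both yield the same nonvanishing conclusion needed for invertibility.
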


\begin{proof} 
Since $\mca{O}$ is splitting, we see that $\tilde{W}_{\aff, y} = W_y$ is a standard parabolic Weyl subgroup. Now the fact that $\mca{H}_{I, y}$ acts on $\C \cdot c(\varphi(y))\otimes \Theta_{\s_y}*\mbm{1}_{I}$ follows from Lemma \ref{WeqSplitting}, \eqref{BR3*I} of the Bernstein relations, and the equality \eqref{FinGGAction}. 
It follows from  \eqref{FinGGAction} and Lemma \ref{WeqSplitting} (ii) that each simple reflection in $\mca{H}_{I, y}$ acts by the sign character. Specifically, for $\alpha\in \Delta$ such that $w_{\alpha}\in \tilde{W}_{\aff, y}$ we have 
$$\varphi(y)(h_{\alpha}(u))=(\varpi,u)_n^{B_{Q}(y,\alpha^{\vee})}=1,$$
where the last equality is due to the fact that $B_{Q}(y,\alpha^{\vee}) = Q(\alpha^\vee)\angb{\alpha}{y} =0$.

Thus, we have an $\mca{H}_{I}$-module homomorphism 
$$f: \varepsilon_y \otimes_{\mca{H}_{I, y}} \mca{H}_{I}\longrightarrow \mca{V}_{\kappa, \mathcal{O}}\otimes_{\mca{H}_\kappa}\mca{H}*\mbm{1}_{I}$$ defined by 
\begin{equation*}
1\otimes h\mapsto c(\varphi(y))\otimes \Theta_{\s_y}*h.
\end{equation*}
We claim that this map is an isomorphism of $\mca{H}_{I}$-modules. To prove this, we show that this map is upper triangular with respect to a natural choice of $\mathcal{A}$-bases.

For each coset $W_{y}w$ we choose the element of minimal length. Let $\set{w_j}_j$ be the set of these minimal elements. Let $\wt{w}_{j}\in K\cap \wt{N(T)}$ represent $w_{j}$. Then the Bernstein decomposition implies that 
$$\set{1\otimes \mathcal{T}_{\wt{w}_{j}}^{I}: 1\lest j\lest \val{W_{\chi}\backslash W}}$$ 
is an $\mathcal{A}$-basis for $\varepsilon_y \otimes_{\mca{H}_{I, y}} \mca{H}_{I}$.

Writing $y_{j}:=w_{j}^{-1}\cdot y$, the set 
$$\set{c(\varphi(y_{j}))\otimes \Theta_{\s_{y_j}}*\mbm{1}_{I}: 1\lest j \lest \val{W_{\chi}\backslash W}}$$
is an $\mathcal{A}$-basis for $\mca{V}_{\kappa, \mathcal{O}}\otimes_{\mca{H}_\kappa}\mca{H}*\mbm{1}_{I}$.
Under the map $\varepsilon_y \otimes_{\mca{H}_{I, y}} \mca{H}_{I}\rightarrow \mca{V}_{\kappa, \mathcal{O}}\otimes_{\mca{H}_\kappa}\mca{H}*\mbm{1}_{I}$ we have 
$$1\otimes \mathcal{T}_{\wt{w}_{j}}^{I}\mapsto c(\varphi(y))\otimes \Theta_{\s_y} \mathcal{T}_{\wt{w}_{j}}*\mbm{1}_{I}.$$
By Lemma \ref{BRandA}, we have
\begin{equation*}
c(\varphi(y))\otimes \Theta_{\s_y} \mathcal{T}_{\wt{w}_{j}}*\mbm{1}_{I}\in c(\varphi(y))\otimes \big( \mathcal{T}_{\wt{w}_{j}}*\Theta_{\wt{w}_{j}^{-1}\cdot \s_y}*\mbm{1}_{I}+\sum_{w'<w_{j}}\mathcal{T}_{\wt{w}'}*\Theta_{(\wt{w}')^{-1}\cdot \s_y}\mathcal{A} \big),
\end{equation*}
and applying \eqref{FinGGAction} again gives
\begin{equation}\label{uppertri}
\begin{aligned}
& c(\varphi(y))\otimes \big( \mathcal{T}_{\wt{w}_{j}}*\Theta_{\wt{w}_{j}^{-1}\cdot \s_y}*\mbm{1}_{I}+\sum_{w'<w_{j}}\mathcal{T}_{\wt{w}'}*\Theta_{(\wt{w}')^{-1}\cdot \s_y}\mathcal{A}\big) \\
=& (-1)^{\ell(w_{j})}c(\varphi(y_{j}))\otimes \Theta_{\wt{w}_{j}^{-1}\cdot \s_y}*\mbm{1}_{I}+\sum_{w'<w_{j}}c(\varphi((w')^{-1}\cdot y))\otimes\Theta_{(\wt{w}')^{-1}\cdot \s_y}\mathcal{A}.
\end{aligned}
\end{equation}
Now it follows from \eqref{uppertri} and Lemma \ref{WeqSplitting} (iii) that the map $f$ is upper triangular. Furthermore, Lemma \ref{WeqSplitting} (iii) also implies that the diagonal entries are nonzero. Thus the map $f$ is an isomorphism of $\mca{H}_{I}$-modules.
\end{proof}

The trivial orbit $\mca{O}_0$ and any free orbit in $\msc{X}_{Q,n}$ are always splitting. In this case, Theorem \ref{Wequi} gives the following.
\begin{cor}\label{O-triv}
Let $\mathcal{O}_0$ be the orbit corresponding to the trivial character of $T_\kappa$. Then we have the isomorphism
\begin{equation*}
\varepsilon_W \otimes_{\mca{H}_W}\mca{H}_{I} \simeq \mca{V}_{\kappa, \mathcal{O}_0}\otimes_{\mca{H}_\kappa}\mca{H}*\mbm{1}_{I}\simeq \mca{V}^I_{\mca{O}_0}.
\end{equation*}
of $\mca{H}_I$-modules.
\end{cor}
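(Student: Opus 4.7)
The plan is to recognize Corollary \ref{O-triv} as the special case of Theorem \ref{Wequi} applied to the singleton orbit $\mathcal{O}_0 = \{\hat{0}\} \subset \msc{X}_{Q,n}$. First I would identify $\mathcal{O}_0$: under the $W$-equivariant embedding $\varphi: \msc{X}_{Q,n} \hookrightarrow \Hom(T_\kappa, \C^\times)$, the trivial character $\mbm{1}$ is the image of $\hat{0}$, and since $\varphi(\hat{0})(s) = [0 \cdot \s_0, s'] = 1$, the preimage orbit in $\msc{X}_{Q,n}$ is the single point $\{\hat{0}\}$.

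Next I would exhibit the splitting. The map $s: \mathcal{O}_0 \to Y$ sending $\hat{0} \mapsto 0$ is tautologically $W$-equivariant, since $w(0) = 0$ for every $w \in W$. The representative $y = 0$ lies in the closure of the positive Weyl chamber, and the stabilizer computation is immediate: $W_0 = W$ in the usual sense, and likewise $\tilde{W}_{\aff, 0} = W$ inside $\tilde{W}_\aff = Y_{Q,n}^{\rm sc} \rtimes W$, since any element with a nonzero translation part moves $0$. Hence the subalgebra $\mca{H}_{I,0} \subset \mca{H}_I$ coincides by definition with $\mca{H}_W$, and the sign character $\varepsilon_0$ in Theorem \ref{Wequi}(i) is precisely the sign character $\varepsilon_W$ of $\mca{H}_W$.

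Now I would simply invoke Theorem \ref{Wequi}(ii): it yields the chain of $\mca{H}_I$-module isomorphisms
\[
\varepsilon_W \otimes_{\mca{H}_W} \mca{H}_I \;\xrightarrow{\ \sim\ }\; \mca{V}_{\kappa, \mathcal{O}_0} \otimes_{\mca{H}_\kappa} \mca{H} * \mbm{1}_I \;\xrightarrow[\ \pmb{\gamma}\ ]{\sim}\; \mca{V}^I_{\mathcal{O}_0},
\]
where the first arrow sends $1 \mapsto c(\mbm{1}) \otimes \Theta_{\s_0} * \mbm{1}_I = c(\mbm{1}) \otimes \mbm{1}_I$. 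There is no substantive obstacle here; the only point worth double-checking is that the action of $\mca{H}_{I,0}=\mca{H}_W$ on the line $\C \cdot c(\mbm{1}) \otimes \mbm{1}_I$ is indeed by $\varepsilon_W$, which is verified directly from \eqref{FinGGAction} by computing $c(\mbm{1}) * \mca{T}^\kappa_\alpha = \g_\alpha(\psi, \mbm{1}) \cdot c(\mbm{1}) = -c(\mbm{1})$ for each simple reflection $w_\alpha$, using $\sum_{u \in \kappa^\times} \psi(e_{-\alpha}(u)) = -1$ because $\psi$ has conductor $\mfr{p}$. This reproduces the computation already recorded in Corollary \ref{finiteTrivial} at the finite-group level, and the passage to $\mca{V}^I_{\mathcal{O}_0}$ is then automatic from Theorem \ref{Wequi}.
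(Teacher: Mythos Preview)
Your proposal is correct and follows exactly the paper's approach: the paper simply notes that the trivial orbit $\mathcal{O}_0=\{\hat{0}\}$ is splitting and invokes Theorem~\ref{Wequi}, and you have spelled out the (routine) details of that specialization, including the identification $\tilde{W}_{\aff,0}=W$, $\mca{H}_{I,0}=\mca{H}_W$, and the verification of the sign action via Corollary~\ref{finiteTrivial}.
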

%\begin{proof}
%One has 
%$$\mca{V}_{\kappa, \mathcal{O}_0} \otimes_{\mca{H}_\kappa}\mca{H}*\mbm{1}_{I}=\C \cdot c(\mbm{1})\otimes_{\mca{H}_\kappa}\mca{H}*\mbm{1}_{I} \simeq\C \cdot c(\mbm{1})\otimes_{\mca{H}_W}\mca{H}_{I},$$
%where $\mca{H}_\kappa$ acts on $\mca{H}_{I}$ via the map $\mca{H}_\kappa\rightarrow \mca{H}_{I}$ defined by $f\mapsto \mbm{1}_{I}*f*\mbm{1}_{I}$. It is easy to check that the map 
%$$\C \cdot c(\mbm{1})\otimes_{\mca{H}_W}\mca{H}_{I} \longrightarrow \C\cdot c(\mbm{1})\otimes_{\mca{H}_\kappa}\mca{H}_{I}$$
%defined by $v\otimes h\mapsto v\otimes h$ is an isomorphism of $\mca{H}_{I}$-modules. In particular, one can check that the map $\C c(1)\otimes_{\mca{H}_\kappa}\mca{H}_{I}\rightarrow\C c(1)\otimes_{(\mca{H}_{I})_{0}}\mca{H}_{I}$ defined by $v\otimes h\mapsto v\otimes h$ is well-defined and thus defines the inverse map. Finally we apply Corollary \ref{finiteTrivial}.
%\end{proof}

\begin{cor} \label{O-free}
Let $\mathcal{O} \subset \msc{X}_{Q,n}$ be a free $W$-orbit. Then 
\begin{equation*}
\mca{H}_I \simeq \mca{V}_{\kappa, \mathcal{O}}\otimes_{\mca{H}_\kappa}\mca{H} *\mbm{1}_{I} \simeq \mca{V}^I_\mca{O}
\end{equation*}
as $\mca{H}_I$-modules.
\end{cor}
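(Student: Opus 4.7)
The plan is to deduce Corollary \ref{O-free} as a direct specialization of Theorem \ref{Wequi}. The first step is to note that a free $W$-orbit is automatically splitting; this is recorded in the discussion immediately following Definition \ref{D:splO}. Hence the hypotheses of Theorem \ref{Wequi} apply once we pick a dominant $y \in Y$ with $\hat{y} \in \mca{O}$; such a $y$ exists because the preimage of $\mca{O}$ in $Y$ is a nonempty $W$-stable subset and therefore meets the closure of the positive Weyl chamber. This gives the isomorphism
\[
\varepsilon_y \otimes_{\mca{H}_{I, y}} \mca{H}_I \;\xrightarrow{\sim}\; \mca{V}_{\kappa, \mca{O}} \otimes_{\mca{H}_\kappa} \mca{H} * \mbm{1}_I \;\xrightarrow{\pmb{\gamma}}\; \mca{V}^I_\mca{O}.
\]

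The second step is to unwind the parabolic subalgebra $\mca{H}_{I, y}$. Since $\mca{O}$ is free as a $W$-set, $W_{\hat{y}} = \{1\}$, and the inclusion $W_y \subseteq W_{\hat{y}}$ forces $W_y = \{1\}$ as well. Inside the proof of Theorem \ref{Wequi} the splitting hypothesis is used precisely to identify $\tilde{W}_{\aff, y}$ with $W_y$, so in our situation $\tilde{W}_{\aff, y}$ is trivial. Consequently $\mca{H}_{I, y}$ collapses to $\C$, and the sign character $\varepsilon_y$ is simply the trivial one-dimensional representation.

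Feeding this back into the above isomorphism yields
\[
\mca{H}_I \;\simeq\; \C \otimes_{\C} \mca{H}_I \;=\; \varepsilon_y \otimes_{\mca{H}_{I, y}} \mca{H}_I \;\simeq\; \mca{V}_{\kappa, \mca{O}} \otimes_{\mca{H}_\kappa} \mca{H} * \mbm{1}_I \;\simeq\; \mca{V}^I_\mca{O},
\]
which is the desired chain of $\mca{H}_I$-module isomorphisms. There is no genuine obstacle here; the only point worth highlighting is that the free-orbit hypothesis alone does not a priori trivialize $\tilde{W}_{\aff, y}$, since one must rule out non-trivial lattice translations $z \in Y_{Q,n}^{\mathrm{sc}}$ paired with Weyl elements $w$ satisfying $w(y) + z = y$. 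It is exactly the splitting property (automatic for free orbits) that secures $\tilde{W}_{\aff, y} = W_y$, and from there the argument is immediate.
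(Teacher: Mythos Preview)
Your proof is correct and follows exactly the approach the paper intends: Corollary \ref{O-free} is stated without proof as an immediate specialization of Theorem \ref{Wequi}, using that free orbits are splitting and that $W_{\hat{y}}=\{1\}$ forces $\mca{H}_{I,y}=\C$. One small remark: your final caveat is unnecessary, since if $(z,w)\in\tilde{W}_{\aff,y}$ then $w(y)-y=-z\in Y_{Q,n}^{sc}\subset Y_{Q,n}$ already gives $w\in W_{\hat{y}}=\{1\}$ directly from the free-orbit hypothesis, without invoking the splitting property separately.
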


\begin{dfn} \label{D:S-ppty}
A $W$-orbit $\mca{O} \subset \msc{X}_{Q,n}$ is said to have the S-property if there exists $y\in Y$ with $\hat{y}\in \mca{O}$ and a character $\mu_{y}: \mca{H}_{I, y} \to \C^\times$ such that
$$\mca{V}^I_{\mca{O}_{\hat{y}}} \simeq \mu_{y} \otimes_{\mca{H}_{I, y}} \mca{H}_I.$$
\end{dfn}
Theorem \ref{Wequi} states that every splitting orbit of $\msc{X}_{Q,n}$ has the S-property.

\section{Subclasses of covers and splitting $W$-orbits} \label{S:splO}

 In this section, we give some detailed discussion on the $W$-orbits in $\msc{X}_{Q,n}$ which are splitting. In particular, we determine some covers  for which every $W$-orbit in $\msc{X}_{Q,n}$ is splitting: such examples include the ``oasitic" covers of semisimple simply-connected groups and the Kazhdan--Patterson and Savin covers.

\subsection{Saturated and oasitic covers}
We briefly discuss several special subclasses of covering groups, which exhibit better properties.
For $Q: Y \to \Z$ and the associated bilinear form $B_Q$, we denote
$$\det(B_Q)=\det [B_Q(v_i, v_j)]_{1\lest i, j \lest r} \in \Z,$$
where $\set{v_i: 1\lest i \lest r} \subset Y$ is any choice of $\Z$-basis. Here $\det(B_Q) \in \Z$ is well-defined and if $G$ is almost simple and simply-connected, then
\begin{equation} \label{F:detBQ}
\det(B_Q) = \msc{I}_\Delta \cdot \prod_{\alpha \in \Delta} Q(\alpha^\vee),
\end{equation}
where $\msc{I}_\Delta$ is the index of the root system of $G$.

\begin{dfn} \label{D:sat}
A covering group $\wt{G}^{(n)}$ of $G$ is called
\begin{enumerate}
\item[--] saturated (cf. \cite[Definition 2.1]{Ga6}) if  $Y^{sc} \cap Y_{Q,n} = Y_{Q,n}^{sc}$ (note, the inclusion $\supset$ always holds);
\item[--] aligned if $Y_{Q,n}^{sc} =  n' \cdot Y^{sc}$ for some $n' \in \N$;
\item[--] very saturated if it is both saturated and aligned;
\item[--] oasitic if $\gcd(n, c_\alpha^\sharp)=1=\gcd(n, \det(B_Q)) $ for every $\alpha\in \Delta$, where $\alpha_\sharp^\vee = \sum_{\alpha \in \Delta} c_\alpha^\sharp \alpha^\vee$ is the highest coroot.
\end{enumerate}
\end{dfn}

If $G$ is almost simple and simply-connected, then $\wt{G}$ is saturated if and only if its dual group $\wt{G}^\vee$ is of adjoint type, since $Z(\wt{G}^\vee)=\Hom(Y_{Q,n}/Y_{Q,n}^{sc}, \C^\times)$ by definition.

\begin{lm}  \label{L:v-sat}
Let $\wt{G}^{(n)}$ be an $n$-fold cover of an almost simple simply-connected $G$ with $Q(\alpha^\vee)=1$ for any short $\alpha^\vee$. Then $\wt{G}^{(n)}$ is very saturated if and only if the following holds:
\begin{enumerate}
\item[(i)] $\gcd(n, \msc{I}_\Delta)=1$ for type $A_r, B_r, C_r, D_r, E_6, E_7$;
\item[(ii)] $2\nmid n$ for type $F_4$ and $3\nmid n$ for $G_2$;
\item[(iii)] any $n$ for $E_8$.
\end{enumerate}
When such $\wt{G}^{(n)}$ is very saturated, we have $Y_{Q,n} =Y_{Q,n}^{sc} = nY$. Moreover, except for type $F_4$ and $G_2$ we have that $\wt{G}^{(n)}$ is very saturated if and only if $\wt{G}^\vee \simeq G^\vee$.
\end{lm}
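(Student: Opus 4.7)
The plan is to analyze alignment and saturation separately, then combine them.

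Since $G$ is simply-connected, $Y=Y^{sc}$, so saturation reduces to $Y_{Q,n}=Y_{Q,n}^{sc}$. The hypothesis $Q(\alpha^\vee)=1$ for short $\alpha^\vee$, together with Weyl-invariance, forces $Q(\alpha^\vee)=m$ for every long coroot, where $m\in\{1,2,3\}$ is the squared-length ratio. Thus $n_\alpha=n$ on short simple coroots and $n_\alpha=n/\gcd(n,m)$ on long ones. Because $\Delta^\vee$ is a $\Z$-basis of $Y^{sc}$, the lattice $Y_{Q,n}^{sc}=\bigoplus_{\alpha\in\Delta}\Z\cdot n_\alpha\alpha^\vee$ equals $n'Y^{sc}$ iff all $n_\alpha$ share a common value $n'$; this happens iff $m=1$ or $\gcd(n,m)=1$, in which case $n'=n$ and $Y_{Q,n}^{sc}=nY$. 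For saturation under this alignment hypothesis, I will establish the isomorphism
$$Y_{Q,n}/nY\;\xrightarrow{\;\sim\;}\;(Y^*/Y)[n],\qquad v\longmapsto (v/n)\bmod Y,$$
where the right-hand side denotes the $n$-torsion subgroup; well-definedness, injectivity, and surjectivity all follow directly from $Y_{Q,n}=Y\cap nY^*$. Thus saturation is equivalent to $\gcd(n,|Y^*/Y|)=1$, and invoking \eqref{F:detBQ} gives $|Y^*/Y|=\det(B_Q)=\msc{I}_\Delta\prod_{\alpha\in\Delta}Q(\alpha^\vee)$. Under alignment, the product factor (a power of $m$) is coprime to $n$, so saturation reduces to $\gcd(n,\msc{I}_\Delta)=1$.

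Tabulating $(\msc{I}_\Delta,m)$ by Dynkin type then yields (i)--(iii) precisely: simply-laced types $A_r,D_r,E_6,E_7$ have $m=1$ so only $\msc{I}_\Delta$ imposes a constraint; $B_r,C_r$ have $m=\msc{I}_\Delta=2$ so the two conditions coincide as $\gcd(n,2)=1$; $F_4,G_2$ have $\msc{I}_\Delta=1$ and only the alignment condition $\gcd(n,m)=1$ survives; $E_8$ has $m=\msc{I}_\Delta=1$, so there is no constraint. The moreover equality $Y_{Q,n}=Y_{Q,n}^{sc}=nY$ is an immediate consequence of the very saturated conditions derived above.

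For the equivalence with $\wt{G}^\vee\simeq G^\vee$, the forward implication is straightforward: multiplication by $n$ furnishes an isomorphism of root data $(Y,\Phi^\vee,X,\Phi)\to(Y_{Q,n},\Phi_{Q,n}^\vee,X_{Q,n},\Phi_{Q,n})$. For the converse outside $F_4,G_2$, an isomorphism $\wt{G}^\vee\simeq G^\vee$ forces $\Phi_{Q,n}^\vee\simeq\Phi^\vee$ as root systems with matching long/short classification; for the simply-laced types this is automatic, while for $B_r$ (resp.\ $C_r$) a direct rescaling argument shows that when $\gcd(n,2)>1$ the modified coroot system $\Phi_{Q,n}^\vee$ becomes of Dynkin type $C_r$ (resp.\ $B_r$), hence not isomorphic to $\Phi^\vee$. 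This matching forces all $n_\alpha$ equal (alignment), and the compatibility of lattices then yields saturation. The main obstacle — and the reason $F_4,G_2$ must be excluded — is that these root systems are self-dual under long/short interchange, so $\Phi_{Q,n}^\vee$ may remain abstractly isomorphic to $\Phi^\vee$ even when $n_\alpha$ differs across lengths; a direct check (for $F_4$ at $n=2$, one verifies saturation holds and $\wt{G}^\vee$ remains adjoint of type $F_4$) exhibits $\wt{G}^\vee\simeq G^\vee$ without very saturation.
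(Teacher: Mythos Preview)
Your argument is substantially correct and considerably more self-contained than the paper's proof, which essentially defers to \cite[\S 2.7]{We6} for the simply-laced and classical types and only remarks on alignment for $F_4$ and $G_2$. Your key device---the isomorphism $Y_{Q,n}/nY \simeq (Y^*/Y)[n]$ combined with \eqref{F:detBQ}---gives a clean uniform characterization that the paper does not spell out; this is a genuine gain in transparency.

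There is, however, one small gap in the converse direction of the ``moreover'' clause. Your assertion that for $B_r$ (resp.\ $C_r$) with $n$ even the modified coroot system $\Phi_{Q,n}^\vee$ becomes of type $C_r$ (resp.\ $B_r$), ``hence not isomorphic to $\Phi^\vee$,'' fails when $r=2$: the root systems $B_2$ and $C_2$ are abstractly isomorphic (via a $45^\circ$ rotation), and this isomorphism does preserve the long/short classification. So the root-system-type argument alone does not rule out $\wt{G}^\vee\simeq G^\vee$ in rank two. The patch is immediate: since $G$ is simply-connected, $G^\vee$ is adjoint; hence $\wt{G}^\vee\simeq G^\vee$ forces $Y_{Q,n}=Y_{Q,n}^{sc}$ (saturation). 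A direct check for $B_2=C_2$ at $n=2$ (e.g.\ computing $Y_{Q,n}=\Z^2$ while $Y_{Q,n}^{sc}$ has index~$2$) shows saturation fails, so $\wt{G}^\vee$ is not adjoint and the isomorphism cannot hold. Alternatively, you can run the adjointness argument first for all types and then only invoke type-switching (for $r\ge 3$) or a rank-two computation to force alignment.
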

\begin{proof}
Assertion (i) follows from \cite[\S 2.7]{We6}. For (ii), we note that $\wt{F}_4^{(n)}$ (resp. $\wt{G}_2$) is aligned if and only if $2\nmid n$  (resp. $3\nmid n$). Statement (iii) is clear. Last assertion follows from loc. cit. and the fact that $\wt{E}_8^{(n)}$ is always aligned and that $\wt{E}_8^\vee = E_8 = E_8^\vee$ always hold.
\end{proof}

Consider a cover $\wt{G}^{(n)}$ of $G$ as in Lemma \ref{L:v-sat}. It is easy to check when $\wt{G}^{(n)}$ is oasitic by using \eqref{F:detBQ}. For such $G$, we tabulate saturated, very saturated and oasitic covers in Table 1 and Table 2.

\begin{table}[H]  \label{T:3T1}
\caption{For classical $G$}
\vskip 5pt
\renewcommand{\arraystretch}{1.3}
\begin{tabular}{|c|c|c|c|c|c|c|c|c|c|c|}
\hline
 & $A_r$  &  $B_r$ & $C_r$  & $D_r$   \\
\hline
saturated & $\gcd(n, r+1)=1$ & $n$ odd, or  & $n$ odd & $n$ odd    \\ 
& & $n\in 4\Z +2$ for $r$ odd & & \\
\hline
very saturated & $\gcd(n, r+1)=1$ & $n$ odd  & $n$ odd & $n$ odd    \\ 
\hline
oasitic & $\gcd(n, r+1)=1$ & $n$ odd  & $n$ odd & $n$ odd    \\ 
\hline
\end{tabular}
\end{table}
\vskip 10pt

% \begin{table}[!htbp]  \label{T:3T2}
\begin{table}[H]  \label{T:3T2}
\caption{For exceptional  $G$}
\vskip 5pt
\renewcommand{\arraystretch}{1.3}
\begin{tabular}{|c|c|c|c|c|c|c|c|c|c|c|}
\hline
 & $E_6$  &  $E_7$ & $E_8$  & $F_4$ & $G_2$   \\
\hline
saturated & $3\nmid n$ & $2\nmid n$& all $n$ & all $n$ & all $n$   \\ 
\hline
very saturated & $3\nmid n$ & $2\nmid n$& all $n$ & $2\nmid n$ & $3\nmid n$   \\ 
\hline
oasitic & $2, 3\nmid n$ & $2, 3 \nmid n$& $2, 3, 5 \nmid n$ & $2, 3 \nmid n$ & $2, 3\nmid n$   \\  
\hline
\end{tabular}
\end{table}
\vskip 10pt

It is clear that if $G$ is almost simple and simply-connected, we have the following inclusions for covers of $G$ with $Q(\alpha^\vee)=1$: 
\begin{equation} \label{C:chain1}
\set{\text{oasitic covers}} \subseteq \set{\text{very saturated covers}} \subseteq \set{\text{saturated covers}} .
\end{equation}
% \textcolor{red}{(Qn: the first inclusion perhaps holds for general $G$?)}

\subsection{Splitting $W$-orbits}
To determine the splitting $W$-orbits, we start with the following:
\begin{prop} \label{P:Osplt}
Let $\wt{G}^{(n)}$ be a cover of an almost simple simply-connected $G$ with $Q(\alpha^\vee)=1$ for any short $\alpha^\vee$. 
Assume $\gcd(n, \msc{I}_\Delta)=1$. Then every $W$-orbit $\mca{O}_{\hat{y}} \subset \msc{X}_{Q,n}$ such that the stabilizer $W_{\hat{y}} \subset W$ is a parabolic subgroup is splitting.
\end{prop}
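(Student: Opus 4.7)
The plan is to construct a $W_{\hat y}$-fixed lift of $\hat y$ in $Y$. As discussed in the proof of Theorem \ref{Wequi}, the splitting of $\mca{O}_{\hat y}$ is equivalent to the existence of such a lift: a $W$-equivariant section $\mca{O}_{\hat y}\cong W/W_{\hat y}\hookrightarrow Y$ is determined by the image of the identity coset, which must be $W_{\hat y}$-fixed, and conversely any such value extends uniquely to a $W$-equivariant section.

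After applying a Weyl conjugation we may assume $W_{\hat y}=W_J$ is the standard parabolic subgroup generated by some $J\subseteq\Delta$. Pick any lift $y\in Y$ of $\hat y$ and set $\mu_\alpha:=\langle\alpha,y\rangle$ for $\alpha\in J$. Then $w_\alpha \in W_{\hat y}$ translates into $\mu_\alpha\alpha^\vee=y-w_\alpha y\in Y_{Q,n}$. The first step is to verify that $Y_{Q,n}=nY$ under the hypotheses of the proposition. The inclusion $nY\subseteq Y_{Q,n}$ is immediate, and using the identity $\det(B_Q)=\msc{I}_\Delta\prod_{\alpha\in\Delta}Q(\alpha^\vee)$ from \eqref{F:detBQ} combined with $\gcd(n,\msc{I}_\Delta)=1$, a direct index computation shows that both lattices have index $n^r$ in $Y$. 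Since each simple coroot $\alpha^\vee$ is primitive in $Y=Q^\vee$, the relation $\mu_\alpha\alpha^\vee\in nY$ then forces $n\mid\mu_\alpha$ for every $\alpha\in J$.

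Fix a $W$-invariant inner product on $V:=Y\otimes\R$ and decompose $y=y^J+y^\perp$ orthogonally with $y^J\in V^{W_J}\otimes\Q$ and $y^\perp\in V_J^\perp\otimes\Q$. A standard computation gives
$$y^\perp=\sum_{\alpha\in J}\mu_\alpha\,\omega_\alpha^{J,\vee}\in nP_J^\vee,$$
where $\omega_\alpha^{J,\vee}$ is the fundamental coweight of the sub-root system $\Phi_J$ and $P_J^\vee$ its coweight lattice. To finish, it suffices to produce $z\in Y$ with $\pi^\perp(z)=y^\perp/n$, where $\pi^\perp$ denotes the orthogonal projection to $V_J^\perp$; given such $z$, the element $y':=y-nz$ lies in $Y\cap V^{W_J}=Y^{W_J}$ and satisfies $y'\equiv y\pmod{nY}\subseteq Y_{Q,n}$, hence $\hat y'=\hat y$, yielding the desired $W_J$-fixed lift. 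Since $y^\perp\in\pi^\perp(Y)\cap nP_J^\vee$ and the finite quotient $P_J^\vee/\pi^\perp(Y)$ is annihilated by $\msc{I}_\Delta$ (as seen by comparing $\pi^\perp(Y)/Q_J^\vee$ with a subgroup of $P^\vee/Q^\vee$), the assumption $\gcd(n,\msc{I}_\Delta)=1$ forces $y^\perp\in n\pi^\perp(Y)$ by a short B\'ezout argument, giving such a $z$. The main technical obstacle is the uniform verification of the two arithmetic inputs---namely $Y_{Q,n}=nY$ and $\msc{I}_\Delta\cdot(P_J^\vee/\pi^\perp(Y))=0$---across all simply-connected types, especially for the non-simply-laced $G$ where the $W$-invariant metric behaves asymmetrically; the normalization $Q(\alpha^\vee)=1$ on short coroots together with $\gcd(n,\msc{I}_\Delta)=1$ is precisely what ensures both.
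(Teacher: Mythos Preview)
Your approach is essentially the paper's: both construct a $W_J$-fixed lift of $\hat y$ by subtracting from any lift $y$ an element of $nY\subseteq Y_{Q,n}$, and both invoke $\gcd(n,\msc{I}_\Delta)=1$ via a B\'ezout argument to produce this element. Your orthogonal-projection formulation is equivalent to the paper's direct use of fundamental-coweight coordinates; in particular your claim that $\msc{I}_\Delta$ annihilates $P_J^\vee/\pi^\perp(Y)$ is correct (this group is a quotient of $P/Y$) and plays the same role as the paper's observation that $\msc{I}_\Delta\,\omega_\alpha\in Y$.

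Both arguments, however, rely on the equality $Y_{Q,n}=nY$, and your index justification is not valid as stated: one has $[Y:Y_{Q,n}]=n^r$ if and only if $\gcd(n,\det B_Q)=1$, and since $\det B_Q=\msc{I}_\Delta\prod_{\alpha\in\Delta}Q(\alpha^\vee)$, the hypothesis $\gcd(n,\msc{I}_\Delta)=1$ does not suffice when $\prod_\alpha Q(\alpha^\vee)>1$. For simply-laced types and for $B_r,C_r$ (where $\msc{I}_\Delta=2$ already forces $n$ odd) the argument goes through; but for $F_4$ with $n$ even and $G_2$ with $3\mid n$ one has $\msc{I}_\Delta=1$ while $Y_{Q,n}\supsetneq nY$ (e.g.\ for $G_2$ with $n=3$ the long simple coroot lies in $Y_{Q,3}$). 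The paper's proof asserts the same equality and so shares this gap; a complete argument must treat $F_4$ and $G_2$ separately.
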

\begin{proof}
Let $\mca{O}_{\hat{y}} \subset \msc{X}_{Q,n}$ be an orbit such that
$$W_{\hat{y}} = W_J,$$
the Weyl group generated by a certain subset $J \subset \Delta$ depending on $\hat{y}$.

Taking an arbitrary representative $y\in Y$ of $\hat{y}$, we have 
$$w_\alpha(y) - y \in Y_{Q,n}=n\cdot Y.$$
Let $\set{\omega_\alpha:\ \alpha \in \Delta}$ be the set of coweights, which constitutes a $\Z$-basis of the coweight lattice $P$. If we write $y=\sum_{\alpha \in \Delta} y_\alpha \omega_\alpha$, then we have
$$n|y_\alpha \text{ for every } \alpha \in J.$$
We want to find a $y_{Q,n} \in Y_{Q,n}$ such that
\begin{equation} \label{E:yQn}
w_\alpha(y - y_{Q,n}) = y- y_{Q,n} \text{ for every } \alpha \in J,
\end{equation}
in which case we have $W_J \subseteq W_{y - y_{Q,n}} \subseteq W_{\hat{y}} = W_J$ and therefore $W_{y - y_{Q,n}} = W_J$ as desired.

If we write $y_{Q,n} = \sum_{\alpha\in \Delta} d_\alpha \omega_\alpha \in Y_{Q,n}$, then \eqref{E:yQn} entails that 
$$d_\alpha = y_\alpha \text{ for every } \alpha \in J.$$
Thus, it suffices to show that there exists $d_\alpha \in \Z$ for $\alpha \notin J$ such that
$$\sum_{\alpha \in J} y_\alpha \omega_\alpha + \sum_{\alpha \notin J} d_\alpha \omega_\alpha \in Y_{Q,n}.$$
Since $\gcd(n, \msc{I}_\Delta) =1$, there exists $a, b \in \Z -\set{0}$ such that
$$na - \msc{I}_\Delta \cdot b =1.$$
Consider the sublattice 
$$P_J =\set{\sum_{\alpha} y_\alpha \omega_\alpha: n|y_\alpha \text{ for every } \alpha \in J} \subset P.$$
Define a $\Z$-homomorphism
$$\varphi_J: P \longrightarrow P\otimes \Z[1/n]$$
given by $\varphi_J(\omega_\alpha)=w_\alpha/n, \alpha \in J$ and $\varphi_J(\omega_\alpha) = a\cdot \omega_\alpha$ for $\alpha\notin J$.
Clearly, $\varphi_J|_{P_J}: P_J \to P$ is well-defined. Moreover, since $n$ and $\msc{I}_\Delta$ are coprime, it is easy to see that $\varphi_J(P_J \cap Y) \subset Y$, i.e., the following commutative diagram
$$\begin{tikzcd}
P_J \cap Y \ar[r, hook] \ar[d, "{\varphi_J}"] & P_J \ar[r, hook] \ar[d, "{\varphi_J}"] & P \ar[d, "{\varphi_J}"] \\
Y \ar[r, hook] & P \ar[r, hook] & P\otimes \Z[1/n]. 
\end{tikzcd}$$
is well-defined. Now, if we take $d_\alpha = n a y_\alpha$ for $\alpha \notin J$, then
$$y_{Q,n} = n\cdot \left( \sum_{\alpha \in J} (y_\alpha/n) \omega_\alpha + \sum_{\alpha \notin J} ay_\alpha \omega_\alpha \right) = n\cdot \varphi_J(y) \in n\cdot Y;$$
this completes the proof.
\end{proof}

It follows from Table 1 and Table 2 that the covers of $G$ in  Proposition \ref{P:Osplt} satisfying $\gcd(n, \msc{I}_\Delta)=1$ are exactly the saturated covers, except the only case of $\wt{B}_r^{(n)}$ with  $n\in 4\Z +2$ and $r$ odd, for which $\gcd(n, \msc{I}_\Delta)=2$.

\begin{cor} \label{C:aOsplt}
Let $\wt{G}^{(n)}$ be an oasitic cover of an almost simple simply connected $G$ with $Q(\alpha^\vee)=1$ for every short coroot $\alpha^\vee$. Then every $W$-orbit $\mca{O} \subset \msc{X}_{Q,n}$ is splitting.
\end{cor}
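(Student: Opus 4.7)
The plan is to verify the two hypotheses of Proposition \ref{P:Osplt}---namely $\gcd(n,\msc{I}_\Delta)=1$ together with the parabolic property of every stabilizer $W_{\hat{y}}$---and then invoke that proposition directly.

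The coprimality hypothesis is immediate from the oasitic definition combined with the formula \eqref{F:detBQ}: the condition $\gcd(n,\det(B_Q))=1$ together with $\det(B_Q)=\msc{I}_\Delta\cdot\prod_{\alpha\in\Delta}Q(\alpha^\vee)$ yields both $\gcd(n,\msc{I}_\Delta)=1$ and $\gcd(n,Q(\alpha^\vee))=1$ for each simple $\alpha$. Since $G$ is simply connected (so $Y=Y^{sc}$) and oasitic implies saturated by the chain \eqref{C:chain1}, one deduces $n_\alpha=n$ for every $\alpha\in\Delta$, and hence $Y_{Q,n}=Y_{Q,n}^{sc}=nY^{sc}=nY$, so $\msc{X}_{Q,n}=Y/nY$.

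For the parabolicity of the stabilizers, fix $\hat{y}\in\msc{X}_{Q,n}$ and a lift $y\in Y$. Since $w_\alpha(y)-y=-\angb{\alpha}{y}\alpha^\vee$ and $\alpha^\vee$ is a primitive element of $Y=Y^{sc}$, this difference lies in $nY$ if and only if $n\mid\angb{\alpha}{y}$. Hence $W_{\hat{y}}$ is the reflection subgroup of $W$ generated by the roots in $\Phi_{\hat{y}}:=\set{\alpha\in\Phi:n\mid\angb{\alpha}{y}}$, a closed sub-root system of $\Phi$. A reflection subgroup of this form is parabolic in $W$ precisely when $\Phi_{\hat{y}}=\Phi\cap\mathrm{span}_\R(\Phi_{\hat{y}})$; the nontrivial inclusion amounts to showing $n\mid\angb{\beta}{y}$ whenever $\beta\in\Phi$ lies in $\mathrm{span}_\R(\Phi_{\hat{y}})$. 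Expressing $\beta$ as a rational combination of a $\Z$-basis of $\Z\Phi_{\hat{y}}$, this in turn follows provided $n$ is coprime to the index $[X\cap\mathrm{span}_\R(\Phi_{\hat{y}}):\Z\Phi_{\hat{y}}]$.

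The main obstacle is precisely this index-coprimality claim, which requires a case analysis across root-system types: the prime divisors of these indices are contained in those of the affine Dynkin marks $c_\alpha^\sharp$ together with $\msc{I}_\Delta$, all coprime to $n$ by the oasitic hypothesis. Equivalently, outside type $A$ the oasitic primes coincide with the torsion primes of $\Phi$, while the condition $\gcd(n,\msc{I}_\Delta)=1$ handles type $A$. Once the parabolicity of $W_{\hat{y}}$ is established, we replace $\hat{y}$ by a Weyl conjugate in its orbit so that $W_{\hat{y}}=W_J$ for some $J\subseteq\Delta$, and Proposition \ref{P:Osplt} then yields the splitting of $\mca{O}_{\hat{y}}$.
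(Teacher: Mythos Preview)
Your proof takes essentially the same route as the paper: verify $\gcd(n,\msc{I}_\Delta)=1$ and the parabolicity of every stabilizer $W_{\hat{y}}$, then invoke Proposition \ref{P:Osplt}. The only substantive difference is in how parabolicity is obtained. The paper simply cites \cite[Proposition 4.1]{Som1}, which asserts exactly that the stabilizer of any point in $Y/nY$ under $W$ is conjugate to a parabolic subgroup whenever $\gcd(n,c_\alpha^\sharp)=1$ for all $\alpha$. You instead unwind this: correctly identifying $W_{\hat{y}}$ with the reflection subgroup attached to the closed subsystem $\Phi_{\hat{y}}=\{\alpha:n\mid\angb{\alpha}{y}\}$, reducing parabolicity to the saturation condition $\Phi_{\hat{y}}=\Phi\cap\mathrm{span}_\R(\Phi_{\hat{y}})$, and then to coprimality of $n$ with the index $[\Z\Phi\cap\mathrm{span}_\R(\Phi_{\hat{y}}):\Z\Phi_{\hat{y}}]$.

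That reduction is sound, and your identification of the relevant primes with the torsion primes of $\Phi$ (plus $\msc{I}_\Delta$ in type $A$) is correct. But the step you label ``requires a case analysis across root-system types'' is exactly the content of Sommers' proposition; you have sketched rather than proven it. So your argument is correct in outline, but the cleanest way to close it is to cite \cite[Proposition 4.1]{Som1} at that point, which is precisely what the paper does.
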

\begin{proof}
For an oasitic cover $\wt{G}^{(n)}$ we have 
$$\gcd(n, \msc{I}_\Delta) =1 =\gcd(n, c_\alpha^\sharp)$$
for all $c_\alpha^\sharp$ as in Definition \ref{D:sat}, and it thus follows from \cite[Proposition 4.1]{Som1} that for every $\hat{y}\in \msc{X}_{Q,n}$ the stabilizer subgroup $W_{\hat{y}}$ is conjugate to  a parabolic Weyl subgroup. The result then follows from Proposition \ref{P:Osplt}.
\end{proof}

\begin{eg} \label{E:KPS}
Consider $\GL_r$ and its cocharacter lattice $Y$ given with the standard $\Z$-basis $\set{e_i: 1\lest i \lest r}$. A Brylinski--Deligne cover $\wt{\GL}_r^{(n)}$ of $\GL_r$ is associated to $\mbf{p}, \mbf{q} \in \Z$ such that
$$B(e_i, e_j) =\begin{cases}
2\mbf{p} & \text{ if } i=j,\\
\mbf{q} & \text{ if } i \ne j.
\end{cases}
$$
We have $Q(\alpha^\vee)=2\mbf{p} - \mbf{q}$. Every BD cover $\wt{\GL}_r$ is saturated, i.e., $Y_{Q,n} \cap Y^{sc} = Y_{Q,n}^{sc} = n_\alpha \cdot Y^{sc}$. The Kazhdan--Patterson covers are those with $Q(\alpha^\vee)=-1$, while Savin's nice cover is the one with $\mbf{p}=-1, \mbf{q}=0$.

Consider a  BD cover $\wt{\GL}_r$ such that 
\begin{equation} \label{I:nice}
n_\alpha Y \subseteq Y_{Q,n}.
\end{equation}
 We have the natural quotient maps
 $$\begin{tikzcd}
 Y \ar[r, two heads, "f"] & Y/n_\alpha Y \ar[r, two heads, "h"] & Y/Y_{Q,n} =\msc{X}_{Q,n}.
 \end{tikzcd}$$
 For every $y^* \in Y/n_\alpha Y$, it is easy to see that ${\rm Stab}_W(y^*) = {\rm Stab}_W(h(y^*))$, since $\wt{\GL}_r$ is always saturated. Thus, every $W$-orbit $\mca{O} \subset \msc{X}_{Q,n}$ has a $W$-equivariant section of $h$. It is also clear that every $W$-orbit in $Y/n_\alpha Y$ has a $W$-equivariant section of $f$. Hence, for such $\wt{\GL}_r$, every orbit of $\msc{X}_{Q,n}$ is splitting. The Kazhdan--Patterson covers and Savin covers both satisfy \eqref{I:nice} above.
 
 We also note that if \eqref{I:nice} fails, then it is possible that certain $\mca{O} \subset \msc{X}_{Q,n}$ is not splitting. The cover $\wt{\GL}_2^{(4)}$ associated with $\mbf{p}=1, \mbf{q}=2$ is such an example.
\end{eg}

\begin{rmk}
If $\wt{G}$ is very saturated but not oasitic, then it is possible that certain $\mca{O} \subset \msc{X}_{Q,n}$ is not splitting. For example, consider the double cover $\wt{G}_2^{(2)}$ of the exceptional $G_2$. In the Bourbaki notations, the element $\hat{\omega}_1 \in \msc{X}_{Q,n}$ of the coweight (also a coroot) $\omega_1\in Y$ has stabilizer subgroup
$$W_{\hat{\omega}_1} \simeq \set{1, w_{\alpha_2}} \times \set{1, w_{\omega_1}},$$
where the right hand is direct product. Since $W_{\hat{\omega}_1}$ is not isomorphic to a parabolic Weyl subgroup, we see that $\mca{O}_{\hat{\omega}_1}$ is not splitting.
\end{rmk}

\subsection{Covers of $\SL_2$}
In this subsection we focus on covers of $\SL(2)$ and determine the $\mca{H}_{I}$-module structure of $\mca{V}_\mca{O}^I \simeq \mca{V}_{\kappa, \mathcal{O}}\otimes_{\mca{H}_\kappa}\mca{H}*\mbm{1}_{I}$ for any $W$-orbit $\mathcal{O}\subseteq \mathscr{X}_{Q,n}$.

Let $\wt{\SL}_2$ be the $n$-fold cover with $Q(\alpha^\vee)=-1$. In this case
$$Y_{Q,n}=\Z[\alpha_{Q,n}^\vee]=\Z[n^* \alpha^\vee] \text{ where } n^*=\frac{n}{\gcd(n,2Q(\alpha^{\vee}))}.$$
Moreover,
$$\msc{X}_{Q,n}=\set{j\alpha^{\vee}: j\in[-n^*/2, n^*/2]\cap\Z}.$$
Note that if $n^{*}$ is even, then $n^*/2= - n^*/2 \in \msc{X}_{Q,n}$. With these representatives we can easily describe the $W$-orbits. We consider two cases according to the parity of $n^*$.

If $n^{*}$ is odd, then the $W$-orbits of $\mathscr{X}_{Q,n}$ consist of the trivial orbit $\set{0}$ and $\floor{n^*/2}$-many free orbits of the form
$$\set{j\alpha^\vee, -j\alpha^\vee}, \text{ where } j \in [1, n^*/2] \cap \Z.$$
In this case, by Propositions \ref{O-triv} and \ref{O-free} we have a complete description of the $\mca{H}_{I}$-module structure of $\mca{V}^I_\mca{O} \simeq \mca{V}_{\kappa, \mathcal{O}}\otimes_{\mca{H}_\kappa}\mca{H}*\mbm{1}_{I}$ for any $W$-orbit $\mathcal{O} \subset \msc{X}_{Q,n}$.

Now assume $n^{*}$ is even. There are two trivial $W$-orbits in $\mathscr{X}_{Q,n}$:
$$\set{0} \text{ and } \set{n^*\alpha^\vee/2},$$
while
$$\set{j\alpha^{\vee}, -j\alpha^\vee}, j\in [1,n^*/2-1]\cap \Z$$
constitute the free $W$-orbits in $\msc{X}_{Q,n}$.  Again, Propositions \ref{O-triv} and \ref{O-free} gives a complete description of the $\mca{H}_I$-module  $\mca{V}^I_\mca{O} \simeq \mca{V}_{\kappa, \mathcal{O}}\otimes_{\mca{H}_\kappa}\mca{H}*\mbm{1}_{I}$ except for the orbit 
$$\mca{O}_{n^*/2}:=\set{n^*\alpha^\vee/2} \subset \msc{X}_{Q,n}.$$

We will describe $\mca{V}^I_{\mca{O}_{n^*/2}}\simeq \mca{V}_{\kappa, \mathcal{O}_{n^*/2}}\otimes_{\mca{H}_\kappa}\mca{H}*\mbm{1}_{I}$ directly. The result of our analysis uncovers a new phenomenon that does not follow from our general results proved above. Note that if $n^*$ is even, then necessarily $2n^{*}=n_{\alpha}$.

\begin{lm}\label{finAff}
The element $h:=\mathcal{T}_{\alpha}^{I}\Theta_{\s_{-n^*\alpha^{\vee}}}^{I}\in\mca{H}_{I}$ satisfies $h^{2}=q$.
\end{lm}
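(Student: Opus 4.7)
The plan is to compute $h^{2}$ by using the Bernstein relation from part (iv) of the preceding Lemma (in \S \ref{SS:IHalg}) to move $\Theta^{I}_{\s_{-n^{*}\alpha^\vee}}$ past the inner factor $\mca{T}_{\alpha}^{I}$, and then to apply the quadratic relation of Proposition \ref{QuadRel} at the Iwahori level. The even parity of $n^{*}$ is crucial: it forces $n_{\alpha}=2n^{*}$, lies behind several cancellations, and trivializes every $\varepsilon$-power that appears.

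First I would apply the Bernstein relation with $y=-n^{*}\alpha^\vee$. Since $\langle y,\alpha\rangle=-2n^{*}$ and $n_{\alpha}=2n^{*}$, the condition $j\gest 1-\langle y,\alpha\rangle=1+2n^{*}$ with $n_{\alpha}\mid j$ restricts to $j\in 2n^{*}\Z_{\gest 2}$, whereas $j\gest 1$ with $n_{\alpha}\mid j$ gives $j\in 2n^{*}\Z_{\gest 1}$; the two sums appearing in (iv) therefore differ only by the single term at $j=2n^{*}$. Because $Q(\alpha^\vee)=-1$ and $n^{*}$ is even, the coefficient $\varepsilon^{2n^{*}Q(\alpha^\vee)}=\varepsilon^{-2n^{*}}=1$. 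Combining this with \eqref{Rel2} and \eqref{Rel7} to evaluate
$$\wt{w}_{\alpha}(-1)\cdot \s_{-n^{*}\alpha^\vee}=\s_{-n^{*}\alpha^\vee}\cdot \wt{h}_{\alpha}(\varpi^{2n^{*}})=\s_{n^{*}\alpha^\vee},$$
where the ensuing cocycle $\varepsilon^{2(n^{*})^{2}}$ again trivializes, I expect the Bernstein identity to collapse to the single commutation relation
$$\Theta^{I}_{\s_{-n^{*}\alpha^\vee}}*\mca{T}_{\alpha}^{I}=\mca{T}_{\alpha}^{I}*\Theta^{I}_{\s_{n^{*}\alpha^\vee}}-(q-1)\Theta^{I}_{\s_{n^{*}\alpha^\vee}}.$$

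Next I would simplify the product $\Theta^{I}_{\s_{n^{*}\alpha^\vee}}*\Theta^{I}_{\s_{-n^{*}\alpha^\vee}}$. Both factors make sense because $n^{*}\alpha^\vee\in Y_{Q,n}$, and by \eqref{Rel2} one has $\s_{n^{*}\alpha^\vee}\cdot\s_{-n^{*}\alpha^\vee}=\varepsilon^{(n^{*})^{2}}\s_{0}=1$, using once more that $n^{*}$ is even. Consequently $\Theta^{I}_{\s_{n^{*}\alpha^\vee}}*\Theta^{I}_{\s_{-n^{*}\alpha^\vee}}=\mbm{1}_{I}$. Substituting the commutation identity into $h^{2}$ and collapsing gives
$$h^{2}=\mca{T}_{\alpha}^{I}*\bigl(\mca{T}_{\alpha}^{I}*\Theta^{I}_{\s_{n^{*}\alpha^\vee}}-(q-1)\Theta^{I}_{\s_{n^{*}\alpha^\vee}}\bigr)*\Theta^{I}_{\s_{-n^{*}\alpha^\vee}}=(\mca{T}_{\alpha}^{I})^{2}-(q-1)\mca{T}_{\alpha}^{I}.$$

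Finally I would invoke the quadratic relation. By Lemma \ref{BasicHeckeRel}(ii), $\mca{T}_{\alpha}$ commutes with $\mbm{1}_{I}=c(\mbm{1})$, so $(\mca{T}_{\alpha}^{I})^{2}=\mbm{1}_{I}*\mca{T}_{\alpha}^{2}*\mbm{1}_{I}$. Plugging in \eqref{Quadfin} and using the elementary fact that $\mca{T}_{h}*\mbm{1}_{I}=\mbm{1}_{I}$ for every $h\in T_\kappa$ (a direct consequence of the braid relations and the definition of $\mbm{1}_{I}$), one obtains $(\mca{T}_{\alpha}^{I})^{2}=q\,\mbm{1}_{I}+(q-1)\mca{T}_{\alpha}^{I}$. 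Inserting this into the formula above produces $h^{2}=q\,\mbm{1}_{I}=q$, as claimed. The only real obstacle is the bookkeeping: one must track the cocycles coming from \eqref{Rel2} and \eqref{Rel7} and verify at each step that the assumption "$n^{*}$ even" kills every unwanted power of $\varepsilon$.
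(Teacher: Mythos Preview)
Your proof is correct and follows essentially the same route as the paper: apply the Iwahori-level Bernstein relation with $y=-n^{*}\alpha^{\vee}$ so that only the single term $j=2n^{*}$ survives, then reduce $h^{2}$ to $(\mca{T}_{\alpha}^{I})^{2}-(q-1)\mca{T}_{\alpha}^{I}$ and invoke the quadratic relation. The only cosmetic difference is that you simplify $\Theta^{I}_{\s_{n^{*}\alpha^\vee}}*\Theta^{I}_{\s_{-n^{*}\alpha^\vee}}=\mbm{1}_{I}$ midway, whereas the paper carries the combined factor $\Theta^{I}_{\s_y\wt{h}_{\alpha}(\varpi^{2n^{*}})\s_y}$ to the end and then observes $\s_y\wt{h}_{\alpha}(\varpi^{2n^{*}})\s_y=1$; you are also more explicit than the paper about why each $\varepsilon$-power vanishes.
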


\begin{proof}
Let $y=-n^{*}\alpha^{\vee}$. By Lemma \ref{I*BR3*I} we have
\begin{equation*}
\Theta_{\s_y}^{I}\mathcal{T}_{\alpha}^{I}=\mathcal{T}_{\alpha}^{I}\Theta_{w_{\alpha}(-1)\cdot\s_y}^{I}-(q-1)\Theta_{\s_y\wt{h}_{\alpha}(\varpi^{2n^{*}})}^{I}.
\end{equation*} 
Thus 
\begin{equation*}
h^{2}=(T_{\alpha}^{I})^{2}\Theta_{(w_{\alpha}(-1)\cdot\s_y) \s_y}^{I}-(q-1)\mathcal{T}_{\alpha}^{I}\Theta_{\s_y\wt{h}_{\alpha}(\varpi^{2n^{*}})\s_y}^{I}.
\end{equation*}
The identity $w_{\alpha}(-1)\cdot\s_y=\s_y\wt{h}_{\alpha}(\varpi^{2n^{*}})$ from \eqref{Rel7} implies that
\begin{align*}
h^{2}=&[(\mathcal{T}_{\alpha}^{I})^{2}-(q-1)\mathcal{T}_{\alpha}^{I}]\Theta_{\s_y\wt{h}_{\alpha}(\varpi^{2n^{*}})\s_y}^{I}\\
=&q\Theta_{\s_y\wt{h}_{\alpha}(\varpi^{2n^{*}})\s_y}^{I}=q,
\end{align*}
where the last equality holds since $\s_y\wt{h}_{\alpha}(\varpi^{2n^{*}})\s_y=1\in \overline{T}$ by \eqref{Rel2}.
\end{proof}

Let $\mathcal{T}':=q^{-1/2} \cdot T_{\alpha}^{I}\Theta_{\s_{-n^*\alpha^\vee}}^{I}$ and consider the $\C$-subalgebra 
$$(\mca{H}_I)_0':=\langle \mca{T}' \rangle \subset \mca{H}_I$$
generated by $\mca{T}'$.  Lemma \ref{finAff} implies $(\mathcal{T}')^{2}=1$ and thus
$$(\mca{H}_I)_0' \simeq \C[\Z/2\Z].$$
Let $\sigma: (\mca{H}_I)_0' \to \C$ be the character given by
$$\sigma(\mathcal{T}')=q^{-1/2} \cdot \g_{\alpha}(\psi,\varphi(-n^*\alpha^\vee/2)) \in \set{\pm1}.$$ 

\begin{prop}\label{WaffSplit}
Let $y=\frac{-n^{*}}{2}\alpha^{\vee}$. The $\C$-vector space map 
$$\sigma\longrightarrow \mca{V}_{\kappa, \mathcal{O}_{n^*/2}}\otimes_{\mca{H}_\kappa}\mca{H}*\mbm{1}_{I}$$
defined by $1\mapsto c(\varphi(y))\otimes \Theta_{\s_y}*\mbm{1}_{I}$ is a map of $(\mca{H}_{I})_{0}'$-modules. Moreover, this map induces an isomorphism 
$$\sigma\otimes_{(\mca{H}_{I})_{0}'}\mca{H}_{I}\rightarrow \mca{V}_{\kappa, \mathcal{O}_{n^*/2}}\otimes_{\mca{H}_\kappa}\mca{H}*\mbm{1}_{I} \stackrel{\pmb{\gamma}}{\rightarrow} \mca{V}^I_{\mca{O}_{n^*/2}}$$
of $\mca{H}_{I}$-modules.
\end{prop}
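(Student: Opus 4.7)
The plan is to adapt the strategy of Theorem \ref{Wequi} to the orbit $\mca{O}_{n^*/2}$, which fails to be splitting: although its stabilizer equals all of $W$, any hypothetical lift $\hat{y}\mapsto y'\in Y$ would force $w_\alpha(y')=y'$, hence $y'=0$, contradicting $\hat{y}\neq \hat{0}$. The role played by the parabolic subalgebra $\mca{H}_{I,y}$ in Theorem \ref{Wequi} is now taken by the two-dimensional subalgebra $(\mca{H}_I)_0'\simeq \C[\Z/2\Z]$, which encodes the affine reflection $(-n^*\alpha^\vee,w_\alpha)\in Y_{Q,n}^{\rm sc}\rtimes W$ fixing $y=-n^*\alpha^\vee/2$, and the sign character $\varepsilon_y$ is replaced by $\sigma$.

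For equivariance, set $v:=c(\varphi(y))\otimes \Theta_{\s_y}*\mbm{1}_I$ and compute $v\cdot \mca{T}'=q^{-1/2}c(\varphi(y))\otimes \Theta_{\s_y}*\mca{T}_\alpha^I *\Theta_{\s_{-n^*\alpha^\vee}}^I$. Apply the Bernstein relation \eqref{BR3} (valid for arbitrary $y\in Y$) and multiply by $\mbm{1}_I$ on the right: the correction terms involve $c_\alpha(jQ(\alpha^\vee))*\mbm{1}_I$ for $j=1,\dots,n^*$, and these vanish because $c_\alpha(m)*\mbm{1}_I=0$ unless $n\mid m$, while the parity hypothesis forces $n=2n^*>n^*$. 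This reduces the relation to $\Theta_{\s_y}*\mca{T}_\alpha^I = \mca{T}_\alpha*\Theta_{w_\alpha(-1)\cdot \s_y}*\mbm{1}_I$. Moving $\mca{T}_\alpha$ across the tensor and invoking \eqref{FinGGAction} yields $c(\varphi(y))*\mca{T}_\alpha=\g_\alpha(\psi,\varphi(y))\cdot c({}^{w_\alpha}\varphi(y))$; crucially, $w_\alpha(y)=-y\equiv y\pmod{Y_{Q,n}}$ since $n^*\alpha^\vee\in Y_{Q,n}$, so ${}^{w_\alpha}\varphi(y)=\varphi(y)$ and the action preserves $\C\cdot v$.

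To finish Step 1, simplify $\Theta_{w_\alpha(-1)\cdot \s_y}*\Theta_{\s_{-n^*\alpha^\vee}}^I$ using \eqref{Rel2} and \eqref{Rel7}. The relevant cocycles $\varepsilon^{B_Q(y,\alpha^\vee)}=\varepsilon^{n^*}$ and $\varepsilon^{D(-y,-n^*\alpha^\vee)}=\varepsilon^{(n^*)^2/2}$ all equal $1$ by the parity hypothesis (write $n^*=2m$ and note $(n^*)^2/2=2m^2$). The product consolidates to $c(\varphi(y))\Theta_{\s_y}*\mbm{1}_I$, yielding $v\cdot \mca{T}'=q^{-1/2}\g_\alpha(\psi,\varphi(y))\cdot v=\sigma(\mca{T}')\cdot v$. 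As a consistency check, $v\cdot (\mca{T}')^2=v$ by Lemma \ref{finAff}, which forces $\sigma(\mca{T}')^2=1$ and so recovers the Gauss sum identity $\g_\alpha(\psi,\varphi(y))^2=q$ for the order-two character $\varphi(y)$.

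For the isomorphism, first apply \eqref{I*BR3*I} to $-n^*\alpha^\vee\in Y_{Q,n}$: both correction sums range over multiples of $n_\alpha=2n^*$, which coincide for the two sides, and hence cancel, leaving the braid-like identity $\mca{T}_\alpha^I\, t^{-1}=t\,\mca{T}_\alpha^I$ with $t:=\Theta_{\s_{-n^*\alpha^\vee}}^I$. Inductively, $\mca{T}'t^k=t^{-k}\mca{T}'$ for all $k\in\Z$. Since $\mca{T}_\alpha^I=q^{1/2}\mca{T}'t^{-1}$ and $t$ is invertible, $\set{1,\mca{T}'}$ is a free left $\mca{A}$-basis for $\mca{H}_I$. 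In $\sigma\otimes_{(\mca{H}_I)_0'}\mca{H}_I$, an element $1\otimes (\alpha+\mca{T}'\gamma)$ simplifies to $1\otimes (\alpha+\sigma(\mca{T}')\gamma)$, exhibiting this tensor product as a free $\mca{A}$-module of rank $1$ generated by $1\otimes 1$. Since Lemma \ref{OrbitA} shows the target is likewise a free $\mca{A}$-module of rank $1$ with generator $v$, the induced map sends generator to generator and so is an isomorphism. The main obstacle throughout is the disciplined bookkeeping of Hilbert symbol cocycles in Step 1, where the hypothesis $n^*$ even is indispensable for trivializing every $\varepsilon$-factor that arises.
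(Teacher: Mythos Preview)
Your equivariance argument is correct and matches the paper's: both apply the Bernstein relation \eqref{BR3}, observe that the correction terms vanish after right multiplication by $\mbm{1}_I$ (since no $j\in[1,n^*]$ is divisible by $n=2n^*$), push $\mca{T}_\alpha\in\mca{H}_\kappa$ across the tensor via \eqref{FinGGAction}, and collapse the resulting $\Theta$-product back to $\Theta_{\s_y}*\mbm{1}_I$.

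Your isomorphism argument, however, contains a false intermediate claim. You assert that applying \eqref{I*BR3*I} to $-n^*\alpha^\vee$ gives the clean braid identity $\mca{T}_\alpha^I\,t^{-1}=t\,\mca{T}_\alpha^I$. But with $\langle -n^*\alpha^\vee,\alpha\rangle=-2n^*$, the left-hand sum in \eqref{I*BR3*I} runs over $j\gest 1+2n^*$ with $n_\alpha\mid j$ while the right-hand sum runs over $j\gest 1$ with $n_\alpha\mid j$; these differ precisely at $j=n_\alpha=2n^*$. The correct relation (this is exactly what the proof of Lemma~\ref{finAff} uses) is
\[
t\,\mca{T}_\alpha^I=\mca{T}_\alpha^I\,t^{-1}-(q-1)\,t^{-1},
\]
so neither $\mca{T}_\alpha^I t^{-1}=t\mca{T}_\alpha^I$ nor the derived identity $\mca{T}'t^k=t^{-k}\mca{T}'$ holds.

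Fortunately this error is inessential to your argument. What you actually need is that $\{1,\mca{T}'\}$ is a free \emph{right} $\mca{A}$-basis of $\mca{H}_I$, and this follows immediately from the Bernstein decomposition $\mca{H}_I=\mca{A}\oplus\mca{T}_\alpha^I\mca{A}$ together with $\mca{T}'=q^{-1/2}\mca{T}_\alpha^I\,t$ and $t\in\mca{A}^\times$, giving $\mca{T}'\mca{A}=\mca{T}_\alpha^I\mca{A}$. Your tensor simplification $1\otimes(a+\mca{T}'b)=1\otimes(a+\sigma(\mca{T}')b)$ then shows that $a\mapsto 1\otimes a$ surjects $\mca{A}$ onto $\sigma\otimes_{(\mca{H}_I)_0'}\mca{H}_I$. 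Since the composite $\mca{A}\to\sigma\otimes_{(\mca{H}_I)_0'}\mca{H}_I\to\mca{V}^I_{\mca{O}_{n^*/2}}$ is $a\mapsto v\cdot a$, an isomorphism by Lemma~\ref{OrbitA}, both maps are isomorphisms. (Note that you asserted freeness of the source without proof; it is this composition that supplies it.) The paper's own proof is essentially this last observation, phrased as ``basis goes to basis.''
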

\begin{proof}
Using the Bernstein relation in \eqref{BR3}, we have
$$
c(\varphi(y))\otimes \Theta_{\s_y}*\mbm{1}_{I}*\mathcal{T}_{\alpha}^{I}=\g_\alpha(\psi,\varphi(y))\cdot c(\varphi(y))\otimes \Theta_{w_{\alpha}(-1)\cdot \s_y}*\mbm{1}_{I}.
$$
Thus
$$
c(\varphi(y))\otimes \Theta_{\s_y}*\mbm{1}_{I}*\mathcal{T}'=\frac{\g_{\alpha}(\psi,\varphi(y))}{q^{1/2}}c(\varphi(y))\otimes \Theta_{(w_{\alpha}(-1)\cdot \s_y)\s_{-n^*\alpha^\vee}}*\mbm{1}_{I}.
$$
Since $w_{\alpha}(-1)\cdot \s_y=\s_yh_{\alpha}(\varpi^{n^{*}})$ by \eqref{Rel7} and $h_{\alpha}(\varpi^{n^{*}})=\s(\alpha^{\vee}(\varpi^{n^{*}}))$, it follows that $(w_{\alpha}(-1)\cdot \s_y) \s(-n^{*}\alpha^{\vee})(\varpi))=\s_y$.
Hence,
\begin{equation*}
c(\varphi(y))\otimes \Theta_{\s_y}*\mbm{1}_{I}*\mathcal{T}'=\frac{\g_{\alpha}(\psi,\varphi(y))}{q^{1/2}}c(\varphi(y))\otimes \Theta_{\s_y}*\mbm{1}_{I}.
\end{equation*}
This shows that the map $\sigma\rightarrow \mca{V}_{\kappa, \mathcal{O}}\otimes_{\mca{H}_\kappa}\mca{H}*\mbm{1}_{I}$ defined by $1\mapsto c(\varphi(y))\otimes \Theta_{\s_y}*\mbm{1}_{I}$ is a map of $(\mca{H}_{I})_{0}'$-modules. The induced map of $\mca{H}_{I}$-modules is an isomorphism because it takes the $\mathcal{A}$-basis $\set{1\otimes \Theta_{\s_{kn^*\alpha^\vee}}^I: k\in \Z }$ to the $\mathcal{A}$-basis $\set{c(\varphi(y))\otimes \Theta_{\s_y}*\mbm{1}_{I}*\Theta_{\s_{kn^*\alpha^\vee}}^{I} }$.
\end{proof}

The orbit $\mca{O}_{n^*/2} \subset \msc{X}_{Q,n}$ does not possess a $W$-equivariant splitting. Thus, Proposition \ref{WaffSplit} does not follow from Theorem \ref{Wequi}. This suggests that splittings of other finite subgroups of the modified extended Weyl group $\tilde{W}_{\ex} = W\ltimes Y_{Q,n}$ should also be considered, besides $W$. We make a few remarks in this direction and further elaborate on the example of Proposition \ref{WaffSplit}.

For any $z\in Y\otimes \Q$, we set $\tilde{W}_{\ex, z}:={\rm Stab}_{\tilde{W}_\ex}(z)$. In particular, $\tilde{W}_{\ex, 0}=W$. For any $y\in Y$ we have
$$W_y = \tilde{W}_{\ex, 0} \cap \tilde{W}_{\ex, y}.$$
Now we may consider
$$\tilde{W}_{\ex, z, y}:=\tilde{W}_{\ex, z} \cap \tilde{W}_{\ex, y}.$$
Let $\eta:\tilde{W}_{\ex}= W\ltimes Y_{Q,n}\rightarrow W$ be the projection map. The quotient map 
$$Y \onto \msc{X}_{Q,n}$$
is equivariant with respect to $\eta: \tilde{W}_\ex \onto W$.  For every $z\in Y\otimes \Q$, the group $\tilde{W}_{\ex, z}$ is finite and $\eta: \tilde{W}_{\ex, z} \to \eta(\tilde{W}_{\ex, z} )$ is an isomorphism. Then an orbit $\mca{O}_y \subset \msc{X}_{Q,n}$ is said to be $\tilde{W}_{\ex, z}$-splitting if 
 \begin{equation} \label{E:gen-spl}
 \eta(\tilde{W}_{\ex, z, y}) = \eta(\tilde{W}_{\ex, z})_{\hat{y}}.
 \end{equation}
As we vary $z$, there are more possibilities for the above equality to hold.

Consider for instance the case of $n^* \in 2\Z$ and $\hat{y}=n^*\alpha^\vee/2$ for $\wt{\SL}_2$ as in Proposition \ref{WaffSplit}. If we take $z=0$, then the two sides of \eqref{E:gen-spl} are $\set{1}$ and $W$ respectively. On the other hand, if we take $z=-n^*\alpha^\vee/2$, then $\tilde{W}_{\ex, z} \subset \tilde{W}_{\ex}$ is of order 2, generated by the reflection fixing $-n^*\alpha^\vee/2$. In this case, the two sides of \eqref{E:gen-spl} are both equal to $\eta(\tilde{W}_{\ex, z, y})$, and there is a $\tilde{W}_{\ex, z}$-splitting for the orbit $\mca{O}_{n^*/2}$.

Such a  $\tilde{W}_{\ex, z}$-splitting is only implicitly used in the proof of Proposition \ref{WaffSplit}. For a general $\wt{G}$, it is not easy to identify explicitly (with concrete relations among the generators) the finite subalgebra of $\mca{H}_{I}$ associated to $\tilde{W}_{\ex, z}$. For $\wt{\SL}_2$, we essentially realized this by using Lemma \ref{finAff}. We hope to return to this problem in a future work.

\section{Metaplectic representation of Sahi--Stokeman--Venkateswaran} \label{S:SSV}

In this section we show that the representation 
$$\mca{V}^I\simeq \mca{V}_\kappa \otimes_{\mca{H}_\kappa}\mca{H}*\mbm{1}_{I}$$ 
embeds naturally into the metaplectic representation constructed by Sahi--Stokeman--Venkateswaran \cite{SSV21} for a particular choice of what they call representation parameters (see \cite[Definition 3.5]{SSV21}).

The proof of Lemma $\ref{OrbitA}$ gives that if $\set{y_j}_j \subset Y$ is a set of representatives for $\mathscr{X}_{Q,n}$, then $$\set{c(\varphi(y_{j}))\otimes \Theta_{\s_{y_j}}*\mbm{1}_{I}}_j$$
is an $\mathcal{A}$-basis for $\mca{V}_\kappa \otimes_{\mca{H}_\kappa}\mca{H}*\mbm{1}_{I}$. Using the Bernstein relations for $\mca{H}$ we can compute the action of $\mca{H}_{I}$ on $\mca{V}_\kappa \otimes_{\mca{H}_\kappa}\mca{H}*\mbm{1}_{I}$ in terms of the above basis $\set{c(\varphi(y_{j}))\otimes \Theta_{\s_{y_j}}*\mbm{1}_{I}}_j$. We begin with a description of the action of $\mathcal{A}$.

\begin{lm}\label{SSVA}
Let $y\in Y$ and $z\in Y_{Q,n}$. Then
\begin{equation*}
c(\varphi(y))\otimes \Theta_{\s_y}*\mbm{1}_{I}*\Theta_{\s_z}*\mbm{1}_{I}=\varepsilon^{D(y,z)}c(\phi(y))\cdot \otimes \Theta_{\s_{y+z}}*\mbm{1}_{I}.
\end{equation*}
\end{lm}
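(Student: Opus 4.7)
\medskip

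\noindent\textbf{Proof proposal.} The plan is to reduce the identity to the multiplication rule \eqref{Rel2} (equivalently \eqref{ppty1'}) together with the fact that for $z\in Y_{Q,n}$ the element $\Theta_{\s_z}$ commutes with the idempotent $\mbm{1}_{I}=c(\mbm{1})$.

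First I would observe that $z\in Y_{Q,n}$ means that the image of $z$ in $\msc{X}_{Q,n}$ is trivial, so $\varphi(\s_z)=\mbm{1}\in \Hom(T_\kappa,\C^\times)$. Then Lemma \ref{BasicHeckeRel}(iii), applied with $\chi=\mbm{1}$ and $t=\s_z$, gives
\begin{equation*}
\Theta_{\s_z}*\mbm{1}_{I}= c(\varphi(\s_z)\cdot\mbm{1})*\Theta_{\s_z}=\mbm{1}_{I}*\Theta_{\s_z}.
\end{equation*}
Combined with $\mbm{1}_{I}*\mbm{1}_{I}=\mbm{1}_{I}$, this yields
\begin{equation*}
\mbm{1}_{I}*\Theta_{\s_z}*\mbm{1}_{I}=\Theta_{\s_z}*\mbm{1}_{I},
\end{equation*}
so that $\Theta_{\s_y}*\mbm{1}_{I}*\Theta_{\s_z}*\mbm{1}_{I}=\Theta_{\s_y}*\Theta_{\s_z}*\mbm{1}_{I}$ inside $\mca{H}$.

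Next I would compute $\Theta_{\s_y}*\Theta_{\s_z}$ using the fact that the map $\mca{R}\to \mca{H}$, $t\mapsto \Theta_t$, is an algebra homomorphism (this is built into the definition $t\cdot\nu_1 = \nu_1*\Theta_t$ via Lemma \ref{TInj} together with associativity of the left $\mca{R}$-action on $\mca{M}$). Applying the cocycle identity \eqref{Rel2} to the pair $(y(\varpi),z(\varpi))$, and using $y(\varpi)\cdot z(\varpi)=(y+z)(\varpi)$ inside the split torus $\mbf{T}(F)$, gives
\begin{equation*}
\s_y\cdot \s_z=(\varpi,\varpi)_n^{D(y,z)}\cdot \s_{y+z}=\varepsilon^{D(y,z)}\cdot \s_{y+z}
\end{equation*}
in $\wt{T}$, and hence $\Theta_{\s_y}*\Theta_{\s_z}=\varepsilon^{D(y,z)}\Theta_{\s_{y+z}}$ in $\mca{H}$ (since $\mu_n$ acts through $\epsilon$ and $\varepsilon\in\{\pm1\}$).

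Putting these together inside $\mca{V}_\kappa\otimes_{\mca{H}_\kappa}\mca{H}*\mbm{1}_{I}$, and using that $\varphi(y)=\varphi(y+z)$ because $z\in Y_{Q,n}=\Ker(\varphi)$, I obtain
\begin{equation*}
c(\varphi(y))\otimes \Theta_{\s_y}*\mbm{1}_{I}*\Theta_{\s_z}*\mbm{1}_{I}
= \varepsilon^{D(y,z)}\,c(\varphi(y))\otimes \Theta_{\s_{y+z}}*\mbm{1}_{I},
\end{equation*}
which is the claimed identity. There is no serious obstacle here: the lemma is essentially a bookkeeping statement that packages the genuine cocycle \eqref{Rel2} on the torus section $\s$ and the centrality of $\Theta_{\s_z}$ for $z\in Y_{Q,n}$ relative to $\mbm{1}_{I}$; the only point requiring care is verifying that the two applications of $\mbm{1}_{I}$ collapse, which is exactly what the first step above ensures.
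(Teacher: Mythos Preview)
Your argument is correct and is exactly the natural unpacking of this statement; the paper in fact states Lemma~\ref{SSVA} without proof, treating it as an immediate consequence of the commutation rule in Lemma~\ref{BasicHeckeRel}(iii) together with the cocycle identity \eqref{Rel2}, which is precisely what you spell out.
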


Now we describe the action of $\HH_W \subset \HH_I$. It suffices to consider the action of the elements $\mathcal{T}_{\alpha}=\mathcal{T}_{\wt{w}_{\alpha}(1)}$, where $\alpha\in \Delta$.

\begin{lm}\label{SSVB}
Let $y\in Y$ and $\alpha\in \Delta$. We write $\theta_{\s_y}:=c(\varphi(y))\otimes \Theta_{\s_y}$. Then
\begin{equation*}
\begin{aligned}
& \theta_{\s_y}*\mbm{1}_{I}*\mathcal{T}_{\alpha}*\mbm{1}_{I} \\
= & \begin{cases}
\g_{\alpha}(\psi,\chi)\theta_{w_{\alpha}(-1)\cdot \s_y}*\mbm{1}_{I}+(q-1) \sum_{ \substack{1-\angb{y}{\alpha}\lest j \lest 0\\ j\equiv 0\mod n_{\alpha} } } \varepsilon^{jQ(\alpha^{\vee})}\theta_{\s_yh_{\alpha}(\varpi^{j})}*\mbm{1}_{I} & \text{ if } \angb{y}{\alpha}> 0,\\
\g_{\alpha}(\psi,\chi)\theta_{w_{\alpha}(-1)\cdot \s_y}*\mbm{1}_{I} & \text{ if } \angb{y}{\alpha}= 0,\\
\g_{\alpha}(\psi,\chi)\theta_{w_{\alpha}(-1)\cdot \s_y}*\mbm{1}_{I}+(q-1) \sum_{ \substack{1\lest j \lest -\angb{y}{\alpha} \\ j\equiv 0\,\text{mod }n_{\alpha} } } \varepsilon^{jQ(\alpha^{\vee})}\theta_{\s_yh_{\alpha}(\varpi^{j})}*\mbm{1}_{I} & \text{ if } \angb{y}{\alpha}< 0.
\end{cases}
\end{aligned}
\end{equation*}
\end{lm}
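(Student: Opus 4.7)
The plan is to derive the identity from the right-$\mbm{1}_I$ Bernstein relation \eqref{BR3*I} by tensoring on the left with $c(\varphi(y))$ over $\mca{H}_\kappa$ and then applying the finite Hecke action formula \eqref{FinGGAction}. Specifically, I would first rearrange \eqref{BR3*I} to isolate $\Theta_{\s_y}*\mca{T}_\alpha*\mbm{1}_I$ on one side, writing the remainder as the main term $\mca{T}_\alpha*\Theta_{\wt{w}_\alpha(-1)\cdot \s_y}*\mbm{1}_I$ plus a correction supported on elements of the form $\varepsilon^{jQ(\alpha^\vee)}\Theta_{\s_y\wt{h}_\alpha(\varpi^j)}*\mbm{1}_I$, where $j\equiv 0\bmod n_\alpha$ and $j$ ranges over the set-theoretic difference of the two index sets appearing in \eqref{BR3*I}.

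Next, I would apply $c(\varphi(y))\otimes_{\mca{H}_\kappa}-$ on the left. Since $\mca{T}_\alpha\in\mca{H}_\kappa$, it moves across the tensor in the main term, and \eqref{FinGGAction} gives
$$c(\varphi(y))*\mca{T}_\alpha \otimes \Theta_{\wt{w}_\alpha(-1)\cdot \s_y}*\mbm{1}_I=\g_\alpha(\psi,\varphi(y))\cdot c(\varphi(w_\alpha y))\otimes \Theta_{\wt{w}_\alpha(-1)\cdot \s_y}*\mbm{1}_I=\g_\alpha(\psi,\varphi(y))\,\theta_{\wt{w}_\alpha(-1)\cdot \s_y}*\mbm{1}_I,$$
where I use the $W$-equivariance of $\varphi:\msc{X}_{Q,n}\into\Hom(T_\kappa,\C^\times)$ established in \eqref{D:varphi}. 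This produces the $\g_\alpha$-term common to all three cases of the lemma. For the correction terms, the cocycle identity \eqref{Rel2} lets me rewrite each $c(\varphi(y))\otimes \Theta_{\s_y\wt{h}_\alpha(\varpi^j)}*\mbm{1}_I$ as $\theta_{\s_y h_\alpha(\varpi^j)}*\mbm{1}_I$ in the notation of the lemma, up to a sign coming from the cocycle.

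Finally, the case analysis on $\angb{y}{\alpha}$ comes entirely from comparing the two index sets $\{j\gest 1-\angb{y}{\alpha}\}$ and $\{j\gest 1\}$ inside $n_\alpha\Z$: when $\angb{y}{\alpha}>0$ the difference is the interval $\{1-\angb{y}{\alpha}\lest j\lest 0\}$; when $\angb{y}{\alpha}=0$ the two sets coincide and the correction vanishes; when $\angb{y}{\alpha}<0$ the difference is (with reversed orientation) the interval $\{1\lest j\lest -\angb{y}{\alpha}\}$. Intersecting with $n_\alpha\Z$ then yields the displayed sums.

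The computation is essentially mechanical once the Bernstein relation is in hand; the only subtle point is the sign bookkeeping. The $\varepsilon^{jQ(\alpha^\vee)}$ factors arising from \eqref{BR3*I} must be carefully combined with the cocycle factors coming from the identification $\s_y\wt{h}_\alpha(\varpi^j)\leftrightarrow \s_{y+j\alpha^\vee}$ (via \eqref{Rel2}) and with the orientation reversal of the index-set difference between the $\angb{y}{\alpha}>0$ and $\angb{y}{\alpha}<0$ regimes, so that all three cases of the lemma come out with the stated sign. Beyond this there is no further content, as the structural maps and the Bernstein presentation of $\mca{H}$ used in the reduction have already been established.
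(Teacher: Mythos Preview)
Your approach is correct and is precisely the intended one: the paper states this lemma without proof, as it is a direct computation combining the right-$\mbm{1}_I$ Bernstein relation \eqref{BR3*I} with the finite Gelfand--Graev action formula \eqref{FinGGAction}, exactly as you outline. The passage of $\mca{T}_\alpha$ across the tensor (using $\mca{T}_\alpha\in\mca{H}_\kappa$), the $W$-equivariance of $\varphi$, and the index-set comparison are all the ingredients needed; the paper's own use of the lemma in Theorem~\ref{T:GG=SSV0} confirms that no further input is required.
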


\subsection{The SSV representation}
We briefly recall the construction of the metaplectic representation of $\mca{H}_I$ in \cite{SSV21} using notations in our paper.

For the rest of this section, we assume that $G$ is semisimple and almost simple. Denote by $P$ the lattice of coweights  with
$$P \supset Y \supset Y^{sc}.$$
Fix $Q: Y \to \Z$ an integral-valued Weyl-invariant quadratic form, and extend it uniquely to 
a rational-valued form
$$Q: P \to \Q.$$
One has a sublattice $P_{Q, n} \subset P$ given in \cite[\S 2.2]{SSV21}. The relations among these sublattices are as in
$$\begin{tikzcd}
P_{Q, n} \ar[r, hook] & P \\
Y_{Q,n} \ar[r, hook] \ar[u, hook] & Y \ar[u, hook] .
\end{tikzcd}$$
In particular, 
$$Y_{Q,n } \subseteq P_{Q, n} \cap Y,$$
where equality holds if $Y=Y^{sc}$, i.e., if $G$ is simply-connected. Furthermore, if $Y = Y^{sc}$, we have an embedding
$$Y/Y_{Q,n} \into P/P_{Q, n}.$$

One has the extended affine Hecke algebra $\mca{H}(P_{Q,n})$ associated with $P_{Q,n}$, and as a $\C$-isomorphism we get
$$\mca{H}(P_{Q,n}) \simeq \mca{H}_W \otimes_\C \C[P_{Q,n}],$$
where $\mca{H}_W$ is the finite Hecke algebra. One has an inclusion of algebras
$$\mca{H}_I \subset \mca{H}(P_{Q,n}).$$
There is a natural action of $\mca{H}(P_{Q,n})$ on $\C[P_{Q,n}]$ via transporting the $\mca{H}(P_{Q,n})$-action from the left hand side of the $\C[P_{Q,n}]$-module isomorphism
$$\mca{H}(P_{Q,n}) \otimes_{\mca{H}_W} \C \simeq \C[P_{Q,n}].$$
It is shown in \cite[Theorem 3.7]{SSV21} that this action of $\mca{H}(P_{Q,n})$ can be extended to the bigger space $\C[P] \supset \C[P_{Q,n}]$.  We let $(\pi, \C[P])$ denote this $\mca{H}(P_{Q,n})$-module. In brief, it is constructed as follows:
\begin{enumerate}
\item[(i)] Choose a $W$-stable subset $C \subset P$ such that $0 \in C$ and that the quotient 
$$P \onto P/P_{Q,n}$$
is surjective when restricted to $C$. In \cite{SSV21}, an explicit $C$ is given.
\item[(ii)] Consider the $\mca{H}(P_{Q,n})$-module
$$N_C:=\mca{H}(P_{Q,n}) \otimes_{\mca{H}_W} V_C,$$
where $V_C$ is the $\val{C}$-dimensional space affording the $\mca{H}_W$-action ``deformed" from the Weyl action, see \cite[\S 3.1]{SSV21}. 
Depending on certain parameters $\cc$, there is a natural $\C[P]$-module surjection
$$\Psi^\cc_C: N_C  \onto \C[P].$$
\item[(iii)] By choosing $\cc$ properly, it is shown that ${\rm Ker}(\Psi^\cc_C)$ is actually $\mca{H}(P_{Q,n})$-stable, and thus one has a $\mca{H}(P_{Q,n})$-module structure on $\C[P]$ by transporting that from $N_C$: this is the representation $\pi$.
\end{enumerate}
In the notation of \cite{SSV21}, the action of the generators of $\mca{H}(P_{Q,n})$ on $\C[P]$ are given as follows:
\begin{equation} \label{F:SSV}
\begin{aligned}
\pi(T_\alpha)x^\lambda & = (\mbf{k}_\alpha - \mbf{k}_\alpha^{-1}) \wt{\nabla}_\alpha(x^\lambda) + \mbf{p}_\alpha(\wt{\lambda}) x^{s_\alpha(\lambda)}, \\
\pi(x^\nu) x^\lambda & = x^{\lambda + \nu}
\end{aligned}
\end{equation}
for $\alpha\in \Delta, \lambda \in P$ and $\nu\in P_{Q,n}$.

By restricting the action of $\mca{H}(P_{Q,n})$ to the subalgebra $\mca{H}_I$, we get an $\mca{H}_I$-module still denoted by $(\pi, \C[P])$. For every $z\in P$, it follows from \eqref{F:SSV} that the subspace 
$$\C[Y]\cdot x^z \subset \C[P]$$
is stable under the $\mca{H}_I$-action and thus gives a $\mca{H}_I$-submodule $(\pi^z, \C[Y]\cdot x^z)$. Moreover, one has a decomposition
$$\pi = \bigoplus_{z\in P/Y} \pi^z$$
as $\mca{H}_I$-modules.

The representation $\pi^0$ has a further decomposition as follows. For each $W$-orbit $\mca{O} \subset \msc{X}_{Q,n}$, consider the vector space
$$\C[Y]_\mca{O}:= \bigoplus_{y\in \mca{O}} \C[Y_{Q,n}]\cdot x^y,$$
which is $\mca{H}_I$-stable by \eqref{F:SSV}. We let $(\pi^0_\mca{O}, \C[Y]_\mca{O})$ denote this, which gives a decomposition
$$\pi^0 = \bigoplus_{\mca{O} \subset \msc{X}_{Q,n}} \pi^0_\mca{O}.$$

\begin{thm}\label{T:GG=SSV0}
Let $\wt{G}$ be an $n$-fold cover of a semisimple group $G$. Assume $\varepsilon=(-1, \varpi)_n= 1$. Then we have an isomorphism 
$$\mca{V}^I_\mca{O} \simeq \pi^0_\mca{O}$$
of $\mca{H}_I$-modules for every orbit $\mca{O} \subset \msc{X}_{Q,n}$.
\end{thm}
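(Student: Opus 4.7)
The plan is to exhibit an explicit $\mca{H}_I$-module isomorphism between $\mca{V}^I_\mca{O}$ and $\pi^0_\mca{O}$ by directly matching their structures on natural $\mca{A}$-bases. On the Gelfand--Graev side, Lemma \ref{OrbitA} provides the $\mca{A}$-basis
\[
\set{c(\varphi(y_j))\otimes \Theta_{\s_{y_j}}*\mbm{1}_I : \hat y_j \in \mca{O}}
\]
for any choice of representatives $\{y_j\}\subset Y$ of $\mca{O}$. On the SSV side, the submodule $\pi^0_\mca{O} = \bigoplus_{\hat y_j\in\mca{O}} \C[Y_{Q,n}]\cdot x^{y_j}$ is visibly free over $\C[Y_{Q,n}]\simeq \mca{A}$ with basis $\set{x^{y_j}}$. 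I would define the candidate isomorphism
\[
\Psi_\mca{O}: \mca{V}^I_\mca{O} \longrightarrow \pi^0_\mca{O}, \qquad c(\varphi(y_j))\otimes \Theta_{\s_{y_j}}*\mbm{1}_I \longmapsto x^{y_j},
\]
(modified by unit scalars to be fixed once the actions are compared), and extend $\mca{A}$-linearly. Bijectivity as $\C$-vector spaces is automatic; the content is to verify equivariance under the Bernstein generators.

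Compatibility with the commutative part $\mca{A}$ is immediate from Lemma \ref{SSVA}: for $z \in Y_{Q,n}$ one has
\[
c(\varphi(y_j))\otimes \Theta_{\s_{y_j}}*\mbm{1}_I*\Theta_{\s_z}*\mbm{1}_I \;=\; \varepsilon^{D(y_j,z)}\, c(\varphi(y_j))\otimes \Theta_{\s_{y_j+z}}*\mbm{1}_I,
\]
and under the standing hypothesis $\varepsilon=1$ the prefactor is trivial, exactly matching $\pi(x^z)x^{y_j}=x^{y_j+z}$ from \eqref{F:SSV}. Thus $\Psi_\mca{O}$ is $\mca{A}$-linear without any normalizing correction coming from $\varepsilon$. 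What remains is the compatibility with the finite Hecke generators $\mathcal{T}_\alpha$, $\alpha\in\Delta$, since $\mca{A}$ and these $\mathcal{T}_\alpha$ generate $\mca{H}_I$ via the Bernstein presentation of \S\ref{SS:IHalg}.

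The main computation is to compare the GG formula from Lemma \ref{SSVB}, which expresses $\theta_{\s_y}*\mbm{1}_I*\mathcal{T}_\alpha*\mbm{1}_I$ as the sum of a Weyl-translate term weighted by the Gauss sum $\g_\alpha(\psi,\varphi(y))$ (see \eqref{D:GS}) and a truncated sum of $\mca{A}$-translates whose range depends on the sign of $\langle y,\alpha\rangle$, against the SSV formula $\pi(T_\alpha)x^\lambda = (\mbf{k}_\alpha-\mbf{k}_\alpha^{-1})\widetilde{\nabla}_\alpha(x^\lambda)+\mbf{p}_\alpha(\widetilde\lambda)x^{s_\alpha(\lambda)}$ of \eqref{F:SSV}. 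The SSV representation depends on a choice of representation parameters $(\mbf{k}_\alpha, \mbf{g}_\alpha,\ldots)$ in the sense of \cite[Definition 3.5]{SSV21}; one specializes these parameters so that $\mbf{k}_\alpha^2 = q$, $\mbf{k}_\alpha-\mbf{k}_\alpha^{-1}$ aligns with $q-1$ up to the normalization of the basis, and $\mbf{p}_\alpha(\widetilde y)$ becomes exactly $\g_\alpha(\psi,\varphi(y))$. With these specializations the Weyl-translate terms match, and the truncated sum in Lemma \ref{SSVB} is the series expansion of $(\mbf{k}_\alpha-\mbf{k}_\alpha^{-1})\widetilde\nabla_\alpha(x^y)$ along the $\alpha^\vee$-direction, where the support condition $n_\alpha\mid j$ is built into the definition of $\widetilde\nabla_\alpha$ through the modified coroot $\alpha^\vee_{Q,n}=n_\alpha\alpha^\vee$.

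The main obstacle is this last matching of the divided-difference operator $\widetilde\nabla_\alpha$ with the piecewise-truncated geometric series of Lemma \ref{SSVB}: one must verify that for every $y$, and separately in the three cases $\langle y,\alpha\rangle > 0$, $=0$, and $<0$, the finite sum $(q-1)\sum_j \varepsilon^{jQ(\alpha^\vee)}\theta_{\s_y h_\alpha(\varpi^j)}*\mbm{1}_I$ corresponds under $\Psi_\mca{O}$ to the corresponding coefficient of $x^{y+j\alpha^\vee}$ in $(\mbf{k}_\alpha-\mbf{k}_\alpha^{-1})\widetilde\nabla_\alpha(x^y)$. The hypothesis $\varepsilon=1$ is precisely what removes the otherwise problematic sign $\varepsilon^{jQ(\alpha^\vee)}$ and reduces the identity to an unambiguous coefficient comparison; once the SSV parameters are pinned down as above and the coefficient identity is verified for each simple root, the theorem follows by passing back through the Bernstein presentation. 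The decomposition $\pi^0=\bigoplus_\mca{O}\pi^0_\mca{O}$ and the analogous decomposition $\mca{V}^I = \bigoplus_\mca{O} \mca{V}^I_\mca{O}$ from Corollary \ref{C:VI-O} together with the orbit-by-orbit isomorphisms then give $\mca{V}^I\simeq \pi^0$ as a consequence.
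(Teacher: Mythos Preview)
Your proposal is correct and follows essentially the same approach as the paper: the paper's proof simply invokes Lemmas \ref{SSVA} and \ref{SSVB} and then lists the dictionary $\mbm{1}_I*\mathcal{T}_\alpha*\mbm{1}_I \leftrightarrow \mbf{k}_\alpha T_\alpha$, $\Theta^I_{\s_{y'}} \leftrightarrow x^\nu$, $c(\varphi(y))\otimes\Theta_{\s_y} \leftrightarrow x^\lambda$, noting that $\varepsilon=1$ is forced by the parameter constraints on $\mbf{k}_\alpha$. Your write-up is an expanded version of exactly this matching, with the same basis, the same generators, and the same role for the hypothesis $\varepsilon=1$.
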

\begin{proof}
In view of  Lemma \ref{SSVA} and Lemma \ref{SSVB}, checking the result essentially amounts to matching the notations used in our paper and those in \cite[Theorem 3.7]{SSV21}, i.e., \eqref{F:SSV} above. More precisely, the correspondence is given as follows:
\begin{itemize}
\item $\mbm{1}_{I}*\mathcal{T}_{\alpha}*\mbm{1}_{I}\longleftrightarrow \mathbf{k_{i}}T_\alpha$ for simple root $\alpha$,
\item $\mbm{1}_{I}*\Theta_{\s_{y'}}*\mbm{1}_{I}\longleftrightarrow x^{\nu}$ where $y' \leftrightarrow \nu\in Y_{Q,n}$,
\item $c(\varphi(y))\otimes \Theta_{\s_y}\longleftrightarrow x^{\lambda}$ where $y \leftrightarrow\lambda\in Y$.
\end{itemize}
Note that the assumption $\varepsilon=1$ is a technical assumption, enforced by the relation that the parameters $\mbf{k}_\alpha$ (essentially the Gauss sums) need to satisfy.
\end{proof}

One of the applications of the SSV representation $\pi$ is to provide a more natural proof of the Chinta--Gunnells action \cite{CG10}. This latter work assumes that $\mu_{2n}\subset F^\times$, which in particular implies that $\varepsilon = 1$. Using our formulas in Lemma \ref{SSVA} and \ref{SSVB}, the method of Sahi--Stokman--Venkateswaran can be easily adjusted to incorporate the slightly more general case without the assumption $\mu_{2n}\subset F^\times$.

\begin{rmk}
It is possible to prove Theorem \ref{Wequi} by using Theorem \ref{T:GG=SSV0} and refining several steps in the construction of $\pi$ in \cite{SSV21}, as outlined in (i)--(iii) above. Indeed, if $\mca{O}:=\mca{O}_{\hat{y}} \subset \msc{X}_{Q,n}$ is a splitting $W$-orbit, then one may take its splitting $C_\mca{O} \subset Y$. Examining the argument in \cite{SSV21} will give that $\pi_\mca{O}^0 \simeq \varepsilon_y \otimes_{\mca{H}_{I, y}} \mca{H}_I$, where $\mca{H}_{I, y} \subset \mca{H}_W$ is the subalgebra associated with the parabolic Weyl subgroup $W_{\hat{y}} \subset W$.
\end{rmk}

\subsection{A speculation}
It is natural to ask what role $\pi^z$ plays for general $z\in P$. We give a speculation as follows. Consider the vector space isomorphism 
$${\rm sh}_z: \C[Y] \longrightarrow \C[Y] \cdot x^z$$
given by $f \mapsto f\cdot x^z$. By transport of structure, one obtains a representation
$$({\rm sh}_z^*(\pi^z), \C[Y])$$
of $\mca{H}_I$.

For $z\in P$, let $\varpi^{z}=z(\varpi)$ be the element of the adjoint torus. 

\begin{conj} \label{C:pi-z}
For every $z\in P$, consider the character $({}^z\psi)(u)=\psi( \varpi^{-z} u \varpi^z)$. Then we have an isomorphism
$$({\rm ind}_{\mu_n U^-}^{\wt{G}} \epsilon \otimes {}^z\psi)^I \simeq {\rm sh}_z^*(\pi^z)$$
of $\mca{H}_I$-modules.
\end{conj}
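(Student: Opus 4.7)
My strategy is to parallel the proof of Theorem \ref{T:GG=SSV0} by tracking the shift $z$ through each step. Set $\mca{V}^{(z)}:=\mathrm{ind}_{\mu_n U^-}^{\wt{G}}(\epsilon \otimes {}^z\psi)$. Since $\varpi^z\in T_{\mathrm{ad}}(F)$ and inner automorphisms of $G$ lift canonically to $\wt{G}$, the formula $(c_{-z}^*f)(g):=f(\mathrm{Ad}(\varpi^{-z})(g))$ defines a $\C$-linear bijection $\mca{V}\to\mca{V}^{(z)}$ intertwining the right regular action twisted by the automorphism $\mathrm{Ad}(\varpi^z)$ of $\wt{G}$. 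Passing to $I$-fixed vectors gives $(\mca{V}^{(z)})^I\simeq \mca{V}^{J}$ with $J:=\mathrm{Ad}(\varpi^{-z})(I)$, an Iwahori associated with a shifted alcove; moreover the $\mca{H}_I$-action transports to $\mca{V}^J$ via the algebra isomorphism $\mca{H}_I\simeq \mca{H}_J$ induced by pullback along $\mathrm{Ad}(\varpi^z)$.

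The parahoric formalism of \S \ref{S:proH}--\S \ref{S:GG} applies to $J$ in place of $I$, yielding an analogue of Theorem \ref{GGProP} and a decomposition indexed by $W$-orbits in $\msc{X}_{Q,n}$. Fixing representatives $\set{y_j}\subset Y$ for $\msc{X}_{Q,n}$, the natural $\mca{A}$-basis of $\mca{V}^{J}$ should correspond, under $c_{-z}^*$, to elements of the form $c(\varphi(y_j))\otimes \Theta_{\s_{y_j+z}}*\mbm{1}_I$, where the shift by $z$ in the torus argument records the translation between the two alcoves. Applying the Bernstein relations of \S \ref{ProPBR} as in the proofs of Lemmas \ref{SSVA}--\ref{SSVB} then produces explicit $\mca{H}_I$-action formulas, which I expect to match \eqref{F:SSV} term-by-term under the correspondence
$$c(\varphi(y))\otimes \Theta_{\s_{y+z}}*\mbm{1}_I\longleftrightarrow x^{y+z}\in\C[Y]\cdot x^z,\quad \mbm{1}_I*\mathcal{T}_\alpha*\mbm{1}_I\longleftrightarrow \mbf{k}_\alpha T_\alpha,\quad \Theta_{\s_\nu}^I\longleftrightarrow x^\nu\ (\nu\in Y_{Q,n}).$$
The shift $y\mapsto y+z$ in the basis is precisely what produces the $z$-dependence in $\mbf{p}_\alpha(\wt{\mu+z})$ and in the exponent of $x^{s_\alpha(\mu+z)}$ on the SSV side. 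Post-composing with $\mathrm{sh}_z^{-1}$ then yields the desired isomorphism $(\mca{V}^{(z)})^I\simeq \mathrm{sh}_z^*(\pi^z)$, specializing to Theorem \ref{T:GG=SSV0} at $z=0$.

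The main obstacle is executing the Chan--Savin analysis for $J$ coherently with the pullback in Step~1. The character ${}^z\psi$ has conductor $\mfr{p}^{1+\langle\alpha,z\rangle}$ on each $U_{-\alpha}$, which may differ from $\mfr{p}$, so the distinguished element $\mathrm{ch}_{I_1}^{\psi}\in \mca{V}^{I_1}$ of Proposition \ref{prop4} must be replaced by an analogue supported on $\mu_nU^-J_1$, and the Iwasawa/Bruhat support analysis of Lemmas \ref{prop1}--\ref{prop3} must be redone for the shifted alcove. A further delicate point is verifying that the preferred $\mca{A}$-basis of $\mca{V}^J$ produced by the analogue of Lemma \ref{OrbitA} maps under $c_{-z}^*$ to $c(\varphi(y_j))\otimes \Theta_{\s_{y_j+z}}*\mbm{1}_I$ up to Hilbert-symbol and Gauss-sum normalizations depending only on $z$, which should eventually account for the $\mbf{p}_\alpha$-factors on the SSV side. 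Once this bookkeeping (via \eqref{Rel2}--\eqref{Rel7}) is performed, the matching in the previous paragraph reduces to a direct comparison with \eqref{F:SSV}.
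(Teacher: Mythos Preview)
This statement is labeled a \emph{Conjecture} in the paper and is not proved there. The paper only asserts that it holds for $z=0$ (this is Theorem~\ref{T:GG=SSV0}) and offers heuristic motivation for $z=\rho$ via the expected decomposition of $({\rm ind}_{\mu_n U^-}^{\wt{G}}\epsilon\otimes{}^\rho\psi)^I$ for oasitic covers. There is therefore no paper proof to compare against; your proposal should be read as an outline of a possible attack on an open problem.

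That said, your outline has a concrete gap that blocks it precisely in the cases of interest. Your proposed dictionary sends the basis element for $y\in Y$ to $c(\varphi(y))\otimes\Theta_{\s_{y+z}}*\mbm{1}_I$, but $\s_{y+z}=\s((y+z)(\varpi))$ is only defined when $y+z\in Y$, i.e.\ when $z\in Y$. For such $z$ the conjecture is essentially the known case $z=0$ (since ${}^z\psi$ and $\psi$ differ by conjugation by an element of $T(F)\subset G(F)$, which lifts to $\wt{G}$). The genuinely new content of the conjecture is for $z\in P\smallsetminus Y$, e.g.\ $z=\rho$, and there your symbol $\Theta_{\s_{y+z}}$ is undefined. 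Relatedly, the conjugation ${\rm Ad}(\varpi^z)$ for $z\notin Y$ is not an inner automorphism of $G$, so its lift to $\wt{G}$ is not the canonical lift of an inner automorphism; one must instead invoke functoriality of the Brylinski--Deligne extension under $G_{\rm ad}$, and then track carefully how this outer action interacts with the section $\s$ and with the pro-$p$ structure of \S\ref{S:proH}--\S\ref{S:GG}. Your last paragraph correctly flags the conductor shift ${}^z\psi|_{U_{-\alpha}}$ having conductor $\mfr{p}^{1+\langle\alpha,z\rangle}$, which forces a genuine reworking of Lemmas~\ref{prop1}--\ref{prop3} rather than a transport of the existing argument; this is not mere bookkeeping but the heart of the difficulty, and is why the paper leaves the statement as a conjecture.
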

The above holds for $z=0$. For $z=\rho$ it was also motivated from the following consideration. If $z=\rho$, then ${}^\rho\psi$ has conductor $O_F$. In this case, work from \cite{Ga7} suggests that (for oasitic covers of simply-connected $G$ for example) one has a decomposition
$$({\rm ind}_{\mu_n U^-}^{\wt{G}} \epsilon \otimes {}^z\psi)^I = \bigoplus_{\mca{O} \subset \msc{X}_{Q,n}} \varepsilon_\mca{O} \otimes_{\mca{H}_{I, \mca{O}}} \mca{H}_I = \big( \varepsilon_W \cdot \sigma_{[z]}^{\msc{X}}  \big) \otimes_{\mca{H}_W} \mca{H}_I,$$
where $\mca{O}$ is taken over the $(W, z)$-orbits in $\msc{X}_{Q,n}$. Also, $\varepsilon_W \cdot \sigma_{[z]}^
\msc{X}$ is a $\mca{H}_W$-module deformed from the corresponding representation of $W$. This seems to be compatible with ${\rm sh}_z^*(\pi^z)$ in view of \eqref{F:SSV}, and gives another motivation for Conjecture \ref{C:pi-z}.

%%%
\section{Explicit Whittaker dimensions} \label{S:Wdim}

The goal of this section and the next one is to apply Theorem \ref{Wequi} to compute explicit Whittaker dimensions of some Iwahori-spherical representations. We will eventually focus on irreducible constituents of regular unramified principal series in this section and those of a unitary unramified principal series in the next.

For every lattice $L \subset \mathscr{A}$ closed under the action of $W$, one has the canonical surjection
\begin{equation} \label{D:eta}
\eta: L \rtimes W \onto W.
\end{equation}
Though we might use the same $\eta$ for different $L$, there is no risk of confusion from the context. If $L$ is a root lattice and $W$ the associated Weyl group, then we write
$$W_{\aff} = L \rtimes W$$
for the affine Weyl group. We also denote 
$$\tilde{W}_{\aff} = L' \rtimes W,$$
where $L' \subset L$ is a modified root lattice (i.e., generated by $\set{k_\alpha \alpha: \alpha \text{ simple root of }  L}$ for some $k_\alpha \in \N$) giving rise to the same Weyl group as $L$ does. Again, the context makes it clear what $L'$ refers to.
 
 %Throughout the notes, 
 %$$\psi: F \longrightarrow \C^\times$$
 %denotes a character with conductor $\mfr{f}(\psi)=\mfr{p}_F$. We view it as a character of $U$ in the standard way.
 
 \subsection{Some permutation representations}
 Recall from \S \ref{SS:sdes} that for $z\in P$ and any Weyl-invariant sublattice $L\subset Y$ of the same rank as $Y$, we have the well-defined action $w[y]_z:=w(y+z) - z$ for $y\in Y/L$. Recall that  this gives a finite dimensional permutation representation
$$\sigma_{[z]}^{Y/L}: W \longrightarrow {\rm Perm}(Y/L).$$
It is easy to see that if $z - z' \in P \cap Y$, then 
$$\sigma_{[z]}^{Y/L} \simeq \sigma_{[z']}^{Y/L}.$$
Thus, there is a well-defined action of $P/(P\cap Y)$ on $\sigma_{[0]}^{Y/L}$. For later purpose, we are interested in the orbit of $\sigma_{[0]}^{Y/L}$ under this action of $P/(P \cap Y)$ and also the associated stabilizer subgroup.  A special case is given as follows:

\begin{lm} \label{L:01}
Assume $Y=Y^{sc}$ is the root lattice of a root system $R$ with simple roots $\Delta=\set{\alpha_i: \ 1\lest i \lest r}$. Let $S \subset Y$ be a sublattice associated to a root system $R'$ with simple roots $\Delta'=\set{k_i \alpha_i: \alpha_i \in \Delta'}$ for some $k_i \in \N$. Then for the above action of $P/Y$ on $\sigma_{[0]}^{Y/S}$, one has
$${\rm Stab}_{P/Y}(\sigma_{[0]}^{Y/S}) = \frac{P \cap (P(S) + Y)}{Y} \subseteq P/Y,$$
where $P(S) \subset Y\otimes \Q$ denotes the weight lattice of $S$. 
\end{lm}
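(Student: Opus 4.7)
The plan is to prove the equivalent statement that for $z\in P$, one has $\sigma_{[z]}^{Y/S} \simeq \sigma_{[0]}^{Y/S}$ as $W$-representations if and only if $z \in P(S)+Y$; quotienting by $Y$ then yields the lemma. The backbone of the argument is the identity
$$P(S) = \{v\in Y\otimes \Q : (w-1)v \in S \text{ for all } w\in W\},$$
which I would verify first. Since $W$ is generated by the simple reflections $s_{\alpha_i}$ and $S$ is $W$-stable, a length induction reduces the condition to $(s_{\alpha_i}-1)v = -\langle v,\alpha_i^\vee\rangle \alpha_i \in S$, which amounts to $\langle v,\alpha_i^\vee\rangle \in k_i\Z$; this is exactly the defining condition for $P(S)$ as the weight lattice of $S$ (dual to the coroots $\alpha_i^\vee/k_i$).

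For the direction $(\supseteq)$, I would take $z = a+y_0$ with $a\in P(S)$ and $y_0\in Y$, and consider the bijection
$$\phi\colon Y/S \longrightarrow Y/S, \quad \phi(y) := y+y_0 \bmod S.$$
A direct computation yields $\phi(w[y]_z) - w\phi(y) = (w-1)(z-y_0) = (w-1)a$, which lies in $S$ by the key identity, so $\phi$ realizes the required isomorphism $\sigma_{[z]}^{Y/S}\simeq \sigma_{[0]}^{Y/S}$.

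For the direction $(\subseteq)$, I would exploit that $\sigma_{[0]}^{Y/S}$ contains the trivial $W$-representation spanned by $\bar 0 \in Y/S$. So if $[z]\in{\rm Stab}_{P/Y}(\sigma_{[0]}^{Y/S})$, the same must hold for $\sigma_{[z]}^{Y/S}$, producing some $y\in Y/S$ with $w[y]_z \equiv y \pmod S$ for every $w\in W$. Unwinding gives $(w-1)(y+z)\in S$ for all $w$, and the key identity then forces $y+z\in P(S)$, hence $z \in P(S)-y \subseteq P(S)+Y$.

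I do not anticipate any serious obstacle; the bookkeeping is straightforward. The only subtle point is the choice of the notion of isomorphism underlying ${\rm Stab}_{P/Y}$: the argument for $(\subseteq)$ uses only the weaker form (presence of a trivial subrepresentation), while $(\supseteq)$ furnishes the stronger $W$-equivariant bijection, so either interpretation of the stabilizer gives the same answer.
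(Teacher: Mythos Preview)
Your key identity $P(S) = \{v\in Y\otimes\Q : (w-1)v\in S \text{ for all } w\in W\}$ and the direction $(\supseteq)$ are correct; in fact your translation map $\phi$ gives a cleaner proof of that direction than the paper's character argument.

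The direction $(\subseteq)$, however, has a genuine gap. You assert that because $\sigma_{[0]}^{Y/S}$ contains the trivial representation spanned by $e_{\bar 0}$, the isomorphic representation $\sigma_{[z]}^{Y/S}$ must likewise ``produce some $y\in Y/S$'' fixed by every $w$. But the presence of a trivial subrepresentation in a permutation module $\C[X]$ is automatic (constant functions always work) and says nothing about fixed points in $X$; more generally, an isomorphism of $W$-representations need not carry basis vectors to basis vectors, so the fixed basis vector $e_{\bar 0}$ on one side does not yield a fixed basis vector on the other. Your own closing remark that $(\subseteq)$ ``uses only the weaker form (presence of a trivial subrepresentation)'' makes the problem vivid: taken literally, that hypothesis is satisfied by \emph{every} $z$, which would force ${\rm Stab}_{P/Y}(\sigma_{[0]}^{Y/S}) = P/Y$.

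The paper closes this gap with a Coxeter-element argument. First one checks that for each $w$ the fixed-point count $\chi_{\sigma_{[z]}}(w)$ is either $0$ or $\chi_{\sigma_{[0]}}(w)$ (the fixed-point set, when nonempty, is a coset of $\ker((w-1)\colon Y/S\to Y/S)$). Hence $\sigma_{[z]}\simeq\sigma_{[0]}$ iff every $w\in W$ has at least one fixed point for $[\cdot]_z$, i.e.\ $W=\bigcup_{y}\eta\bigl(\mathrm{Stab}_{S\rtimes W}(y+z)\bigr)$. Each term in this union is a reflection subgroup of $W$ (an isotropy group in the affine Weyl group $S\rtimes W$), and a Coxeter element of $W$ lies in no proper reflection subgroup; so some single $y$ already has full stabilizer $W$. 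That $y$ is the common fixed point you need, and then your key identity finishes the job exactly as you wrote. If you insert this Coxeter step in place of the trivial-subrepresentation sentence, your proof goes through.
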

\begin{proof}
Write $\sigma_{[z]}$ for $\sigma_{[z]}^{Y/S}$ for simplicity. Consider its character $\chi_{\sigma_{[z]}}$ which is integer-valued and satisfies $\chi_{\sigma_{[z]}}(w) \gest 0$ for every $w\in W$. We observe that if $\chi_{\sigma_{[z]}}(w) \ne 0$, then 
$$\chi_{\sigma_{[z]}}(w) =\chi_{\sigma_{[0]}}(w).$$
Thus, the following are equivalent:
\begin{enumerate}
\item[(i)] $\chi_{\sigma_{[z]}}(w) \ne 0$ for every $w\in W$;
\item[(ii)] $\chi_{\sigma_{[z]}}(w) = \chi_{\sigma_{[0]}}(w) \ne 0$ for every $w \in W$;
\item[(iii)] $\sigma_{[z]} \simeq \sigma_{[0]}$.
\end{enumerate}
One has $z\in \Stab_{P/Y}(\sigma_{[0]})$ if and only if
$$W \subseteq \bigcup_{y + z \in Y + z} \eta( {\rm Stab}_{S \rtimes W}(y + z)),$$
where $\eta: S\rtimes W \onto W$ is the natural surjection as in \eqref{D:eta}.
However, since the right hand is a union of reflection subgroups of $W$, and the Coxeter element of $W$ does not lie in any proper reflection subgroup (see \cite[\S 5]{Stem94}), the above inclusion holds if and only if $Y +z$ has a special point of the affine Weyl group $S \rtimes W$, i.e., if and only if 
$$z\in P(S) + Y.$$
This completes the proof.
\end{proof}

Specializing to  $L=Y_{Q,n}$, we have a permutation representation
$$\sigma_{[z]}^\msc{X}: W \longrightarrow {\rm Perm}(\msc{X}_{Q,n}).$$

\begin{prop} \label{P:O1}
Let $\wt{G}^\vee$ be a very saturated cover of an almost simple simply-connected $G$ with $Q(\alpha^\vee)=1$ for any short coroot $\alpha^\vee$. Then
$$\sigma_{[z]}^\msc{X} \simeq \sigma_{[0]}^\msc{X}$$
for every $z \in P/Y$. In fact, for every $z\in P$, there exists $y_z \in Y$ such that the bijective map 
$$\mfr{m}_z: \msc{X}_{Q,n} \longrightarrow \msc{X}_{Q,n}, \quad y \mapsto y + y_z$$
is equivariant with respect to the $w[-]_0=w(-)$ and $w[-]_z$ on the domain and codomain respectively. In particular, $\mfr{m}_{z}$ induces a bijection between the $W$-orbits  and $(W, z)$-orbits in $\msc{X}_{Q,n}$.
\end{prop}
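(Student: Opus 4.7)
The plan is to unwind the equivariance condition on $\mfr{m}_z$ as a $W$-invariance problem in $P/Y_{Q,n}$, and then solve it using the very saturated hypothesis. For $\mfr{m}_z(y) = y + y_z$, a direct check gives
\[
\mfr{m}_z(w(y)) - w[\mfr{m}_z(y)]_z \;=\; -\bigl(w(y_z + z) - (y_z + z)\bigr) \pmod{Y_{Q,n}},
\]
so the task reduces to producing $y_z \in Y$ such that $y_z + z$ lies in $(P/Y_{Q,n})^W$. This is the only content of the equivariance requirement; once such $y_z$ exists, the map $\mfr{m}_z$ is automatically a bijective translation intertwining the two actions.

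Next, I would compute $(P/Y_{Q,n})^W$ explicitly. By Lemma \ref{L:v-sat}, the very saturated hypothesis forces $Y_{Q,n} = nY$, and $Y = Y^{sc}$ since $G$ is simply connected. Writing $\lambda \in P$ in the basis of fundamental coweights as $\lambda = \sum_j c_j \omega_j$, the relation $w_{\alpha_j}(\lambda) - \lambda = -c_j \alpha_j^\vee$ lies in $nY$ precisely when $n \mid c_j$, because the simple coroots form a $\Z$-basis of $Y$. Hence $(P/nY)^W$ coincides with the image of $nP$ in $P/nY$, and the existence of $y_z$ is equivalent to the solvability of $\bar z \in n\cdot (P/Y)$, i.e., to $n$ being invertible modulo $\msc{I}_\Delta = |P/Y|$.

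To finish, I would appeal to the classification: the very saturated condition implies $\gcd(n, \msc{I}_\Delta) = 1$ in every case. For types $A_r, B_r, C_r, D_r, E_6, E_7$ this is precisely Lemma \ref{L:v-sat}(i); for $E_8, F_4, G_2$ one has $P = Y^{sc} = Y$ and so $\msc{I}_\Delta = 1$ trivially. Thus multiplication by $n$ is an automorphism of $P/Y$, the required $y_z$ exists, and the resulting bijection $\mfr{m}_z$ simultaneously yields the isomorphism $\sigma_{[z]}^\msc{X} \simeq \sigma_{[0]}^\msc{X}$ and the claimed bijection between $W$-orbits and $(W,z)$-orbits in $\msc{X}_{Q,n}$.

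The argument is essentially formal once the structural identity $Y_{Q,n} = nY$ is in hand; the only mild subtlety is the identification $(P/nY)^W = nP/nY$, which rests entirely on simple coroots being a $\Z$-basis of $Y$ in the very saturated setting. I therefore do not foresee any serious obstacle, and the proof should proceed by executing the three steps above.
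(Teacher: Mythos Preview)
Your proof is correct and reaches the same endpoint as the paper: both reduce to solving $y_z + z \in nP$ for $y_z \in Y$, and both conclude via $\gcd(n, \msc{I}_\Delta)=1$. The difference is in how this reduction is justified. The paper first establishes $\sigma_{[z]}^\msc{X} \simeq \sigma_{[0]}^\msc{X}$ by invoking Lemma~\ref{L:01}, whose proof goes through a character-theoretic argument and the fact that a Coxeter element lies in no proper reflection subgroup; it then uses the resulting stabilizer identity $\Stab_{P/Y}(\sigma_{[0]}) = (P \cap (nP + Y))/Y = P/Y$ to produce $y_z$. You instead reverse the logic: you compute $(P/nY)^W = nP/nY$ directly from the simple-reflection formula $w_{\alpha_j}(\lambda) - \lambda = -c_j \alpha_j^\vee$ and the fact that the $\alpha_j^\vee$ form a $\Z$-basis of $Y$, obtain $y_z$ first, and deduce the isomorphism of permutation representations from the equivariant bijection $\mfr{m}_z$. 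Your route is more elementary in that it bypasses Lemma~\ref{L:01} entirely; the paper's route has the advantage that Lemma~\ref{L:01} is stated in a generality (arbitrary modified root sublattice $S \subset Y$) that may be of independent use.
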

\begin{proof}
For $G=F_4, E_8, G_2$ one has $P/Y=\set{0}$ and thus it suffices to consider $G$ of other types. Since $\wt{G}$ is very saturated, Lemma \ref{L:v-sat} gives
$$Y_{Q,n}=Y_{Q,n} = n \cdot Y$$
and also $\gcd(n, \msc{I}_\Delta)=1$. Note $\msc{I}_\Delta = \val{P/Y}$. Now  $P(Y_{Q,n}^{sc}) = nP$
and it follows from Lemma \ref{L:01} that
\begin{equation} \label{E:S-sig0}
{\rm Stab}_{P/Y}(\sigma_{[0]}) = (P \cap (nP + Y))/Y = P/Y,
\end{equation}
where the last equality follows from the fact that $\gcd(n, \msc{I}_\Delta)=1$. This shows that
$$\sigma^\msc{X}_{[z]} \simeq \sigma^\msc{X}_{[0]}.$$

Now for the second assertion, $\mfr{m}_z$ (depending on $y_z$) is equivariant if and only if
$$w[y+ y_z]_z = y_z + w(y) \in \msc{X}_{Q,n}$$
for all $w\in W$ and $y\in \msc{X}_{Q,n}$. This equality is equivalent to
$$w(y_z + z) \equiv y_z + z \mod Y_{Q,n}^{sc}=nY,$$
i.e., $y_z + z \in P(Y_{Q,n}^{sc})=nP$.
However, it follows from \eqref{E:S-sig0} that for every $z\in P$ there always exists $y_z\in Y$ such that $y_z + z \in P(Y_{Q,n}^{sc})$. This proves the equivariance of $\mfr{m}_z$, from which the last assertion is clear.
\end{proof}

\subsection{$z$-persistent orbits and covers}

We also recall the notion of a persistent cover as follows (see \cite[Definition 2.3]{Ga6}). In fact, we introduce a slightly more general version of $z$-persistency for any element $z \in P$ in the coweight lattice.  These $z$-persistent covers will give a refinement of the chain in \eqref{C:chain1} since they always contain the saturated covers, see \eqref{C:chain2} below. It is expected that $z$-persistent covers exhibit better behavour if we consider ${}^z\psi$-Whittaker space of Iwahori-spherical representations.

Consider the sublattices $Y_{Q,n}, Y_{Q,n}^{sc} \subset Y$ and
$$\msc{X}_{Q,n}^{sc}:=Y/Y_{Q,n}^{sc}, \ \msc{X}_{Q,n}=Y/Y_{Q,n},$$
on which the action $w[-]_z, z \in P/(P\cap Y)$ is well-defined. 
For any $y\in Y$, let $y^\dag$ and $\hat{y}$ denote its image in $\msc{X}_{Q,n}^{sc}$ and $\msc{X}_{Q,n}$ respectively. With respect to the action $w[-]_z$, we have the $(W, z)$-orbits $\mca{O}_{y^\dag} \subset \msc{X}_{Q,n}^{sc}$ and $\mca{O}_{\hat{y}} \subset \msc{X}_{Q,n}$, which are the images of $\mca{O}_y$ in the respective quotient spaces.

\begin{dfn} \label{D:z-per}
A $(W, z)$-orbit $\mca{O}_y \subset Y$ is called $z$-persistent if 
$${\rm Stab}_W(y^\dag; \msc{X}_{Q,n}^{sc}) = {\rm Stab}_W(\hat{y}; \msc{X}_{Q,n}).$$
A covering group $\wt{G}$ is called $z$-persistent if every $(W, z)$-orbit $\mca{O}_y$ is $z$-persistent.
\end{dfn}
If $\mca{O}_{\hat{y}} = \mca{O}_{\hat{x}} \subset \msc{X}_{Q,n}$, then $\mca{O}_y \subset Y$ is persistent if and only if $\mca{O}_x\subset Y$ is. The proof of this is identical to \cite[Corollary 2.5]{Ga6}. Thus, checking if a covering group $\wt{G}$ is $z$-persistent amounts to checking for a finite set of representative orbits of those in $\msc{X}_{Q,n}$.

 While persistency is a slightly technical condition, we note that a saturated cover is always $z$-persistent for any $z\in P$. Indeed, if $w[\hat{y}]_z = \hat{y}$, then 
 $$w[y]_z - y \in Y_{Q,n} \cap Y^{sc};$$
 but if $G$ is saturated, then $w[y]_z - y \in Y_{Q,n}^{sc}$, i.e., $w$ fixes $y^\dag \in \msc{X}_{Q,n}^{sc}$. This shows $\mca{O}_y$ is $z$-persistent. Thus, if $G$ is almost simple and simply-connected, then the chain \eqref{C:chain1} is refined to be:
\begin{equation} \label{C:chain2}
\set{\text{oasitic covers}} \subseteq \set{\text{very saturated covers}} \subseteq \set{\text{saturated covers}} \subseteq \set{\text{$z$-persistent covers}}.
\end{equation}
We note that the last three inclusions actually hold for arbitrary $G$.

\begin{eg}
Every Brylinski--Deligne cover of $\GL_r$ is saturated and thus $z$-persistent. On the other hand, the cover $\wt{\SL}_2^{(n)}$ associated with 
$$Q(\alpha) = -1$$
is saturated if and only if $n$ is odd. For $n\in 4\Z +2$, the cover $\wt{\SL}_2^{(n)}$ is  $0$-persistent but not $\rho$-persistent. On the other hand, if $4|n$, then $\wt{\SL}_2^{(n)}$ is $\rho$-persistent but not $0$-persistent.
\end{eg}
We expect that saturated and $z$-persistent covers form subclasses of covers whose representation theory is more accessible, especially the part pertaining to Whittaker models. For instance, 
the ${}^\rho\psi$-Whittaker dimension of a theta representation  of $\wt{\SL}_2^{(n)}$ (for $n\in 4\Z +2$) depends sensitively on the choice of central characters of $Z(\wt{T})$, since it is not $(W, \rho)$-persistent. For odd-fold covers of $\SL_2$, such subtle dependence disappears. For simply-connected $G$, exhibiting even better properties are the very saturated covers or even the oasitic covers.

\subsection{$\mca{O}$-Whittaker space}
Recall that we have a decomposition
$$\mca{V}^I = \bigoplus_{\mca{O} \subset \msc{X}_{Q,n}} \mca{V}^I_\mca{O},$$
where for every splitting $W$-orbit $\mca{O}_{\hat{y}} \subset \msc{X}_{Q,n}$ one has
\begin{equation} \label{E:gGG}
\mca{V}^I_{\mca{O}_{\hat{y}}} \simeq \varepsilon \otimes_{\HH_{I, \hat{y}}} \HH_I,
 \end{equation}
where the stabilizer subgroup $W_{\hat{y}} = W_y \subset W$ is a parabolic subgroup for a certain representative $y$ of $\hat{y}$. 

For an Iwahori-spherical representation $\pi$, we define
$$\Wh_\psi(\pi):= \Hom_{\wt{G}}( \mca{V}, \check{\pi} )=\Hom_{\mca{H}_I} (\mca{V}^I, \check{\pi}^I)$$
and
$$\Wh_\psi(\pi)^\sharp:= \Hom_{\mca{H}_I} (\pi^I, ({\rm Ind}_{U^-}^{\wt{G}} \psi^{-1})^I),$$
where $\check{\pi}$ denotes the contragredient representation of $\pi$.
By the perfect  $\wt{G}$-pairing between ${\rm ind}_{U^-}^{\wt{G}} \psi$ and ${\rm Ind}_{U^-}^{\wt{G}} \psi^{-1}$, we get a canonical isomorphism
$$\iota_\psi: \Wh_\psi(\pi) \longrightarrow \Wh_\psi(\pi)^\sharp.$$

\begin{dfn} \label{D:O-Wh}
For every Iwahori-spherical representation $\pi$ and every orbit $\mca{O} \subset \msc{X}_{Q,n}$, the subspace
$$\Whc(\pi)_\mca{O} := \Hom_{\mca{H}_I} (\mca{V}^I_\mca{O}, \check{\pi}^I) \subset \Wh_\psi(\pi),$$
 is called the $\mca{O}$-Whittaker subspace of $\pi$.
\end{dfn}
One has a decomposition  as
$$\Wh_\psi(\pi)= \bigoplus_{\mca{O} \subset \msc{X}_{Q,n}} \Whc(\pi)_\mca{O}.$$
If $T \in \Hom_{\wt{G}}(\pi_1, \pi_2)$ is an intertwining operator, then it induces a well-defined $\C$-homomorphism
$$T_{\psi, \mca{O}}: \Wh_\psi(\pi_2)_\mca{O} \longrightarrow \Wh_\psi(\pi_1)_\mca{O}.$$

\begin{lm}\label{L:Proj}
For every orbit $\mca{O}\subset \msc{X}_{Q,n}$, the $\mca{H}_I$-module $\mca{V}^I_\mca{O}$ is projective. Hence $\Whc(-)_\mca{O}$ is an exact functor.
\end{lm}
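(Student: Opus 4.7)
The approach is to reduce projectivity of each $\mca{V}^I_\mca{O}$ to projectivity of $\mca{V}^I$ and to establish the latter via the Bushnell--Kutzko theory of types. By Corollary \ref{C:VI-O}, one has $\mca{V}^I=\bigoplus_\mca{O}\mca{V}^I_\mca{O}$ as $\mca{H}_I$-modules, so it suffices to show that $\mca{V}^I$ is projective over $\mca{H}_I$. The exactness of $\Whc(-)_\mca{O}=\Hom_{\mca{H}_I}(\mca{V}^I_\mca{O},\check{(-)}^I)$ is then a formal consequence: the contragredient is exact on admissible representations, $(-)^I$ is exact as $I$ is compact open, and $\Hom_{\mca{H}_I}(\mca{V}^I_\mca{O},-)$ is exact by projectivity.

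First I would show that $\mca{V}=\mathrm{ind}_{\mu_n U^-}^{\wt{G}}(\epsilon\otimes\psi)$ is projective in the category of smooth $\epsilon$-genuine representations of $\wt{G}$. By Frobenius reciprocity, $\Hom_{\wt{G}}(\mca{V},V)=\Hom_{U^-}(\psi,V|_{U^-})$, reducing the task to exactness of the twisted Jacquet functor $V\mapsto V^{U^-,\psi}$ on smooth $U^-$-representations. One identifies $V^{U^-,\psi}$ with the twisted coinvariants $V_{U^-,\psi}$---an isomorphism for the unipotent group $U^-$ on smooth modules, obtained by averaging---and writes the latter as the filtered direct limit $\varinjlim_N V_{U^-(\mfr{p}^{-N}),\psi}$ over compact open subgroups exhausting $U^-$; each term is the image of an idempotent in the Hecke algebra of $U^-(\mfr{p}^{-N})$ (hence exact in $V$), and filtered colimits of exact functors of modules are exact. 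Next, by Bushnell--Kutzko theory (already employed in Section \ref{S:proPS}), the functor $V\mapsto V^I$ restricts to an equivalence of categories between the Iwahori Bernstein subcategory of $\mathrm{Rep}_\epsilon(\wt{G})$ and $\mathrm{Mod}(\mca{H}_I)$. Under the Bernstein decomposition the Iwahori component $\mca{V}_I$ of $\mca{V}$ is a direct summand of the projective $\mca{V}$, hence projective in the Iwahori subcategory, and $\mca{V}_I^I=\mca{V}^I$; since categorical equivalences preserve projectivity, $\mca{V}^I$ is projective over $\mca{H}_I$.

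The main technical obstacle is the exactness of the twisted Jacquet functor---a classical fact, unaffected by covering aspects since $U^-$ splits uniquely in $\wt{G}$ and the argument takes place entirely within $U^-$. An alternative, more algebraic route proceeds via Theorem \ref{GGProP}: $\mca{V}^{I_1}\simeq\mca{V}_\kappa\otimes_{\mca{H}_\kappa}\mca{H}$ is projective over $\mca{H}$ because $\mca{V}_\kappa$ is a sum of irreducibles over the semisimple finite-dimensional algebra $\mca{H}_\kappa$ (hence projective), and the functor $-\otimes_{\mca{H}_\kappa}\mca{H}$ preserves projectives via its exact restriction right adjoint. The descent from $\mca{V}^{I_1}$ to $\mca{V}^I=\mca{V}^{I_1}\ast\mathbbm{1}_I$ then requires the flatness of $\mca{H}\mathbbm{1}_I$ as right $\mca{H}_I$-module, a cover-analogue of the classical Borel relation between the Iwahori and pro-$p$ Iwahori Hecke algebras.
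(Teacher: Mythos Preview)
Your overall strategy---reduce to projectivity of $\mca{V}$ and pass to the Iwahori block via the Borel/Bushnell--Kutzko equivalence---agrees with the paper's one-line proof. However, the detailed justification in your main route has genuine errors. First, the Frobenius reciprocity $\Hom_{\wt G}(\mca{V},V)=\Hom_{U^-}(\psi,V|_{U^-})$ you invoke does not hold in this form: compact induction from a closed subgroup $H$ is left adjoint to restriction only when $H$ is open (already for $H=\{1\}$ there is no unit map $\C\to C_c^\infty(G)$, so the adjunction cannot exist). Second, and independently, the claimed identification $V^{U^-,\psi}\cong V_{U^-,\psi}$ is false for non-compact $U^-$: take $V=C_c^\infty(U^-)$ with the regular action, where any $(U^-,\psi)$-eigenvector must vanish by the support constraint, yet the $\psi$-coinvariants are one-dimensional. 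Averaging over the compact $U^-(\mfr{p}^{-N})$ only produces the $(U^-(\mfr{p}^{-N}),\psi)$-invariants, and the inverse limit of these does not recover coinvariants. The paper instead invokes exactness of the \emph{contravariant} Whittaker functor $\Hom_{\wt G}(-,\Ind_{U^-}^{\wt G}\psi)$, which does follow from Frobenius for full induction together with exactness of the twisted Jacquet functor; the passage from this to projectivity of $\mca{V}^I$ is left implicit there as well.

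Your alternative route via Theorem~\ref{GGProP} is in fact cleaner and can be completed entirely from results already proved in the paper. As you note, $\mca{V}^{I_1}\cong\mca{V}_\kappa\otimes_{\mca{H}_\kappa}\mca{H}$ is projective over $\mca{H}$ since $\mca{H}_\kappa$ is semisimple. The remaining step---that $\mca{H}\mbm{1}_I$ is projective (indeed free) as a right $\mca{H}_I$-module---follows from Lemma~\ref{HeckeIdemDecomp}, Lemma~\ref{HeckeIdemnon0} and Corollary~\ref{HeckeIdemIso}: writing $\mca{H}\mbm{1}_I=\bigoplus_\chi c(\chi)\mca{H} c(\mbm{1})$, the summand is nonzero only for $\chi$ in the $W\ltimes\msc{X}_{Q,n}$-orbit of $\mbm{1}$, and each such summand is isomorphic to $\mbm{1}_I\mca{H}\mbm{1}_I=\mca{H}_I$ as a right $\mca{H}_I$-module. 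Hence $M\mapsto M\mbm{1}_I$ sends projective $\mca{H}$-modules to projective $\mca{H}_I$-modules, and $\mca{V}^I=\mca{V}^{I_1}\mbm{1}_I$ is projective. This yields a self-contained proof that avoids the analytic input on Whittaker exactness altogether.
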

\begin{proof}
Since the functor $\Hom(-, \Ind_{U^-}^{\wt{G}} \psi)$ of taking Whittaker model  is exact, we get that $\mca{V}^I$ is  a projective $\mca{H}_I$-module. Any of its direct summand $\mca{V}^I_\mca{O}$ is thus also projective.
\end{proof}

\subsection{A concrete realization} \label{SS:con-Wh}
If $\pi = I(\chi)$ is an unramified genuine principal series, then we expect to have a more concrete description of $\Whc(I(\chi))_\mca{O}$ which arises essentially from composing the Jacquet integral with functionals of $i(\chi)$, see \cite{KP, GSS2}.

Let $\Ftn(i(\chi))$ be the vector space of  functions $\cc$ on $\wt{T}$  satisfying
$$\cc(\wt{t} \cdot \wt{z}) =  \cc(\wt{t}) \cdot \chi(\wt{z}), \quad \wt{t} \in \wt{T} \text{ and } \wt{z} \in \wt{A}=Z(\wt{T})\mathbf{T}(O_{F}).$$
The support of $\cc \in \Ftn(i(\chi))$ is a disjoint union of cosets in $\wt{T}/\wt{A}$.
For every $\gamma \in \wt{T}$, let $\cc_\gamma \in \Ftn(i(\chi))$ be the unique element satisfying
$$\text{supp}(\cc_{\gamma})=\gamma \cdot \wt{A} \text{ and } \cc_{\gamma}(\gamma)=1.$$
Clearly, $\cc_{\gamma \cdot a} = \chi(a)^{-1} \cdot \cc_\gamma$ for every $a\in \wt{A}$. If $\set{\gamma_i}\subset \wt{T}$ is a set of representatives of $\wt{T}/\wt{A}$, then $\set{\cc_{\gamma_i}}$ forms a basis for $\Ftn(i(\chi))$. Let $i(\chi)^\vee$ be the vector space of functionals of $i(\chi)$, which affords the contragredient representation of $i(\chi)$.
 The set $\set{\gamma_i}$ gives rise to linear functionals $l_{\gamma_i} \in i(\chi)^\vee$ such that $l_{\gamma_i}(\phi_{\gamma_j})=\delta_{ij}$, where $\phi_{\gamma_j}\in i(\chi)$ is the unique element such that
$$\text{supp}(\phi_{\gamma_j})=\wt{A}\cdot \gamma_j^{-1} \text{ and  } \phi_{\gamma_j}(\gamma_j^{-1})=1.$$
It is easy to see that for every $\gamma\in \wt{T}$ and $a\in \wt{A}$, one has
$$\phi_{\gamma a}= \chi(a)\cdot \phi_\gamma, \quad  l_{\gamma a} = \chi(a)^{-1} \cdot l_\gamma.$$
Moreover, there is a natural isomorphism of vector spaces
$$\Ftn(i(\chi)) \simeq i(\chi)^\vee$$
 given by
$$ \cc  \mapsto   l_\cc:= \sum_{\gamma_i \in \wt{T}/\wt{A}} \cc(\gamma_i) \cdot l_{\gamma_i}.
$$
It can be checked easily that this isomorphism does not depend on the choice of representatives for $\wt{T}/\wt{A}$.

For any $z\in Y_{\rm ad}=P$, consider again
$${}^z \psi(x)= \psi(z^{-1} x z).$$
There is an isomorphism between $i(\chi)^\vee$ and the space $\Wh_{^z\psi} (I(\chi))^\sharp$ of $(U^-, (^z\psi)^{-1})$-Whittaker functionals  on $I(\chi)$  given by
$$l \mapsto \lambda_l^\sharp$$
with
$$\lambda_l^\sharp:  I(\chi) \to \C, \quad f \mapsto l \circ J^\sharp_\psi(f) \text{ where } J^\sharp_\psi(f)= \int_{U^-} f(u) \psi(u) du  \in i(\chi).$$
Here $f\in I(\chi)$ is an $i(\chi)$-valued function on $\wt{G}$. For any $\cc\in \Ftn(i(\chi))$, write $\lambda_\cc^\sharp \in \Wh_{^z\psi}(I(\chi))^\sharp$ for the $(U^-, (^z\psi)^{-1})$-Whittaker functional of $I(\chi)$ associated to $l_\cc$. Therefore, $\cc \mapsto \lambda_\cc^\sharp$ gives an isomorphism between $\Ftn(i(\chi))$ and $\text{Wh}_{^z\psi}(I(\chi))^\sharp$. For $\gamma \in \wt{T}$, we will write
$$\lambda_\gamma^\sharp:=\lambda_{\cc_\gamma}^\sharp.$$
%To avoid confusion, we may write $\lambda^{\chi}$ instead of $\lambda \in \text{Wh}_{{}^z\psi}(I(\chi))^\sharp$ to emphasize the underlying representation $I(\chi)$ involved.  

Let $w_{G}$ be the long element in the Weyl group $W$. To describe the above $\Wh_{^z\psi}(I(\chi))^\sharp$ further, we relate it to the $(U, ({}^{w_G z}\psi)^{-1})$-Whittaker model of $I(\chi)$ which we denote by $\Wh_{{}^{w_G z}\psi}(I(\chi))^\std$ and appeared more frequently in the literature.
More precisely, for 
$${}^{w_G}({}^z\psi) = {}^{w_G z w_G^{-1}} ({}^{w_G} \psi): U \longrightarrow \C^\times$$
we consider the $(U, ({}^{w_G z}\psi)^{-1})$-functional 
$$J^\std_{^{w_Gz}\psi}: I(\chi) \to i(\chi) \text{ given by } J^\std_{^{w_Gz}\psi}(f)=\int_{U} f(\wt{w}_G^{-1}u) (^{w_G z}\psi)(u) du \in i(\chi).$$
We define $\lambda_l^\std:= l \circ J^\std_{^{w_G z}\psi}$ and also
$$\Wh_{^{w_G z}\psi}(I(\chi))^\std = \set{\lambda_l^\std: \ l \in i(\chi)^\vee}.$$
It is clear that 
\begin{equation} \label{E:2J}
J^\sharp_{^z\psi}(f) = J_{{}^{w_Gz}\psi}^\std(R_{w_G}(f)),
\end{equation}
where $R_{g}$ means the right translation action of $I(\chi)$ by $g\in \wt{G}$. One has a commutative diagram
$$\begin{tikzcd}
I(\chi) \ar[d, "{R_{\wt{w}_G}}"]  \ar[rr, "{T(w, \chi)}"] & &  I({}^w \chi) \ar[d, "{R_{\wt{w}_G}}"] \\
I(\chi) \ar[rr, "{T(w, \chi)}"] & &  I({}^w \chi),
\end{tikzcd}$$
which induces the diagram
$$\begin{tikzcd}
\Wh_{^z\psi}(I(\chi))^\sharp   & & \Wh_{^z\psi}(I({}^w \chi))^\sharp \ar[ll, "{T(w, \chi)_{^z\psi}}"'] \\
\Wh_{^{w_Gz}\psi}(I(\chi))^\std \ar[u, "{R_{\wt{w}_G}^*}"]  & &\Wh_{^{w_Gz}\psi}(I(^w \chi))^\std \ar[ll, "{T(w, \chi)_{^{w_Gz}\psi}^\std}"]   \ar[u, "{R_{\wt{w}_G}^*}"],
\end{tikzcd}$$

%Now, the conjugation map 
%$$w_G(-): \msc{X}_{Q,n} \longrightarrow \msc{X}_{Q,n}$$
%gives a bijection between the $(W, z)$-orbits and $(W, w_G(z))$-orbits in $\msc{X}_{Q,n}$.

Now for every $(W, w_G(z))$-orbit $\mca{O} \subset \msc{X}_{Q,n}$, consider
$$\Ftn(i(\chi))_{\mca{O}}=\set{\cc \in \Ftn(i(\chi)):\ {\rm supp}(\cc) \subset \bigcup_{y\in \mca{O}} \s_y\cdot \wt{A} }.$$  
This gives via the above natural isomorphisms of vector spaces
$$\Ftn(i(\chi)) \longrightarrow i(\chi)^\vee \longrightarrow \Wh_{{}^z\psi}(I(\chi))^\sharp$$
a natural subspace 
$$\Wh_{{}^z\psi}(I(\chi))_\mca{O}^\sharp \subset \Wh_{{}^z\psi}(I(\chi))^\sharp.$$
Similarly, one has
$$\Wh_{^{w_G z}\psi}(I(\chi))^\std_\mca{O} \subset \Wh_{^{w_G z}\psi}(I(\chi))^\std$$
with
\begin{equation} \label{psWd}
\dim \Wh_{{}^z\psi}(I(\chi))_\mca{O}^\sharp= \dim \Wh_{^{w_G z}\psi}(I(\chi))^\std_\mca{O} =  \val{\mca{O}}.
\end{equation}
As a consequence of \eqref{E:2J}, the restriction
\begin{equation} \label{F:RwG}
R_{w_G}^*: \Wh_{^{w_G z}\psi}(I(\chi))^\std_\mca{O}  \longrightarrow \Wh_{{}^z\psi}(I(\chi))_\mca{O}^\sharp
\end{equation}
is a  well-defined vector space isomorphism.

We want to show that 
\begin{equation} \label{F:TO}
T(w, \chi)_{{}^z\psi, \mca{O}}^\sharp: \Wh_{{}^z\psi}(I({}^w \chi))_\mca{O}^\sharp \longrightarrow \Wh_{{}^z\psi}(I(\chi))_\mca{O}^\sharp
\end{equation}
is well-defined for every $\mca{O} \subset \msc{X}_{Q,n}$. In view of \eqref{F:RwG}, it suffices to prove that by restricting $T(w, \chi)_{^{w_Gz}\psi}^\std$ the map
\begin{equation} \label{F:TOstd}
T(w, \chi)_{^{w_Gz}\psi, \mca{O}}^\std: \Wh_{^{w_Gz}\psi}(I(^w \chi))^\std \longrightarrow \Wh_{^{w_Gz}\psi}(I(\chi))^\std
\end{equation}
is well-defined.

For notational convenience, we write
$$z^*:=w_G(z) \text{ and } \quad \psi^*={}^{w_G z}\psi ={}^{z^*}({}^{w_G}\psi): U \to \C^\times.$$
If we choose bases $\set{\lambda_{\gamma'}^{{}^w\chi}}_{\gamma' \in \wt{T}/\wt{A}}$ and $\set{\lambda_{\gamma}^\chi}_{\gamma \in \wt{T}/\wt{A}}$ for $\Wh_{\psi^*}(I({}^w \chi))_\mca{O}^\std$ and $\Wh_{\psi^*}(I(\chi))_\mca{O}^\std$, then $T(w, \chi)_{\psi^*}^\std$ is naturally represented by the so-called scattering matrix
$$[\tau_{\psi^*}(w, \chi, \gamma', \gamma)]_{\gamma', \gamma\in \wt{T}/\wt{A}}$$
satisfying
\begin{equation} \label{F:tau}
T(w, \chi)_{\psi^*}^\std (\lambda_{\gamma'}^{{}^w\chi}) = \sum_{\gamma \in \wt{T}/\wt{A}}  \tau_{\psi^*}(w, \chi, \gamma', \gamma) \cdot \lambda_\gamma^\chi.
\end{equation}
Furthermore, if one chooses a set $\mfr{R} \subset Y$ of representatives of the set $\msc{X}_{Q,n}$, it gives a natural basis for the domain and codomain of $T(w, \chi)_{\psi^*}^\std$, which is then represented by $[\tau_{\psi^*}(w, \chi, \s_{y'}, \s_y)]_{y', y\in \mfr{R}}$.
In any case, $\tau_{\psi^*}(w, \chi, \gamma', \gamma)$ satisfies the cocycle relation reflecting the decomposition of the intertwining operator $T(w, \chi)$ into rank-one intertwining operators. Thus, it suffices to determine $\tau_{\psi^*}(w_\alpha, \chi, \gamma', \gamma)$ for a simple reflection $w_\alpha$, which is given as follows:
\begin{enumerate}
\item[--] $\tau_{\psi^*}(w_\alpha, \chi,\gamma', \gamma) = \tau_{\psi^*}^1(w_\alpha, \chi,\gamma', \gamma) + \tau_{\psi^*}^2(w_\alpha, \chi,\gamma', \gamma)$ with
$$\tau_{\psi^*}^i(w_\alpha, \chi,\gamma' \cdot a', \gamma \cdot a)=({}^{w_\alpha} \chi)^{-1}(a') \cdot \tau_{\psi^*}^i(w_\alpha, \chi, \gamma', \gamma) \cdot \chi(a)$$
for every  $a, a'\in \wt{A}$ and $1\lest i \lest 2$;
\item[--] $\tau_{\psi^*}^1(w_\alpha, \chi, \s_{y'}, \s_y)=0$  unless  $y' \equiv y \mod Y_{Q,n}$, and
$$\tau_{\psi^*}^1(w_\alpha, \chi, \s_y, \s_y) = (1-q^{-1}) \frac{\chi (\wt{h}_\alpha(\varpi^{n_\alpha}))^{k_{z, y,\alpha}}}{1-\chi (\wt{h}_\alpha(\varpi^{n_\alpha}))},$$
 where $k_{z, y,\alpha}=\ceil{\frac{1+\angb{y+z^*}{\alpha}}{n_\alpha}}$;
\item[--] $\tau_{\psi^*}^2(w_\alpha, \chi, \s_{y'}, \s_y)=0$   unless $y' \equiv w_\alpha[y]_{z^*} \mod Y_{Q,n}$ and
$$\tau_{\psi^*}^2(w_\alpha, \chi, \s_{w_\alpha[y]_{z^*}}, \s_y) = (-1, \varpi)_n^{\angb{y+z^*}{\alpha}\cdot D(y,\alpha^\vee)} \cdot \g_{^{w_G}\psi}(\angb{y+z^*}{\alpha}Q(\alpha^\vee)).$$
\end{enumerate}
In the above, the Gauss sum $\mbf{g}_{^{w_G}\psi}(k)$ is given as in \cite[\S 3.6]{GSS2}, and is essentially $\g_\alpha(\psi, (-,\varpi)_n^k)$ in \eqref{D:GS}. We see that in particular if $z \in P\cap Y$, then 
$$\tau_{\psi^*}^i(w_\alpha, \chi, \s_{y'}, \s_y) = \tau_{\psi^*}^i(w_\alpha, \chi, \s_{y' + z^*}, \s_{y +z^*})$$
for $1 \lest i \lest 2$. We also remark that  for $z=\rho$ and thus $\mfr{f}(\psi^*) = O_F$, the scattering matrix was given in \cite{KP, Mc2, Ga2}.  For $z=0$, the formula for $\tau_{\psi^*}^i(w_\alpha, \chi, \s_{y'}, \s_y)$ is given in \cite[\S 5.3.3]{GSS3}. 

From \eqref{F:tau} and the above description of the $\tau$-functions, we see that $T(w, \chi)_{\psi^*, \mca{O}}^\std$ in \eqref{F:TOstd} is indeed well-defined, and so is \eqref{F:TO}. In fact, this is the  raison d'\^etre for considering the $(W, z^*)$ orbits in $\msc{X}_{Q,n}$.

When specialized to $z=0$ and thus for $W$-orbits $\mca{O}$, one has the natural embedding
$$\iota(\chi)_{\psi, \mca{O}}: \Wh_\psi(I(\chi))_\mca{O} \into \Wh_\psi(I(\chi))^\sharp.$$

\begin{conj} \label{C:iden}
Keep notations as above. Then $\iota(\chi)_{\psi, \mca{O}}$ gives an isomorphism
$$\iota(\chi)_{\psi, \mca{O}}: \Whc(I(\chi))_\mca{O} \simeq \Whc(I(\chi))_\mca{O}^\sharp$$
for every $W$-orbit $\mca{O} \subset \msc{X}_{Q,n}$. Moreover, $\iota(\chi)_{\psi, \mca{O}}$ is equivariant with respect to the two homomorphisms on Whittaker spaces induced from $T(w, \chi)$, i.e., 
$$\iota({}^w \chi)_{\psi, \mca{O}} \circ T(w, \chi)_{\psi, \mca{O}} = T(w, \chi)_{\psi, \mca{O}}^\sharp \circ \iota(\chi)_{\psi, \mca{O}}.$$
\end{conj}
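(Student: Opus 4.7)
The plan is to prove the orbit-wise isomorphism by an explicit comparison at the level of generators, and then deduce the equivariance with $T(w,\chi)$ from naturality of $\iota_\psi$. The first (and main) part exploits the fact that $\iota_\psi$ is realized through a concrete $\wt G$-invariant pairing, while the second part is essentially formal once the first is known.

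To prove the first assertion, I would argue as follows. By Lemma \ref{OrbitA} and equation \eqref{psWd}, both $\Whc(I(\chi))_\mca{O}$ and $\Whc(I(\chi))^\sharp_\mca{O}$ have dimension $|\mca{O}|$, so it suffices to show $\iota_\psi$ carries the former into the latter. The canonical $\wt G$-invariant pairing between ${\rm ind}_{\mu_n U^-}^{\wt G}(\epsilon\otimes\psi)$ and $\Ind_{\mu_n U^-}^{\wt G}(\epsilon^{-1}\otimes\psi^{-1})$ makes $\iota_\psi(\phi)$ concretely computable. For $\phi\in\Whc(I(\chi))_\mca{O}\subset\Hom_{\mca{H}_I}(\mca{V}^I,\check{I(\chi)}^I)$, its image $\iota_\psi(\phi):I(\chi)\to\C$ can be evaluated on a basis vector $f_{y'}\in I(\chi)^I$ indexed by $\hat y'\in\msc{X}_{Q,n}$ (lifting $\phi_{\s_{y'}}\in i(\chi)$) by pairing $f_{y'}$ against the explicit generators $v_y:=\pmb{\gamma}(c(\varphi(y))\otimes\Theta_{\s_y}*\mbm{1}_I)$ of $\mca{V}^I_\mca{O}$, with $\hat y\in\mca{O}$. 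By Theorem \ref{GGProP} and Lemma \ref{Embed}, each $v_y$ is supported in $\mu_n U^-\s_y I$, and unfolding the Jacquet integral $J^\sharp_\psi$ via the Iwahori factorization shows that this pairing vanishes unless $\hat y'$ lies in the same $W$-orbit as $\hat y$. Hence $\iota_\psi(\phi)\in\Whc(I(\chi))^\sharp_\mca{O}$, giving the desired isomorphism by dimension.

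For the equivariance with $T(w,\chi)$, the construction of $\iota_\psi$ is natural in $\pi$: any $\wt G$-homomorphism $T:I(\chi)\to I({}^w\chi)$ induces a commutative square relating $T(w,\chi)_\psi$ and $T(w,\chi)^\sharp_\psi$. Restricting this square to the $\mca{O}$-summands is legitimate once the first part has been established, provided that $T(w,\chi)^\sharp_\psi$ preserves the $\Ftn(i(\chi))_\mca{O}$-decomposition. The latter follows by inspection of the rank-one scattering matrix formulas $\tau_{\psi^*}^i(w_\alpha,\chi,\s_{y'},\s_y)$ in \S\ref{SS:con-Wh}: both $\tau^1$ and $\tau^2$ are supported on pairs $(\hat y',\hat y)$ lying in the same $W$-orbit, and the general case reduces to the rank-one case by the cocycle relation for the scattering matrix.

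The principal obstacle is the explicit pairing calculation in the second paragraph: a priori, the Jacquet integral of $v_y$ (supported in $\mu_n U^-\s_y I$) could contribute nontrivially to test vectors $f_{y'}$ indexed by $\hat y'$ in a different $W$-orbit. In the linear setting, multiplicity one eliminates this possibility automatically; for covers, the orbit-wise support must be extracted from a careful unwinding of the chain of isomorphisms in Lemmas \ref{prop1}--\ref{prop3} and Proposition \ref{prop4}, combined with the explicit Bernstein relations in \eqref{BR3*I}. This amounts to an Iwahori-level refinement of the twisted Satake-transform analysis of \cite{GuKa}, and is the key technical input needed to complete the proof.
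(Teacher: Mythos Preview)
The statement you are attempting to prove is labeled a \emph{Conjecture} in the paper, and the paper does not prove it. It is explicitly treated as open: the remark following Theorem~\ref{T:reg-ps} and the hypothesis in Corollary~\ref{C:Wh-equi} both invoke Conjecture~\ref{C:iden} as an assumption rather than an established result. So there is no paper proof to compare against.

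Your proposal is not a proof but a strategy with a self-acknowledged gap, and that gap is precisely the content of the conjecture. In your second paragraph you assert that unfolding the pairing shows it ``vanishes unless $\hat y'$ lies in the same $W$-orbit as $\hat y$''; in your final paragraph you then correctly flag this very assertion as ``the principal obstacle'' and ``the key technical input needed to complete the proof.'' The two $\mca{O}$-decompositions in play arise from genuinely different sources: one from the decomposition of $\mca{V}^I$ via the finite Gelfand--Graev module $\mca{V}_\kappa$, the other from the support decomposition of $\Ftn(i(\chi))$ on $\wt{T}/\wt{A}$. Your observation that the generator $v_y=\pmb{\gamma}(c(\varphi(y))\otimes\Theta_{\s_y}*\mbm{1}_I)$ has support controlled by $\mu_n U^-\s_y I$ is correct but does not by itself force the Jacquet integral $J^\sharp_\psi$ to respect $W$-orbits: the integral involves $U^-$-translates, and the Iwahori-fixed test vectors in $I(\chi)$ do not have such localized support after the Iwasawa decomposition is unwound. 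One would need to show, in effect, that the matrix of $\iota_\psi$ in the natural bases is block-diagonal with respect to $W$-orbits, and nothing in Lemmas~\ref{prop1}--\ref{prop3} or the Bernstein relations~\eqref{BR3*I} delivers this directly. (A minor side point: the dimension equality $\dim\Whc(I(\chi))_\mca{O}=|\mca{O}|$ that you cite from Lemma~\ref{OrbitA} is not immediate from that lemma alone; however, since $\iota_\psi$ is a global isomorphism, it would follow automatically once you establish that $\iota_\psi$ respects the grading.)

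Your treatment of the equivariance is sound as a conditional statement: once the first part is known, naturality of $\iota_\psi$ in $\pi$ gives a global commuting square, and the restriction to $\mca{O}$-summands is then legitimate because both $T(w,\chi)_{\psi}$ and $T(w,\chi)^\sharp_{\psi}$ are already known to preserve the respective $\mca{O}$-gradings (the latter by the scattering-matrix support analysis in \S\ref{SS:con-Wh}, which the paper does carry out). But this second part rests entirely on the first, which remains open.
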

% \textcolor{red}{(Rmk: give a proof and make it a Proposition? If not possible, explain the obstacles.)}

The remainder of this paper is devoted to determining $\dim \Whc(\pi)_\mca{O}$  for constituents of a regular unramified principal series $I(\chi)$ or a unitary unramified principal series. We prove analogues of certain conjectural formulas for such genuine Whittaker dimensions in \cite{Ga6, Ga7} -- we deal with the case $\mfr{f}(\psi)=\mfr{p}$ in this paper while that in loc. cit. assumes $\mfr{f}(\psi)= O_F$.
In fact, for unitary unramified $I(\chi)$ we also investigate partially the relation between the two cases for different conductors, see Corollary \ref{C:Wh-equi}.

\subsection{Regular unramified $I(\chi)$}
Consider an unramified $\chi$ satisfying the following:
\begin{enumerate}
\item[--] $\chi$ is regular, that is, its stabilizer subgroup of $W$ is trivial,
\item[--] the set $\Phi(\chi):=\set{\alpha\in \Phi: \chi( \wt{h}_\alpha(\cdot)^{n_\alpha} ) =\val{\cdot}_F}$ is a subset of $\Delta$.
\end{enumerate}
Such an exceptional $\chi$ (following \cite{KP}) gives a regular unramified genuine principal series $I(\chi)$ that decomposes
\begin{equation} \label{E:Rod}
I(\chi)^{\rm ss} = \bigoplus_{S \subset \Phi(\chi)} \pi_S,
\end{equation}
where the left hand side denotes the semisimplification of $I(\chi)$ and $\pi_S \in \Irrg(\wt{G})$. The decomposition is multiplicity-free and the irreducible constituent $\pi_S$ is characterized by its Jacquet module, see \cite{Rod4} and \cite[\S 3]{Ga6}. For example, if $\Phi(\chi) =\Delta$, then $\pi_\Delta = \Theta(\chi)$ is a theta representation and $\pi_\emptyset$ is a covering analogue of the Steinberg representation.

Recall that for each $S \subset \Phi(\chi) \subset \Delta$, there is a representation $\sigma_S$ (possibly reducible) of the Weyl group.
More precisely, one has for the Jacquet module
$$(\pi_S)_U = \bigoplus_{w\in W_S} \delta_B^{1/2} \cdot i({}^{w^{-1}} \chi),$$
where $W_S \subset W$ is a union of Kazhdan--Lusztig right cells of $W$. The representation $\sigma_S$ is then the direct sum of the cell representations associated to $W_S$. In a more concrete form (see \cite[Corollary 6.5]{Ga6}), one has
\begin{equation} \label{E:al-sum}
\sigma_S = \sum_{S':\ S \subseteq S' \subseteq \Phi(\chi)} (-1)^{\val{S'- S}} \cdot \Ind_{W(S')}^W \varepsilon_{W(S')},
\end{equation}
where $W(S') \subset W$ is the Weyl subgroup generated by $S'$.

Also recall the permutation representation 
$$\sigma^\msc{X}_{\mca{O}}: W \longrightarrow {\rm Perm}(\mca{O})$$
 for each $\mca{O} \subset \msc{X}_{Q,n}$, which is given by $\sigma^\msc{X}_\mca{O}(w)(y) = w(y)$.

\begin{thm} \label{T:reg-ps}
Let $I(\chi)$ be a regular unramified genuine principal series such that $\Phi(\chi) \subset \Delta$. Let $S\subseteq \Phi(\chi)$. Then for every splitting orbit $\mca{O} \subset \msc{X}_{Q,n}$ one has
$$\dim \Whc(\pi_S)_\mca{O} = \angb{\sigma_S}{\sigma^\msc{X}_\mca{O}}_W.$$
Hence, for $\wt{G}$ such that every orbit $\mca{O}$ is splitting (for example, those as in Corollary \ref{C:aOsplt} and Example \ref{E:KPS}) one has 
$$\dim \Whc(\pi_S) = \angb{\sigma_S}{\sigma^\msc{X}}_W.$$
\end{thm}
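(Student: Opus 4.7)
The plan is to adapt the strategy from \cite[Theorem 1.1]{Ga6}, which proved the analogous identity for the (then conjecturally equivalent) functor $\Whc(-)^\sharp_\mca{O}$. Two inputs from earlier in the paper make the adaptation clean: Theorem \ref{Wequi}, which realizes $\mca{V}^I_\mca{O}$ concretely as an induced $\mca{H}_I$-module; and Lemma \ref{L:Proj}, which asserts exactness of $\Whc(-)_\mca{O}$. Explicitly, for the splitting orbit $\mca{O} = \mca{O}_{\hat{y}}$ with $y$ dominant, Theorem \ref{Wequi} gives $\mca{V}^I_\mca{O} \simeq \varepsilon_y \otimes_{\mca{H}_{I,y}} \mca{H}_I$, where $\mca{H}_{I,y} \subset \mca{H}_W$ is the finite Hecke subalgebra attached to the parabolic Weyl subgroup $W_y = W_{\hat{y}}$; Frobenius reciprocity then identifies
$$\Whc(\pi_S)_\mca{O} \simeq \Hom_{\mca{H}_{I,y}}\bigl(\varepsilon_y,\ \check{\pi}_S^I\bigr),$$
turning the problem into a multiplicity computation for the sign character on a finite Hecke subalgebra.

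Next, substitute the alternating expansion \eqref{E:al-sum} for $\sigma_S$. Via M\"obius inversion on the Boolean lattice of subsets of $\Phi(\chi)$, combined with exactness of $\Whc(-)_\mca{O}$, the theorem reduces to the family of identities
$$\dim \Whc(I_{S'})_\mca{O} = \angb{\Ind_{W(S')}^W \varepsilon_{W(S')}}{\sigma^\msc{X}_\mca{O}}_W = \dim\bigl(\sigma^\msc{X}_\mca{O}|_{W(S')}\bigr)^{\varepsilon_{W(S')}},$$
one for each $S' \subseteq \Phi(\chi)$, where $I_{S'}$ is a standard parabolically induced module whose Jordan--H\"older content equals $\{\pi_T : S' \subseteq T \subseteq \Phi(\chi)\}$; the second equality is Frobenius reciprocity on $W$.

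To evaluate each such identity, decompose $\mca{O}$ into $W(S')$-orbits. On the $W$-side, $\sigma^\msc{X}_\mca{O}|_{W(S')} = \bigoplus_z \Ind_{\Stab_{W(S')}(z)}^{W(S')} \mbm{1}$, so its $\varepsilon_{W(S')}$-isotypic counts exactly the free $W(S')$-orbits in $\mca{O}$. On the Hecke side, the Bernstein presentation of \S \ref{SS:IHalg} furnishes $I_{S'}^I$ with an explicit basis indexed by $W \cdot \mca{O}$; restriction to $\mca{H}_{I,y}$ and extraction of the $\varepsilon_y$-isotypic yields the same count, because Lemma \ref{WeqSplitting}(ii) forces $B_Q(y,\alpha^\vee) = 0$ whenever $w_\alpha$ lies in the stabilizer, collapsing the relevant $\mca{H}_{I,y}$-characters to the sign character exactly as in the proof of Theorem \ref{Wequi}. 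Term-by-term matching closes the argument for a single splitting $\mca{O}$, and the last clause of the theorem follows by summing over all $W$-orbits of $\msc{X}_{Q,n}$ under the hypotheses of Corollary \ref{C:aOsplt} or Example \ref{E:KPS}.

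The principal obstacle is the M\"obius-inversion step: passing from the alternating sum \eqref{E:al-sum} on the $W$-side to a compatible alternating sum of Whittaker dimensions on the $\mca{H}_I$-side. This is precisely where \cite{Ga6} had to invoke the analogue of \cite[Conjecture 1.1]{Ga6}, since exactness of $\Whc(-)^\sharp_\mca{O}$ was (and remains) open. In the present setting Lemma \ref{L:Proj} supplies this compatibility unconditionally, so the detailed combinatorial argument of \cite[\S 6]{Ga6} transports to our setting with only formal modifications.
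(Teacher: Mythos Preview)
Your architecture matches the paper's exactly: both invoke Theorem \ref{Wequi} to realize $\mca{V}^I_\mca{O}\simeq\varepsilon_y\otimes_{\mca{H}_{I,y}}\mca{H}_I$, use the exactness of Lemma \ref{L:Proj} to pass the alternating sum \eqref{E:al-sum} through $\Whc(-)_\mca{O}$, and reduce to computing $\dim\Whc(\pi(w_{S'},\chi))_\mca{O}$ for each $S'\supseteq S$ (your $I_{S'}$ is the paper's $\pi(w_{S'},\chi)$, the image of the intertwining operator).

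The one substantive difference is how that last dimension is computed, and here the paper's route is cleaner than yours. The paper first treats the extreme case $S'=\Phi(\chi)=\Delta$: then $\pi_\Delta=\Theta(\chi)$ is the theta representation, its contragredient is the unramified subrepresentation of a principal series, and $(\check\Theta(\chi))^I|_{\mca{H}_{\tilde W_\aff}}$ is the \emph{trivial} character, so $\Hom_{\mca{H}_{I,y}}(\varepsilon_y,\check\Theta(\chi)^I)$ is one-dimensional precisely when $W_y=\{1\}$, i.e.\ when $\mca{O}$ is free. For general $S'$, the paper recognizes $\pi(w_{S'},\chi)$ as the parabolic induction from the theta representation of the Levi $M_{S'}$ and applies the Levi version of this computation to obtain $\langle\varepsilon_{W(S')},\sigma^\msc{X}_\mca{O}\rangle_{W(S')}$. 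Your proposed alternative---reading off an ``explicit basis of $I_{S'}^I$ indexed by $W\cdot\mca{O}$'' from the Bernstein presentation and extracting the $\varepsilon_y$-isotypic---is not clearly formulated (what is $W\cdot\mca{O}$ when $\mca{O}$ is already a single $W$-orbit?), and in any case would require identifying the $\mca{H}_{I,y}$-structure on $\check\pi(w_{S'},\chi)^I$, which is precisely what the paper's theta-plus-parabolic-induction reduction accomplishes without that bookkeeping.
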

\begin{proof}
The idea is the same as in \cite{Ga6} where $\mfr{f}(\psi)= O_F$. Let $\mca{O} =\mca{O}_{\hat{y}}$ be a splitting orbit with $W_{\hat{y}} \subset W$ a parabolic subgroup.

First, we consider the case when $S= \Phi(\chi) = \Delta$, and thus $\pi_\Delta = \Theta(\chi)$ is a theta representation of $\wt{G}$. Here $\chi$ is an exceptional character and thus $\Theta(\chi)$ is the Langlands quotient of $I(\chi)$, and is also the image of the intertwining operator $T(w_G, \chi)$, where $w_G \in W$ is the longest Weyl element. Its contragredient $\check{\pi}_\Delta$ is the irreducible subrepresentation of $I(\chi^{-1})$, and is also the theta representation associated to the exceptional genuine character ${}^{w_G}(\chi^{-1})$ of $Z(\wt{T})$. In any case, $\check{\pi}_\Delta$ is the unique unramified representation of $I({}^{w_G}(\chi^{-1}))$, and we have that
$$(\check{\pi}_\Delta)^I |_{ \HH_{\tilde{W}_{\aff}} }$$
is the trivial representation. That is, assuming $\tilde{W}_{\aff}$ is generated by $\set{ \tilde{\alpha}_0^\vee } \cup \set{ \tilde{\alpha}_i^\vee: 1\lest i \lest r}$, then each $T_{\tilde{\alpha}_i^\vee} \in \mca{H}_I, 0\lest i \lest r$ acts on $(\check{\pi}_\Delta)^I$ by $q$. This gives
$$\begin{aligned}
 \dim \Wh_\psi(\pi_\Delta)_\mca{O} & =\dim  \Hom_{\HH_I}( \mca{V}^I_\mca{O}, (\check{\pi}_\Delta)^I) \\
 & =\dim \Hom_{\HH_I}( \varepsilon \otimes_{\HH_{W_{\hat{y}}}} \HH_I, (\check{\pi}_\Delta)^I)  \text{ by \eqref{E:gGG}} \\
& =\dim \Hom_{\HH_{W_{\hat{y}}}}( \varepsilon, (\check{\pi}_\Delta)^I |_{\HH_{W_{\hat{y}}}}) \\
& = \begin{cases}
1 & \text{ if } W_{\hat{y}} = \set{1}, \\
0 & \text{ otherwise}.
\end{cases} 
 \end{aligned} $$
This shows that
\begin{equation} \label{E:dWT}
 \dim \Whc(\pi_\Delta)_\mca{O}= \angb{\varepsilon_W}{ \sigma^\msc{X}_\mca{O} }_W 
= \begin{cases}
 1 & \text{ if  $\mca{O}$ is a free $W$-orbit},\\
 0 & \text{ otherwise.}
 \end{cases}
 \end{equation}
 
 Second, assume in general $S \subseteq \Phi(\chi) \subseteq \Delta$. Denote by 
 $$\pi(w, \chi) \subset I(w, \chi)$$
  the image of intertwining operator $T(w, \chi)$. We have 
 $$\pi_S =\sum_{S':\ S \subset S' \subset \Phi(\chi)} (-1)^{\val{S'- S}} \cdot \pi(w_{S'}, \chi) \in \msc{R}(\Irrg(\wt{G})),$$
 where $w_{S'}$ means the longest element in the Weyl subgroup $W(S')$ generated by $S'$. Since $\Whc(-)_\mca{O}$ is an exact functor, we have
$$\dim \Whc(\pi_S)_\mca{O} = \sum_{S':\ S \subset S' \subset \Phi(\chi)} (-1)^{\val{S'- S}} \cdot \dim \Whc(\pi(w_{S'}, \chi))_\mca{O}.$$
However, since $\pi(w_{S'}, \chi)$ is equal to the full representation parabolically induced from the theta representation $\pi_{S'}^{M_{S'}}$ of the Levi subgroup $M_{S'}$, we have
$$\Whc(\pi(w_{S'}, \chi))_\mca{O} = \angb{\varepsilon_{W(S')}}{ \sigma^\msc{X}_\mca{O} }_{W(S')}= \angb{\Ind_{W(S')}^W \varepsilon_{W(S')}}{ \sigma^\msc{X}_\mca{O} }_{W},$$
where the first equality follows from \eqref{E:dWT}. In view of \eqref{E:al-sum}, we get that
$$\Whc(\pi_S)_\mca{O} = \angb{\sigma_S}{\sigma^\msc{X}_\mca{O}}_W$$
for every splitting orbit $\mca{O} \subset \msc{X}_{Q,n}$. This completes the proof.
\end{proof}

We can  verify for $\mfr{f}(\psi)=\mfr{p}$ an analogue of \cite[Conjecture 6.9 (i)]{Ga6}.

\begin{cor} \label{C:lbd}
Let $\Theta(\chi)=\pi_\Delta$ be an unramified theta representation associated to $\chi$ with $\Phi(\chi)=\Delta$. For every splitting orbit $\mca{O} \subset \msc{X}_{Q,n}$, one has
$$\dim \Whc(\pi)_\mca{O} \gest \dim \Whc(\Theta(\chi))_\mca{O}$$
for every irreducible Iwahori-spherical representation $\pi$ of $\wt{G}$. In particular, if every orbit $\mca{O} \subset \msc{X}_{Q,n}$ is splitting, then $\dim \Whc(\pi) \gest \dim \Whc(\Theta(\chi))$ for every such $\pi$ as well.
\end{cor}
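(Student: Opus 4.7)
The plan is to reduce the inequality to a very clean statement about free splitting orbits. Equation \eqref{E:dWT} established in the proof of Theorem \ref{T:reg-ps} already computes the right-hand side explicitly:
$$\dim \Whc(\Theta(\chi))_\mca{O} = \begin{cases} 1 & \text{if $\mca{O}$ is a free $W$-orbit}, \\ 0 & \text{otherwise.} \end{cases}$$
So the inequality is trivial for any non-free orbit, and the entire content of the corollary reduces to showing that $\dim \Whc(\pi)_\mca{O} \gest 1$ whenever $\mca{O}$ is a free splitting $W$-orbit and $\pi$ is Iwahori-spherical.

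For such a free splitting orbit, Corollary \ref{O-free} gives an isomorphism $\mca{V}^I_\mca{O} \simeq \mca{H}_I$ of $\mca{H}_I$-modules. Feeding this into Definition \ref{D:O-Wh}, I would write
$$\Whc(\pi)_\mca{O} = \Hom_{\mca{H}_I}(\mca{V}^I_\mca{O}, \check{\pi}^I) \simeq \Hom_{\mca{H}_I}(\mca{H}_I, \check{\pi}^I) \simeq \check{\pi}^I.$$
Since $\pi$ is irreducible and Iwahori-spherical, $\pi^I \ne 0$, and admissibility yields $\dim \check{\pi}^I = \dim \pi^I \gest 1$, giving the required lower bound.

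For the final ``in particular'' assertion, under the hypothesis that every orbit in $\msc{X}_{Q,n}$ is splitting, we sum the orbit-wise inequality over the decomposition $\Wh_\psi(-) = \bigoplus_{\mca{O}} \Whc(-)_\mca{O}$ to conclude $\dim \Whc(\pi) \gest \dim \Whc(\Theta(\chi))$.

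I do not anticipate any serious obstacle: the real work has already been done in proving Theorem \ref{GGProP}, Corollary \ref{O-free}, and the computation of $\dim \Whc(\Theta(\chi))_\mca{O}$ inside Theorem \ref{T:reg-ps}. The only mild point to be careful about is invoking admissibility of $\pi$ to identify $\dim \check{\pi}^I$ with $\dim \pi^I$, which is standard and causes no difficulty here.
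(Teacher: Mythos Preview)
Your proposal is correct and follows essentially the same approach as the paper: reduce via \eqref{E:dWT} to free splitting orbits, invoke Corollary \ref{O-free} to identify $\mca{V}^I_\mca{O}\simeq \mca{H}_I$, and conclude $\dim \Whc(\pi)_\mca{O}=\dim \check{\pi}^I\gest 1$. Your added remark on admissibility to pass from $\check{\pi}^I$ to $\pi^I$ is a harmless extra justification.
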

\begin{proof}
Let $\pi$ be any Iwahori-spherical representation of $\wt{G}$. From \eqref{E:dWT}, we see that $\dim \Whc(\Theta(\chi))_\mca{O}=1$ if and only if $\mca{O} \subset \msc{X}_{Q,n}$ is a free $W$-orbit. In this case,
$$\mca{V}^I_\mca{O} \simeq \mca{H}_I$$
as $\mca{H}_I$-module. It gives that 
$$\dim \Whc(\pi)_\mca{O} = \dim \Hom_{\mca{H}_I}(\mca{H}_I, \check{\pi}^I) \gest 1$$
and thus the result follows.
\end{proof}

Corollary \ref{C:lbd} in particular applies to Kazhdan--Patterson covers and Savin covers by Example \ref{E:KPS}. In fact, for $G=\GL_r$, we expect the inequality in Corollary \ref{C:lbd} holds for every irreducible $\pi \in \Irrg(\wt{G})$ (not necessarily Iwahori-spherical), which however may fail for general $\wt{G}$. For more discussion, see \cite[Page 339]{Ga6}.

\begin{rmk}
First, retaining the notation $\pi(w, \chi) \subset I({}^w \chi)$ as in the proof of Theorem \ref{T:reg-ps}, we can define the space $\Wh_\psi(\pi(w, \chi))_\mca{O}^\sharp$ as the image of $T(w, \chi)_{\psi, \mca{O}}^\sharp$. If we assume Conjecture \ref{C:iden}, then $\Wh_\psi(\pi(w, \chi))_\mca{O} \simeq \Wh_\psi(\pi(w, \chi))_\mca{O}^\sharp$ for every orbit $\mca{O} \subset \msc{X}_{Q,n}$; in particular \cite[Conjecture 4.7]{Ga6} holds. In this case, we have
$$\dim \Whc(\pi_S)_\mca{O} = \dim \Whc(\pi_S)_\mca{O}^\sharp = \angb{\sigma_S}{\sigma^\msc{X}_\mca{O}}_W,$$
where the second equality follows from \cite[Theorem 6.6]{Ga6}. We note that the proof in \cite{Ga6} actually applies to all $0$-persistent covers, and thus we expect that Theorem \ref{T:reg-ps} holds for all orbits $\mca{O} \subset \msc{X}_{Q,n}$ as long as $\wt{G}$ is a $0$-persistent cover.

Second, if we consider an orbit $\mca{O}$ with the S-property (see Definition \ref{D:S-ppty}) and assume further that $\mu_{\hat{y}}|_{ \HH_{\tilde{W}_{\aff, y}} }$ is nontrivial, then the proof of Theorem \ref{T:reg-ps} applies in the same way for such $\mca{O}$. In view of the first remark above, it is plausible that for $0$-persistent covers every orbit satisfies these two assumptions, i.e., the S-property and non-triviality of $\mu_{\hat{y}}|_{ \HH_{\tilde{W}_{\aff, y}} }$.
\end{rmk}

\section{Unitary unramified genuine principal series} \label{S:uniPS}
In this section, we assume that $G$ is almost simple and simply-connected. Let $\wt{G}$ be the $n$-fold cover of $G$ associated with
$$Q(\alpha^\vee)=1$$
for any short coroot $\alpha^\vee$. The general case of $Q(\alpha^\vee)$ imposes no extra difficulties, except for notational complications.

\subsection{$R$-group and decomposition of $I(\chi)$}
Consider a unitary unramified principal series $I(\chi)$. Let $R_\chi \subset W_\chi$ be the R-group associated to $I(\chi)$ satisfying 
$$\C[R_\chi] \simeq \text{End}(I(\chi)).$$
One has a natural correspondence between $\text{Irr}(R_\chi)$ and the constituents $\Pi(\chi)$ of $I(\chi)$:
$$\Irr(R_\chi) \longleftrightarrow  \Pi(\chi), \quad \sigma \leftrightarrow \pi_\sigma,$$
which is normalised such that $\pi_{\mbm{1}}$ is the unique unramified constituent of $I(\chi)$. It is known that $R_\chi$ is abelian (see \cite{Luo3}), and therefore the decomposition of $I(\chi)$ is multiplicity-free. The nontrivial $R_\chi$ is given in Table 3.

% \begin{table}[!htbp]  \label{T3}
\begin{table}[H]  \label{T3}
\caption{Nontrivial $R_\chi$}
\vskip 5pt
\begin{tabular}{|c|c|c|c|c|c|c|c|c|c|c|}
\hline
 & $A_r$  &  $B_r$ & $C_r$  & $D_r, r $ even &  $D_r, r $ odd  & $E_6$  &  $E_7$  \\
\hline
$ R_\chi $ & $\Z/d\Z, d|(r+1)$ & $\Z/2\Z$  & $\Z/2\Z$ & $\Z/2\Z$ or $(\Z/2\Z)^2$  & $\Z/2\Z$ or $\Z/4\Z$  & $\Z/3\Z$ & $\Z/2\Z$  \\ 
\hline
\end{tabular}
\end{table}
\vskip 10pt

For every $w \in R_\chi$, we have a well-defined isomorphism
$$\msc{A}(w, \chi)= \gamma(w, \chi) \cdot T(w,\chi):  I(\chi) \longrightarrow  I(\chi),$$
where  $\gamma(w, \chi)$ is the gamma-factor associated with $w$ and $\chi$, i.e.,
$$\gamma(w, \chi)^{-1}= \prod_{\alpha \in \Phi_\w}   \frac{1 - q^{-1} \chi_\alpha^{-1}  }{ 1-\chi_\alpha }.$$
More explicitly, one has
$$\msc{A}(w, \chi)|_{\pi_\sigma} = \sigma(w) \cdot \text{id}$$
for every irreducible constituent $\pi_\sigma \subset I(\chi)$. It induces an isomorphism
$$\msc{A}(w, \chi)_\psi:  \Whc(I(\chi)) \longrightarrow  \Whc( I(\chi) ),$$
where $\dim \Whc(I(\chi)) =\val{ \msc{X}_{Q,n} }$. This gives a $\val{\msc{X}_{Q,n}}$-dimensional representation 
$$\sigma^\Whc:   R_\chi  \longrightarrow   \GL( \Whc(I(\chi)) )$$
of $R_\chi$ given by 
$$\sigma^\Whc(w):=  \msc{A}(w, \chi)_\psi.$$
Indeed, the proof in \cite{Ga7} for $\sigma^{ \Wh^\std_{{}^{w_G \rho}\psi} }$ afforded by $\Wh_{{}^{w_G\rho}\psi}(I(\chi))^\std$ relies only on the fact that $R_\chi$ is abelian, thus it applies to $\Wh_\psi(I(\chi))$ here. 

For every $W$-orbit $\mca{O} \subset \msc{X}_{Q,n}$, we have from restriction
$$\msc{A}(w,  \chi)_{\psi, \mca{O}}: \Whc(I({}^w\chi))_{\mca{O}} \longrightarrow \Whc(I(\chi))_{\mca{O}}$$
 This gives rise to the following:
\begin{enumerate}
\item[$\bullet$] a subrepresentation
$$\sigma_{\mca{O}}^\Whc:  R_\chi \longrightarrow  \GL(\Whc(I(\chi))_{\mca{O}}), \quad \sigma_{\mca{O}}^\Whc(w):= \msc{A}(w,  \chi)_{\psi, \mca{O}}$$
such that the decomposition
$$\sigma^\Whc = \bigoplus_{ \mca{O} \subset \msc{X}_{Q,n}  } \sigma_{\mca{O}}^\Whc$$
holds;
\item[$\bullet$] for every $\pi_\sigma \subset I(\chi)$ the decomposition
$$\Whc(\pi_\sigma)= \bigoplus_{ \mca{O} \subset \msc{X}_{Q,n} }  \Whc(\pi_\sigma)_{\mca{O}},$$
where $\Whc(\pi_\sigma)_{\mca{O}}$ is isomorphic to the subspace of $\Whc(I(\chi))_\mca{O}$ consisting of Whittaker functionals of $I(\chi)$ restricted to $\pi_\sigma$.
\end{enumerate}
It follows from Fourier inversion (cf. \cite[Theorem 5.6]{Ga7}) that for every $\sigma\in \Irr(R_\chi)$ we have
$$\dim \Whc(\pi_\sigma)_{\mca{O}_y} = \angb{\sigma}{ \sigma^\Whc_{\mca{O}_y} }_{R_\chi}$$
for every orbit $\mca{O}_y \subset \msc{X}_{Q,n}$. We thus get
$$\dim \Whc(\pi_\sigma) = \angb{\sigma}{ \sigma^\Whc }_{R_\chi}$$
for every $\pi_\sigma \subset I(\chi)$.
The representation $\sigma^\Whc$ is in general difficult to compute. Indeed, one has
$$\sigma_{\mca{O}_y}^\Whc(w) = \gamma(w, \chi)^{-1} \cdot \text{Tr}(T(w, \chi)_{\psi, \mca{O}}),$$
and $T(w, \chi)_{\psi, \mca{O}}$ is not easily computable.

If $\wt{G}$ is a very saturated cover of  $G$, then it was speculated (see \cite{Ga7, Ga8}) that 
$$\sigma^{{\rm Wh}^\std_{{}^{w_G\rho}\psi}} \simeq \sigma_{[-\rho]}^\msc{X},$$
the second of which is more accessible for computation. The goal of this section is to prove an analogue of such equality for $\psi$ (see Theorem \ref{T:uni-ps} below). Moreover, as a consequence of our discussion, we also prove a result on the variation of Whittaker dimensions of representations inside the same $L$-packet for a unitary unramified representation, see Corollary \ref{C:Wh-equi}. This latter result was also speculated in \cite[Conjecture 5.7]{GSS3}.

% \subsubsection{The $\mca{H}_I$-module $\pi_\sigma^I$}

\subsection{Explicit Whittaker dimension of $\pi_\sigma$}
Henceforth, we assume that $G$ is almost simple and simply-connected, and that $\wt{G}^{(n)}$ is a very saturated cover of $G$ with
$Q(\alpha^\vee)=1$ for any short simple coroot $\alpha^\vee$. We have in this case
$$Y_{Q,n} = Y_{Q,n}^{sc} = n \cdot Y^{sc}.$$
and
$$\wt{G}^\vee \simeq G^\vee.$$
The modified affine Weyl group is
$$\tilde{W}_{\aff} = Y_{Q,n}^{sc} \rtimes W = (nY) \rtimes W,$$
which acts on $Y \subset Y\otimes \R$ naturally. For any $y \in Y$, we have
$$\tilde{W}_{\aff, y} \subset \tilde{W}_\aff, $$
the stabilizer subgroup of $y$. 

For every root $\alpha$ and $k\in \Z$, we have
$$w_{\alpha, k}  = (k\alpha^\vee, w_\alpha) \in W_{\aff}= Y \rtimes W,$$
where $w_\alpha(y) = y - \angb{y}{\alpha} \alpha^\vee$ the usual reflection. Similarly, we write
$$\tilde{w}_{\alpha, k} =  (k\alpha_{Q,n}^\vee, w_\alpha) =  (k \cdot n \alpha^\vee, w_\alpha)   \in \tilde{W}_{\aff}= Y_{Q,n} \rtimes W.$$
We have
$$w_\alpha = \tilde{w}_{\alpha, 0} = w_{\alpha, 0}$$
for every root $\alpha \in \Phi$.

Let $\alpha^\dag  \in \Phi^+$ be the highest root of $\Phi$. Then $\tilde{\alpha}^\dag = \alpha^\dag/n \in \Phi_{Q,n} = \Phi/n$ is the highest root of the modified root system. 
Recall the set
$$S_\aff= \set{\tilde{w}_{\alpha^\dag, 1}} \cup \set{w_{\alpha_i}: \ 1\lest i \lest r}$$
of generators of $\tilde{W}_\aff$. For every $y\in Y$ there exists a proper subset $S_{\aff, y} \subsetneq S_\aff$ such that
$$\tilde{W}_{\aff, y} \simeq W(S_{\aff, y}),$$
where the right hand sides denotes the subgroup of $\tilde{W}_\aff$ generated by $S_{\aff, y}$.
The surjective map
$$\eta: \tilde{W}_\aff \onto W$$
restricts to give an isomorphism 
$$\tilde{W}_{\aff, y} \simeq \eta(\tilde{W}_{\aff, y}).$$
If we set $\alpha_0:=-\alpha^\dag$ and define as usual
$$\Delta_\aff = \set{\alpha_0} \cup \Delta,$$
then it is clear that
$$\eta(\tilde{W}_{\aff, y}) = \langle w_\alpha:  \alpha \in \Delta_{\aff, y} \rangle \subset W$$ 
for a subset $\Delta_{\aff, y} \subsetneq \Delta_\aff$.

The following result will play an important role in the proof of Theorem \ref{T:uni-ps} below.
\begin{prop} \label{P:uni-key}
Let $\wt{G}^{(n)}$ be a very saturated cover of an almost simple simply-connected group $G$ such that $Q(\alpha^\vee)=1$ for any short simple coroot $\alpha^\vee$. Let $y\in Y$. Assume $\alpha_0 \in \Delta_{\aff, y}$. Then 
$$\eta(\tilde{W}_{\aff, y}) \cap R_\chi = \set{1}$$
for any unitary unramified genuine character $\chi$ of $Z(\wt{T})$.
\end{prop}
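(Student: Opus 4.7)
The plan is to reduce Proposition \ref{P:uni-key} to a finite computation in $W$, combining the Borel--de Siebenthal classification of reflection subgroups with the explicit list of possible $R$-groups from Table 3.

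First, I would use the very saturated hypothesis to identify $\eta(\tilde{W}_{\aff, y})$ explicitly. Lemma \ref{L:v-sat} gives $Y_{Q,n} = nY^{sc} = nY$ (since $G$ is simply-connected), so $\tilde{W}_\aff = nY \rtimes W$ acts on $Y \otimes \R$ as an affine reflection group. The stabilizer $\tilde{W}_{\aff, y}$ is then a finite Coxeter group generated by the reflections $\tilde{w}_{\alpha, k}$ through walls $\alpha(v) = kn$ passing through $y$, and $\eta$ restricts to an isomorphism
$$\tilde{W}_{\aff, y} \xrightarrow{\sim} W(\Phi_y), \quad \Phi_y := \set{\alpha \in \Phi : n \mid \alpha(y)}.$$
Geometrically, $W(\Phi_y)$ is the Weyl group of the pseudo-Levi centralizer of $\xi_y = \exp(2\pi i y/n)$ in the adjoint dual group $\wt{G}^\vee \simeq G^\vee$. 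The hypothesis $\alpha_0 \in \Delta_{\aff, y}$ translates to $\alpha^\dag(y) = n$, hence $\pm\alpha^\dag \in \Phi_y$ and $w_{\alpha^\dag} \in W(\Phi_y)$.

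Next, I would recall the structure of $R_\chi$. Realized as minimal-length coset representatives of $W_\chi^0 \backslash W_\chi$, it satisfies $R_\chi \cap W_\chi^0 = \set{1}$; in particular, $R_\chi$ contains no non-trivial reflection. By Table 3, $R_\chi$ is trivial for $G$ of types $F_4$, $G_2$, and $E_8$ (whose very saturated covers make the conclusion immediate), and otherwise falls in the short list: $\Z/d$ for $A_r$ ($d \mid r+1$); $\Z/2$ for $B_r$, $C_r$, $E_7$; $\Z/2$ or $(\Z/2)^2$ for even-rank $D_r$; $\Z/2$ or $\Z/4$ for odd-rank $D_r$; and $\Z/3$ for $E_6$.

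The remaining core claim $R_\chi \cap W(\Phi_y) = \set{1}$ I would verify by a case analysis via the Borel--de Siebenthal description of $W(\Phi_y)$ through subsets of the extended Dynkin diagram $\Delta_\aff$. The hypothesis $\alpha_0 \in \Delta_{\aff, y}$ singles out those proper subsets that contain the affine node. For each irreducible type I would combine (a) the explicit realization of non-trivial elements of $R_\chi$ inside $W$ (rotations of order dividing $r+1$ in type $A_r$; the longest element $-1$ in types $B_r, C_r, E_7$ and even-rank $D_r$; a diagram-type element in odd-rank $D_r$ and $E_6$) with (b) the explicit list of proper sub-Dynkin diagrams containing $\alpha_0$; a short inspection in each case confirms that no non-trivial element of $R_\chi$ lies in $W(\Phi_y)$. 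The main obstacle will be precisely this type-by-type verification, particularly for $D_r$ with $r$ odd (where $R_\chi$ can be as large as $\Z/4$, and its non-trivial elements have a more intricate structure tied to an outer diagram automorphism) and for the exceptional $E_6, E_7$. A uniform conceptual proof is elusive because $R_\chi$ and $W(\Phi_y)$ depend delicately on $\chi$ and $y$ respectively, while the statement must hold simultaneously for all unitary unramified $\chi$.
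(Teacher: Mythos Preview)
Your overall strategy matches the paper's: both reduce to a type-by-type verification combining the explicit list of $R$-groups with the Borel--de Siebenthal description of $\eta(\tilde{W}_{\aff,y})$ via proper subsets of $\Delta_\aff$ containing $\alpha_0$. The paper likewise derives, for each type, numerical constraints on the coordinates of $y$ from $\alpha_0\in\Delta_{\aff,y}$ together with the further nodes forced into $\Delta_{\aff,y}$ by the $W$-conjugacy class of the $R_\chi$-generator, and shows these contradict the coprimality conditions defining a very saturated cover ($\gcd(n,r+1)=1$ for $A_r$, $n$ odd for $B_r,C_r,D_r$, $3\nmid n$ for $E_6$, $n$ odd for $E_7$).

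There is, however, a concrete error in your description of $R_\chi$ that would derail the case analysis. In types $B_r$, $C_r$, $E_7$, and even-rank $D_r$ the generator of $R_\chi\simeq\Z/2\Z$ is \emph{not} the longest element $-1$: up to $W$-conjugacy it is a short product of mutually commuting simple reflections, for instance $w_{\alpha_1}w_{\alpha_3}\cdots w_{\alpha_{r-1}}$ for $B_r$ with $r$ even, or $w_{\alpha_2}w_{\alpha_5}w_{\alpha_7}$ for $E_7$ (see Keys \cite{Key2}). The argument hinges on the $W$-conjugacy class (equivalently, the cycle type) of this specific element, which dictates which subsets of $\Delta_\aff$ can support a conjugate; testing against $-1$ would impose the wrong constraints and the contradictions with the parity of $n$ would not materialize. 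As a side remark, for $C_r$ the paper observes the case is in fact vacuous: $\langle y,\tilde\alpha^\dag\rangle = 2y_1/n = 1$ has no integer solution when $n$ is odd, so no $y\in Y$ satisfies $\alpha_0\in\Delta_{\aff,y}$.
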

\begin{proof}
We verify this from a case by case checking.

First, for type $A_r$, i.e., $G=\SL_{r+1}$, the cover $\wt{\SL}_{r+1}^{(n)}$ is  very saturated  if and only if
$$\gcd(n, r+1)=1.$$
Identifying $W\simeq S_{r+1}$, we know that $R_\chi$ is $W$-conjugate to $\Z/d\Z$ generated by
$$w:=(1, 2, ..., d)(d+1, d+2, ..., 2d) .... (kd + 1, ..., r+1)$$
for some $d$ with $r+1=dk$. Note that $w$ is the Coxeter element of the parabolic Weyl subgroup $\prod_{i=1}^k S_d \subset W$.  Since a conjugate of $w$ lies in $W(\Delta_{\aff, y})$, it follows that the set $\Delta_{\aff, y}$ contains $k$-many connected components of the extended Dynkin diagram, each of which is of size at least $d-1$ and one of which contains $\alpha_0$. 
Writing $y=\sum_{i=1}^{r+1} y_i e_i$ with $y_i \in \Z$ and $\sum_i y_i =0$, together with the two equalities
$$\angb{y}{\tilde{\alpha}^\dag} = (y_1 - y_{r+1})/n = 1 \text{ and } \sum_i y_i =0,$$
this shows that  there exist distinct $1\lest i_j \lest r+1$ with $1\lest j \lest k$ such that
$$d\cdot \left( \sum_{1\lest j \lest k} y_{i_j} \right) = n.$$
Since $\gcd(r+1, n) =1$ and $d|(r+1)$, it follows that $d=1$ and thus $w=1$.

Second, for type $B_r$, the cover $\wt{\Spin}_{2r+1}^{(n)}$ is very saturated if and only if $n$ is odd. Write 
$$y=\sum_{i=1}^r y_i e_i \in Y \text{ with } y_i \in \Z \text{ and } 2|(\sum_i y_i).$$
Depending on the parity of $r$, we have the following two cases of $R_\chi$:
\begin{enumerate}
\item[--] If $r$ is even, then $R_\chi \simeq \Z/2\Z$ is $W$-conjugate to the group generated by
$$w=(12)(34)... (r-1, r) = w_{\alpha_1} w_{\alpha_3} ... w_{\alpha_{r-1}}.$$
Suppose $w \in \eta(\tilde{W}_{\aff, y})$ with $\alpha_0 \in \Delta_{\aff, y}$. Now if $\alpha_1 \in \Delta_{\aff, y}$, then the two equalities
$$\angb{y}{\tilde{\alpha}^\dag} = 1, \quad \angb{y}{\alpha_1} = 0$$
gives that $y_1 = y_2 = n/2$, which is a contradiction since $n$ is odd. Thus, we see that $\alpha_1 \notin \Delta_{\aff, y}$ and this gives that
$$\set{\alpha_0, \alpha_3, \alpha_5, ..., \alpha_{r-1}} \subset \Delta_{\aff, y}.$$
(In fact, the above inclusion also follows from \cite{Osh}.) It then follows that
$$y_1 + y_2 = n \text{ and } y_i= y_{i+1} \text{ for all odd } i \in [3, r-1].$$
However, this contradicts the fact that $\sum_i y_i$ is even.
\item[--] If $r$ is odd, then $R_\chi \simeq \Z/2\Z$ and $W$-conjugate to the group generated by
$$w=w_{\alpha_1} w_{\alpha_3} ... w_{\alpha_{r-2}} w_{\alpha_r}.$$
The argument is similar to the case above and we have $w \notin \eta(\tilde{W}_{\aff, y})$.
\end{enumerate}

Third, covers of $G$ of type $C_r$ are in fact simpler. More precisely, the cover $\wt{\Sp}_{2r}^{(n)}$ is very saturated if and only if $n$ is odd.  We claim that there is no $y \in Y$ such that $\alpha_0 \in \Delta_{\aff, y}$. Indeed, in the standard coordinates, one has
$$\tilde{\alpha}^\dag = 2e_1/n.$$
Thus, the equality $\angb{y}{\tilde{\alpha}^\dag} =1$ has no solution for $y\in Y$. Hence, the desired equality holds vacuously.

Fourth, for type $D_r$, the cover $\wt{\Spin}_{2r}^{(n)}$ is very saturated if and only if $n$ is odd. Using the Bourbaki notations, we could write every $y\in Y$ as
$$y=\sum_{i=1}^r y_i e_i \text{ with } y_i \in \Z \text{ and } 2|\sum_i y_i.$$
We have $\alpha_0 = -\alpha^\dag = - (e_1 + e_2)$. Now according to the parity of $r$ and the possible $R_\chi$, we have the following cases:
\begin{enumerate}
\item[--] $r$ is odd and $R_\chi \simeq \Z/2\Z$ generated by $w:=w_{\alpha_{r-1}} w_{\alpha_r}$. If $w$ is $W$-conjugate to an element in $\eta(\tilde{W}_{\aff, y})$, then it is easy to see (cf. \cite{DyMi} as well)
$$\alpha_1 \in \Delta_{\aff, y}$$
as well. Note that $\alpha_0\in \Delta_{\aff, y}$ by assumption. These two imply that $y_1 = y_2 = n/2$, which is a contradiction. 
\item[--] $r$ is odd and $R_\chi \simeq \Z/4\Z$ generated by
$$w:=w_{\alpha_1} w_{\alpha_3} ... , w_{\alpha_{r-2}} \cdot w_{\alpha_{r-1}} w_{\alpha_r}.$$
If $w$ is $W$-conjugate to an element in $\eta(\tilde{W}_{\aff, y})$, then we get (see \cite{DyMi})
$$\set{\alpha_0, \alpha_1, \alpha_2, \alpha_4, \alpha_6, ..., \alpha_{r-5}, \alpha_{r-3}, \alpha_{r-1}} \subset \Delta_{\aff, y}$$
or $\set{\alpha_0, \alpha_1, \alpha_2, \alpha_4, \alpha_6, ..., \alpha_{r-5}, \alpha_{r-3}, \alpha_{r-1}} \subset \Delta_{\aff, y}$. In either case, we have $\set{\alpha_0, \alpha_1} \subset \Delta_{\aff, y}$, which gives again $y_1= y_2 =n/2$ and is a contradiction.
\item[--] $r$ is even and any nontrivial $R_\chi$ is isomorphic to either $\Z/2\Z$ or $\Z/2\Z \times \Z/2\Z$. If $R_\chi \simeq \Z/2\Z$, it is generated by either $w_{\alpha_{r-1}} w_{\alpha_r}$ or by
$$w:= w_{\alpha_1} w_{\alpha_3} w_{\alpha_5} ... w_{\alpha_{r-3}} w_{\alpha_{r-1}}.$$
If $R_\chi \simeq \Z/2\Z\times \Z/2\Z$, then 
$$R_\chi \simeq \langle w, w_{\alpha_{r-1}} w_{\alpha_r}  \rangle.$$
For either such $R_\chi$, the argument is similar to the case when $r$ is odd, and there exists no $y\in Y$ satisfying both $-\alpha_0 \in \Delta_{\aff, y}$ and $\eta(\tilde{W}_{\aff, y}) \cap R_\chi \ne \set{1}$.
\end{enumerate}

Fifth, a cover of $\wt{E}_6^{(n)}$ is very saturated if and only if $3\nmid n$. The nontrivial $R_\chi \simeq \Z/3\Z$ is generated by 
$$w:= w_1 w_3 \cdot w_6 w_5$$
in the notations of Bourbaki. If $\eta(\tilde{W}_{\aff, y}) \cap R_\chi \neq\set{1}$, then (see \cite[Theorem 3.4]{DyMi}) we have either
$$\set{\alpha_0, \alpha_2, \alpha_5, \alpha_6} \subset \Delta_{\aff, y} \text{ or } \set{\alpha_0, \alpha_2, \alpha_1, \alpha_3} \subset \Delta_{\aff, y}.$$
In the first case, one has
$$\angb{y}{\alpha^\dag}=n \text{ and } \angb{y}{\alpha_i}=0 \text{ for } i=2, 5, 6.$$
In terms of the coordinates $e_i$, we can write $y=\sum_{i=1}^8 y_i e_i$ with $y_i\in \Z/2$ and $y_6=y_7=-y_8$. Then above conditions give that
$$3(y_3 + y_8) = 2n$$
and  in particular $3|n$, which is a contradiction. The second case is similar, and thus $\eta(\tilde{W}_{\aff, y}) \cap R_\chi = \set{1}$.

Lastly, we consider cover $\wt{E}_7^{(n)}$ of the simply-connected $E_7$, which is very saturated if and only if $n$ is odd.
Again, in the Bourbaki notations, the nontrivial $R_\chi \simeq \Z/2\Z$ is generated by
$$w:=w_{\alpha_2} w_{\alpha_5} w_{\alpha_7}.$$
It then follows from \cite{DyMi} (see also \cite{Osh}) that if $w$ is $W$-conjugate to an element in $\eta(\tilde{W}_{\aff, y})$, then necessarily 
$$\set{\alpha_0, \alpha_2, \alpha_3} \subset \Delta_{\aff, y}.$$
If we write $y = \sum_{i=1}^7 k_i \alpha_i^\vee$ with $k_i\in \Z$, then this gives
$$k_1 = n, k_2=0, k_3=n/2,$$
which is a contradiction since $n$ is odd.

All the above completes the proof.
\end{proof}

For such $\wt{G}$, the unique distinguished genuine character of $Z(\wt{T})$ is used to construct a natural algebra isomorphism (see \cite[\S 15.4]{GG})
$$\mca{H}(\wt{G}, I) \simeq \mca{H}(G, I).$$
We continue to write $\mca{H}_I$ when there is no chance of confusion. 
Also, $\mca{H}_I$ can be identified with the affine Hecke algebra $\HH_{\tilde{W}_\aff}$ associated with the affine Weyl group $W_\aff = Y_{Q,n}^{sc} \rtimes W$ of the dual group $\wt{G}^\vee$.
By Borel's theorem, every Iwahori-spherical representation $\pi$ of $\wt{G}$ corresponds uniquely to a finite-dimensional irreducible $\mca{H}_I$-module.  

If a linear $G$ has connected center, then a Langlands correspondence was established by Kazhdan and Lusztig \cite{KL2} between the  ireducible $\mca{H}_I$-modules and certain Kazhdan--Lusztig parameters of triples 
$(\tau, u,  \rho)$. Such a correspondence was extended by Reeder \cite{Ree4} to include the case when the center of $G$ may not be connected. In fact, the work in loc. cit. also includes ramified $I(\chi)$ when $G$ is assumed to have connected center. In recent works by Aubert--Baum--Plymen--Solleveld (for example \cite{ABPS17}), a Langlands correspondence for irreducible constituents of general $I(\chi)$ was investigated and a link to the geometric side of the parameter space in terms of the ``extended quotients" was established.

Thus one has a local Langlands correspondence
$$\mca{L}: \Irr_\epsilon(\wt{G})^I \longrightarrow \Irr(\mca{H}_I) \longrightarrow \set{(\tau, u, \rho)}, \quad \pi\mapsto \pi^I \mapsto (\tau, u, \rho)$$
from the Iwahori-spherical representations to the set of Kazhdan--Lusztig--Reeder (KLR) parameters. Every KLR parameter that arises in the correspondence above satisfies the following:
\begin{enumerate}
\item[--] $\tau \in \wt{T}^\vee$ is a semisimple element in the dual torus inside $\wt{G}^\vee$;
\item[--] $u \in \wt{G}^\vee$ is a nilpotent element such that $\tau u \tau^{-1} = q^{-1}\cdot u$;
\item[--] $\rho \in \Irr_{\rm geom}(\pi_0(\wt{G}^\vee_{\tau, u}))$ is a certain irreducible representation of connected component group of the mutual centralizer subgroup $\wt{G}^\vee_{\tau, u} \subset \wt{G}^\vee$ that appears in the homology $H_*(\mfr{B}^{\tau, u}_{\wt{G}^\vee}, \C)$.
\end{enumerate}
Here $\mfr{B}_{\wt{G}^\vee}^{\tau, u}$ denotes the variety of Borel subgroups of $\wt{G}^\vee$ containing both $\tau$ and $u$. In particular, the irreducible constituent $\pi_\sigma \subset I(\chi)$ has a KLR parameter
$$(\tau = s_\chi, u=0, \rho=\sigma),$$
where $s_\chi \in \wt{G}^\vee$ is the Satake parameter associated to $I(\chi)$.

Before stating our main result on the $\psi$-Whittaker dimension of $\pi_\sigma$, we first recall a homomorphism (depending on the half sum $\tilde{\rho} := n\rho$ of positive roots of $\wt{G}^\vee$) 
$$\zeta_{\tilde{\rho}}: R_\chi \longrightarrow \C^\times$$ given as follows.
Consider the simply-connected cover  $\wt{G}_{sc}^\vee$ of $\wt{G}^\vee$ as in
$$\begin{tikzcd}
Z \ar[r, hook] & \wt{G}^\vee_{sc} \ar[r, two heads] & \wt{G}^\vee.
\end{tikzcd}$$
Attached to $I(\chi)$ is the Satake parameter $s_\chi \in \wt{G}^\vee$.  For any $w\in R_\chi \subset W_\chi$ and any lifting $s_\chi' \in \wt{G}^\vee_{sc}$ of $s_\chi$, one has
$$w(s_\chi')/s_\chi' \in Z,$$
which is independent of the particular lifting $s_\chi'$ chosen. Since
$$Z = \Hom(P_{Q,n}/Y_{Q,n}, \C^\times) \text{ with } P_{Q,n}=nP,$$
this gives a natural pairing
$$\phi: (P_{Q,n}/Y_{Q,n}) \times R_\chi \longrightarrow \C^\times$$
given by
$$\phi(z, w): =\left( \frac{w(s_\chi')}{ s_\chi' } \right)(z) =(w(z) - z)(s_\chi) \in \C^\times.$$
Now we just set
\begin{equation} \label{D:z-rho}
\zeta_{\tilde{\rho}}(-):= \phi(\tilde{\rho}, -).
\end{equation}
Since $2\tilde{\rho} \in Y_{Q,n}^{sc} \subset Y_{Q,n}$, we see that
\begin{equation} \label{E:zeta-quad}
\zeta_{\tilde{\rho}}^2 = \mbm{1},
\end{equation}
that is, $\zeta_{\tilde{\rho}}$ is a (possibly trivial) quadratic character.

\begin{thm} \label{T:uni-ps}
Let $\wt{G}$ be a very saturated cover of an almost simple simply-connected  $G$ associated with $Q(\alpha^\vee)=1$ for any short simple coroot $\alpha^\vee$. Let $I(\chi)$ be a unitary $(K, s_K)$-unramified genuine principal series of $\wt{G}$. Let $\mca{O} \subset \msc{X}_{Q,n}$ be a $W$-orbit satisfying the S-property (see Definition \ref{D:S-ppty}). Then, as representations of $R_\chi$ one has
$$\sigma_\mca{O}^\Whc \simeq  \zeta_{\tilde{\rho}}^{-1} \otimes \sigma_\mca{O}^\msc{X};$$
equivalently,
$$\dim \Whc(\pi_\sigma)_\mca{O} =  \angb{\sigma \otimes \zeta_{\tilde{\rho}} }{  \sigma_\mca{O}^\msc{X} }_{R_\chi}$$ 
for every $\sigma \in \Irr(R_\chi)$.
\end{thm}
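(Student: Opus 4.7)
The plan is to combine the S-property of $\mca{O}$ with the Kazhdan--Lusztig--Reeder geometric realization of the Iwahori-fixed vectors of a unitary unramified principal series, adapting Reeder's strategy from the linear case to the covering setting.

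First, by Definition \ref{D:S-ppty} pick a representative $y \in Y$ of $\mca{O}$ and a character $\mu_{\hat{y}}: \mca{H}_{I,y} \to \C^\times$ such that $\mca{V}^I_\mca{O} \simeq \mu_{\hat{y}} \otimes_{\mca{H}_{I,y}} \mca{H}_I$. Frobenius reciprocity then gives $\Whc(\pi_\sigma)_\mca{O} \simeq \Hom_{\mca{H}_{I,y}}(\mu_{\hat{y}}, \check{\pi}_\sigma^I)$, and tracking the intertwiner action $\msc{A}(w,\chi)_\psi$ together with the duality $\check{\pi}_\sigma \leftrightarrow \sigma^{-1}$ for abelian $R_\chi$ yields an $R_\chi$-equivariant identification
\[
\sigma_\mca{O}^\Whc \simeq \Hom_{\mca{H}_{I,y}}\bigl(\mu_{\hat{y}}, \check{I(\chi)}^I\bigr).
\]
By Fourier inversion on $R_\chi$ it then suffices to identify this right-hand side, as an $R_\chi$-module, with $\zeta_{\tilde\rho}^{-1} \otimes \sigma^\msc{X}_\mca{O}$.

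Next, invoke the Kazhdan--Lusztig--Reeder realization: at the unitary Satake parameter $s_\chi \in \wt{T}^\vee$, the $\mca{H}_I$-module $I(\chi)^I$ is canonically isomorphic to the total Borel--Moore homology $H_*(\mfr{B}^{s_\chi}_{\wt{G}^\vee}, \C)$ of the fixed-point variety $\mfr{B}^{s_\chi}_{\wt{G}^\vee} = (\wt{G}^\vee/B^\vee)^{s_\chi}$, and the conjugation action of $R_\chi = \pi_0(\wt{G}^\vee_{s_\chi})$ on this variety matches the intertwiner action on the module. Via the cellular (Bia\l ynicki--Birula/Bruhat) decomposition of $\mfr{B}^{s_\chi}_{\wt{G}^\vee}$, the homology has a natural basis of fundamental classes indexed by $W/W_\chi$, on which $\mca{H}_{I,y}$ acts through the standard Hecke-theoretic formulas. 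The central task is then to identify the $\mu_{\hat{y}}$-isotypic subspace for $\mca{H}_{I,y}$ geometrically.

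Here the argument splits according to whether the affine simple reflection appears in the stabilizer. When $\alpha_0 \notin \Delta_{\aff,y}$, the parabolic $\mca{H}_{I,y}$ is a standard finite parabolic subalgebra of $\mca{H}_W$, $\mu_{\hat{y}}$ is the sign character of Theorem \ref{Wequi}, and a direct coset count on $W/W_{\hat{y}}$ yields the permutation module $\sigma^\msc{X}_\mca{O}$. When $\alpha_0 \in \Delta_{\aff, y}$ --- the new phenomenon in the covering setting --- the parabolic $\mca{H}_{I,y}$ is affine and not contained in $\mca{H}_W$; here Proposition \ref{P:uni-key} is crucial, as it forces $\eta(\tilde{W}_{\aff,y}) \cap R_\chi = \set{1}$, so $R_\chi$ acts freely on the set of relevant cells, and they again assemble into the permutation module $\sigma^\msc{X}_\mca{O}$ without collapse. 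The twist by $\zeta_{\tilde\rho}^{-1}$ arises from the $\delta_B^{1/2}$-shift built into the KLR normalization: transporting this shift to the dual group and comparing it against the $R_\chi$-action on $\wt{T}^\vee$ via the pairing $\phi: (P_{Q,n}/Y_{Q,n}) \times R_\chi \to \C^\times$ produces precisely the character $\zeta_{\tilde\rho}$ of \eqref{D:z-rho}.

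The main obstacle will be the uniform handling of the case $\alpha_0 \in \Delta_{\aff, y}$: the standard affine-Hecke machinery does not directly apply to a non-standard affine parabolic $\mca{H}_{I,y}$, so one must rely on Proposition \ref{P:uni-key} --- whose proof was the case-by-case core of this section --- to guarantee that no spurious contributions from $R_\chi$-fixed cells spoil the final formula. This is precisely what forces the hypothesis of a very saturated cover with $Q(\alpha^\vee)=1$ on short coroots.
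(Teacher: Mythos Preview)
Your overall architecture is close to the paper's: Frobenius reciprocity from the S-property, the KLR/ABPS geometric description of $\check{\pi}_\sigma^I$ via $H_*(\mfr{B}^{s_\chi}_{\wt{G}^\vee},\C)$, and the dichotomy on whether $\alpha_0\in\Delta_{\aff,y}$ with Proposition~\ref{P:uni-key} handling the affine case. However, there is a genuine gap in your account of the character $\zeta_{\tilde\rho}$, and with it the heart of the argument is missing.

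The twist $\zeta_{\tilde\rho}$ does \emph{not} arise from a $\delta_B^{1/2}$-normalization in the KLR construction. What the paper actually does is pass to $q\to 1$ (Lusztig's reduction, together with Kato's formula and Borel's $H_*(\mfr{B}_{\wt{G}^{\vee,0}_{s_\chi}})\simeq\C[W_{s_\chi}^0]$) to obtain $(\check{\pi}_\sigma^I)_{q\to 1}\simeq\Ind_{Y_{Q,n}\rtimes R_\chi}^{Y_{Q,n}\rtimes W}(s_\chi^{-1}\rtimes\sigma)$, then applies Mackey to both sides. This reduces the theorem to a character identity on each double-coset stabilizer $W_{y,{}^w\chi}=\eta(\tilde{W}_{\aff,y})\cap R_{{}^w\chi}$: one must check that $\eta_*(\mu_y)^{-1}\otimes s^y_{{}^w\chi}={}^w\zeta_{\tilde\rho}$ there. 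When $\alpha_0\in\Delta_{\aff,y}$, Proposition~\ref{P:uni-key} makes this trivial. But when $\alpha_0\notin\Delta_{\aff,y}$ one has $\mu_y=\varepsilon_{W_y}$ and $s^y_{{}^w\chi}=\mbm{1}$, so the identity becomes $\varepsilon_W(w_1)=\zeta_{\tilde\rho}(w_1)=s_\chi(w_1(\tilde\rho)-\tilde\rho)$ for each generator $w_1$ of $R_\chi$. This is \emph{not} a coset count: it is a type-by-type computation (for $A_r$, $B_r$, $C_r$, $D_r$ with $R_\chi\simeq\Z/2,\Z/4,(\Z/2)^2$, $E_6$, $E_7$) using the explicit description of $R_\chi$ and of $\chi_\alpha$ on the relevant simple roots, and it occupies the bulk of the paper's proof. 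Your ``direct coset count'' would only produce $\sigma^\msc{X}_\mca{O}$ untwisted; the appearance of $\zeta_{\tilde\rho}$ is precisely the discrepancy between the sign character on the Gelfand--Graev side and the trivial character on the permutation side, and it has to be verified by hand.
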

\begin{proof}
Let $\mca{O} =\mca{O}_y \subset \msc{X}_{Q,n}$ be a $W$-orbit with the S-property. It suffices to prove the second equality.

Since $Y_{Q,n}=Y_{Q,n}^{sc} = nY$ for very saturated covers, it follows from Definition \ref{D:S-ppty} that
$$\dim \Wh_\psi(\pi_\sigma)_\mca{O} = \dim \Hom_{\mca{H}_I}( \mu_y \otimes_{ \mca{H}_{\tilde{W}_{\aff, y}} } \mca{H}_I, \check{\pi}_\sigma^I) = \dim \Hom_{\mca{H}_{\tilde{W}_{\aff, y}}}(\mu_y, \check{\pi}_\sigma^I|_{ \mca{H}_{\tilde{W}_{\aff, y}} }).$$
Here $\mca{H}_{\tilde{W}_{\aff, y}} \subset \mca{H}_I$ is the associated/deformed subalgebra of  the group $\tilde{W}_{\aff, y}$, and we have the $q\to 1$ operation of sending $\mca{H}_{\tilde{W}_{\aff, y}}$-modules to $\tilde{W}_{\aff, y}$-modules. Moreover, since $\tilde{W}_{\aff, y}$ is a finite Weyl subgroup of $\tilde{W}_\aff$, this operation is an isometry, see \cite[Proposition 10.11.4]{Car}. That is, we have
$$\Hom_{\mca{H}_{\tilde{W}_{\aff, y}}}(\mu_y, \check{\pi}_\sigma^I|_{ \mca{H}_{\tilde{W}_{\aff, y}} }) = \Hom_{\tilde{W}_{\aff, y}}(\mu_y, (\check{\pi}_\sigma^I|_{ \mca{H}_{\tilde{W}_{\aff, y}} })_{q\to 1}),$$
where we view $\mu_y$ as a character of $\tilde{W}_{\aff, y}$ as well. For $\mca{H}_I$-modules, one also has the operation $q\to 1$ by Lusztig's theory (see \cite{Lu-CA4}, \cite[\S 6]{Ree-00} or \cite[\S 12]{ABPS17}), which commutes with restriction, and thus
$$(\check{\pi}_\sigma^I|_{ \mca{H}_{\tilde{W}_{\aff, y}} })_{q\to 1} = (\check{\pi}_\sigma^I)_{q \to 1} |_{ \tilde{W}_{\aff, y} }.$$

Consider the homology $H_*(\mfr{B}_{\wt{G}^\vee}^{s_\chi}, \C)$ of the variety of Borel subgroups of $\wt{G}^\vee$ containing $s_\chi$. It is naturally a $ \tilde{W}_\aff \times \pi_0(Z_{\wt{G}^\vee}(s_\chi))$-module. Identifying
$$R_\chi = \pi_0(Z_{\wt{G}^\vee}(s_\chi)),$$
we can view $H_*(\mfr{B}_{\wt{G}^\vee}^{s_\chi}, \C)$ as a $\tilde{W}_\aff \times R_\chi$-module. It follows from \cite[Proposition 6.2]{Kat-83}  that
\begin{equation} \label{E:Kato}
H_*(\mfr{B}_{\wt{G}^\vee}^{s_\chi}, \C) =  \Ind_{(Y_{Q,n} \rtimes W_{s_\chi}^0)\rtimes R_\chi}^{\tilde{W}_\aff \times R_\chi} H_*(\mfr{B}_{\wt{G}^{\vee, 0}_{s_\chi}}, \C ),
\end{equation}
where
\begin{enumerate}
\item[--] $W_{s_\chi} \simeq W_\chi \subset W$ is the stabilizer subgroup of $s_\chi$ with $W_{s_\chi} = W_{s_\chi}^0 \rtimes R_\chi$;
\item[--] $\wt{G}^\vee_{s_\chi} \subset \wt{G}^\vee$ is the stabilizer subgroup of $s_\chi$,  its connected component is denoted by $\wt{G}_{s_\chi}^{\vee, 0}$ whose Weyl group is $W_{s_\chi}^0$;
\item[--] the inclusion $(Y_{Q,n} \rtimes W_{s_\chi}^0) \rtimes R_\chi \into \tilde{W}_\aff \times R_\chi$ is given by
$$(y, w, r) \mapsto ( (y, w, r) ,r),$$
where we note the canonical $(Y_{Q,n} \rtimes W_{s_\chi}^0) \rtimes R_\chi = Y_{Q,n} \rtimes W_\chi \subset \tilde{W}_\aff$;
\item[--] the action of $Y_{Q,n} \rtimes (W_{s_\chi}^0 \rtimes R_\chi)$ on $H_*(\mfr{B}_{\wt{G}^{\vee, 0}_{s_\chi}}, \C ) $ is given by $(s_\chi \otimes \rho_{\rm Spr}) \otimes \mbm{1}$, where $s_\chi$ is viewed as a character of $Y_{Q,n}$, and $W_{s_\chi}^0$ acts on $H_*(\mfr{B}_{\wt{G}^{\vee, 0}_{s_\chi}}, \C )$ by the Springer correspondence $\rho_{\rm Spr}$, and $R_\chi$ acts trivially (see \cite[Page 199]{Kat-83}).
\end{enumerate}
It follows from \cite[Theorem 9.1]{ABPS17} and its proof that as $\tilde{W}_\aff = Y_{Q,n} \rtimes W$-modules 
\begin{equation} \label{E:q1}
(\check{\pi}_\sigma^I)_{q\to 1} = \Hom_{R_\chi}(\sigma, H_*(\mfr{B}_{\wt{G}^\vee}^{s_\chi}, \C))^\vee.
\end{equation}
It is a classical result of Borel that
\begin{equation} \label{E:Borel}
H_*(\mfr{B}_{\wt{G}^{\vee, 0}_{s_\chi}}, \C ) \simeq \C[W_{s_\chi}^0]
\end{equation}
as $W_{s_\chi}^0$-representations.  Now \eqref{E:Kato} \eqref{E:q1} and \eqref{E:Borel} together give that
$$(\check{\pi}_\sigma^I)_{q\to 1} = (\Ind_{Y_{Q,n} \rtimes R_\chi}^{Y_{Q,n} \rtimes W} (s_\chi \rtimes \sigma^{-1}))^\vee = \Ind_{Y_{Q,n} \rtimes R_\chi}^{Y_{Q,n} \rtimes W} (s_\chi^{-1} \rtimes \sigma).$$

It follows
$$(\check{\pi}_\sigma^I|_{ \mca{H}_{\tilde{W}_{\aff, y}} })_{q\to 1} = \Res_{\tilde{W}_{\aff, y}} \Ind_{Y_{Q,n} \rtimes R_\chi}^{Y_{Q,n} \rtimes W} (s_\chi^{-1} \rtimes \sigma).$$
Since we need to consider the restriction  to $\tilde{W}_{\aff, y}$, which may not be a parabolic subgroup,  we ``pull-back" all the  representations to be those of subgroups $W$ as in \cite[Page 54]{Ree-00}. 

For any $w\in W$ and $\varsigma:={}^w \chi$, we have the following diagram:
$$\begin{tikzcd}
\eta(\tilde{W}_{\aff, y}) \ar[r, "{\eta^{-1}}"] & \tilde{W}_{\aff, y} \\
R_{\varsigma} \cap \eta(\tilde{W}_{\aff, y}) \ar[r, hook, "{\eta^{-1}}"] \ar[u, hook] \ar[rrr, bend right=15, "{s_\varsigma^y}"'] & (Y_{Q,n} \rtimes R_\varsigma) \cap \tilde{W}_{\aff, y}  \ar[u, hook] \ar[r, hook] & (Y_{Q,n} \rtimes R_\varsigma) \ar[r, "{s_\varsigma^{-1}}"]   & \C^\times.
\end{tikzcd}$$
Define 
$$s_\varsigma^y= s_\varsigma \circ \eta^{-1}: R_\varsigma \cap \eta(\tilde{W}_{\aff, y}) \longrightarrow \C^
\times.$$
For simplicity of notation, we write for the rest of the proof that 
$$W_{\aff, y}^\eta:= \eta(\tilde{W}_{\aff, y}) \text{ and } W_{y, \varsigma} :=W_{\aff, y}^\eta \cap R_\varsigma.$$
Mackey's formula gives that as representations of $\tilde{W}_{\aff, y}^\eta$ one has
$$\begin{aligned}
\eta_* \left( (\check{\pi}_\sigma^I|_{ \mca{H}_{\tilde{W}_{\aff, y}} })_{q\to 1} \right)   
& = \bigoplus_{w \in W_{\aff, y}^\eta \backslash W /R_\chi} \eta_*\left( \Ind_{\tilde{W}_{\aff, y} \cap R_{{}^w \chi} }^{ \tilde{W}_{\aff, y}  }  {}^w (s_\chi^{-1}) \otimes  {}^w \sigma\right) \\
& =  \bigoplus_{w \in W_{\aff, y}^\eta \backslash W /R_\chi}  \Ind_{ W_{y, {}^w \chi} }^{ W_{\aff, y}^\eta } ( s_{{}^w \chi}^y \otimes {}^w\sigma).
\end{aligned}
$$
This gives
$$\begin{aligned}
\dim \Hom_{\tilde{W}_{\aff, y}}(\mu_y, (\check{\pi}_\sigma^I|_{ \mca{H}_{\tilde{W}_{\aff, y}} })_{q\to 1}) & = \angb{ \eta_*(\mu_y)  }{ \eta_* \left( (\check{\pi}_\sigma^I|_{ \mca{H}_{\tilde{W}_{\aff, y}} })_{q\to 1} \right) }_{W_{\aff, y}^\eta} \\
& =  \bigoplus_{w \in W_{\aff, y}^\eta \backslash W /R_\chi} \angb{\eta_*(\mu_y)}{ \Ind_{ W_{y, {}^w \chi} }^{ W_{\aff, y}^\eta } (s_{{}^w \chi}^y \otimes {}^w\sigma)  }_{ W_{\aff, y}^\eta } \\
& =  \bigoplus_{w \in W_{\aff, y}^\eta \backslash W /R_\chi} \angb{\eta_*(\mu_y)}{  s_{{}^w \chi}^y \otimes {}^w\sigma  }_{ W_{y, {}^w \chi} }
\end{aligned}$$

On the other hand, since $\sigma_\mca{O}^\msc{X} =\Ind_{W_{\aff, y}^\eta}^W \mbm{1}$, one has
$$\begin{aligned}
\angb{\sigma \otimes \zeta_{\tilde{\rho}} }{  \sigma_\mca{O}^\msc{X} }_{R_\chi}  & = \bigoplus_{w\in R_\chi \backslash W / W_{\aff, y}^\eta} \angb{\mbm{1}}{ \zeta_{\tilde{\rho}} \otimes \sigma }_{R_\chi \cap {}^w (W_{\aff, y}^\eta)} \\
& = \bigoplus_{w \in W_{\aff, y}^\eta \backslash W /R_\chi} \angb{\mbm{1}}{ {}^w \zeta_{\tilde{\rho}} \otimes {}^w \sigma }_{W_{y, {}^w \chi}}.
\end{aligned}$$
We will verify the following equality
\begin{equation} \label{E:keq}
\eta_*(\mu_y)^{-1} \otimes s^y_{{}^w \chi} = {}^w \zeta_{\tilde{\rho}} \text{ on } W_{y, {}^w \chi} \text{  for every } w \in W_{\aff, y}^\eta \backslash W /R_\chi, 
\end{equation}
which clearly suffices to give the desired result. We discuss the two cases according to whether $\alpha_0 \in \Delta_{\aff, y}$ or not.

First, if $\alpha_0 \in \Delta_{\aff, y}$, then it follows from Proposition \ref{P:uni-key} that 
$$W_{y, {}^w \chi} =\set{1} \text{ for every } w,$$
and thus  equality \eqref{E:keq} holds trivially.

Second, we assume $\alpha_0 \notin \Delta_{\aff, y}$ and thus $\tilde{W}_{\aff, y} \subset W$ is isomorphic to a parabolic subgroup. 
Hence the orbit $\mca{O}_y \subset \msc{X}_{Q,n}$ is splitting. In this case, we simply write $W_y:=\tilde{W}_{\aff, y}$.
It follows that $\mu_y = \varepsilon_{W_y}$
and $\eta$ is the identity map on $W_y$. Also, in this case $s_{{}^w \chi}^y$ is trivial on $W_{y, {}^w \chi}$ for all $w$, and the equality \eqref{E:keq} is equivalent to
\begin{equation} \label{E:keq-2}
\varepsilon_{W_y} = {}^w \zeta_{\tilde{\rho}} \text{ on } W_{y, {}^w\chi}.
\end{equation}
Now we take ${}^w w_1 \in W_{y, {}^w \chi} = W_y \cap R_{{}^w \chi} = W_y \cap {}^w R_\chi$ with $w_1 \in R_{\chi}$ being the standard representative as in \cite{Key2}. We have 
$$\varepsilon_{W_y}({}^w w_1) = \varepsilon_W({}^w w_1) = \varepsilon_W(w_1).$$
On the other hand, 
$${}^w \zeta_{\tilde{\rho}}({}^w w_1) = \zeta_{\tilde{\rho}}(w_1) = s_\chi(w_1(\tilde{\rho}) - \tilde{\rho}).$$

Let $w_1 = \prod_{j=1}^k w_{\alpha_{i_j}}$ and set $\Delta_{w_1}=\set{\alpha_{i_j}: 1\lest j \lest k}$. We divide the computation of the above into two situations according whether $\Delta_{w_1}$ is totally disconnected in the Dynkin diagram of $G$. For every $\alpha \in \Phi$, we denote
$$\chi_\alpha:=\chi(\wt{h}_\alpha(\varpi^{n_\alpha})).$$

First, if $\Delta_{w_1}$ is totally disconnected, then by \cite[\S 3]{Key2} or \cite{Ga8}, we see that $l(w_1) = \val{\Delta_{w_1}} = k$ and $\chi_{\alpha_{i_j}} = -1$ for every $j$. Since in this case 
$$w_1(\tilde{\rho}) - \tilde{\rho} = - \sum_j \alpha_{i_j, Q, n}^\vee,$$
it is clear that
$$ {}^w \zeta_{ \tilde{\rho} }({}^w w_1) =  \varepsilon_{W_y}({}^w w_1)  = (-1)^{l(w_1)}.$$
This applies to the cases when $R_\chi \simeq \Z/2\Z$ or $\Z/2\Z \times \Z/2\Z$, in particular, to all covers of type $B_r, C_r, E_7$ and some covers of $A_r$ and $D_r$.

Second, we deal with the remaining cases. Again, we use the standard notations and coordinates from Bourbaki. We discuss case by case.
\begin{enumerate}
\item[$\bullet$] For $A_r$ with $R_\chi \simeq \Z/d\Z, (r+1) =dk$ we have
$$w_1 =(1, ..., d)(d+1, ..., 2d) ... (dk-d+1, ..., dk)$$
where $\chi_{\alpha_i} = \xi$, a primitive $d$-th roots of unity. We see that
$$s_\chi(w_1(\tilde{\rho}) - \tilde{\rho}) = \xi^{-d(d-1)/2}  = (-1)^{d-1} = \varepsilon_W(w_1).$$
\item[$\bullet$] For $D_r$ with $R_\chi \simeq \Z/4\Z$, we have necessarily $r$ is odd, and a generator for $R_\chi$ is
$$w_1 = w_{\alpha_1} w_{\alpha_3} ... w_{\alpha_{r-4}} \cdot w_{\alpha_{r-2}} w_{\alpha_{r-1}} w_{\alpha_r},$$
and 
$$\chi_{\alpha_i} = -1 \text{ for } i \in \set{1, 3, 5, ..., r-4, r-2}, \quad \chi_{\alpha_{r-1}}^2 = -1, \quad \chi_{\alpha_r} = \chi_{\alpha_{r-1}}^{-1}.$$
A straightforward computation gives that
$$s_\chi(w_1(\tilde{\rho}) - \tilde{\rho}) = (-1)^{(r-1)/2},$$
which is equal to $\varepsilon_W(w_1) = (-1)^{(r+3)/2}$.
\item[$\bullet$] The last case is for $E_6$ with $R_\chi \simeq \Z/3\Z$ with a generator 
$$w_1 = w_{\alpha_1} w_{\alpha_3} \cdot w_{\alpha_6} w_{\alpha_5},$$
and
$$\chi_{\alpha_1}= \chi_{\alpha_3} = \chi_{\alpha_5}^{-1} = \chi_{\alpha_6}^{-1} = \xi,$$
where $\xi$ is a primitive third root of unity. A direct computation gives that
$$s_\chi(w_1(\tilde{\rho}) - \tilde{\rho}) = 1 = \varepsilon_W(w_1),$$
where the first equality already follows from \eqref{E:zeta-quad}
\end{enumerate}
This completes the proof of the equality \eqref{E:keq-2} and also the theorem.
\end{proof}

For $\mca{O}$ with the S-property as in Theorem \ref{T:uni-ps}, we have that
$$\sigma_\mca{O}^\Whc \simeq  \zeta_{\tilde{\rho}}^{-1} \otimes \sigma_\mca{O}^\msc{X}  \simeq  \zeta_{\tilde{\rho}} \otimes \sigma_\mca{O}^\msc{X}.$$

\begin{cor} \label{C:uni-ps}
Let $\wt{G}$ be an oasitic cover of a simply-connected semisimple group $G$ associated with $Q(\alpha^\vee)=1$ for any short simple coroot $\alpha^\vee$. Then, one has
$$\dim \Whc(\pi_\sigma)_\mca{O} =  \angb{\sigma \otimes \zeta_{\tilde{\rho}} }{  \sigma_\mca{O}^\msc{X} }_{R_\chi} = \angb{\sigma \otimes \zeta_{\tilde{\rho}}^{-1} }{  \sigma_\mca{O}^\msc{X} }_{R_\chi}$$ 
for every $W$-orbit $\mca{O} \subset \msc{X}_{Q,n}$ and $\sigma \in \Irr(R_\chi)$. Hence,
$$\dim \Whc(\pi_\sigma) =  \angb{\sigma \otimes \zeta_{\tilde{\rho}} }{  \sigma^\msc{X} }_{R_\chi} = \angb{\sigma \otimes \zeta_{\tilde{\rho}}^{-1} }{  \sigma^\msc{X} }_{R_\chi}.$$
\end{cor}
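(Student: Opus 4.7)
The plan is to deduce Corollary \ref{C:uni-ps} essentially as a direct specialization of Theorem \ref{T:uni-ps}, after verifying that the two hypotheses of that theorem (very saturation and the S-property of every $W$-orbit) hold in the oasitic setting, and after reducing from the semisimple case to the almost simple one.

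First, I would reduce to the almost simple simply-connected case by decomposing $G = \prod_{j=1}^m G_j$ into its almost simple factors. Since $\wt{G}$ is oasitic, the restriction of $Q$ to each factor $Y_j \subset Y$ yields an oasitic cover $\wt{G}_j$ of $G_j$ satisfying $Q(\alpha^\vee)=1$ on short simple coroots of $G_j$. Under such a decomposition one has a compatible factorization of all the relevant objects:
\begin{equation*}
\msc{X}_{Q,n} = \prod_j \msc{X}_{Q,n,j}, \quad W = \prod_j W_j, \quad R_\chi = \prod_j R_{\chi_j},
\end{equation*}
and consequently every $W$-orbit $\mca{O} \subset \msc{X}_{Q,n}$ is a product $\mca{O} = \prod_j \mca{O}_j$ of $W_j$-orbits. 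The representations $\sigma_\mca{O}^\msc{X}$, $\sigma_\mca{O}^\Whc$, the character $\zeta_{\tilde{\rho}}$, and the dimension $\dim \Wh_\psi(\pi_\sigma)_\mca{O}$ all factor through the corresponding products, so it suffices to establish the formula for each almost simple simply-connected factor $\wt{G}_j$ separately.

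Next, I would verify the two hypotheses of Theorem \ref{T:uni-ps} for each such $\wt{G}_j$. By the chain of inclusions \eqref{C:chain1}, an oasitic cover of an almost simple simply-connected group is very saturated, so the first hypothesis holds. For the second, Corollary \ref{C:aOsplt} gives that every $W_j$-orbit $\mca{O}_j \subset \msc{X}_{Q,n,j}$ is splitting, and Theorem \ref{Wequi} then implies that each such $\mca{O}_j$ enjoys the S-property in the sense of Definition \ref{D:S-ppty}. Applying Theorem \ref{T:uni-ps} to each $\wt{G}_j$ and multiplying across the factors yields the first equality
\begin{equation*}
\dim \Whc(\pi_\sigma)_\mca{O} = \angb{\sigma \otimes \zeta_{\tilde{\rho}}}{\sigma_\mca{O}^\msc{X}}_{R_\chi}.
\end{equation*}

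The second equality follows immediately from \eqref{E:zeta-quad}, which asserts $\zeta_{\tilde{\rho}}^2 = \mbm{1}$, and hence $\zeta_{\tilde{\rho}} = \zeta_{\tilde{\rho}}^{-1}$; summing over all orbits yields the final formula for $\dim \Whc(\pi_\sigma)$. There is no serious obstacle here: the only nontrivial content is already packaged in Theorem \ref{T:uni-ps} and Corollary \ref{C:aOsplt}. The mildest technical point, which deserves an explicit sentence in the written proof, is the verification that the factorization of $\zeta_{\tilde{\rho}}$ and $\sigma_\mca{O}^\msc{X}$ across the almost simple factors is compatible with the $R_\chi$-pairing, but this is a straightforward consequence of the product decompositions above.
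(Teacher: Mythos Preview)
Your proposal is correct and matches the paper's (implicit) approach: since an oasitic cover is very saturated by \eqref{C:chain1} and every $W$-orbit is splitting by Corollary \ref{C:aOsplt} (hence has the S-property via Theorem \ref{Wequi}), Theorem \ref{T:uni-ps} applies to every orbit, and \eqref{E:zeta-quad} gives the second equality. Your reduction from semisimple to almost simple is an extra step not present in the paper, since \S\ref{S:uniPS} carries the standing assumption that $G$ is almost simple; it is harmless but unnecessary here.
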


\subsection{Conductor of $\psi$ varied}
Note that for any $z$ one clearly has a decomposition
$$\Wh_{{}^z\psi}(I(\chi))^\sharp = \bigoplus_{\sigma \in \Irr(R_\chi)} \Wh_{{}^z\psi}(\pi_\sigma)^\sharp.$$
We want to compare $\dim \Wh_\psi(\pi_\sigma)^\sharp$ against $\dim \Wh_{{}^{\rho}\psi} (\pi_\sigma)^\sharp$ with $\mfr{f}({}^{\rho}\psi) = O_F$, and verify some speculations given in \cite[Conjecture 5.7]{GSS3}.

\begin{cor} \label{C:Wh-equi}
Let $\wt{G}$ be an oasitic cover of an almost simple simply-connected $G$ with $Q(\alpha^\vee)=1$. As in Proposition \ref{P:O1}, let $y_{-\rho} \in Y$ be such that
$$\mfr{m}_{-\rho}: \msc{X}_{Q,n} \longrightarrow \msc{X}_{Q,n}, y\mapsto y+ y_{-\rho}$$
induces a bijective correspondence between $W$-orbits and $(W,-\rho)$-orbits of the domain and codomain of $\mfr{m}_{-\rho}$ respectively. Let $I(\chi)$ be a unitary unramified principal series. Assume that Conjecture \ref{C:iden} holds. Also assume $R_\chi \simeq \Z/2\Z$ or $\Z/2\Z \times \Z/2\Z$. Then for every $\pi_\sigma \subset I(\chi)$ and every $W$-orbit $\mca{O} \subset \msc{X}_{Q,n}$, one has
$$\dim \Wh_\psi(\pi_\sigma)^\sharp_\mca{O} = \dim \Wh_{ {}^{\rho} \psi} (\pi_{\sigma \otimes \zeta^{-1}_{\tilde{\rho}}})^\sharp_{\mfr{m}_{-\rho}(\mca{O})};$$
hence,
$$\dim \Wh_\psi(\pi_\sigma)^\sharp = \dim \Wh_{ {}^{\rho} \psi} (\pi_{\sigma \otimes \zeta^{-1}_{\tilde{\rho}}})^\sharp$$
as well.
\end{cor}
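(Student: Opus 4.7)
The strategy is to reduce both sides of the claimed identity to explicit $R_\chi$-inner products by combining Corollary \ref{C:uni-ps} with its ${}^{\rho}\psi$-analogue, and then match them via the equivariant bijection $\mfr{m}_{-\rho}$ of Proposition \ref{P:O1} together with the quadratic relation $\zeta_{\tilde{\rho}}^{2}=\mbm{1}$ from \eqref{E:zeta-quad}.

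First I would treat the left-hand side. Conjecture \ref{C:iden} identifies $\Whc(\pi_\sigma)_\mca{O}^\sharp$ with $\Whc(\pi_\sigma)_\mca{O}$, and Corollary \ref{C:uni-ps}, applicable since $\wt{G}$ is oasitic, computes its dimension as
$$\dim \Whc(\pi_\sigma)_\mca{O}^\sharp = \angb{\sigma \otimes \zeta_{\tilde{\rho}}}{\sigma_\mca{O}^\msc{X}}_{R_\chi}.$$
For the right-hand side I would establish a ${}^{\rho}\psi$-analogue of Corollary \ref{C:uni-ps}: for any $(W,-\rho)$-orbit $\mca{O}'$ of $\msc{X}_{Q,n}$ and any $\tau \in \Irr(R_\chi)$,
$$\dim \Wh_{{}^{\rho}\psi}(\pi_\tau)_{\mca{O}'}^\sharp = \angb{\tau}{\sigma_{[-\rho],\mca{O}'}^\msc{X}}_{R_\chi},$$
with no $\zeta_{\tilde{\rho}}$-correction. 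This is the orbit-by-orbit refinement of the speculation $\sigma^{\Whc^\std_{{}^{w_G\rho}\psi}} \simeq \sigma_{[-\rho]}^\msc{X}$ recorded after Conjecture \ref{C:pi-z}, and it is obtained by rerunning the Kazhdan--Lusztig--Reeder argument of Theorem \ref{T:uni-ps}, now with the $(W,-\rho)$-orbit decomposition of the conductor-$O_F$ Gelfand--Graev module in \S\ref{SS:con-Wh} (for $z=\rho$, $z^*=-\rho$). The only substantive change is that the analogue of equality \eqref{E:keq-2} picks up an extra factor $s_\chi(w_1(\tilde{\rho})-\tilde{\rho}) = \zeta_{\tilde{\rho}}(w_1)$ coming from the $\rho$-shift in $w[\cdot]_{-\rho}$, which precisely cancels the $\zeta_{\tilde{\rho}}$ appearing in the $\psi$-case.

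Specializing this analogue to $\tau = \sigma \otimes \zeta_{\tilde{\rho}}^{-1}$ and $\mca{O}' = \mfr{m}_{-\rho}(\mca{O})$, invoking Proposition \ref{P:O1} to identify $\sigma_{[-\rho], \mfr{m}_{-\rho}(\mca{O})}^\msc{X} \simeq \sigma_\mca{O}^\msc{X}$ as $R_\chi$-representations, and using $\zeta_{\tilde{\rho}}^{-1} = \zeta_{\tilde{\rho}}$ from \eqref{E:zeta-quad}, the right-hand side collapses to $\angb{\sigma \otimes \zeta_{\tilde{\rho}}}{\sigma_\mca{O}^\msc{X}}_{R_\chi}$, matching the LHS; summing over $\mca{O}$ yields the global statement. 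The main obstacle is the second step: although the representation-theoretic scaffolding of Theorem \ref{T:uni-ps} transports with only cosmetic changes, the case-by-case sign verification at the end of that proof must be redone with the $\rho$-shift built in. The hypothesis $R_\chi \simeq \Z/2\Z$ or $(\Z/2\Z)^2$ is what keeps this tractable, since every nontrivial $w_1 \in R_\chi$ then has totally disconnected $\Delta_{w_1}$, so the required sign identity holds uniformly and the $\tilde{\rho}$-cancellation goes through without a delicate type-by-type ($A_r$, $D_r$, $E_6$) analysis.
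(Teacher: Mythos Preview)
Your treatment of the left-hand side is fine and matches the paper. The gap is in the second step, where you claim to establish the ${}^{\rho}\psi$-analogue of Corollary \ref{C:uni-ps} by ``rerunning the Kazhdan--Lusztig--Reeder argument of Theorem \ref{T:uni-ps}''. That argument begins from the S-property, i.e.\ from the identification $\mca{V}^I_\mca{O} \simeq \mu_y \otimes_{\mca{H}_{I,y}} \mca{H}_I$ supplied by Theorem \ref{Wequi}. No such structural result is available in the paper for the conductor-$O_F$ Gelfand--Graev module $({\rm ind}_{\mu_n U^-}^{\wt{G}} \epsilon \otimes {}^{\rho}\psi)^I$: this is exactly the content of the open Conjecture \ref{C:pi-z} for $z=\rho$, and \S\ref{SS:con-Wh} only sets up the $\sharp/\std$ Whittaker spaces and scattering matrices, not a module-theoretic decomposition. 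Moreover, Conjecture \ref{C:iden} is stated only for $z=0$, so it does not bridge the $\sharp$-side to a hypothetical $\Wh_{{}^{\rho}\psi}(\pi_\tau)_{\mca{O}'}$ either. Thus the input you need to launch the Theorem \ref{T:uni-ps} machinery on the right-hand side simply is not there.

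The paper sidesteps this entirely: it converts $\Wh_{{}^{\rho}\psi}(\cdot)^\sharp_{\mca{O}'}$ to $\Wh_{{}^{-\rho}({}^{w_G}\psi)}(\cdot)^\std_{\mca{O}'}$ via \eqref{F:RwG} and then invokes \cite[Theorem 1.4]{Ga8}, a result proved by scattering-matrix methods, to obtain the formula \eqref{E:c-O} directly. This is also precisely why the hypothesis $R_\chi \simeq \Z/2\Z$ or $(\Z/2\Z)^2$ appears: that is the range in which the cited external result is known, not (as you suggest) a device to simplify a sign check in a rerun of Theorem \ref{T:uni-ps}. The remaining steps---Proposition \ref{P:O1} and the comparison of the two inner products---are as you describe.
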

\begin{proof}
Assuming Conjecture \ref{C:iden}, one has
$$\dim \Wh_\psi(\pi_\sigma)^\sharp_\mca{O} = \dim \Wh_\psi(\pi_\sigma)_\mca{O} = \angb{\sigma \otimes \zeta_{\tilde{\rho}}^{-1} }{  \sigma_{[0]}^\mca{O} }_{R_\chi},$$
where the second equality follows from Corollary \ref{C:uni-ps}. (Here we use the notation $\sigma_{[0]}^\mca{O}$ for $\sigma_\mca{O}^\msc{X}$ to highlight the twisting parameter.)

On the other hand, from \eqref{F:RwG} we have
$$\dim \Wh_{{}^\rho\psi}(\pi_\sigma)_{\mca{O}'}^\sharp = \dim \Wh_{^{-\rho}(^{w_G}\psi)}(\pi_\sigma)^\std_{\mca{O}'}$$
for every $(W, -\rho)$-orbit $\mca{O}' \subset \msc{X}_{Q,n}$. Note that every $\mca{O}' = \mfr{m}_{-\rho}(\mca{O})$ for a unique $W$-orbit $\mca{O} \subset \msc{X}_{Q,n}$. Since $\mfr{f}({}^{-\rho}(^{w_G} \psi))= O_F$, then for $R_\chi \simeq (\Z/2\Z)^i, i=1, 2$ it is shown in  \cite[Theorem 1.4]{Ga8} that
\begin{equation} \label{E:c-O}
\dim \Wh_{ {}^{-\rho} (^{w_G}\psi)} (\pi_{\sigma \otimes \zeta_{\tilde{\rho}}^{-1}})^\std_{\mfr{m}_{-\rho}(\mca{O})} = \angb{\sigma_{[-\rho]}^{\mfr{m}_{-\rho}(\mca{O})}}{\sigma \otimes \zeta_{\tilde{\rho}}^{-1}}_{R_\chi}.
\end{equation}
However, it follows from Proposition \ref{P:O1} that for such covers $\wt{G}$, one has
$$\sigma_{[-\rho]}^{\mfr{m}_{-\rho}(\mca{O})} \simeq \sigma_{[0]}^\mca{O}.$$
The desired first equality then follows from comparing all the above. The second equality is clear from this.
\end{proof}

\begin{rmk}
The equality \eqref{E:c-O} is expected to hold for general $R_\chi$ (see \cite{Ga7, Ga8}) and if proven will give a stronger version of Corollary \ref{C:Wh-equi} for all $R_\chi$. 
Moreover, for very saturated covers, one can compute directly the two sides of the equality in Corollary \ref{C:Wh-equi} as in \cite{Ga8}. Indeed,  the results in loc. cit. was obtained by considering the scattering matrix $\tau_{\psi^*}$ in \S \ref{SS:con-Wh} with $\psi^*={}^{z^*}(^{w_G}\psi)$ and $z^*=-\rho$, while the method equally applies to $\tau_{^{w_G}\psi}$. In fact, even if $\chi$ is ramified, similar results are expect to hold following the work \cite{GoSz, Szp6} on the local coefficients matrices and scattering matrices.
\end{rmk}

%%%%%%%%%%%%%%% %%%%%%%%%%%%%%%%%%%%%%%%%%%%%%%%%%%%%%%%%
\begin{bibdiv}
\begin{biblist}[\resetbiblist{9999999}]*{labels={alphabetic}}

% \bibselect{MyAMSRefs}

\bib{ABPS17}{article}{
  author={Aubert, Anne-Marie},
  author={Baum, Paul},
  author={Plymen, Roger},
  author={Solleveld, Maarten},
  title={The principal series of $p$-adic groups with disconnected center},
  journal={Proc. Lond. Math. Soc. (3)},
  volume={114},
  date={2017},
  number={5},
  pages={798--854},
  issn={0024-6115},
  review={\MR {3653247}},
  doi={10.1112/plms.12023},
}

\bib{BM1}{article}{
  author={Barbasch, Dan},
  author={Moy, Allen},
  title={Whittaker models with an Iwahori fixed vector},
  conference={ title={Representation theory and analysis on homogeneous spaces}, address={New Brunswick, NJ}, date={1993}, },
  book={ series={Contemp. Math.}, volume={177}, publisher={Amer. Math. Soc., Providence, RI}, },
  date={1994},
  pages={101--105},
  review={\MR {1303602}},
  doi={10.1090/conm/177/01917},
}

\bib{BBF4}{article}{
  author={Brubaker, Ben},
  author={Bump, Daniel},
  author={Friedberg, Solomon},
  title={Matrix coefficients and Iwahori-Hecke algebra modules},
  journal={Adv. Math.},
  volume={299},
  date={2016},
  pages={247--271},
  issn={0001-8708},
  review={\MR {3519469}},
  doi={10.1016/j.aim.2016.05.012},
}

\bib{BD}{article}{
  author={Brylinski, Jean-Luc},
  author={Deligne, Pierre},
  title={Central extensions of reductive groups by $\bold K_2$},
  journal={Publ. Math. Inst. Hautes \'Etudes Sci.},
  number={94},
  date={2001},
  pages={5--85},
  issn={0073-8301},
  review={\MR {1896177}},
  doi={10.1007/s10240-001-8192-2},
}

\bib{Bum12}{article}{
  author={Bump, Daniel},
  title={Introduction: multiple Dirichlet series},
  conference={ title={Multiple Dirichlet series, L-functions and automorphic forms}, },
  book={ series={Progr. Math.}, volume={300}, publisher={Birkh\"{a}user/Springer, New York}, },
  date={2012},
  pages={1--36},
  review={\MR {2952570}},
  doi={10.1007/978-0-8176-8334-4-1},
}

\bib{BuHe03}{article}{
  author={Bushnell, Colin J.},
  author={Henniart, Guy},
  title={Generalized Whittaker models and the Bernstein center},
  journal={Amer. J. Math.},
  volume={125},
  date={2003},
  number={3},
  pages={513--547},
  issn={0002-9327},
  review={\MR {1981032}},
}

\bib{BK98}{article}{
  author={Bushnell, Colin J.},
  author={Kutzko, Philip C.},
  title={Smooth representations of reductive $p$-adic groups: structure theory via types},
  journal={Proc. London Math. Soc. (3)},
  volume={77},
  date={1998},
  number={3},
  pages={582--634},
  issn={0024-6115},
  review={\MR {1643417}},
  doi={10.1112/S0024611598000574},
}

\bib{Car}{book}{
  author={Carter, Roger W.},
  title={Finite groups of Lie type},
  series={Wiley Classics Library},
  note={Conjugacy classes and complex characters; Reprint of the 1985 original; A Wiley-Interscience Publication},
  publisher={John Wiley \& Sons, Ltd., Chichester},
  date={1993},
  pages={xii+544},
  isbn={0-471-94109-3},
  review={\MR {1266626}},
}

\bib{CS18}{article}{
  author={Chan, Kei Yuen},
  author={Savin, Gordan},
  title={Iwahori component of the Gelfand-Graev representation},
  journal={Math. Z.},
  volume={288},
  date={2018},
  number={1-2},
  pages={125--133},
  issn={0025-5874},
  review={\MR {3774407}},
  doi={10.1007/s00209-017-1882-3},
}

\bib{CG10}{article}{
  author={Chinta, Gautam},
  author={Gunnells, Paul E.},
  title={Constructing Weyl group multiple Dirichlet series},
  journal={J. Amer. Math. Soc.},
  volume={23},
  date={2010},
  number={1},
  pages={189--215},
  issn={0894-0347},
  review={\MR {2552251}},
  doi={10.1090/S0894-0347-09-00641-9},
}

\bib{CO}{article}{
  author={Chinta, Gautam},
  author={Offen, Omer},
  title={A metaplectic Casselman-Shalika formula for ${\rm GL}_r$},
  journal={Amer. J. Math.},
  volume={135},
  date={2013},
  number={2},
  pages={403--441},
  issn={0002-9327},
  review={\MR {3038716}},
  doi={10.1353/ajm.2013.0013},
}

\bib{DyMi}{article}{
  author={Dynkin, E. B.},
  author={Minchenko, A. N.},
  title={Enhanced Dynkin diagrams and Weyl orbits},
  journal={Transform. Groups},
  volume={15},
  date={2010},
  number={4},
  pages={813--841},
  issn={1083-4362},
  review={\MR {2753258}},
  doi={10.1007/s00031-010-9100-y},
}

\bib{Fl11}{article}{
  author={Flicker, Y. Z.},
  title={The tame algebra},
  journal={J. Lie Theory},
  volume={21},
  date={2011},
  number={2},
  pages={469--489}
}

\bib{GG}{article}{
  author={Gan, Wee Teck},
  author={Gao, Fan},
  title={The Langlands-Weissman program for Brylinski-Deligne extensions},
  language={English, with English and French summaries},
  note={L-groups and the Langlands program for covering groups},
  journal={Ast\'erisque},
  date={2018},
  number={398},
  pages={187--275},
  issn={0303-1179},
  isbn={978-2-85629-845-9},
  review={\MR {3802419}},
}

\bib{GGP1}{article}{
  author={Gan, Wee Teck},
  author={Gross, Benedict H.},
  author={Prasad, Dipendra},
  title={Symplectic local root numbers, central critical $L$ values, and restriction problems in the representation theory of classical groups},
  language={English, with English and French summaries},
  note={Sur les conjectures de Gross et Prasad. I},
  journal={Ast\'{e}risque},
  number={346},
  date={2012},
  pages={1--109},
  issn={0303-1179},
  isbn={978-2-85629-348-5},
  review={\MR {3202556}},
}

\bib{Ga2}{article}{
  author={Gao, Fan},
  title={Distinguished theta representations for certain covering groups},
  journal={Pacific J. Math.},
  volume={290},
  date={2017},
  number={2},
  pages={333--379},
  doi={10.2140/pjm.2017.290.333},
}

\bib{Ga6}{article}{
  author={Gao, Fan},
  title={Kazhdan--Lusztig representations and Whittaker space of some genuine representations},
  journal={Math. Ann.},
  volume={376},
  date={2020},
  number={1},
  pages={289--358},
  doi={10.1007/s00208-019-01925-1},
}

\bib{Ga7}{article}{
  author={Gao, Fan},
  title={R-group and Whittaker space of some genuine representations},
  status={J. Inst. Math. Jussieu (2021, accepted), DOI: https://doi.org/10.1017/S1474748021000128},
}

\bib{Ga8}{article}{
  author={Gao, Fan},
  title={R-group and Whittaker space of some genuine representations, II},
  status={preprint (2021, submitted)},
}

\bib{GSS2}{article}{
  author={Gao, Fan},
  author={Shahidi, Freydoon},
  author={Szpruch, Dani},
  title={Local coefficients and gamma factors for principal series of covering groups},
  status={Memoirs of the AMS (2019, accepted), available at https://arxiv.org/abs/1902.02686},
}

\bib{GSS3}{article}{
  author={Gao, Fan},
  author={Shahidi, Freydoon},
  author={Szpruch, Dani},
  title={Restrictions, L-parameters, and local coefficients for genuine representations},
  status={preprint (2021, submitted), available at https://arxiv.org/abs/2102.08859},
}

\bib{GoSz}{article}{
  author={Goldberg, David},
  author={Szpruch, Dani},
  title={Plancherel measures for coverings of $p$-adic $\text {SL}_2(F)$},
  journal={Int. J. Number Theory},
  volume={12},
  date={2016},
  number={7},
  pages={1907--1936},
  issn={1793-0421},
  review={\MR {3544420}},
  doi={10.1142/S1793042116501189},
}

\bib{GuKa}{article}{
  author={Gurevich, Nadya},
  author={Karasiewicz, Edmund},
  title={The twisted Satake transform and the Casselman--Shalika formula for quasi-split groups},
  status={to appear in Int. Math. Res. Not.},
}

\bib{HKP10}{article}{
  author={Haines, Thomas J.},
  author={Kottwitz, Robert E.},
  author={Prasad, Amritanshu},
  title={Iwahori-Hecke algebras},
  journal={J. Ramanujan Math. Soc.},
  volume={25},
  date={2010},
  number={2},
  pages={113--145},
  issn={0970-1249},
  review={\MR {2642451}},
}

\bib{Hum}{book}{
  author={Humphreys, James E.},
  title={Reflection groups and Coxeter groups},
  series={Cambridge Studies in Advanced Mathematics},
  volume={29},
  publisher={Cambridge University Press, Cambridge},
  date={1990},
  pages={xii+204},
  isbn={0-521-37510-X},
  review={\MR {1066460}},
}

\bib{IM65}{article}{
  author={Iwahori, N.},
  author={Matsumoto, H.},
  title={On some Bruhat decomposition and the structure of the Hecke rings of ${\germ p}$-adic Chevalley groups},
  journal={Inst. Hautes \'{E}tudes Sci. Publ. Math.},
  number={25},
  date={1965},
  pages={5--48},
  issn={0073-8301},
  review={\MR {185016}},
}

\bib{Kal4}{article}{
  author={Kaletha, Tasho},
  title={Genericity and contragredience in the local Langlands correspondence},
  journal={Algebra Number Theory},
  volume={7},
  date={2013},
  number={10},
  pages={2447--2474},
  issn={1937-0652},
  review={\MR {3194648}},
  doi={10.2140/ant.2013.7.2447},
}

\bib{Kar21}{article}{
  author={Karasiewicz, Edmund},
  title={A Hecke algebra on the double cover of a Chevalley group over $\Bbb {Q}_2$},
  journal={Algebra Number Theory},
  volume={15},
  date={2021},
  number={7},
  pages={1729--1753},
  issn={1937-0652},
  review={\MR {4333663}},
  doi={10.2140/ant.2021.15.1729},
}

\bib{Kat-83}{article}{
  author={Kato, Shin-ichi},
  title={A realization of irreducible representations of affine Weyl groups},
  journal={Nederl. Akad. Wetensch. Indag. Math.},
  volume={45},
  date={1983},
  number={2},
  pages={193--201},
  issn={0019-3577},
  review={\MR {705426}},
}

\bib{KL2}{article}{
  author={Kazhdan, David},
  author={Lusztig, George},
  title={Proof of the Deligne-Langlands conjecture for Hecke algebras},
  journal={Invent. Math.},
  volume={87},
  date={1987},
  number={1},
  pages={153--215},
  issn={0020-9910},
  review={\MR {862716}},
}

\bib{KP}{article}{
  author={Kazhdan, D. A.},
  author={Patterson, S. J.},
  title={Metaplectic forms},
  journal={Inst. Hautes \'Etudes Sci. Publ. Math.},
  number={59},
  date={1984},
  pages={35--142},
  issn={0073-8301},
  review={\MR {743816}},
}

\bib{Key2}{article}{
  author={Keys, C. David},
  title={Reducibility of unramified unitary principal series representations of $p$-adic groups and class-$1$\ representations},
  journal={Math. Ann.},
  volume={260},
  date={1982},
  number={4},
  pages={397--402},
  issn={0025-5831},
  review={\MR {670188}},
  doi={10.1007/BF01457019},
}

\bib{Luo3}{article}{
   author={Luo, Caihua},
   title={Knapp-Stein dimension theorem for finite central covering groups},
   journal={Pacific J. Math.},
   volume={306},
   date={2020},
   number={1},
   pages={265--280},
   issn={0030-8730},
   review={\MR{4109915}},
   doi={10.2140/pjm.2020.306.265},
}

\bib{Lu-CA4}{article}{
  author={Lusztig, George},
  title={Cells in affine Weyl groups. IV},
  journal={J. Fac. Sci. Univ. Tokyo Sect. IA Math.},
  volume={36},
  date={1989},
  number={2},
  pages={297--328},
  issn={0040-8980},
  review={\MR {1015001}},
}

\bib{Mc1}{article}{
  author={McNamara, Peter J.},
  title={Principal series representations of metaplectic groups over local fields},
  conference={ title={Multiple Dirichlet series, L-functions and automorphic forms}, },
  book={ series={Progr. Math.}, volume={300}, publisher={Birkh\"auser/Springer, New York}, },
  date={2012},
  pages={299--327},
  review={\MR {2963537}},
  doi={10.1007/978-0-8176-8334-413},
}

\bib{Mc2}{article}{
  author={McNamara, Peter J.},
  title={The metaplectic Casselman-Shalika formula},
  journal={Trans. Amer. Math. Soc.},
  volume={368},
  date={2016},
  number={4},
  pages={2913--2937},
  issn={0002-9947},
  review={\MR {3449262}},
  doi={10.1090/tran/6597},
}

\bib{MiPa21}{article}{
  author={Mishra, Manish},
  author={Pattanayak, Basudev},
  title={Principal series component of Gelfand-Graev representation},
  journal={Proc. Amer. Math. Soc.},
  volume={149},
  date={2021},
  number={11},
  pages={4955--4962},
  issn={0002-9939},
  review={\MR {4310118}},
  doi={10.1090/proc/15642},
}

\bib{Osh}{article}{
  author={Oshima, Toshio},
  title={A classification of subsystems of a root system},
  status={preprint (2007), available at https://arxiv.org/abs/math/0611904},
}

\bib{Ree-00}{article}{
  author={Reeder, Mark},
  title={Formal degrees and $L$-packets of unipotent discrete series representations of exceptional $p$-adic groups},
  note={With an appendix by Frank L\"{u}beck},
  journal={J. Reine Angew. Math.},
  volume={520},
  date={2000},
  pages={37--93},
  issn={0075-4102},
  review={\MR {1748271}},
  doi={10.1515/crll.2000.023},
}

\bib{Ree4}{article}{
  author={Reeder, Mark},
  title={Isogenies of Hecke algebras and a Langlands correspondence for ramified principal series representations},
  journal={Represent. Theory},
  volume={6},
  date={2002},
  pages={101--126},
  review={\MR {1915088}},
  doi={10.1090/S1088-4165-02-00167-X},
}

\bib{Rod4}{article}{
  author={Rodier, Fran\c {c}ois},
  title={D\'ecomposition de la s\'erie principale des groupes r\'eductifs $p$-adiques},
  language={French},
  conference={ title={Noncommutative harmonic analysis and Lie groups}, address={Marseille}, date={1980}, },
  book={ series={Lecture Notes in Math.}, volume={880}, publisher={Springer, Berlin-New York}, },
  date={1981},
  pages={408--424},
  review={\MR {644842}},
}

\bib{SSV21}{article}{
  author={Sahi, Siddhartha},
  author={Stokman, Jasper V.},
  author={Venkateswaran, Vidya},
  title={Metaplectic representations of Hecke algebras, Weyl group actions, and associated polynomials},
  journal={Selecta Math. (N.S.)},
  volume={27},
  date={2021},
  number={3},
  pages={Paper No. 47, 42},
  issn={1022-1824},
  review={\MR {4273644}},
  doi={10.1007/s00029-021-00654-1},
}

\bib{Sav88}{article}{
  author={Savin, Gordan},
  title={Local Shimura correspondence},
  journal={Math. Ann.},
  volume={280},
  date={1988},
  number={2},
  pages={185--190},
  issn={0025-5831},
  review={\MR {929534}},
  doi={10.1007/BF01456050},
}

\bib{Sav04}{article}{
  author={Savin, Gordan},
  title={On unramified representations of covering groups},
  journal={J. Reine Angew. Math.},
  volume={566},
  date={2004},
  pages={111--134},
  issn={0075-4102},
  review={\MR {2039325}},
}

\bib{Som1}{article}{
  author={Sommers, E.},
  title={A family of affine Weyl group representations},
  journal={Transform. Groups},
  volume={2},
  date={1997},
  number={4},
  pages={375--390},
  issn={1083-4362},
  review={\MR {1486037}},
  doi={10.1007/BF01234541},
}

\bib{Ste16}{book}{
  author={Steinberg, Robert},
  title={Lectures on Chevalley groups},
  series={University Lecture Series},
  volume={66},
  note={Notes prepared by John Faulkner and Robert Wilson; Revised and corrected edition of the 1968 original [ MR0466335]; With a foreword by Robert R. Snapp},
  publisher={American Mathematical Society, Providence, RI},
  date={2016},
  pages={xi+160},
  isbn={978-1-4704-3105-1},
  review={\MR {3616493}},
  doi={10.1090/ulect/066},
}

\bib{Stem94}{article}{
  author={Stembridge, John R.},
  title={Some permutation representations of Weyl groups associated with the cohomology of toric varieties},
  journal={Adv. Math.},
  volume={106},
  date={1994},
  number={2},
  pages={244--301},
  issn={0001-8708},
  review={\MR {1279220}},
  doi={10.1006/aima.1994.1058},
}

\bib{Szp6}{article}{
  author={Szpruch, Dani},
  title={On Shahidi local coefficients matrix},
  journal={Manuscripta Math.},
  volume={159},
  date={2019},
  number={1-2},
  pages={117--159},
  issn={0025-2611},
  review={\MR {3936136}},
  doi={10.1007/s00229-018-1052-x},
}

\bib{Tit79}{article}{
  author={Tits, Jacques},
  title={Reductive groups over local fields},
  conference={ title={Automorphic forms, representations and $L$-functions}, address={Proc. Sympos. Pure Math., Oregon State Univ., Corvallis, Ore.}, date={1977}, },
  book={ series={Proc. Sympos. Pure Math., XXXIII}, publisher={Amer. Math. Soc., Providence, R.I.}, },
  date={1979},
  pages={29--69},
  review={\MR {546588}},
}

\bib{Vig16}{article}{
  author={Vigneras, Marie-France},
  title={The pro-$p$-Iwahori Hecke algebra of a reductive $p$-adic group I},
  journal={Compos. Math.},
  volume={152},
  date={2016},
  number={4},
  pages={693--753},
  issn={0010-437X},
  review={\MR {3484112}},
  doi={10.1112/S0010437X15007666},
}

\bib{We1}{article}{
  author={Weissman, Martin H.},
  title={Metaplectic tori over local fields},
  journal={Pacific J. Math.},
  volume={241},
  date={2009},
  number={1},
  pages={169--200},
  issn={0030-8730},
  review={\MR {2485462}},
  doi={10.2140/pjm.2009.241.169},
}

\bib{We3}{article}{
  author={Weissman, Martin H.},
  title={Split metaplectic groups and their L-groups},
  journal={J. Reine Angew. Math.},
  volume={696},
  date={2014},
  pages={89--141},
  issn={0075-4102},
  review={\MR {3276164}},
  doi={10.1515/crelle-2012-0111},
}

\bib{We4}{article}{
  author={Weissman, Martin H.},
  title={Covering groups and their integral models},
  journal={Trans. Amer. Math. Soc.},
  volume={368},
  date={2016},
  number={5},
  pages={3695--3725},
  issn={0002-9947},
  review={\MR {3451891}},
  doi={10.1090/tran/6598},
}

\bib{We6}{article}{
  author={Weissman, Martin H.},
  title={L-groups and parameters for covering groups},
  language={English, with English and French summaries},
  note={L-groups and the Langlands program for covering groups},
  journal={Ast\'erisque},
  date={2018},
  number={398},
  pages={33--186},
  issn={0303-1179},
  isbn={978-2-85629-845-9},
  review={\MR {3802418}},
}

\end{biblist}
\end{bibdiv}

\end{document}